\theoremstyle{plain}
\newtheorem{thm}{Theorem}[section]
\newtheorem{prop}[thm]{Proposition}
\newtheorem{cor}[thm]{Corollary}
\newtheorem{lem}[thm]{Lemma}
\newtheorem*{thm*}{Theorem}
\theoremstyle{definition}
\newtheorem{defi}[thm]{Definition}
\newtheorem*{defi*}{Definition}
\newtheorem*{nota*}{Notation}
\newtheorem{rem}[thm]{Remark}
\newcommand{\Q}{\mathbb{Q}}
\newcommand{\Z}{\mathbb{Z}}
\newcommand{\Ns}{\mathbb{Z}_{>0}}
\newcommand{\N}{\mathbb{Z}_{\geq0}}
\newcommand{\C}{\mathbb{C}}
\newcommand{\R}{\mathbb{R}}
\renewcommand{\H}{\mathbb{H}}
\newcommand{\tr}{\operatorname{tr}}
\renewcommand{\i}{\mathrm{i}}
\newcommand{\e}{\mathrm{e}}
\newcommand{\End}{\operatorname{End}}
\newcommand{\Aut}{\operatorname{Aut}}
\newcommand{\rk}{\operatorname{rk}}
\newcommand{\im}{\operatorname{im}}
\newcommand{\voa}{vertex operator algebra}
\newcommand{\vosa}{vertex operator subalgebra}
\newcommand{\aia}{abelian intertwining algebra}
\newcommand{\fpvosa}{fixed-point vertex operator subalgebra}
\newcommand{\BKMa}{Borcherds-Kac-Moody algebra}
\newcommand{\BKMA}{Borcherds-Kac-Moody Algebra}
\newcommand{\fqs}{finite quadratic space}
\newcommand{\vac}{\textbf{1}}
\newcommand{\ch}{\operatorname{ch}}
\newcommand{\id}{\operatorname{id}}
\newcommand{\eps}{\varepsilon}
\newcommand{\SLZ}{\operatorname{SL}_2(\mathbb{Z})}
\newcommand{\GL}{\operatorname{GL}}
\newcommand{\MpZ}{\operatorname{Mp}_2(\mathbb{Z})}
\newcommand{\ee}{\mathfrak{e}}
\newcommand{\g}{\mathfrak{g}}
\newcommand{\h}{\mathfrak{h}}
\newcommand{\strathol}{strongly rational, holomorphic}
\newcommand{\strat}{strongly rational}
\newcommand{\II}{I\!I}
\renewcommand{\O}{\operatorname{O}}
\newcommand{\oddity}{\operatorname{oddity}}
\renewcommand{\Im}{\operatorname{Im}}
\newlength{\myl}
\begin{document}

\title[Natural Construction of Ten Borcherds-Kac-Moody Algebras]{Natural Construction of Ten Borcherds-Kac-Moody Algebras Associated with Elements in $M_{23}$}
\author[Sven Möller]{Sven Möller}
\address{Rutgers University, Piscataway, NJ, United States of America}
\email{\href{mailto:math@moeller-sven.de}{math@moeller-sven.de}}

\begin{abstract}
\BKMa{}s generalise finite-dimensional, simple Lie algebras. Scheithauer showed that there are exactly ten \BKMa{}s whose denominator identities are completely reflective automorphic products of singular weight on lattices of square-free level. These belong to a larger class of Borcherds-Kac-Moody (super)algebras Borcherds obtained by twisting the denominator identity of the Fake Monster Lie algebra. Borcherds asked whether these Lie (super)algebras admit natural constructions. For the ten Lie algebras from the classification we give a positive answer to this question, i.e.\ we prove that they can be realised uniformly as the BRST cohomology of suitable vertex algebras.
\end{abstract}

\vspace*{-25pt}
\maketitle

\vspace*{-25pt}
\setcounter{tocdepth}{1}
\tableofcontents
\setcounter{tocdepth}{2}

\vspace*{-25pt}


\section{Introduction}
There is an intriguing relation between vertex algebras, \BKMa{}s and automorphic forms. Vertex (operator) algebras give a mathematically rigorous description of two-dimensional conformal field theories \cite{Bor86,FLM88}. \BKMa{}s (or generalised Kac-Moody algebras) are natural generalisations of finite-dimensional, simple Lie algebras defined by generators and relations \cite{Bor88}. Both concepts were famously used by Borcherds in his proof of the Monstrous Moonshine conjecture \cite{Bor92}.

Sometimes, the denominator identities of \BKMa{}s are automorphic forms on orthogonal groups. Classification results for such Lie algebras were obtained in \cite{GN02,Sch06,GN18}. It is conjectured that those \BKMa{}s whose denominator identities are automorphic products of singular weight \cite{Bor98} can all be realised in natural constructions, i.e.\ other than by generators and relations, for example as string quantisations of suitable conformal vertex algebras $M$ of central charge $26$ (see Problem~8 in \cite{Bor01}).

On the other hand, in \cite{Bor92} a large class of Borcherds-Kac-Moody (super)al\-ge\-bras $\g_{\phi_\nu}$ was obtained by twisting the denominator identity of the Fake Monster Lie algebra \cite{Bor90} by elements $\nu\in\O(\Lambda)\cong\mathrm{Co}_0$, the automorphism group of the Leech lattice~$\Lambda$, or, more precisely, by their standard lifts $\phi_\nu$ (see Section~\ref{sec:twisting}). Borcherds then asked if also these Lie algebras can be obtained in natural constructions (see \cite{Bor92}, Section~15).

In Section~\ref{sec:brst}, as our main result, we give bosonic string constructions (based on the BRST or semi-infinite cohomology \cite{Fei84,FGZ86}) of ten particularly nice special cases, namely those \BKMa{}s $\g_{\phi_\nu}$ associated with the elements $\nu$ of square-free order $m$ in the Mathieu group $M_{23}$, viewed as a subgroup of $\O(\Lambda)$, hence giving a partial positive answer to Borcherds' question.
\begin{thm*}[Main Result, Theorem~\ref{thm:main}]
Let $\nu$ be of square-free order in $M_{23}$. Then there is a conformal vertex algebra $M_{\phi_\nu}$ of central charge $26$ whose BRST cohomology $H^1_\mathrm{BRST}(M_{\phi_\nu})$ is isomorphic to the (complexification of the) \BKMa{} $\g_{\phi_\nu}$ obtained by twisting the denominator identity of the Fake Monster Lie algebra by $\phi_\nu$.
\end{thm*}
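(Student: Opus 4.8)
The plan is to realize the twisted denominator identity of $\g_{\phi_\nu}$ as the denominator identity of the BRST cohomology of a concretely built vertex algebra, by mimicking Borcherds' original construction of the Fake Monster Lie algebra (the case $\nu=1$) as the physical states of a chiral string on the Leech-lattice torus. The key structural fact is the no-ghost theorem of \cite{GW86,FGZ86}: if $M$ is a conformal vertex algebra of central charge $26$ factoring as $M = V_{\II_{1,1}} \otimes M^0$, where $V_{\II_{1,1}}$ is the lattice vertex algebra of the hyperbolic plane and $M^0$ has central charge $24$, then the degree-one BRST cohomology $H^1_{\mathrm{BRST}}(M)$ is, as a graded vector space, controlled by the lightcone states of $M^0$. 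So my first step is to produce the right central-charge-$26$ input $M_{\phi_\nu}$.

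First I would build $M_{\phi_\nu}$ as a tensor product of $V_{\II_{1,1}}$ with a central-charge-$24$ piece attached to the element $\nu$. The natural candidate is the cyclic orbifold of the Leech-lattice vertex algebra $V_\Lambda$ by the standard lift $\phi_\nu$ of $\nu$; since $\nu$ has square-free order $m$ in $M_{23}$, its cycle shape is of the fixed, well-controlled square-free type, and the standard lift has order $m$ with no anomaly, so the orbifold $V_\Lambda^{\orb(\phi_\nu)}$ is again a \strathol{} vertex algebra of central charge $24$. (When these do not individually have central charge $26$ one works instead with the abelian intertwining algebra structure on the direct sum of the $\phi_\nu$-twisted modules; twisting by $\phi_\nu$ is precisely what produces the $1/m$-fractional gradings matching the twisted denominator identity.) Then I set $M_{\phi_\nu} = V_{\II_{1,1}} \otimes V_\Lambda[\phi_\nu]$, where $V_\Lambda[\phi_\nu]$ denotes this abelian intertwining algebra assembled from the twisted sectors, so that the full object has central charge $26$ and carries a grading by the discriminant form of $\II_{1,1}$ together with the $\phi_\nu$-twist data.

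Second I would apply the no-ghost theorem in its graded form to read off $H^1_{\mathrm{BRST}}(M_{\phi_\nu})$ as a vector space graded by the lattice $\II_{1,1}$ (the root lattice of $\g_{\phi_\nu}$), with the multiplicity of the root space attached to a nonzero $\alpha$ given by the appropriate coefficient of the graded character of the twisted sectors of $V_\Lambda[\phi_\nu]$ at level $-\alpha^2/2+1$. The BRST differential endows $H^1_{\mathrm{BRST}}$ with the structure of a Lie algebra (via the degree-$0$ product on the cohomology), and the pairing coming from the invariant bilinear form makes it a Lie algebra with an invariant symmetric form, a Cartan subalgebra $\h = \II_{1,1}\otimes_\Z\C$, and a real root system inside $\II_{1,1}$. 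The decisive comparison is then to match the graded dimensions, i.e.\ the root multiplicities, against those prescribed by the twisted denominator identity defining $\g_{\phi_\nu}$; this is where the twisting by $\phi_\nu$ on the two sides must be seen to agree coefficient by coefficient, which reduces to identifying the character of the $\phi_\nu$-twisted $V_\Lambda$-modules with the twisted theta/eta product appearing in Borcherds' twisted identity.

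Having matched root multiplicities, I would finish by invoking a recognition theorem for \BKMa{}s (in the spirit of \cite{Bor88}): a Lie algebra with an invariant symmetric bilinear form, a self-centralizing Cartan subalgebra with respect to which it decomposes into finitely-many-per-degree root spaces, grading with positive-norm condition failing only on imaginary simple roots, and almost all simple roots imaginary, is the \BKMa{} determined by its root multiplicities and Cartan matrix. Since $H^1_{\mathrm{BRST}}(M_{\phi_\nu})$ satisfies these axioms and has exactly the root multiplicities dictated by the twisted denominator identity, it must be isomorphic to $\g_{\phi_\nu}\otimes_\R\C$. The main obstacle I expect is the second step: establishing the precise grading of the no-ghost theorem in the \emph{twisted} setting, i.e.\ controlling how the abelian intertwining algebra structure and its $1/m$-fractional conformal weights interact with the semi-infinite cohomology, so that the cohomology is concentrated in the expected degree and its character reproduces exactly the twisted theta-function coefficients; verifying the simple-root data and the completeness of the reflective automorphic product (using the square-free level classification of \cite{Sch06}) is then comparatively routine.
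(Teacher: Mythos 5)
Your overall strategy (build a central-charge-$26$ input, apply a graded vanishing/no-ghost theorem, match root multiplicities, invoke a Borcherds-Kac-Moody recognition theorem) parallels the paper's, but your construction of the input $M_{\phi_\nu}$ has a genuine gap, and it is exactly the gap that the paper's key idea is designed to close. The object $V_{\II_{1,1}}\otimes V_\Lambda[\phi_\nu]$, with $V_\Lambda[\phi_\nu]=\bigoplus_{i,j\in\Z_m}V_\Lambda^{\phi_\nu}(i,j)$, is \emph{not} a conformal vertex algebra: the twisted sectors have $L_0$-weights in $\frac{1}{m}\Z$, and the direct sum carries only an \aia{} structure whose associated quadratic form $(\Z_m\times\Z_m,Q_m)$ is non-trivial. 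Tensoring with $V_{\II_{1,1}}$ does not cure this, because $\II_{1,1}$ is unimodular and contributes a trivial discriminant form, so the fractional grading and non-trivial braiding persist. Since the theorem demands a conformal vertex algebra of central charge $26$ --- and since the Lian--Zuckerman Lie bracket on $H^1_\mathrm{BRST}$ (Proposition~\ref{prop:valie}) and the whole BRST formalism require honest locality and integral $L_0$-grading --- your proposed input does not qualify; you flag this yourself as ``the main obstacle I expect'' but do not resolve it. The paper's resolution (Proposition~\ref{prop:mdef}) is to replace $\II_{1,1}$ by the \emph{rescaled} hyperbolic plane $K=\II_{1,1}(m)$, whose discriminant form $K'/K$ is anti-isometric to the fusion algebra of $V_\Lambda^{\phi_\nu}$, and to take not the full tensor product but the diagonal simple-current extension $\bigoplus_{\alpha+K\in K'/K}V_\Lambda^{\phi_\nu}(\varphi(\alpha+K))\otimes V_{\alpha+K}$: the two quadratic forms cancel, the $L_0$-grading becomes integral, and one obtains a genuine conformal vertex algebra of central charge $26$. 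This cancellation trick is the missing idea in your proposal.

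A second, related error: you assert that the cohomology is graded by $\II_{1,1}$, which you call the root lattice of $\g_{\phi_\nu}$, with Cartan subalgebra $\II_{1,1}\otimes_\Z\C$. In fact the root lattice of $\g_{\phi_\nu}$ is $\Delta=\Lambda^\nu\oplus\II_{1,1}$ (Theorem~\ref{thm:twistedbkma}), of rank $k=\rk(\Lambda^\nu)+2$, and the Cartan subalgebra is $k$-dimensional; an $\II_{1,1}$-grading is too coarse to identify the real and imaginary roots, the Weyl group (the full reflection group of $\Delta$) and the simple-root data on which the final denominator-identity comparison rests. In the paper this finer grading comes from a second decomposition of $M_{\phi_\nu}$ (Proposition~\ref{prop:Mdecomp}) with respect to the dual pair $V_{\Lambda_\nu}^{\hat\nu}\otimes V_{\Lambda^\nu}$ inside $V_\Lambda^{\phi_\nu}$, exhibiting $M_{\phi_\nu}$ as graded by $L'$ with $L=\Lambda^\nu\oplus\II_{1,1}(m)$, so that the vanishing theorem applies sector by sector with the rank-$k$ Lorentzian Heisenberg module $\pi_\alpha^{(k-1,1)}$, and the root multiplicities are coefficients of the characters of the $V_{\Lambda_\nu}^{\hat\nu}$-modules, identified with the Borcherds-lift input $F$ in Proposition~\ref{prop:F1eqF2}. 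Finally, the hardest verification in the recognition step --- that root spaces attached to positive multiples of the same norm-zero root commute, which is needed for Proposition~\ref{prop:bor95thm2} --- is not addressed by your appeal to a generic recognition theorem; the paper proves it by a non-trivial detour through holomorphic \voa{}s of central charge $24$ and the characterisation of $V_\Lambda$ as the unique such \voa{} with $\dim(V_1)=24$.
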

This also adds evidence to the aforementioned conjecture since it was proved in \cite{Sch06} that these ten \BKMa{}s are exactly those whose denominator identities are completely reflective automorphic products of singular weight on lattices of square-free level (see Section~\ref{sec:class}).

In fact, we shall see that their denominator identities are Borcherds lifts of certain vector-valued characters associated with the vertex algebras $M_{\phi_\nu}$ in the input of the BRST construction (see Proposition~\ref{prop:F1eqF2} and Remark~\ref{rem:vvmfF}).

The construction of $M_{\phi_\nu}$ as a certain simple-current extension involving the \fpvosa{} $V_\Lambda^{\phi_\nu}$ of the Leech lattice \voa{} $V_\Lambda$ (see Proposition~\ref{prop:mdef}) is made possible by recent advancements in orbifold theory, most notably in \cite{Lam19} and \cite{EMS20a,Moe16} (see Section~\ref{sec:orb}).

Some of these ten \BKMa{}s have already been constructed as string quantisations. Clearly, for $\nu=\id$ we obtain the Fake Monster Lie algebra \cite{Bor90} itself. For the automorphism of order $2$ in $M_{23}$ one obtains the Fake Baby Monster Lie algebra \cite{HS03}. With a slightly less effective method in \cite{CKS07} the authors constructed the four \BKMa{}s associated with the automorphisms in $M_{23}$ of order $2$, $3$, $5$ and $7$ depending on some conjectures.

The main notions and their connections are depicted in the following diagram and in the diagram at the end of Section~\ref{sec:brst}:
\begin{equation*}
\begin{tikzcd}
\begin{tabular}{c}Vertex\\algebras\end{tabular}\arrow{rr}{\text{quantise}}\arrow[bend right]{rrrr}{\text{lift of char.}}&&\begin{tabular}{c}Borcherds-Kac-\\Moody algebras\end{tabular}\arrow[<->]{rr}{\text{den. id.}}&&\begin{tabular}{c}Automorphic\\products\end{tabular}
\end{tikzcd}
\end{equation*}

The paper is organised as follows:

In Section~\ref{sec:bkma} we state a sufficient criterion for a Lie algebra to be a \BKMa{}, describe Borcherds' twisting procedure for the Fake Monster Lie algebra and state Scheithauer's classification result.

In Section~\ref{sec:va} we describe orbifold results for \voa{}s associated with coinvariant sublattices of unimodular lattices and then construct the ten conformal vertex algebras of central charge $26$ that will serve as input for the BRST quantisation construction.

In Section~\ref{sec:brst} we describe the BRST quantisation, study the ten \BKMa{}s obtained in this procedure and state the main result of the paper (Theorem~\ref{thm:main}).

\subsubsection*{Conventions}
All Lie algebras and vertex algebras will be over the base field $\C$ unless otherwise noted, in which case they will be over $\R$. Note that $\tau$ will always be assumed to be in the upper half-plane $\H=\{z\in\C\,|\,\Im(z)>0\}$ and $q=\e^{(2\pi\i)\tau}$.

\subsection*{Acknowledgements}
The author would like to thank Jethro van Ekeren, Gerald Höhn, Ching Hung Lam, Jim Lepowsky, Maximilian Rössler and Nils Scheithauer for helpful discussions. The author thanks the two anonymous referees for their detailed comments. The author was partially supported by a scholarship from the \emph{Studienstiftung des deutschen Volkes} and by the \emph{Deutsche Forschungsgemeinschaft} through the project ``Infinite-dimensional Lie Algebras in String Theory''.


\section{\BKMA{}s}\label{sec:bkma}
In this section we discuss \BKMa{}s and in particular the ten \BKMa{}s for which we develop string constructions in this text.

\emph{\BKMa{}s} (or \emph{generalised Kac-Moody algebras}) are a class of infinite-dimensional Lie algebras introduced in \cite{Bor88} (see also \cite{Bor92} and \cite{Jur96}) naturally generalising Kac-Moody algebras, which in turn generalise finite-dimensional, simple Lie algebras. Like Kac-Moody algebras, \BKMa{}s are defined by generators and relations, which are encoded in a generalised Cartan matrix. However, the restrictions on the generalised Cartan matrix are weaker, and, in particular, simple roots may be imaginary.

\BKMa{}s admit representation-theoretic data like a character formula for highest-weight modules and a denominator identity
\begin{equation*}
\ee^\rho\prod_{\alpha\in\Phi^+}(1-\ee^\alpha)^{\operatorname{mult}(\alpha)}=\sum_{w\in W}\det(w)w\left(\ee^\rho\sum_{\alpha\in\Phi}\eps(\alpha)\ee^\alpha\right),
\end{equation*}
an identity of formal exponentials, where the second sum is over all roots $\alpha$ in the root system $\Phi$ and the product ranges over the set $\Phi^+$ of positive roots, $W$ denotes the Weyl group, $\rho$ the Weyl vector, $\operatorname{mult}(\alpha)$ the multiplicity of the root $\alpha$ and $\eps(\alpha)$ is $(-1)^n$ if $\alpha$ is the sum of $n$ pairwise orthogonal imaginary simple roots and 0 otherwise.


\subsection{Borcherds-Kac-Moody Property}
In the following we state a sufficient criterion that will allow us to identify complex Lie algebras as \BKMa{}s. It is a slight modification of Theorem~1 in \cite{Bor95a} where the case of real Lie algebras was treated:
\begin{prop}[\cite{Car12b}, Lemma~3.4.2]\label{prop:bkma}
Let $\g$ be a complex Lie algebra satisfying the following conditions:
\begin{enumerate}
\item\label{enum:bkma1} $\g$ admits a non-degenerate, symmetric, invariant bilinear form $(\cdot,\cdot)$.
\item\label{enum:bkma2} $\g$ has a self-centralising subalgebra $\mathcal{H}$, called a \emph{Cartan subalgebra}, such that $\g$ is the direct sum of eigenspaces under the adjoint action of $\mathcal{H}$ and the non-zero eigenvalues, called \emph{roots}, have finite multiplicity.
\item\label{enum:bkma3} There is a real subspace $\mathcal{H}_\R$ of $\mathcal{H}$ such that $\mathcal{H}=\mathcal{H}_\R\otimes_\R\C$, the bilinear form is real-valued on $\mathcal{H}_\R$ and the roots lie in the dual space $(\mathcal{H}_\R)^*$.
\item\label{enum:bkma4} The norms of the roots under the inner product $(\cdot,\cdot)$ are bounded from above.
\item\label{enum:bkma5} There exists a vector $h_\mathrm{reg.}\in\mathcal{H}_\R$, called a \emph{regular element}, such that:
\begin{enumerate}
\item $\mathcal{H}=C_\g(h_\mathrm{reg.})$, the centraliser of $h_\mathrm{reg.}$ in $\g$,
\item for any $M\in \R$, there exist only finitely many roots $\alpha$ such that $|\alpha(h_\mathrm{reg.})|<M$.
\end{enumerate}
(If $\alpha(h_\mathrm{reg.})<0$, then we say that the root $\alpha$ is \emph{negative} and if $\alpha(h_\mathrm{reg.})>0$, we say that $\alpha$ is \emph{positive}.)
\item\label{enum:bkma6} Any two roots of non-positive norm that are both positive or both negative have inner product at most zero, and if the inner product is zero, then their root spaces commute.
\end{enumerate}
Then $\g$ is a \BKMa{}.
\end{prop}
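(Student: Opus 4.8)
The plan is to follow Borcherds' proof of Theorem~1 in \cite{Bor95a}, which establishes the same statement for \emph{real} Lie algebras under the six real-linear counterparts of the hypotheses above, and to check that each step survives the passage to the complex setting (one could equivalently construct a real form of $\g$ and then invoke that theorem as a black box). The essential observation that makes this transfer possible is that condition~\ref{enum:bkma3} forces all the data on which Borcherds' analysis depends to be real: since the roots lie in $(\mathcal{H}_\R)^*$, the values $\alpha(h_\mathrm{reg.})$, the norms $(\alpha,\alpha)$ and the pairwise inner products $(\alpha,\beta)$ of roots are all real numbers. Hence the sign and positivity conditions \ref{enum:bkma4} and \ref{enum:bkma6}, which are the heart of the characterisation, carry the same meaning as in the real case, while the remaining representation-theoretic steps are insensitive to the base field.

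First I would use the regular element $h_\mathrm{reg.}$ of condition~\ref{enum:bkma5} to partition the root system $\Phi$ into positive and negative roots and to form the triangular-type decomposition $\g=\mathcal{N}^-\oplus\mathcal{H}\oplus\mathcal{N}^+$, where $\mathcal{N}^+$ (resp.\ $\mathcal{N}^-$) is the sum of the root spaces of the positive (resp.\ negative) roots; the second part of condition~\ref{enum:bkma5} guarantees that only finitely many roots occur in any bounded strip, so that this decomposition behaves like a grading with controlled graded pieces. The candidate simple roots are the positive roots that cannot be written as a sum of two positive roots. Using conditions~\ref{enum:bkma4} and \ref{enum:bkma5} one then shows that every positive root space lies in the subalgebra generated by the root spaces of the simple roots, so that these, together with $\mathcal{H}$, generate all of $\g$.

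Next I would verify the defining relations of a \BKMa{} root by root. A simple root $\alpha$ with $(\alpha,\alpha)>0$ yields a copy of $\sl_2(\C)$ whose finite-dimensional representation theory, valid over $\C$, forces the numbers $2(\alpha,\beta)/(\alpha,\alpha)$ for the other simple roots $\beta$ to be non-positive integers; for a simple root of non-positive norm, condition~\ref{enum:bkma6} supplies directly the inequality $(\alpha,\beta)\le0$ together with the commuting relation $[\g_\alpha,\g_\beta]=0$ whenever $(\alpha,\beta)=0$. The matrix $((\alpha_i,\alpha_j))_{i,j}$ is therefore a symmetrised generalised Cartan matrix of \BKMa{} type, and the non-degeneracy of the form from condition~\ref{enum:bkma1} is used to rule out additional relations, so that $\g$ is presented exactly by the expected generators and relations.

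The hard part will be the single step of Borcherds' proof that genuinely uses the ordering of $\R$, namely the positive-definiteness of a contravariant form on a compact real form. Over $\C$ the correct replacement is an antilinear involution $\omega$ together with the Hermitian form $\langle a,b\rangle=-(a,\omega(b))$, and the main technical point is to build $\omega$ compatibly with the root-space decomposition, using the reality of $(\cdot,\cdot)$ on $\mathcal{H}_\R$ from condition~\ref{enum:bkma3}, and to confirm that condition~\ref{enum:bkma6} still yields the required signature of this form. Once this is arranged, the classification of simple roots into real and imaginary is unambiguous, the remainder of Borcherds' generation and relation analysis carries over verbatim, and $\g$ is a \BKMa{}.
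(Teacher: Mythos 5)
Your first two paragraphs take essentially the same route as the paper, which offers no argument of its own but quotes \cite{Car12b}, Lemma~3.4.2, i.e.\ exactly the routine adaptation you describe: by condition~\eqref{enum:bkma3} the values $\alpha(h_\mathrm{reg.})$, the norms $(\alpha,\alpha)$ and the inner products of roots are all real, so Borcherds' argument for Theorem~1 of \cite{Bor95a} goes through with $\mathcal{H}$ replaced by $\mathcal{H}_\R$ (this is literally how the paper handles the companion statement, Proposition~\ref{prop:bor95thm2}). The construction of simple roots from the regular element, the $\sl_2(\C)$ integrality argument for positive-norm simple roots, condition~\eqref{enum:bkma6} for the non-positive-norm ones, and the passage from the universal \BKMa{} onto $\g$ with kernel meeting $\mathcal{H}$ trivially are indeed insensitive to the base field.

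The genuine problem is your third paragraph. Borcherds' proof of Theorem~1 in \cite{Bor95a} contains no step involving a compact real form or the positive-definiteness of a contravariant form; you are conflating this theorem with the older characterisation of \BKMa{}s in \cite{Bor88} (the one used in \cite{Bor92}), where an involution $\omega$ and an ``almost positive definite'' contravariant form appear \emph{as hypotheses}. The 1995 theorem was designed precisely to eliminate that positivity assumption, and its proof is completed by the generators-and-relations analysis you already sketched — there is no residual step ``genuinely using the ordering of $\R$'' beyond the reality of the root data secured by condition~\eqref{enum:bkma3}. More importantly, the step you single out as the crux could not be carried out: conditions \eqref{enum:bkma1}--\eqref{enum:bkma6} contain no positivity from which the required signature of $\langle a,b\rangle=-(a,\omega(b))$ could be derived; such positivity becomes available only a posteriori, once $\g$ is known to be a \BKMa{}, so a proof organised around establishing it as an intermediate result is circular. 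This is not a pedantic point for the present paper: the Hermitian forms on the BRST cohomology spaces to which Proposition~\ref{prop:bkma} is applied are only shown to be non-degenerate, not positive-definite (see Remark~\ref{rem:noghost}), so a criterion requiring contravariant positivity would be unusable here. Deleting your third paragraph and noting instead that every step of \cite{Bor95a} uses only the reality of the root data yields a correct proof.
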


We can simplify the above criterion if the Lie algebra is Lorentzian, i.e.\ if the bilinear form restricted to $\mathcal{H}_\R$ has Lorentzian signature:
\begin{prop}[cf.\ \cite{Bor95a}, Theorem~2]\label{prop:bor95thm2}
Let $\g$ be a complex Lie algebra satisfying conditions \eqref{enum:bkma1} to \eqref{enum:bkma4} in the above proposition. Assume that the bilinear form restricted to $\mathcal{H}_\R$ is Lorentzian, i.e.\ has signature $(\dim(\mathcal{H})-1,1)$. Then \eqref{enum:bkma5} is fulfilled. Moreover, \eqref{enum:bkma6} is true if additionally the following holds: if two roots are positive multiples of the same norm-zero vector, then their root spaces commute.
\end{prop}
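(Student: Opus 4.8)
The plan is to deduce the Lorentzian simplifications from Proposition~\ref{prop:bkma} by constructing, in the Lorentzian setting, the data that the general criterion merely postulates. Conditions \eqref{enum:bkma1} to \eqref{enum:bkma4} are assumed outright, so the task splits into two independent parts: producing a regular element as in \eqref{enum:bkma5}, and verifying \eqref{enum:bkma6} under the stated extra hypothesis on norm-zero vectors.

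First I would address \eqref{enum:bkma5}. Because the form on $\mathcal{H}_\R$ has signature $(\dim(\mathcal{H})-1,1)$, the set of negative-norm vectors splits into two open cones; I would fix one of them and pick $h_\mathrm{reg.}$ to lie inside it, i.e.\ a vector of strictly negative norm. Part (a), that $\mathcal{H}=C_\g(h_\mathrm{reg.})$, amounts to showing $\alpha(h_\mathrm{reg.})\neq0$ for every root $\alpha$, which I would rephrase via the form as $(\alpha,h_\mathrm{reg.})\neq0$ after identifying $\mathcal{H}_\R$ with its dual through the non-degenerate form; this holds because a root orthogonal to a time-like vector would itself have non-negative norm and, more to the point, the orthogonal complement of a negative-norm vector is positive-definite, so a root lying in it and satisfying $(\alpha,h_\mathrm{reg.})=0$ would be forced to contribute a centraliser element beyond $\mathcal{H}$ only if $\alpha=0$. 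For part (b), the key is that the level set $\{x:(x,h_\mathrm{reg.})=c\}$ meets the region of norm bounded above by the fixed constant from \eqref{enum:bkma4} in a bounded set, since in Lorentzian signature the inequalities $(\alpha,\alpha)\le B$ and $|(\alpha,h_\mathrm{reg.})|<M$ together confine $\alpha$ to a compact region; as roots form a discrete set (finite multiplicities, lattice-like spacing), only finitely many lie there.

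For \eqref{enum:bkma6} I would take two roots $\alpha,\beta$ of non-positive norm, both positive (the both-negative case being symmetric), and show $(\alpha,\beta)\le0$, with commuting root spaces when equality holds. Working inside the closed forward cone, the crucial geometric fact is the \emph{reverse Cauchy-Schwarz inequality} in Lorentzian signature: two vectors in the same component of the (closed) negative-norm cone satisfy $(\alpha,\beta)\le -\sqrt{(\alpha,\alpha)(\beta,\beta)}\le 0$. So the inequality $(\alpha,\beta)\le0$ is automatic once both roots are genuinely time-like. The delicate cases are the boundary ones where $\alpha$ or $\beta$ has norm exactly zero, which is precisely why the extra hypothesis is needed: if $(\alpha,\beta)=0$ with both of non-positive norm, the equality case of the reverse inequality forces $\alpha$ and $\beta$ to be parallel norm-zero vectors pointing the same way, and then the added assumption—that root spaces of positive multiples of a common norm-zero vector commute—supplies exactly the commuting conclusion demanded by \eqref{enum:bkma6}.

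The main obstacle I anticipate is the careful bookkeeping of the norm-zero boundary in \eqref{enum:bkma6}: the reverse Cauchy-Schwarz inequality handles the generic interior case cleanly, but its degenerate equality case must be analysed to see that $(\alpha,\beta)=0$ with non-positive norms can only occur for collinear light-like roots. Establishing that collinearity rigorously—ruling out, say, a time-like root orthogonal to a light-like one in the same cone—is where the signature argument has to be made precise, and it is the hinge on which the reduction to the stated hypothesis turns. The regularity verification in \eqref{enum:bkma5}, by contrast, is a fairly standard compactness-of-level-sets argument once the bound from \eqref{enum:bkma4} is in hand.
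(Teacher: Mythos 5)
Your treatment of condition \eqref{enum:bkma6} is essentially correct: with a time-like regular element, all positive roots of non-positive norm lie in one component of the closed light cone, the reverse Cauchy--Schwarz inequality gives $(\alpha,\beta)\le 0$ there, and in the equality case a time-like root orthogonal to a root in the same closed cone is impossible (the orthogonal complement of a time-like vector is positive-definite), so both roots must be collinear norm-zero vectors, which is exactly what the extra hypothesis handles. Note that the paper itself offers no details to compare against --- its proof is a one-line citation of Theorem~2 of \cite{Bor95a} plus the remark that for complex $\g$ one replaces $\mathcal{H}$ by $\mathcal{H}_\R$ --- so your attempt at a self-contained argument is welcome; but it has two genuine gaps, both in \eqref{enum:bkma5}. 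First, part (a): you claim an \emph{arbitrary} negative-norm vector $h_\mathrm{reg.}$ satisfies $\mathcal{H}=C_\g(h_\mathrm{reg.})$. This is false. A root orthogonal to $h_\mathrm{reg.}$ merely has to have positive norm, and positive-norm roots are fully compatible with \eqref{enum:bkma1}--\eqref{enum:bkma4}: they are the real roots, and they abound in the intended examples (already the Fake Monster Lie algebra, the case $\nu=\id$ of this paper, has norm-$2$ roots, and there are time-like vectors orthogonal to any one of them). Your sentence that such a root ``would be forced to contribute a centraliser element beyond $\mathcal{H}$ only if $\alpha=0$'' asserts the conclusion rather than proving it; precisely when such a root exists, $C_\g(h_\mathrm{reg.})$ \emph{does} exceed $\mathcal{H}$. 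The regular element has to be chosen generically, off the union of all root hyperplanes, and you must argue that such a choice exists inside the negative-norm cone (e.g.\ because the roots are countable or discrete, so the hyperplanes cannot cover an open cone).

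Second, part (b): you invoke discreteness of the set of roots (``finite multiplicities, lattice-like spacing''). No such hypothesis is available, and it does not follow from \eqref{enum:bkma1}--\eqref{enum:bkma4}: finite multiplicity of each individual root does not prevent the roots from accumulating. Concretely, one can build Heisenberg-type Lie algebras satisfying \eqref{enum:bkma1}--\eqref{enum:bkma4} with Lorentzian $\mathcal{H}_\R$, and even satisfying the additional norm-zero commutation hypothesis, whose roots are a dense set of positive rational multiples of a single norm-zero vector $v$ (each root space one-dimensional, with $[\g_{tv},\g_{sv}]=0$ and $[\g_{tv},\g_{-tv}]\subseteq\mathcal{H}$); for such an algebra \emph{no} vector satisfies the finiteness demanded in \eqref{enum:bkma5}, since infinitely many roots pair arbitrarily close to zero with any given $h$. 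So discreteness is not bookkeeping, it is the crux: your (correct) observation that $\{x\,:\,(x,x)\le B,\ |(x,h_\mathrm{reg.})|\le M\}$ is compact in Lorentzian signature yields finiteness only once the roots are known to form a locally finite subset of $(\mathcal{H}_\R)^*$. In the situation where this proposition is actually applied in the paper the roots lie in the lattice $L'$ (equivalently $\Delta$), so the needed discreteness is automatic there; but as a free-standing deduction from the stated hypotheses, your proof of \eqref{enum:bkma5} does not go through without an explicit discreteness input or an argument producing it.
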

\begin{proof}
This is essentially Theorem~2 in \cite{Bor95a} adapted to the case of complex Lie algebras. In the case of complex $\g$ we have to replace $\mathcal{H}$ by $\mathcal{H}_\R$ and can apply the same arguments.
\end{proof}


\subsection{Twisting the Fake Monster Lie Algebra}\label{sec:twisting}
In \cite{Bor92}, in addition to his famous proof of the Monstrous Moonshine conjecture, Borcherds also constructed a class of Borcherds-Kac-Moody (super)algebras by twisting the denominator identity of the Fake Monster Lie algebra, both as Lie (super)algebras over $\R$. We shall describe this construction and a nice special case in the following.

The \emph{Fake Monster Lie algebra} $\g$ \cite{Bor90}, originally called Monster Lie algebra\footnote{The term \emph{Monster Lie algebra} was later recoined to denote the \BKMa{} obtained as quantisation of $V^\natural\otimes V_{\II_{1,1}}$ (with the Moonshine module $V^\natural$ \cite{FLM88}), which was used by Borcherds in his proof of the Monstrous Moonshine conjecture \cite{Bor92}.} by Borcherds, is the $\II_{25,1}$-graded (real) \BKMa{} obtained as quantisation (see Section~\ref{sec:brst}) of the conformal vertex algebra $M=V_{\II_{25,1}}$ of central charge $26$ associated with the unique even, unimodular lattice $\II_{25,1}$ of Lorentzian signature $(25,1)$.

Let $\Lambda$ denote the Leech lattice, i.e.\ the unique positive-definite, even, unimodular lattice of dimension $24$ that has no roots. The root lattice of the Fake Monster Lie algebra $\g$ is $\II_{25,1}\cong\Lambda\oplus\II_{1,1}$ with elements $\alpha=(\lambda,m,n)$ for $\lambda\in\Lambda$, $m,n\in\Z$ and norm $\langle\alpha,\alpha\rangle/2=\langle\lambda,\lambda\rangle/2-mn$. A non-zero vector $\alpha\in\II_{25,1}$ is a root if and only if $\langle\alpha,\alpha\rangle/2\leq1$, in which case it has multiplicity
\begin{equation*}
\dim(\g(\alpha))=\left[\frac{1}{\eta^{24}}\right]\left(-\frac{\langle\alpha,\alpha\rangle}{2}\right)
\end{equation*}
where $\eta$ is the Dedekind eta function, a modular form of weight $1/2$. The real simple roots of $\g$ are the vectors $\alpha\in\II_{25,1}$ of norm $\langle\alpha,\alpha\rangle/2=1$ satisfying $\langle\alpha,\rho\rangle=-\langle\alpha,\alpha\rangle/2=-1$ where $\rho=(0,0,1)$ is (a choice of) the Weyl vector. They generate the Weyl group $W$ of $\g$, which is the full reflection group of $\II_{25,1}$. The imaginary simple roots are the positive multiples $n\rho$, $n\in\Ns$, of the Weyl vector, each with multiplicity $24$. The denominator identity of $\g$ is
\begin{equation*}
\ee^\rho\prod_{\alpha\in\Phi^+}(1-\ee^\alpha)^{[1/\eta^{24}](-\langle\alpha,\alpha\rangle/2)}=\sum_{w\in W}\det(w)w(\eta^{24}(\ee^\rho)).
\end{equation*}
Note that $\eta^{24}(\ee^\rho)=\ee^\rho\prod_{n=1}^\infty(1-\ee^{n\rho})^{24}$ and recall that $\Phi^+$ denotes the set of positive roots. Upon replacing the formal exponentials by complex ones, the above is the expansion of a certain automorphic product $\Psi$ of weight $12$ for $\O(\II_{26,2})^+$.\footnote{Here, $\O(\II_{26,2})^+$ denotes the subgroup of $\O(\II_{26,2})$ of elements preserving the (choice of continuously varying) orientation on the $2$-dimensional positive-definite subspaces of $\II_{26,2}\otimes_\Z\R$. See, for example, Section~13 in \cite{Bor98}.}

We describe certain automorphisms of the Fake Monster Lie algebra $\g$. The automorphism group of the Leech lattice \voa{} $V_\Lambda$ acts on $M=V_{\II_{25,1}}\cong V_\Lambda\otimes V_{\II_{1,1}}$ by trivially extending the automorphisms to the tensor product. This implies that $\Aut(V_\Lambda)$ acts as Lie algebra automorphisms on $\g$ (see comment after Proposition~\ref{prop:valie}). Explicitly, by the vanishing theorem (see Propositions \ref{prop:vanish2} and \ref{prop:vanish2_0}), viewing $M$ and $\g$ only as $\II_{1,1}$-graded for the moment, the graded component $\g(\beta)$ is isomorphic as an $\Aut(V_\Lambda)$-module to the $L_0$-weight space $(V_\Lambda)_{1-\langle\beta,\beta\rangle/2}$ for non-zero $\beta\in\II_{1,1}$ and to $(V_\Lambda)_1\oplus\R^{1,1}$ for $\beta=0$.

Given an automorphism of the \BKMa{} $\g$, Borcherds defined a \emph{twisted denominator identity} \cite{Bor92}. Sometimes, this will be the (untwisted) denominator identity of some other Borcherds-Kac-Moody (super)algebra. We describe some special cases. For an automorphism $\nu\in\O(\Lambda)$ of order $m$, let $\phi_\nu\in\O(\hat{\Lambda})\leq\Aut(V_\Lambda)$ be the (up to conjugacy unique) standard lift of $\nu$ (see Section~\ref{sec:lattice}). For simplicity, we also assume that $\phi_\nu^k$ is a standard lift of $\nu^k$ for all $k\in\N$, which is for example the case if $\nu$ has odd order. In particular, $\phi_\nu$ has the same order as $\nu$. Borcherds then computes the corresponding twisted denominator identity and shows that it is the denominator identity of a real Borcherds-Kac-Moody superalgebra, which we shall call $\g_{\phi_\nu}$ in the following. Depending on $\nu$, this Lie superalgebra $\g_{\phi_\nu}$ will sometimes be a Lie algebra.

For a lattice automorphism $\nu$ of order $m$ with cycle shape $\prod_{t\mid m}t^{b_t}$, $b_t\in\Z$, we define the associated eta product as $\eta_\nu(\tau):=\prod_{t\mid m}\eta(t\tau)^{b_t}$. The \emph{level} of such an automorphism is defined as the level of the subgroup of $\SLZ$ fixing the eta product $\eta_\nu$ under modular transformations, and this is the smallest positive multiple $N$ of $m=|\nu|$ such that $24$ divides $N\sum_{t\mid m}b_t/t$.

Scheithauer showed that if $\nu$ has square-free level, then the $\phi_\nu$-twisted denominator identity of $\g$, i.e.\ the denominator identity of $\g_{\phi_\nu}$, is an automorphic form of singular weight $-w:=\rk(\Lambda^\nu)/2=:k/2-1$ in the image of the Borcherds lift, i.e.\ an automorphic product (see \cite{Sch06}, Theorem~10.1, \cite{Sch04b,Sch08}). Indeed, starting from the modular form $1/\eta_\nu$ he constructed a vector-valued modular form $F$ of weight $w=1-k/2$ (see Section~\ref{sec:chars}), which he then lifted, using the Borcherds lift \cite{Bor98}, to an automorphic product $\Psi_{\phi_\nu}$ whose expansion at a certain cusp gives the denominator identity of $\g_{\phi_\nu}$.

Finally, we describe the nice special case relevant for this text, which is obtained for ten particular conjugacy classes of automorphisms of the Leech lattice $\Lambda$. Let $m\in\Ns$ be square-free such that $\sigma_1(m)\mid24$ with the sum-of-divisors function $\sigma_1$. Explicitly, let $m=1,2,3,5,6,7,11,14,15,23$. For each such $m$ let $\nu$ be the up to algebraic conjugacy (i.e.\ conjugacy of cyclic subgroups \cite{CCNPW85}) unique\footnote{Except for $m=23$ this is also the unique conjugacy class. When $m=23$, $\nu$ and $\nu^{-1}$ represent two distinct conjugacy classes.} automorphism with cycle shape $\prod_{t\mid m}t^{b_t}=\prod_{t\mid m}t^{24/\sigma_1(m)}$. These automorphisms have order and level $m$. We remark that the fixed-point lattices $\Lambda^\nu$ are the unique even lattices in their respective genera without roots. The rank of $\Lambda^\nu$ is given by $\rk(\Lambda^\nu)=k-2=24\sigma_0(m)/\sigma_1(m)$. The ten automorphisms correspond exactly to the elements of square-free order in the Mathieu group $M_{23}$, which acts naturally on the Leech lattice $\Lambda$, and they are listed in Table~\ref{table:ten} below.

\begin{thm}[\cite{Sch04b}, Theorem~10.1]\label{thm:twistedbkma}
Let $\nu$ be of square-free order in $M_{23}$. Then the $\phi_\nu$-twisted denominator identity of $\g$ is
\begin{equation*}
\ee^\rho\prod_{d\mid m}\prod_{\alpha\in\Phi^+\cap d\Delta'}(1-\ee^\alpha)^{[1/\eta_\nu](-\langle\alpha,\alpha\rangle/2d)}=\sum_{w\in W}\det(w)w(\eta_\nu(\ee^\rho))
\end{equation*}
where $\Delta=\Lambda^\nu\oplus\II_{1,1}$, $\Delta'$ is the dual lattice of $\Delta$, $\rho=(0,0,1)$ and $W$ is the full reflection group of $\Delta$.

This is the denominator identity of the $\Delta$-graded real \BKMa{} $\g_{\phi_\nu}$ whose real simple roots are the simple roots of the Weyl group $W$ and whose imaginary simple roots are the positive multiples $n\rho$, $n\in\Ns$, of the Weyl vector $\rho$ with multiplicity $24\sigma_0((m,n))/\sigma_1(m)$.

This denominator identity is the expansion at any cusp of the automorphic product $\Psi_{\phi_\nu}$ on the lattice $P=L\oplus\II_{1,1}=\Lambda^\nu\oplus\II_{1,1}(m)\oplus\II_{1,1}$ of singular weight $-w=12\sigma_0(m)/\sigma_1(m)\in\Z$ (where $L=\Lambda^\nu\oplus\II_{1,1}(m)$). The lattice $P$ is even, of signature $(k,2)$ and has level $m$.
\end{thm}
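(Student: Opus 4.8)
The plan is to follow Borcherds' twisting procedure from \cite{Bor92} explicitly and then identify the resulting twisted denominator identity with an automorphic product via Scheithauer's singular-weight machinery. First I would recall that the starting point is the (untwisted) denominator identity of the Fake Monster Lie algebra $\g$, whose root multiplicities are given by the Fourier coefficients of $1/\eta^{24}$, and that an automorphism $\phi_\nu\in\Aut(V_\Lambda)\le\Aut(\g)$ of order $m$ acts on each root space $\g(\alpha)$. The key structural input is the identification, recorded in the excerpt, of $\g(\beta)$ (as an $\Aut(V_\Lambda)$-module) with the Leech-lattice weight space $(V_\Lambda)_{1-\langle\beta,\beta\rangle/2}$ for nonzero $\beta\in\II_{1,1}$; this lets me compute the trace of $\phi_\nu$ (and its powers) on each root space in terms of the graded traces of $\phi_\nu$ on $V_\Lambda$, which are precisely the eta products $\eta_\nu$ (the McKay--Thompson series of the lattice automorphism).

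Next I would carry out the twisting itself. Borcherds' twisted denominator formula expresses the product over positive roots, weighted by the multiplicities of the $\phi_\nu$-eigenspace decomposition, as an alternating sum over the Weyl group applied to $\sum_{k}\eta_{\nu^k}$-type data. Concretely, one substitutes the trace $\tr(\phi_\nu^k\mid\g(\alpha))$ into the logarithm of the denominator product and sorts the contributions according to $d=\gcd$-type divisibility conditions that govern which roots survive in each $\phi_\nu^k$-graded piece. Because $\phi_\nu^k$ is again a standard lift of $\nu^k$ (the odd-order / stated assumption), the traces organise into the divisor sums $24\sigma_0((m,n))/\sigma_1(m)$ for the imaginary simple root multiplicities at $n\rho$, and into coefficients of $1/\eta_\nu$ evaluated at $-\langle\alpha,\alpha\rangle/2d$ for the product over $\alpha\in\Phi^+\cap d\Delta'$. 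This bookkeeping, matching the twisted product to the stated right-hand side $\sum_{w\in W}\det(w)w(\eta_\nu(\ee^\rho))$, is the computational heart and the main obstacle: one must check carefully that the root lattice collapses from $\II_{25,1}$ to $\Delta=\Lambda^\nu\oplus\II_{1,1}$, that $W$ becomes the full reflection group of $\Delta$ rather than of $\II_{25,1}$, and that the dual-lattice refinement $d\Delta'$ correctly captures the rescaled sublattices $\II_{1,1}(m)$ appearing in $P$.

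Having produced the twisted denominator identity, I would then invoke the Borcherds lift \cite{Bor98} to recognise the right-hand side as the expansion of an automorphic product. For this I would pass from $1/\eta_\nu$ to the vector-valued modular form $F$ of weight $w=1-k/2$ on the relevant finite quadratic space (as in Section~\ref{sec:chars}), verify that its singular Fourier coefficients feed into the Borcherds lift to yield a form of weight $-w=12\sigma_0(m)/\sigma_1(m)$, and check that this weight equals half the signature minus one, i.e.\ is \emph{singular}, so that the only contributing terms are those of norm zero. The lattice-theoretic claims---that $P=\Lambda^\nu\oplus\II_{1,1}(m)\oplus\II_{1,1}$ is even of signature $(k,2)$ and of level $m$, and that the automorphic product $\Psi_{\phi_\nu}$ expands at every cusp to the same denominator identity---follow from the square-free hypothesis on $m$ (which forces the discriminant form to be a product of prime components) together with the genus-theoretic fact, stated in the excerpt, that $\Lambda^\nu$ is the unique rootless even lattice in its genus. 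The cusp-independence is ultimately a consequence of $\O(P)^+$ acting transitively enough on the relevant isotropic lines; since all of this is exactly Theorem~10.1 of \cite{Sch04b}, the final step is really a citation with the verification that the ten automorphisms of square-free order in $M_{23}$ satisfy the hypotheses ($m$ square-free, $\sigma_1(m)\mid 24$, $\rk(\Lambda^\nu)=24\sigma_0(m)/\sigma_1(m)$) listed in the excerpt.
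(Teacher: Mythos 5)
This theorem is not proved in the paper at all: it is quoted directly as Theorem~10.1 of \cite{Sch04b} (the twisting construction itself being due to \cite{Bor92}), so the ``paper's proof'' is a citation, and your proposal, which sketches Borcherds' twisting of the Fake Monster denominator identity and Scheithauer's identification of the result as a singular-weight automorphic product before concluding that ``the final step is really a citation'' with a check of the hypotheses for the ten square-free classes in $M_{23}$, matches that treatment. Your outline of the underlying argument is a fair summary of the cited proof; the only imprecision worth flagging is that the graded trace of $\phi_\nu$ on $V_\Lambda$ is $\vartheta_{\Lambda^\nu}(\tau)/\eta_\nu(\tau)$ rather than ``precisely the eta product $\eta_\nu$'', but this does not affect the structure of your sketch.
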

We remark that the root multiplicities of $\g_{\phi_\nu}$ are
\begin{equation*}
\dim(\g_{\phi_\nu}(\alpha))=\sum_{d\mid m}\delta_{\alpha\in\Delta\cap d\Delta'}\left[\frac{1}{\eta_\nu}\right]\left(-\frac{1}{d}\frac{\langle\alpha,\alpha\rangle}{2}\right)
\end{equation*}
for all non-zero $\alpha\in\Delta$ and that $\dim(\g_{\phi_\nu}(0))=k$.

One observes that for these ten automorphisms $\nu$ the automorphic form $\Psi_{\phi_\nu}$ is completely reflective (see \cite{Sch06}, Section~9), i.e.\ it has nice symmetries. In fact, as we shall see in the next section, one can show that these are essentially all the completely reflective automorphic products of singular weight on lattices of square-free level.


\subsection{Classification}\label{sec:class}
We describe a classification result for automorphic products and for \BKMa{}s from \cite{Sch06}.

As we saw above, the denominator identity of a \BKMa{} is sometimes an \emph{automorphic product}. These are automorphic forms on orthogonal groups in the image of the \emph{Borcherds lift} \cite{Bor98}, which lifts from vector-valued modular forms for the Weil representation of $\MpZ$. Since these automorphic forms have an infinite-product expansion, they are called automorphic products.

In \cite{Sch06} the author classified all \BKMa{}s whose denominator identities are completely reflective automorphic products of singular weight. He found that the ten \BKMa{}s from Section~\ref{sec:twisting} are essentially all such \BKMa{}s. More precisely:
\begin{thm}[\cite{Sch06}, Theorem~12.7]\label{thm:12.7}
Let $P$ be an even lattice of signature $(k,2)$ with $k\geq 4$, square-free level $m$ and $p$-ranks of the discriminant form $P'/P$ at most $k+1$. Then a real \BKMa{} whose denominator identity is a completely reflective automorphic product of singular weight $-w=k/2-1$ on $P$ is isomorphic to $\g_{\phi_\nu}$ for the automorphism $\nu$ of order $m$ in $M_{23}$.
\end{thm}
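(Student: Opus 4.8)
The plan is to reduce the statement to a classification of the input data of the Borcherds lift. Since the denominator identity is by assumption a completely reflective automorphic product of singular weight on $P$, it is the Borcherds lift of a weakly holomorphic vector-valued modular form $F$ of weight $1-k/2$ for the Weil representation of the discriminant form $P'/P$ on $\MpZ$. The first step is therefore to translate the two structural hypotheses into conditions on $F$. Singular weight $-w=k/2-1$ is the smallest weight a non-zero holomorphic orthogonal modular form can carry; by the theory of singular modular forms its Fourier expansion is supported on norm-zero vectors, which forces the constant term $c(0,0)=k-2$ of $F$ and severely restricts its principal part. Complete reflectivity means that every vector $\alpha$ contributing a zero of the product is a reflection vector of $P$, so the principal part of $F$ is concentrated on the finitely many discriminant classes $\gamma$ of small negative norm giving reflections, with non-negative integer coefficients.

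The second step is to extract arithmetic constraints on $P$ from the mere existence of such an $F$. Here I would invoke Borcherds' obstruction principle and pair the prescribed principal part against the space of modular forms of dual weight $1+k/2$ for the dual Weil representation of $P'/P$: the pairing with every cusp form must vanish, while the pairing with the Eisenstein series recovers the constant term and the multiplicities of the imaginary simple roots. Because the weight is singular the Eisenstein coefficients are given by an explicit local product of representation densities, and matching them against the integral, reflective principal part of $F$ yields a single divisibility relation. Combined with the square-freeness of the level $m$ this relation is exactly $\sigma_1(m)\mid24$, and it simultaneously fixes the signature through $k=2+24\sigma_0(m)/\sigma_1(m)$. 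This already confines $m$ to the ten values $1,2,3,5,6,7,11,14,15,23$.

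The third step is to pin down the genus of $P$. The bound on the $p$-ranks of $P'/P$ by $k+1$ keeps the list of admissible local discriminant forms at each prime $p\mid m$ finite, and the Eisenstein-series computation of the previous step, carried out prime by prime, forces the local symbol of $P'/P$ at each $p\mid m$ to agree with that of $\Lambda^\nu\oplus\II_{1,1}(m)\oplus\II_{1,1}$. Together with the fixed signature $(k,2)$ and level $m$ this determines the genus of $P$ by the classification of genus symbols. Since the fixed-point lattices $\Lambda^\nu$ are the unique root-free lattices in their respective genera, the relevant genus contains a single isometry class, so $P\cong\Lambda^\nu\oplus\II_{1,1}(m)\oplus\II_{1,1}$, the lattice of Theorem~\ref{thm:twistedbkma}.

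Finally, with the lattice fixed, the reflective singular-weight input form $F$ is uniquely determined: its principal part is forced by reflectivity, and any two holomorphic forms of weight $1-k/2$ with the same principal part differ by a cusp form, of which there are none in this weight for the relevant Weil representation. Hence the product coincides with $\Psi_{\phi_\nu}$, and reading its expansion at a cusp recovers the root multiplicities, the real and imaginary simple roots, and the Weyl group of Theorem~\ref{thm:twistedbkma}; since a \BKMa{} is determined up to isomorphism by this root data, the given real Lie algebra is isomorphic to $\g_{\phi_\nu}$. The main obstacle is the third step: controlling the discriminant form locally at every $p\mid m$ and proving that the Eisenstein-series constraints, under the $p$-rank bound, eliminate all candidates outside the $M_{23}$ list. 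This is where the bulk of the case analysis lives, and where the hypotheses of square-free level and bounded $p$-rank are indispensable.
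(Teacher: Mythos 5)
First, a point of order: the paper does not prove this statement at all. It is quoted from \cite{Sch06}, Theorem~12.7 (with the hypothesis that $P$ splits two hyperbolic planes removed by citing \cite{Moe12}), and the paper only records that it rests on the classification of completely reflective automorphic products, Theorem~\ref{thm:12.6}. So your proposal can only be measured against that cited proof, whose architecture you have in fact reproduced correctly: pass to the vector-valued input form $F$ of weight $1-k/2$ for the Weil representation of $P'/P$, use singular weight and complete reflectivity to prescribe its constant term and principal part, extract arithmetic constraints by pairing against the Eisenstein series of weight $1+k/2$, pin down the genus of $P$, and finally identify the Lie algebra from the uniquely determined denominator identity. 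Your last step is sound: two weakly holomorphic forms with the same principal part differ by a holomorphic form of negative weight, which vanishes, so $F$ and hence the product coincide with $\Psi_{\phi_\nu}$; and a \BKMa{} is determined up to isomorphism by the simple-root data read off from its denominator identity --- the same argument the paper itself uses in Proposition~\ref{prop:main} and, for the vanishing of negative-weight forms, in Proposition~\ref{prop:F1eqF2}.

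The genuine gap is that the two steps carrying the entire content of the theorem are asserted rather than derived. You claim the Eisenstein pairing yields ``a single divisibility relation'' which is ``exactly $\sigma_1(m)\mid 24$'' and ``simultaneously fixes'' $k=2+24\sigma_0(m)/\sigma_1(m)$, and that the local computation ``forces'' the genus symbols of the ten lattices $\Lambda^\nu\oplus\II_{1,1}(m)\oplus\II_{1,1}$. That is the correct endpoint, but nothing in your text produces it: the pairing gives one equation \emph{per candidate genus}, its evaluation requires the explicit local-density formulas for the Eisenstein coefficients at every prime $p\mid m$, and eliminating all genera outside the list --- under the $p$-rank bound, which is precisely what makes the candidate list finite --- is a substantial case analysis; this is the content of Theorems~12.3--12.6 of \cite{Sch06}, and in your write-up the known answer is inserted where that work should be. Two smaller inaccuracies: the constant term $c(0,0)=k-2$ follows from the weight formula for Borcherds lifts (the lift has weight $c(0,0)/2$), not from the support property of singular forms; and the Eisenstein pairing does not ``recover the multiplicities of the imaginary simple roots'' --- it only ties the prescribed principal part to the constant term. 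Neither of these damages the skeleton, but the skeleton alone is not a proof of the classification.
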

As formulated here, this is a slight improvement of the theorem in \cite{Sch06} due to the author of this text, removing the assumption that $P$ splits two hyperbolic planes (see Satz~6.4.2 in \cite{Moe12}).

The above result is achieved by classifying automorphic products:
\begin{thm}[\cite{Sch06}, Theorem~12.6]\label{thm:12.6}
Let $P$ be an even lattice of signature $(k,2)$ with $k\geq 4$, square-free level $m$ and $p$-ranks of the discriminant form $P'/P$ at most $k+1$. Then a completely reflective automorphic product of singular weight $-w=k/2-1$ exists on $P$ if and only if $P$ is isomorphic to one of the following lattices (the unique isomorphism class in the following lattice genera):
\begin{equation*}
\renewcommand{\arraystretch}{1.3}
\begin{array}{r|l}
\multicolumn{1}{c|}{-w} & \multicolumn{1}{c}{P}\\\hline
1 & \II_{4,2}(23^{-3})\\
2 & \II_{6,2}(11^{-4}),\,\II_{6,2}(2_{\II}^{+4}7^{-4}),\,\II_{6,2}(3^{+4}5^{-4})\\
3 & \II_{8,2}(7^{-5})\\
4 & \II_{10,2}(5^{+6}),\,\II_{10,2}(2_{\II}^{+6}3^{-6})\\
6 & \II_{14,2}(3^{-8})\\
8 & \II_{18,2}(2_{\II}^{+10})\\
12& \II_{26,2}
\end{array}
\end{equation*}
Moreover, all these lattices are of the form $P\cong\Lambda^\nu\oplus\II_{1,1}(m)\oplus\II_{1,1}$ for an element $\nu$ of square-free order $m$ in $M_{23}$.
\end{thm}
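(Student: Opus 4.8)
The plan is to translate the existence of a completely reflective automorphic product of singular weight on $P$ into an arithmetic condition on the discriminant form $D := P'/P$, and then to classify the finitely many admissible $D$. Since such an automorphic product is by definition a Borcherds lift, I would first write $\Psi = \Psi_F$ for a weakly holomorphic vector-valued modular form $F$ of weight $1 - k/2$ for the Weil representation of $\MpZ$ attached to $D$. Both the divisor and the weight of $\Psi_F$ are governed by the principal part of $F$, i.e.\ by the coefficients $c_\gamma(n)$ with $n \le 0$: the weight equals $c_0(0)/2$, and the singular divisors lie along the hyperplanes $\lambda^\perp$ with multiplicities read off from $c_{\lambda+P}(\lambda^2/2)$. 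So I would first record that singular weight $-w = k/2 - 1$ forces $c_0(0) = k - 2$ together with the vanishing of all Fourier coefficients of $\Psi$ supported on non-isotropic vectors, and that complete reflectivity forces the remaining principal-part coefficients to be supported exactly on the reflective vectors of $P$ (those $\lambda \in P'$ for which $\sigma_\lambda \in \O(P)$), with every reflective hyperplane occurring.

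The second step is to turn these conditions into a finite problem. The principal part of $F$ may only be nonzero at finitely many pairs $(\gamma, n)$, and by Borcherds' obstruction theorem it extends to a genuine modular form precisely when it pairs to zero with every cusp form of the dual weight $1 + k/2$ for the dual Weil representation. I would combine this with the structural constraints on $D$: since the level $m$ is square-free, $D \cong \bigoplus_{p \mid m} D_p$ with each $D_p$ an elementary abelian $p$-group carrying a nondegenerate quadratic form, so that its $p$-rank is just the number of cyclic summands, bounded by $k+1$ by hypothesis. Milgram's signature formula ties $\operatorname{sign}(D) \bmod 8$ to $k - 2$, and, crucially, the reflectivity condition links the root norms of $P$ (which for square-free level are confined to the divisors $d \mid m$) to the isotropy structure of $D$.

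The heart of the argument, and the step I expect to be the main obstacle, is showing that the singular-weight balance together with complete reflectivity pins down the rank: one must derive the relation $k - 2 = 24\,\sigma_0(m)/\sigma_1(m)$ with the integrality requirement $\sigma_1(m) \mid 24$, leaving only $m = 1,2,3,5,6,7,11,14,15,23$ and, for each, a short list of candidate discriminant forms compatible with the $p$-rank bound and Milgram's formula. This amounts to counting reflective hyperplanes against the number of isotropic vectors dictated by singular weight, and is where essentially all of the casework resides. For each surviving candidate I would then settle existence by exhibiting $F$ explicitly — starting from $1/\eta_\nu$ and using Scheithauer's lifting of scalar eta products to the Weil representation — and rule out the spurious cases via the (computable, finite-dimensional) obstruction space of cusp forms.

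Finally, I would identify the genera. Each lattice in the list lies in a one-class genus, so it suffices to exhibit a single representative; I would check that $\Lambda^\nu \oplus \II_{1,1}(m) \oplus \II_{1,1}$ has signature $(k,2)$, level $m$ and the correct discriminant form for the relevant $\nu \in M_{23}$, using that the fixed-point lattices $\Lambda^\nu$ are the unique even lattices without roots in their genera. Uniqueness of the class in each genus then forces $P \cong \Lambda^\nu \oplus \II_{1,1}(m) \oplus \II_{1,1}$, establishing the ``moreover'' clause and hence the theorem.
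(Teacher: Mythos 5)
The paper does not prove this statement at all: it is imported verbatim from Scheithauer \cite{Sch06} (Theorem~12.6), with only the remark that \cite{Moe12} removes the hypothesis that $P$ splits two hyperbolic planes, so there is no internal proof to compare against. Your outline is, in substance, a faithful reconstruction of Scheithauer's actual argument --- translation into completely reflective vector-valued modular forms of weight $1-k/2$, finiteness from the $p$-rank bound together with the Eisenstein/obstruction pairing that converts the singular-weight condition $c_0(0)=k-2$ into the arithmetic constraint yielding $\sigma_1(m)\mid 24$, existence via the lifts of $1/\eta_\nu$, and the ``moreover'' clause via uniqueness of the class in each genus --- so as a proof plan it takes essentially the same route as the cited source; the only caveat, which you yourself flag, is that the step ``where essentially all of the casework resides'' (solving the singular-weight equation over the admissible discriminant forms) is exactly the content of Scheithauer's Sections~11--12 and is left unexecuted here.
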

The restriction on the $p$-ranks is essential since it guarantees in particular the finiteness of the above list (see \cite{Sch06}, remark after Theorem~12.3). As before, Satz~6.4.1 in \cite{Moe12} removes the assumption that $P$ splits two hyperbolic planes.


\section{Vertex Algebras}\label{sec:va}
In this section we define the ten conformal vertex algebras $M_{\phi_\nu}$ of central charge $26$ that will serve as input of the BRST quantisation in Section~\ref{sec:brst}. For an introduction to the theory of vertex (operator) algebras and their representation theory we refer the reader to \cite{FLM88,FHL93,LL04}, for example.

Recall that a \voa{} $V=\bigoplus_{n\in\Z}V_n$ is a $\Z$-graded vertex algebra with $\dim(V_n)<\infty$ and $\dim(V_n)=0$ for $n\ll 0$. Moreover, it carries a representation of the Virasoro algebra of some central charge $c\in\C$ (see also Section~\ref{sec:brst0}) and $V_n$ is the eigenspace of $L_0$ associated with eigenvalue (or weight) $n$ for all $n\in\Z$. If we drop the assumptions of lower-boundedness of the grading and the finite-dimensionality of the graded components, we arrive at the notion of a \emph{conformal vertex algebra}. Examples of conformal vertex algebras are vertex algebras associated with even lattices. If the lattice is in addition positive-definite, then we obtain a \voa{}.

In this paper we follow the convention in \cite{DM04b} and call a \voa{} \emph{\strat{}} if it is rational (as defined in \cite{DLM97}, for example), $C_2$-cofinite (or lisse), self-contragredient (or self-dual) and of CFT-type (which imply simplicity). Rationality entails that the category of modules is semisimple with finitely many simple objects, i.e.\ irreducible modules. A \voa{} of CFT-type is $\N$-graded with $V_0=\C\vac$ where $\vac$ denotes the vacuum vector.

A \voa{} is called \emph{holomorphic} if its only irreducible module is $V$ itself.


\subsection{Heisenberg and Lattice Vertex Algebras}\label{sec:lattice}
We review Heisenberg \voa{}s and vertex algebras associated with even lattices, which are among the best-studied examples of vertex (operator) algebras.

Let $M_{\hat\h}(1,0)$ denote the Heisenberg (or free-boson) \voa{} (of level $1$) associated with the $\C$-vector space $\h$ equipped with a non-degenerate, symmetric bilinear form $\langle\cdot,\cdot\rangle$. It has central charge $\dim(\h)$ and its irreducible modules are given up to isomorphism by $M_{\hat\h}(1,\alpha)$ for $\alpha\in\h$ with conformal weights (or lowest $L_0$-weights) $\langle\alpha,\alpha\rangle/2$ (see, for example, \cite{LL04}, Section~6.3). The form $\langle\cdot,\cdot\rangle$ on $\h$ naturally induces a non-degenerate, Virasoro-invariant (see Definition~\ref{defi:virasoro} below), symmetric bilinear form on $M_{\hat\h}(1,\alpha)$ for all $\alpha\in\h$.

Over $\C$, $(\h,\langle\cdot,\cdot\rangle)$ is always isometric to $\C^d$, $d:=\dim(\h)$, equipped with the standard bilinear form, and the Heisenberg \voa{}s corresponding to $(\h,\langle\cdot,\cdot\rangle)$ and $\C^d$ are isomorphic. In the following, we shall simply write $\pi^d_\alpha:=M_{\hat\h}(1,\alpha)$ if $\dim(\h)=d$. Also note that $\pi_0^d\cong(\pi_0^1)^{\otimes d}$.

However, for the BRST construction in Section~\ref{sec:brst} we shall also need to demand that $\h$, and $M_{\hat\h}(1,\alpha)$ for all $\alpha\in\h$, come equipped with a non-degenerate, Hermitian sesquilinear form. To this end, we shall start with an $\R$-vector space $\h_\R$ equipped with a non-degenerate, symmetric bilinear form $\langle\cdot,\cdot\rangle_\R$ of signature $(r,s)$ for some $r,s\in\N$ with $r+s=d$. Then, over $\R$, $(\h_\R,\langle\cdot,\cdot\rangle_\R)$ is isometric to $\R^{(r,s)}$, the $(r+s)$-dimensional $\R$-vector space with the standard bilinear form of signature $(r,s)$. The form $\langle\cdot,\cdot\rangle_\R$ extends to a non-degenerate, symmetric bilinear form $\langle\cdot,\cdot\rangle$ and also to a non-degenerate, Hermitian sesquilinear form of signature $(r,s)$ on $\h=\h_\R\otimes_\R\C$, and these forms extend naturally to non-degenerate, Virasoro-invariant, symmetric bilinear and Hermitian sesquilinear forms on $M_{\hat\h}(1,\alpha)$, $\alpha\in\h$. In the following, we shall write $\pi^{(r,s)}_\alpha:=M_{\hat\h}(1,\alpha)$ if $(\h_\R,\langle\cdot,\cdot\rangle_\R)$ has signature $(r,s)$.

The character or graded dimension of $\pi_\alpha^d$ is given by
\begin{equation*}
\ch_{\pi_\alpha^d}(\tau)=\tr_{\pi_\alpha^d}q^{L_0-d/24}=\frac{q^{\langle\alpha,\alpha\rangle/2}}{\eta(\tau)^d}
\end{equation*}
for $\alpha\in\h$ with the Dedekind eta function $\eta$.

Let $L$ be an even lattice, i.e.\ a free abelian group $L$ of rank $d$ equipped with a non-degenerate, symmetric bilinear form $\langle\cdot,\cdot\rangle\colon L\times L\to\Z$ such that $\langle\alpha,\alpha\rangle/2\in\Z$ for all $\alpha\in L$. Let $(r,s)$ denote the signature of $\langle\cdot,\cdot\rangle$ over $\R$. We recall some well-known facts about the lattice vertex algebra $V_L$ associated with $L$ \cite{Bor86,FLM88,Don93}. $V_L$ is a conformal vertex algebra of central charge $d=r+s$. If $L$ is positive-definite, then $V_L$ is a \strat{} \voa{}, and if $L$ is unimodular, then $V_L$ is holomorphic.

The lattice vertex algebra $V_L$ contains the Heisenberg \voa{} $\pi_0^{(r,s)}$ associated with the vector space $\h=L\otimes_\Z\C$ (and $\h_\R=L\otimes_\Z\R$) as a \vosa{} and decomposes into a direct sum $V_L=\bigoplus_{\alpha\in L}\pi_\alpha^{(r,s)}$ of irreducible $\pi_0^{(r,s)}$-modules.

The irreducible $V_L$-modules up to isomorphism are $V_{\lambda+L}=\bigoplus_{\alpha\in\lambda+L}\pi_\alpha^{(r,s)}$ for $\lambda+L\in L'/L$ where $L'$ is the dual lattice. They are all simple currents, meaning that their fusion products are again irreducible, and have the fusion rules
\begin{equation*}
V_{\lambda+L}\boxtimes V_{\mu+L}\cong V_{\lambda+\mu+L}
\end{equation*}
for all $\lambda+L,\mu+L\in L'/L$, i.e.\ the fusion algebra is the group algebra $\C[L'/L]$.

In the following, let $L$ be positive-definite. Then character of $V_{\lambda+L}$ is well-defined and given by
\begin{equation*}
\ch_{V_{\lambda+L}}(\tau)=\tr_{V_{\lambda+L}}q^{L_0-d/24}=\frac{\vartheta_{\lambda+L}(\tau)}{\eta(\tau)^d}
\end{equation*}
for $\lambda+L\in L'/L$. Here, $\vartheta_{\lambda+L}(\tau)=\sum_{\alpha\in\lambda+L}q^{\langle\alpha,\alpha\rangle/2}$ denotes the usual theta series of the lattice coset $\lambda+L\in L'/L$.

We denote by $\O(L)$ the group of automorphisms (or isometries) of the lattice~$L$. The construction of the \voa{} $V_L$ involves a choice of group $2$-cocycle $\eps\colon L\times L\to\{\pm 1\}$ such that $\eps(\alpha,\beta)/\eps(\beta,\alpha)=(-1)^{\langle\alpha,\beta\rangle}$ for all $\alpha,\beta\in L$. An automorphism $\nu\in\O(L)$ together with a function $\eta\colon L\to\{\pm 1\}$ satisfying $\eta(\alpha)\eta(\beta)/\eta(\alpha+\beta)=\eps(\alpha,\beta)/\eps(\nu\alpha,\nu\beta)$ defines an automorphism $\hat{\nu}\in\O(\hat{L})\leq\Aut(V_L)$ (see, for example, \cite{FLM88,Bor92}). We call $\hat{\nu}$ a \emph{standard lift} if the restriction of $\eta$ to the fixed-point sublattice $L^\nu\subseteq L$ is trivial. All standard lifts of $\nu$ are conjugate in $\Aut(V_L)$ (see \cite{EMS20a}, Proposition~7.1). Let $\hat{\nu}$ be a standard lift of $\nu$ and suppose that $\nu$ has order $m$. If $m$ is odd or if $m$ is even and $\langle\alpha,\nu^{m/2}\alpha\rangle\in 2\Z$ for all $\alpha\in L$, then the order of $\hat{\nu}$ is also $m$. Otherwise the order of $\hat{\nu}$ is $2m$, in which case we say that $\nu$ exhibits \emph{order doubling}.


\subsection{Orbifold Theory}\label{sec:orb}
Given a suitably nice, for example \strat{}, \voa{} $V$ and a group $G\leq\Aut(V)$ of automorphisms of $V$, orbifold theory is concerned with the properties of the \fpvosa{} $V^G$ and in particular its representation theory. Recently, it was proved that if $V$ is \strat{} and $G$ is a finite, solvable group of automorphisms of $V$, then $V^G$ is also \strat{} \cite{DM97,Miy15,CM16}.

In this section we describe two special cases in which the representation theory of $V^G$ has been fully determined, i.e.\ the irreducible $V^G$-modules and the fusion rules amongst them. The first one is the cyclic orbifold theory for holomorphic \voa{}s developed in \cite{EMS20a,Moe16}. Here, $V$ is assumed to be holomorphic and $G$ to be cyclic. Secondly, we discuss the representation theory of the \voa{} $V_{L_\nu}^{\hat\nu}$ associated with the coinvariant lattice $L_\nu=(L^\nu)^\bot$ of a unimodular lattice $L$ and an automorphism $\nu\in\O(L)$. These results were obtained in \cite{Lam19} (with partial results in \cite{Moe16}, Chapter~7).

We begin with the holomorphic orbifold theory \cite{EMS20a,Moe16}. Let $V$ be a \strathol{} \voa{}, whose central charge is necessarily in $8\N$, and $G=\langle\sigma\rangle\leq\Aut(G)$ a finite, cyclic group of automorphisms of $V$ of order $m\in\Ns$.

By \cite{DLM00} there is an up to isomorphism unique irreducible $\sigma^i$-twisted $V$-module $V(\sigma^i)$ for each $i\in\Z_m$. Moreover, for all $i\in\Z_m$ the vector space $V(\sigma^i)$ admits a representation $\phi_i\colon G\to\Aut_\C(V(\sigma^i))$ of $G$ such that
\begin{equation*}
\phi_i(\sigma)Y_{V(\sigma^i)}(v,x)\phi_i(\sigma)^{-1}=Y_{V(\sigma^i)}(\sigma v,x)
\end{equation*}
for all $v\in V$. This representation is unique up to an $m$-th root of unity. We denote by $V^\sigma(i,j)$ the eigenspace of $\phi_i(\sigma)$ in $V(\sigma^i)$ corresponding to the eigenvalue $\e^{(2\pi\i)j/m}$. On $V(\sigma^0)=V$ a possible choice for $\phi_0$ is given by $\phi_0(\sigma)=\sigma$.

The \fpvosa{} $V^\sigma=V^\sigma(0,0)$ is \strat{} by \cite{DM97,Miy15,CM16} and has exactly $m^2$ irreducible modules, namely
\begin{equation*}
V^\sigma(i,j)\quad\text{for}\quad i,j\in\Z_m,
\end{equation*}
which follows from results in \cite{MT04,DRX17}. We showed that the conformal weight $\rho(V(\sigma))$ of $V(\sigma)$ is in $(1/m^2)\Z$, and we define the \emph{type} $t\in\Z_m$ of $\sigma$ by $t=m^2\rho(V(\sigma))\pmod{m}$.

For ease of presentation, let us assume in the following that $\sigma$ has type $0$, i.e.\ that $\rho(V(\sigma))\in(1/m)\Z$. (But note that the other cases were studied as well in \cite{EMS20a,Moe16}.) Then it is possible to choose the representations $\phi_i$ such that the conformal weights obey
\begin{equation*}
Q_\rho((i,j)):=\rho(V^\sigma(i,j))+\Z=ij/m+\Z=:Q_m((i,j))
\end{equation*}
and $V^\sigma$ has fusion rules
\begin{equation*}
V^\sigma(i,j)\boxtimes V^\sigma(k,l)\cong V^\sigma(i+k,j+l)
\end{equation*}
for all $i,j,k,l\in\Z_m$, i.e.\ the fusion algebra of $V^\sigma$ is the group algebra $\C[\Z_m\times\Z_m]$ (see \cite{EMS20a}, Section~5). In particular, all $V^\sigma$-modules are simple currents (see also \cite{DRX17}).

The fusion group $\Z_m\times\Z_m$ together with the quadratic form $Q_\rho=Q_m$ forms a non-degenerate \fqs{} $R(V^\sigma):=(\Z_m\times\Z_m,Q_m)$. It is isomorphic to the discriminant form of the rescaled hyperbolic lattice $\II_{1,1}(m)$, i.e.
\begin{equation*}
R(V^\sigma)\cong(\II_{1,1}(m))'/\II_{1,1}(m).
\end{equation*}

We now describe the orbifold theory for certain \voa{}s associated with coinvariant lattices \cite{Lam19}. Let $L$ be an even, positive-definite, unimodular lattice and $\nu\in\O(L)$ an isometry of $L$ of order $m$. Then $L^\nu=\{\alpha\in L\;|\;\nu\alpha=\alpha\}$ denotes the fixed-point lattice (or invariant lattice), and its orthogonal complement $L_\nu:=(L^\nu)^\bot\subseteq L$ is called \emph{coinvariant lattice}. The restriction of $\nu$ to $L_\nu$, which we shall also call $\nu$, acts fixed-point free on $L_\nu$, i.e.\ $(L_\nu)^\nu=\{0\}$. This implies that all lifts of $\nu\in\O(L_\nu)$ to $\Aut(V_{L_\nu})$ are conjugate. Let $\hat\nu$ be one such lift. It is a standard lift and has order $m$, i.e.\ no order doubling occurs.

Note however that the (up to conjugacy unique) standard lift $\phi_\nu$ of $\nu\in\O(L)$ to an automorphism in $\Aut(V_L)$ might exhibit order doubling and this will play a role in what follows.

Given the lattice \voa{} $V_{L_\nu}$ and the automorphism $\hat\nu$, we consider the \fpvosa{} $V_{L_\nu}^{\hat\nu}$. It was shown in \cite{Moe16,Lam19} that $V_{L_\nu}^{\hat\nu}$ has exactly $m^2|(L_\nu)'/L_\nu|$ irreducible modules, which are all simple currents. The exact fusion rules were determined in \cite{Lam19}. There are two cases depending on whether $\phi_\nu\in\Aut(V_L)$ exhibits order doubling or not. For simplicity let us assume that this is not the case, i.e.\ that $\langle\alpha,\nu^{m/2}\alpha\rangle\in2\Z$ for all $\alpha\in L$ if $m$ is even.

By \cite{Don93} the irreducible $V_{L_\nu}$-modules are parametrised by the lattice cosets $(L_\nu)'/L_\nu$. For $i\in\Z_m$ the irreducible $\hat\nu^i$-twisted $V_{L_\nu}$-modules were determined in \cite{DL96,BK04}. They are similarly given by $V_{\alpha+L_\nu}(\hat\nu^i)$ for $\alpha+L_\nu\in((L_\nu)'/L_\nu)^{\nu^i}$. Since $L_\nu$ is the coinvariant lattice corresponding to $\nu$, it is easy to show that $((L_\nu)'/L_\nu)^{\nu^i}=(L_\nu)'/L_\nu$, i.e.\ that $\nu$ acts trivially on $(L_\nu)'/L_\nu$. This also implies the existence of linear representations $\phi_{\alpha+L_\nu,i}\colon\langle\hat\nu\rangle\to\Aut_\C(V_{\alpha+L_\nu}(\hat\nu^i))$ satisfying the same property as the $\phi_i$ above.

With the same arguments as before, the irreducible $V_{L_\nu}^{\hat\nu}$-modules are exactly the corresponding eigenspaces
\begin{equation*}
V_{L_\nu}^{\hat\nu}(\alpha+L_\nu,i,j)\quad\text{for}\quad \alpha+L_\nu\in(L_\nu)'/L_\nu\text{ and }i,j\in\Z_m.
\end{equation*}
Again, for simplicity we only present the case when $\phi_\nu$ has type $0$, i.e.\ when $\rho(V_L(\phi_\nu))\in(1/m)\Z$. Note that $\rho(V_L(\phi_\nu))=\rho(V_{L_\nu}(\hat\nu))$. Then, after a suitable choice of the aforementioned representations, the irreducible $V_{L_\nu}^{\hat\nu}$-modules have conformal weights
\begin{equation*}
Q_\rho((\alpha+L_\nu,i,j))=\rho(V_{L_\nu}^{\hat\nu}(\alpha+L_\nu,i,j))+\Z=\frac{\langle\alpha,\alpha\rangle}{2}+\frac{ij}{m}+\Z
\end{equation*}
and fusion rules
\begin{equation*}
V_{L_\nu}^{\hat\nu}(\alpha+L_\nu,i,j)\boxtimes V_{L_\nu}^{\hat\nu}(\beta+L_\nu,k,l)\cong V_{L_\nu}^{\hat\nu}(\alpha+\beta+L_\nu,i+k,j+l)
\end{equation*}
for all $\alpha+L_\nu,\beta+L_\nu\in(L_\nu)'/L_\nu$ and $i,j,k,l\in\Z_m$, i.e.\ the fusion algebra of $V_{L_\nu}^{\hat\nu}$ is the group algebra $\C[(L_\nu)'/L_\nu\times\Z_m\times\Z_m]$. Together with the quadratic form $Q_\rho$ the fusion group forms a \fqs{}
\begin{equation*}
R(V_{L_\nu}^{\hat\nu})=(L_\nu)'/L_\nu\times R(V_L^{\phi_\nu})=(L_\nu)'/L_\nu\times(\Z_m\times\Z_m,Q_m),
\end{equation*}
which depends on the \fqs{} of $V_L^{\phi_\nu}$. Similar results hold if $\phi_\nu$ does not have type $0$ or exhibits order doubling.


\subsection{Simple-Current Extensions}\label{sec:ext}
We describe simple-current extensions of \voa{}s. A lot of progress has been made recently concerning \voa{} extensions. We shall only need the following special case, which is developed in \cite{EMS20a,Moe16}.

Let $V$ be a \strat{} \voa{} and assume that all irreducible $V$-modules are simple currents. Then the fusion algebra of $V$ is the group algebra $\C[D]$ of some finite abelian group $D$, i.e.\ the isomorphism classes of irreducible $V$-modules $\{W^\gamma\,|\,\gamma\in D\}$ can be parametrised by $D$ and
\begin{equation*}
W^\gamma\boxtimes W^\delta\cong W^{\gamma+\delta}
\end{equation*}
for all $\gamma,\delta\in D$. The identity element is given by $W^0\cong V$ and the inverse of $\gamma$ by $W^{-\gamma}\cong(W^{\gamma})'$, the contragredient module.

Now additionally assume that $V$ satisfies the \emph{positivity condition}, i.e.\ that the conformal weight $\rho(W)>0$ for any irreducible $V$-module $W\not\cong V$ and $\rho(V)=0$. Then
\begin{equation*}
Q_\rho(\gamma)=\rho(W^\gamma)+\Z\in\Q/\Z
\end{equation*}
defines a non-degenerate quadratic form on $D$, i.e.\ $(D,Q_\rho)$ is a non-degenerate finite-quadratic space.

Let $I$ be a subset of $D$. Then the direct sum
\begin{equation*}
V_I:=\bigoplus_{\gamma\in I}W^\gamma
\end{equation*}
carries an up to isomorphism unique \voa{} structure, extending the \voa{} structure of $V$ and the module structure of the $W^\alpha$, $\alpha\in I$, if and only if $I$ is an isotropic subgroup of $D$.

In this case $V_I$ is \strat{} and the irreducible $V_I$-modules are up to isomorphism given by
\begin{equation*}
X^{\alpha+I}:=\bigoplus_{\gamma\in\alpha+I}W^{\gamma}
\end{equation*}
for $\alpha+I\in I^\bot/I$. They are again all simple currents and the fusion group of $V_I$ is given by the quotient group $I^\bot/I$. In particular, $V_I$ is holomorphic if and only if $I=I^\bot$.


\subsection{Conformal Vertex Algebras of Central Charge 26}\label{sec:tenM}
Using the tools from Sections \ref{sec:orb} and \ref{sec:ext} we shall define the conformal vertex algebras $M_{\phi_\nu}$ of central charge $26$ that will serve as input for the BRST construction.

To this end consider the \strathol{} \voa{} $V_\Lambda$ of central charge $24$ associated with the Leech lattice $\Lambda$ and let $\nu\in\O(\Lambda)$ be of square-free order in $M_{23}$, i.e.\ one of the ten automorphisms from Section~\ref{sec:twisting} with orders $m=1,2,3,5,6,7,11,14,15,23$ and cycle shapes $\prod_{t\mid m}t^{b_t}=\prod_{t\mid m}t^{24/\sigma_1(m)}$. Let $\phi_\nu\in\Aut(V_\Lambda)$ be the (up to conjugacy unique) standard lift of $\nu\in\O(\Lambda)$. In the ten cases at hand, $\phi_\nu$ has order order $m$, i.e.\ no order doubling occurs, and the property that $\phi_\nu^k$ is a standard lift of $\nu^k$ for all $k\in\N$.

The conformal weight of the unique irreducible $\phi_\nu$-twisted $V_\Lambda$-module $V_\Lambda(\phi_\nu)$ is
\begin{equation*}
\rho(V_\Lambda(\phi_\nu))=\frac{1}{24}\sum_{t\mid m}b_t(t-1/t)=\frac{m-1}{m}\in\frac{1}{m}\Z.
\end{equation*}
In particular, $\phi_\nu$ has type $0$. Note that $V_\Lambda^{\phi_\nu}$ satisfies the positivity condition.

Applying the cyclic orbifold theory for holomorphic \voa{}s described in Section~\ref{sec:orb} we conclude that $V_\Lambda^{\phi_\nu}$ has exactly $m^2$ irreducible modules $V_\Lambda^{\phi_\nu}(i,j)$, $i,j\in\Z_m$, with fusion group $\Z_m\times\Z_m$ and quadratic form $Q_\rho((i,j))=\rho(V_\Lambda^{\phi_\nu}(i,j))+\Z=ij/m+\Z=Q_m((i,j))$.

Let $\II_{1,1}$ be the up to isomorphism unique even, unimodular lattice of Lorentzian signature $(1,1)$ and let $K:=\II_{1,1}(m)$ be the same lattice with the quadratic form rescaled by $m$. As mentioned above, the discriminant form $K'/K$ is as \fqs{} isomorphic to $(\Z_m\times\Z_m,Q_m)$ and in fact it is also isomorphic to $\overline{(\Z_m\times\Z_m,Q_m)}=(\Z_m\times\Z_m,-Q_m)$. (Given any \fqs{} $A$, let $\overline{A}$ be the same finite abelian group but with the quadratic form multiplied by $-1$.) We make a choice of isomorphism
\begin{equation*}
\varphi\colon K'/K\to(\Z_m\times\Z_m,-Q_m)
\end{equation*}
but shall later see that this choice is irrelevant.

Consider the conformal vertex algebra $V_K$ of central charge $2$ associated with~$K$. It has irreducible modules $V_{\alpha+K}$ for $\alpha+K\in K'/K$ and fusion group $K'/K$ \cite{Don93}.

In Table~\ref{table:ten} we collect some properties of the ten cases. Recall that $\O(\Lambda)\cong\mathrm{Co}_0$ and that the sporadic group $\mathrm{Co}_1$ is the quotient $\mathrm{Co}_0/\{\pm1\}$ of $\mathrm{Co}_0$ by its centre.
\renewcommand{\arraystretch}{1.2}
\begin{table}
\caption{The ten automorphisms $\nu\in\O(\Lambda)$ and related properties.}
\begin{tabular}{r|c|c|r|r|l|l}
$\mathrm{Co}_1$ class & Cycle shape & $\rho(V_\Lambda(\phi_\nu))$ & $k$ & $w$ & Genus $\Lambda^\nu$ & Genus $K$\\\hline
$1A$   & $1^{24}$      &$0$    &$26$&$-12$&$\II_{24,0}$                    &$\II_{1,1}$                     \\
$2A$   & $1^82^8$      &$1/2$  &$18$& $-8$&$\II_{16,0}(2_{\II}^{+8})$      &$\II_{1,1}(2_{\II}^{+2})$      \\
$3B$   & $1^63^6$      &$2/3$  &$14$& $-6$&$\II_{12,0}(3^{+6})$            &$\II_{1,1}(3^{-2})$            \\
$5B$   & $1^45^4$      &$4/5$  &$10$& $-4$&$\II_{ 8,0}(5^{+4})$            &$\II_{1,1}(5^{+2})$            \\
$6E$   & $1^22^23^26^2$&$5/6$  &$10$& $-4$&$\II_{ 8,0}(2_{\II}^{+4}3^{+4})$&$\II_{1,1}(2_{\II}^{+2}3^{-2})$\\
$7B$   & $1^37^3$      &$6/7$  & $8$& $-3$&$\II_{ 6,0}(7^{+3})$            &$\II_{1,1}(7^{-2})$            \\
$11A$  & $1^211^2$     &$10/11$& $6$& $-2$&$\II_{ 4,0}(11^{+2})$           &$\II_{1,1}(11^{-2})$           \\
$14B$  & $1.2.7.14$    &$13/14$& $6$& $-2$&$\II_{ 4,0}(2_{\II}^{+2}7^{+2})$&$\II_{1,1}(2_{\II}^{+2}7^{-2})$\\
$15D$  & $1.3.5.15$    &$14/15$& $6$& $-2$&$\II_{ 4,0}(3^{-2}5^{-2})$      &$\II_{1,1}(3^{-2}5^{+2})$      \\
$23A,B$& $1.23$        &$22/23$& $4$& $-1$&$\II_{ 2,0}(23^{+1})$           &$\II_{1,1}(23^{-2})$
\end{tabular}
\label{table:ten}
\end{table}
\renewcommand{\arraystretch}{1}

Finally, we define the conformal vertex algebra $M_{\phi_\nu}$ in the matter sector of the BRST construction as a simple-current extension of $V_\Lambda^{\phi_\nu}\otimes V_K$:
\begin{prop}\label{prop:mdef}
Let $\nu$ be of square-free order in $M_{23}$. Then the direct sum
\begin{equation*}
M_{\phi_\nu}:=\bigoplus_{\alpha+K\in K'/K}V_\Lambda^{\phi_\nu}(\varphi(\alpha+K))\otimes V_{\alpha+K}
\end{equation*}
admits the structure of a conformal vertex algebra of central charge 26.
\end{prop}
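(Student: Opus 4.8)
The plan is to realise $M_{\phi_\nu}$ as a simple-current extension in the spirit of Section~\ref{sec:ext}, associated with a suitable isotropic subgroup of the combined fusion group of the tensor-product building block $V_\Lambda^{\phi_\nu}\otimes V_K$. The two factors are already understood: by the cyclic orbifold theory recalled in Section~\ref{sec:orb}, the \fpvosa{} $V_\Lambda^{\phi_\nu}$ is \strat{} and, since $\phi_\nu$ has type $0$, has exactly the $m^2$ simple-current modules $V_\Lambda^{\phi_\nu}(i,j)$, $i,j\in\Z_m$, with fusion group $\Z_m\times\Z_m$ and quadratic form $Q_\rho=Q_m$; and $V_K$ is the lattice conformal vertex algebra of $K=\II_{1,1}(m)$, with simple-current modules $V_{\alpha+K}$, $\alpha+K\in K'/K$, whose associated quadratic form is $Q_{K'/K}(\alpha+K)=\langle\alpha,\alpha\rangle/2+\Z$. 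The central charge of the extension will be $24+2=26$, as required.

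First I would record the module-theoretic data of $V_\Lambda^{\phi_\nu}\otimes V_K$: its irreducible modules of product form are the $V_\Lambda^{\phi_\nu}(i,j)\otimes V_{\alpha+K}$, again all simple currents, with fusion group $D:=(\Z_m\times\Z_m)\times(K'/K)$ and quadratic form $Q_m\oplus Q_{K'/K}$, the $L_0$-eigenvalues of each summand modulo $\Z$ being the sum of the two factor values. I would then introduce the gluing set
\begin{equation*}
I:=\bigl\{(\varphi(\alpha+K),\alpha+K)\;\big|\;\alpha+K\in K'/K\bigr\}\subseteq D,
\end{equation*}
which indexes precisely the summands occurring in $M_{\phi_\nu}$, so that $M_{\phi_\nu}=\bigoplus_{\gamma\in I}W^\gamma$, the summand for $\alpha+K=K$ being the identity object $V_\Lambda^{\phi_\nu}\otimes V_K$.

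The heart of the argument is then to verify that $I$ is an isotropic subgroup. Since $\varphi\colon K'/K\to(\Z_m\times\Z_m,-Q_m)$ is an isomorphism of \fqs{}s, it is in particular a group homomorphism, so $I$ is a subgroup of $D$ of order $|K'/K|=m^2$; and $\varphi$ being an isometry onto $(\Z_m\times\Z_m,-Q_m)$ gives $Q_m(\varphi(\alpha+K))=-Q_{K'/K}(\alpha+K)$, whence every $\gamma=(\varphi(\alpha+K),\alpha+K)\in I$ satisfies
\begin{equation*}
(Q_m\oplus Q_{K'/K})(\gamma)=Q_m(\varphi(\alpha+K))+Q_{K'/K}(\alpha+K)=0\in\Q/\Z.
\end{equation*}
Thus $I$ is isotropic (indeed $|I|^2=m^4=|D|$ shows it is Lagrangian). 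In particular all summands carry integer conformal weight, so the extension is an honest (bosonic, not super) conformal vertex algebra, its conformal vector and central charge $26$ being inherited from the identity summand.

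The main obstacle, and the one point where Section~\ref{sec:ext} cannot be quoted verbatim, is that $K=\II_{1,1}(m)$ is Lorentzian, so $V_K$ is \emph{not} a \strat{} \voa{}: it is neither lower-bounded nor of finite-dimensional graded pieces, and the positivity condition and the form $Q_\rho$ used in Section~\ref{sec:ext} do not literally apply to it. To circumvent this I would argue through \aia{}s rather than the VOA-extension theorem. For a lattice (conformal) vertex algebra the intertwining operators among the $V_{\alpha+K}$ and their braiding and associativity are governed purely by the discriminant-form quadratic form $Q_{K'/K}$, which is defined irrespective of the signature of $K$; hence $\bigoplus_{\alpha+K\in K'/K}V_{\alpha+K}=V_{K'}$ carries an \aia{} structure with form $Q_{K'/K}$, its tensor product with the \aia{} $\bigoplus_{i,j\in\Z_m}V_\Lambda^{\phi_\nu}(i,j)$ from the orbifold factor carries the \aia{} structure with form $Q_m\oplus Q_{K'/K}$ on $D$, and the restriction of an \aia{} to an isotropic subgroup---on which the quadratic form, and hence the abelian $3$-cocycle controlling commutativity and associativity, trivialises---is a genuine conformal vertex algebra. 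Applying this to $I$ produces $M_{\phi_\nu}$ and completes the proof.
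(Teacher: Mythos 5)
Your proposal is correct and takes essentially the same route as the paper: the paper likewise realises $M_{\phi_\nu}$ as the subalgebra of the tensor-product \aia{} $\bigl(\bigoplus_{i,j\in\Z_m}V_\Lambda^{\phi_\nu}(i,j)\bigr)\otimes\bigl(\bigoplus_{\alpha+K\in K'/K}V_{\alpha+K}\bigr)$ supported on the graph subgroup $\{(\varphi(\gamma),\gamma)\,|\,\gamma\in K'/K\}$, and concludes from the resulting integral $L_0$-grading that the quadratic form vanishes there, so that the restriction is an honest conformal vertex algebra. The one point to flag is that your identification of the \aia{} quadratic form with the conformal-weight form $Q_m\oplus Q_{K'/K}$ (the paper's convention is its negative, which does not affect isotropy) is precisely the \emph{evenness} property that the paper explicitly invokes from \cite{EMS20a,Moe16} for the orbifold factor and from \cite{DL93} for the lattice factor; this is a genuine input, since grading-compatibility alone would only force the associated bilinear form, not the quadratic form itself, to vanish on the subgroup.
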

\begin{proof}
We note that $\bigoplus_{i,j\in\Z_m}V_\Lambda^{\phi_\nu}(i,j)$ is an \aia{} \cite{EMS20a,Moe16}, and so is $\bigoplus_{\alpha+K\in K'/K}V_{\alpha+K}$ \cite{DL93}, corresponding to the fact that all the irreducible modules are simple currents.

An \aia{} \cite{DL93} is a generalisation of a conformal vertex algebra associated with some \fqs{}. Conformal vertex algebras are recovered if the $L_0$-grading is integral and the quadratic form trivial. The axioms of an \aia{} also include a grading-compatibility condition that relates the bilinear form associated with the quadratic form to the $L_0$-grading. Under mild assumptions (see, for example, Remark~3.1.5 in \cite{Moe16}) this guarantees that if an \aia{} has integral $L_0$-grading, this bilinear form vanishes. This does not quite mean, however, that the quadratic form vanishes.\footnote{Indeed, every quadratic form $Q$ (on some finite, abelian group $D$) has a unique associated bilinear form $B_Q$. On the other hand, given a finite bilinear form $B$, there are $|D/2D|$ many quadratic forms $Q$ with $B_Q=B$.} Some \aia{}s satisfy an additional evenness condition. In that case, the quadratic form itself is related to the $L_0$-grading, and hence an integral $L_0$-grading does imply that the quadratic form vanishes.

Now, the tensor-product \aia{} of central charge $26$
\begin{equation*}
\Big(\bigoplus_{i,j\in\Z_m}V_\Lambda^{\phi_\nu}(i,j)\Big)\otimes\Big(\bigoplus_{\alpha+K\in K'/K}V_{\alpha+K}\Big)=\bigoplus_{\substack{i,j\in\Z_m\\\alpha+K\in K'/K}}V_\Lambda^{\phi_\nu}(i,j)\otimes V_{\alpha+K}
\end{equation*}
is an \aia{} with associated \fqs{}
\begin{equation*}
(\Z_m\times\Z_m,-Q_m)\times K'/K.
\end{equation*}
It was shown in \cite{EMS20a,Moe16} that the first \aia{} satisfies the evenness condition, and for the second this follows by definition of lattice \aia{}s \cite{DL93}. Hence, also the tensor product satisfies evenness.

Clearly, by definition of $\varphi$, the abelian intertwining subalgebra $M_{\phi_\nu}$ defined by the subgroup of all elements of the form
\begin{equation*}
(\varphi(\gamma),\gamma)\quad\text{for}\quad\gamma\in K'/K
\end{equation*}
has integral $L_0$-grading. Hence, the quadratic form for $M_{\phi_\nu}$ vanishes and $M_{\phi_\nu}$ is a conformal vertex algebra.
\end{proof}
We shall see in Proposition~\ref{prop:choiceindep} that $M_{\phi_\nu}$ is up to isomorphism independent of the choice of $\varphi$.

For the remainder of this section we study the properties of
\begin{equation*}
M_{\phi_\nu}=\bigoplus_{\alpha\in K'}V_\Lambda^{\phi_\nu}(\varphi(\alpha+K))\otimes\pi^{(1,1)}_\alpha,
\end{equation*}
which is clearly graded by $K'$. In the following we shall see that $M_{\phi_\nu}$ is actually graded by $L'$ where
\begin{equation*}
L:=\Lambda^\nu\oplus K=\Lambda^\nu\oplus \II_{1,1}(m)
\end{equation*}
is a lattice of rank $k$ and signature $(k-1,1)$. Indeed, recall that $V_{\Lambda_\nu}^{\hat\nu}$ is the orbifold \voa{} associated with the coinvariant lattice $\Lambda_\nu$ and the up to conjugacy unique lift $\hat\nu\in\Aut(V_{\Lambda_\nu})$ of $\nu\in\O(\Lambda_\nu)$. It is not difficult to see that $V_{\Lambda_\nu}^{\hat\nu}$ and the lattice \voa{} $V_{\Lambda^\nu}$ form a dual pair in $V_\Lambda^{\phi_\nu}$, i.e.\ they are mutual commutants (or centralisers), intersect trivially, i.e.\ $V_{\Lambda_\nu}^{\hat\nu}\cap V_{\Lambda^\nu}=\C\vac$, and generate a full vertex operator subalgebra of $V_\Lambda^{\phi_\nu}$ isomorphic to $V_{\Lambda_\nu}^{\hat\nu}\otimes V_{\Lambda^\nu}$.

This implies that we can decompose $V_\Lambda^{\phi_\nu}$ and any of its modules into a direct sum of irreducible $V_{\Lambda_\nu}^{\hat\nu}\otimes V_{\Lambda^\nu}$-modules. First we observe that because fixed-point sublattices are always primitive sublattices and because $\Lambda$ is unimodular, there is a natural isomorphism of \fqs{}s
\begin{equation*}
\psi\colon(\Lambda^\nu)'/\Lambda^\nu\to\overline{(\Lambda_\nu)'/\Lambda_\nu}
\end{equation*}
such that
\begin{equation*}
\Lambda\cong\bigcup_{\alpha+\Lambda^\nu\in(\Lambda^\nu)'/\Lambda^\nu}\psi(\alpha+\Lambda^\nu)\oplus(\alpha+\Lambda^\nu)
\end{equation*}
(see, for example, Proposition~1.2 in \cite{Ebe13}). Hence,
\begin{equation*}
V_\Lambda\cong\bigoplus_{\alpha+\Lambda^\nu\in(\Lambda^\nu)'/\Lambda^\nu}V_{\psi(\alpha+\Lambda^\nu)}\otimes V_{\alpha+\Lambda^\nu}.
\end{equation*}
This can be used to show that
\begin{equation*}
V_\Lambda^{\phi_\nu}(i,j)\cong\bigoplus_{\alpha+\Lambda^\nu\in(\Lambda^\nu)'/\Lambda^\nu}V_{\Lambda_\nu}^{\hat\nu}(\psi(\alpha+\Lambda^\nu),i,j)\otimes V_{\alpha+\Lambda^\nu}
\end{equation*}
for all $i,j\in\Z_m$ (see \cite{Lam19}, proof of Theorem~5.3).

Inserting the above into the definition of $M_{\phi_\nu}$ and defining the isomorphism $\chi:=(\psi,\varphi)$ of \fqs{}s
\begin{equation*}
\chi\colon L'/L\longrightarrow\overline{(\Lambda_\nu)'/\Lambda_\nu}\times(\Z_m\times\Z_m,-Q_m)
\end{equation*}
where $L=\Lambda^\nu\oplus K=\Lambda^\nu\oplus \II_{1,1}(m)$ we can decompose $M_{\phi_\nu}$ as simple-current extension of $V_{\Lambda_\nu}^{\hat\nu}\otimes V_L$:
\begin{prop}\label{prop:Mdecomp}
Let $\nu$ be of square-free order in $M_{23}$. Then the conformal vertex algebra $M_{\phi_\nu}$ decomposes as
\begin{equation*}
M_{\phi_\nu}\cong\bigoplus_{\gamma+L\in L'/L}V_{\Lambda_\nu}^{\hat\nu}(\chi(\gamma+L))\otimes V_{\gamma+L}.
\end{equation*}
\end{prop}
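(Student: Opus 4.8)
The plan is to substitute the $V_{\Lambda_\nu}^{\hat\nu}\otimes V_{\Lambda^\nu}$-module decomposition of the irreducible modules $V_\Lambda^{\phi_\nu}(i,j)$ established just above (following \cite{Lam19}) into the definition of $M_{\phi_\nu}$ from Proposition~\ref{prop:mdef}, and then to reorganise the resulting double sum using the orthogonal splitting $L=\Lambda^\nu\oplus K$. Concretely, writing $\varphi(\alpha+K)=(i,j)$ and inserting
\begin{equation*}
V_\Lambda^{\phi_\nu}(i,j)\cong\bigoplus_{\beta+\Lambda^\nu\in(\Lambda^\nu)'/\Lambda^\nu}V_{\Lambda_\nu}^{\hat\nu}(\psi(\beta+\Lambda^\nu),i,j)\otimes V_{\beta+\Lambda^\nu}
\end{equation*}
into $M_{\phi_\nu}=\bigoplus_{\alpha+K\in K'/K}V_\Lambda^{\phi_\nu}(\varphi(\alpha+K))\otimes V_{\alpha+K}$ produces a direct sum indexed by pairs $(\beta+\Lambda^\nu,\alpha+K)\in(\Lambda^\nu)'/\Lambda^\nu\times K'/K$ whose summand is $V_{\Lambda_\nu}^{\hat\nu}(\psi(\beta+\Lambda^\nu),\varphi(\alpha+K))\otimes V_{\beta+\Lambda^\nu}\otimes V_{\alpha+K}$.

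The key reindexing step uses that $L=\Lambda^\nu\oplus K$ is an orthogonal direct sum, so $L'=(\Lambda^\nu)'\oplus K'$ and hence $L'/L\cong(\Lambda^\nu)'/\Lambda^\nu\times K'/K$, with a coset $\gamma+L$ corresponding to the pair $(\beta+\Lambda^\nu,\alpha+K)$. Under this identification the lattice vertex algebra modules factor as $V_{\gamma+L}\cong V_{\beta+\Lambda^\nu}\otimes V_{\alpha+K}$, and by definition of $\chi=(\psi,\varphi)$ one has $\chi(\gamma+L)=(\psi(\beta+\Lambda^\nu),\varphi(\alpha+K))$, so the two arguments of $V_{\Lambda_\nu}^{\hat\nu}$ combine into the single argument $\chi(\gamma+L)$. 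Collecting tensor factors (using associativity of the tensor product of modules) then rewrites the double sum as $\bigoplus_{\gamma+L\in L'/L}V_{\Lambda_\nu}^{\hat\nu}(\chi(\gamma+L))\otimes V_{\gamma+L}$, which is the claimed decomposition.

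It remains to argue that this is an isomorphism of conformal vertex algebras and not merely of the underlying $V_{\Lambda_\nu}^{\hat\nu}\otimes V_L$-modules. Since $L$ is indefinite, $V_L$ is not \strat{} and the extension theory of Section~\ref{sec:ext} does not apply directly; rather, as in the proof of Proposition~\ref{prop:mdef}, one works in the abelian intertwining algebra framework. The cleanest route is to transport the vertex algebra structure: $M_{\phi_\nu}$ is already a conformal vertex algebra by Proposition~\ref{prop:mdef}, and each isomorphism used above respects the relevant (twisted) module and intertwining structures, so the identification is one of conformal vertex algebras. Conceptually, the right-hand side is the abelian-intertwining-algebra extension of $V_{\Lambda_\nu}^{\hat\nu}\otimes V_L$ along the graph $\{(\chi(\gamma+L),\gamma+L)\mid\gamma+L\in L'/L\}$, which is isotropic precisely because $\chi$ is an isometry of $L'/L$ onto $\overline{R(V_{\Lambda_\nu}^{\hat\nu})}$, so that $Q_\rho(\chi(\gamma+L))+\langle\gamma,\gamma\rangle/2\in\Z$ for all $\gamma+L$; integrality of this grading together with the evenness of the ambient algebra forces the extension to be an honest conformal vertex algebra.

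I expect the genuine content to lie entirely in the two inputs already in hand — the $V_{\Lambda_\nu}^{\hat\nu}\otimes V_{\Lambda^\nu}$-decomposition of $V_\Lambda^{\phi_\nu}(i,j)$ from \cite{Lam19} and the finite-quadratic-space isometry $\chi=(\psi,\varphi)$ — so that the proof is essentially a bookkeeping of indices and tensor factors. The one point that needs real care is the compatibility of the various finite-quadratic-space identifications: one must verify that the group isomorphism $L'/L\cong(\Lambda^\nu)'/\Lambda^\nu\times K'/K$ coming from $L=\Lambda^\nu\oplus K$ is compatible with $\psi$ and $\varphi$, i.e.\ that $\chi$ assembles into an honest isometry onto $\overline{R(V_{\Lambda_\nu}^{\hat\nu})}$, since this is exactly what makes the reindexed sum a conformal vertex algebra rather than an abelian intertwining algebra with a nontrivial quadratic form.
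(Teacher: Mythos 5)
Your proof is correct and follows essentially the same route as the paper: substitute the $V_{\Lambda_\nu}^{\hat\nu}\otimes V_{\Lambda^\nu}$-decomposition of $V_\Lambda^{\phi_\nu}(i,j)$ from \cite{Lam19} into the definition of $M_{\phi_\nu}$ and reindex the double sum via $L'/L\cong(\Lambda^\nu)'/\Lambda^\nu\times K'/K$ together with $\chi=(\psi,\varphi)$ and $V_{\gamma+L}\cong V_{\beta+\Lambda^\nu}\otimes V_{\alpha+K}$. The paper's proof stops at the decomposition as a $V_{\Lambda_\nu}^{\hat\nu}\otimes V_L$-module; your additional paragraphs upgrading the identification to the level of conformal vertex algebras (via the abelian intertwining algebra framework and the isotropy of the graph of $\chi$) are sound extra care, not a divergence in approach.
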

\begin{proof}
With the above observation we can decompose $M_{\phi_\nu}$ as $V_{\Lambda_\nu}^{\hat\nu}\otimes V_L$-module
\begin{align*}
M_{\phi_\nu}&=\bigoplus_{\beta+K\in K'/K}V_\Lambda^{\phi_\nu}(\varphi(\beta+K))\otimes V_{\beta+K}\\
&\cong\bigoplus_{\beta+K\in K'/K}\bigoplus_{\alpha+\Lambda^\nu\in(\Lambda^\nu)'/\Lambda^\nu}V_{\Lambda_\nu}^{\hat\nu}(\psi(\alpha+\Lambda^\nu),\varphi(\beta+K))\otimes V_{\alpha+\Lambda^\nu}\otimes V_{\beta+K}\\
&\cong\bigoplus_{\gamma+L\in L'/L}V_{\Lambda_\nu}^{\hat\nu}(\chi(\gamma+L))\otimes V_{\gamma+L},
\end{align*}
which proves the assertion.
\end{proof}
The proposition implies in particular that $M_{\phi_\nu}$ is graded by $L'$, i.e.\
\begin{equation*}
M_{\phi_\nu}=\bigoplus_{\alpha\in L'}M_{\phi_\nu}(\alpha)=\bigoplus_{\alpha\in L'}V_{\Lambda_\nu}^{\hat\nu}(\chi(\alpha+L))\otimes\pi^{(k-1,1)}_\alpha
\end{equation*}
with $M_{\phi_\nu}(\alpha)=V_{\Lambda_\nu}^{\hat\nu}(\chi(\alpha+L))\otimes\pi^{(k-1,1)}_\alpha$ for all $\alpha\in L'$.

Note that $(V_{\Lambda_\nu}^{\hat\nu})_1=\{0\}$ since $\Lambda_\nu\subseteq\Lambda$ has no vectors $\alpha$ of norm $\langle\alpha,\alpha\rangle/2=1$ and $\nu$ acts fixed-point free on $\Lambda_\nu\otimes\C$. This plays a role in Section~\ref{sec:tenbkmas} when we determine a Cartan subalgebra for the Lie algebra obtained as quantisation of $M_{\phi_\nu}$.

In the following we shall prove that the conformal vertex algebra $M_{\phi_\nu}$ is up to isomorphism independent of the isomorphism $\chi$ and hence in particular of the choice of $\varphi$.
\begin{lem}\label{lem:surjective}
Let $\nu$ be of square-free order $m$ in $M_{23}$ and $L=\Lambda^\nu\oplus\II_{1,1}(m)$. Then the natural group homomorphism $\O(L)\to\O(L'/L)$ is surjective.
\end{lem}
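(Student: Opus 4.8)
The plan is to apply Nikulin's criterion for the surjectivity of the natural map $\O(L)\to\O(L'/L)$ to each of the ten lattices $L=\Lambda^\nu\oplus\II_{1,1}(m)$ of signature $(k-1,1)$, and then to treat separately the single case in which the criterion narrowly fails. First I would recall Nikulin's theorem on integral bilinear forms: if $M$ is an even lattice with positive inertia indices $t_+\geq1$, $t_-\geq1$ and $\rk M\geq\ell(M'/M)+2$, where $\ell(M'/M)$ denotes the minimal number of generators of the discriminant group, then $\O(M)\to\O(M'/M)$ is surjective (and $M$ is then unique in its genus). Since $L$ is indefinite of signature $(k-1,1)$, both inertia indices are automatically positive, so only the rank inequality must be verified.

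Because $m$ is square-free, the discriminant form $L'/L\cong(\Lambda^\nu)'/\Lambda^\nu\oplus K'/K$ with $K=\II_{1,1}(m)$ decomposes into $p$-parts that are $\mathbb{F}_p$-vector spaces, one for each prime $p\mid m$, so that $\ell(L'/L)=\max_{p\mid m}\dim_{\mathbb{F}_p}(L'/L)_p$. Reading the $p$-ranks off the genus symbols of $\Lambda^\nu$ and $K$ in Table~\ref{table:ten} (recall that $K'/K\cong(\Z_m\times\Z_m,Q_m)$ contributes $2$ to every $p$-rank), one checks $\ell(L'/L)+2\leq k=\rk L$ for all $m\in\{1,2,3,5,6,7,11,14,15\}$, the inequality being an equality in the tightest cases $m=11,14,15$. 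This settles nine of the ten cases at once.

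The hard part is the remaining case $m=23$, where $\rk L=4$ but $\ell(L'/L)=3$, so Nikulin's inequality fails by exactly one. Here $L$ has signature $(3,1)$ and $L'/L\cong 23^{-3}$ is a three-dimensional quadratic space over $\mathbb{F}_{23}$. For this case I would argue by lifting reflections: by the Cartan--Dieudonn\'e theorem $\O(L'/L)$ is generated by the reflections $\tau_{\bar v}$ in the anisotropic classes $\bar v\in L'/L$, so it suffices to realise each such reflection as the image of a lattice isometry. Concretely, I would produce for every anisotropic class a primitive representative $r\in L'$ whose norm makes the reflection $s_r\colon x\mapsto x-\tfrac{2\langle x,r\rangle}{\langle r,r\rangle}r$ an isometry of $L$ inducing $\tau_{\bar v}$; the existence of a vector of the required norm in the prescribed discriminant class is ensured because $L$ is indefinite of rank $\geq3$, so the local representation conditions are satisfied and the Hasse--Minkowski principle applies. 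The main technical obstacle is precisely the verification that these reflections preserve $L$ itself (and not merely $L'$) and that the liftable ones suffice to generate the full orthogonal group $\O(23^{-3})$.

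As an alternative uniform route that also covers $m=23$, one may instead invoke the theory of Miranda--Morrison, which expresses the cokernel of $\O(M)\to\O(M'/M)$ for an indefinite even lattice of rank $\geq3$ in terms of local spinor-norm data; for the square-free-level form $23^{-3}$ these local factors can be shown to vanish, yielding surjectivity directly and avoiding the explicit construction of roots. Either way, combining the nine Nikulin cases with the separate treatment of $m=23$ establishes the lemma.
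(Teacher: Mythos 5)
Your proposal is correct and follows essentially the same route as the paper: Nikulin's surjectivity criterion (Theorem~1.14.2 in \cite{Nik80}) disposes of the nine cases $m\neq 23$, and the remaining lattice of genus $\II_{3,1}(23^{-3})$, where the rank inequality $\rk(L)\geq\ell(L'/L)+2$ fails by exactly one, is settled by Miranda--Morrison theory (the paper cites Corollary~7.8 in Chapter~VIII of \cite{MM09}) --- precisely your ``alternative uniform route.'' Your primary suggestion for $m=23$ (lifting reflections of the discriminant form via Cartan--Dieudonn\'e and representation arguments) is the one part not actually carried out and, as you yourself note, would require real work to verify that the relevant reflections preserve $L$ and generate $\O(L'/L)$; the Miranda--Morrison fallback makes that unnecessary.
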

\begin{proof}
Of the ten lattices $L=\Lambda^\nu\oplus\II_{1,1}(m)$ all but one fulfil the assumptions of Theorem~1.14.2 in \cite{Nik80}, which implies the assertion. The lattice for $m=23$ of genus $\II_{3,1}(23^{-3})$ is covered by Corollary~7.8 in \cite{MM09}, Chapter~VIII.
\end{proof}
\begin{prop}\label{prop:choiceindep}
Let $\nu$ be of square-free order $m$ in $M_{23}$. Then the isomorphism class of $M_{\phi_\nu}$ does not depend on the isomorphism $\chi\colon L'/L\to\overline{(\Lambda_\nu)'/\Lambda_\nu}\times(\Z_m\times\Z_m,-Q_m)$ and is in particular independent of the choice of the isomorphism $\varphi\colon K'/K\to(\Z_m\times\Z_m,-Q_m)$.
\end{prop}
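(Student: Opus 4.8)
The idea is to realise any change of $\chi$ by a genuine automorphism of the \voa{} $A:=V_{\Lambda_\nu}^{\hat\nu}\otimes V_L$, of which $M_{\phi_\nu}$ is a simple-current extension by Proposition~\ref{prop:Mdecomp}. Recall from Section~\ref{sec:ext} that such an extension is governed by an isotropic subgroup of the fusion \fqs{}
\[
R(A)=R(V_{\Lambda_\nu}^{\hat\nu})\times(L'/L),
\]
namely here
\[
I_\chi=\big\{\big(\chi(\gamma+L),\,\gamma+L\big)\,\big|\,\gamma+L\in L'/L\big\},
\]
with summands $V_{\Lambda_\nu}^{\hat\nu}(\chi(\gamma+L))\otimes V_{\gamma+L}$. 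Writing $M^{(\chi)}$ to make the dependence explicit, the task is to show $M^{(\chi_1)}\cong M^{(\chi_2)}$ for any two admissible isomorphisms $\chi_1,\chi_2\colon L'/L\to\overline{(\Lambda_\nu)'/\Lambda_\nu}\times(\Z_m\times\Z_m,-Q_m)$.

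First I would reduce the two choices to a single automorphism of the source. Since $\chi_1$ and $\chi_2$ are isomorphisms of \fqs{}s onto the same target, $g:=\chi_1^{-1}\circ\chi_2$ is an automorphism of the \fqs{} $L'/L$, i.e.\ $g\in\O(L'/L)$, and $\chi_2=\chi_1\circ g$. A direct reindexing by $\delta+L=g(\gamma+L)$ then shows that the automorphism $\id\times g$ of $R(A)$ carries one isotropic subgroup to the other:
\[
(\id\times g)(I_{\chi_2})=\big\{\big(\chi_1(g(\gamma+L)),\,g(\gamma+L)\big)\,\big|\,\gamma+L\in L'/L\big\}=I_{\chi_1}.
\]

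Next I would realise $\id\times g$ by a \voa{} automorphism of $A$. By Lemma~\ref{lem:surjective} the natural map $\O(L)\to\O(L'/L)$ is surjective, so $g$ lifts to an isometry $\tilde g\in\O(L)$. Any lift $\widehat{\tilde g}\in\Aut(V_L)$ permutes the irreducible $V_L$-modules according to the action of $g$ on $L'/L$, sending $V_{\gamma+L}$ to $V_{g(\gamma+L)}$; tensoring with the identity on $V_{\Lambda_\nu}^{\hat\nu}$ yields $\Phi:=\id\otimes\widehat{\tilde g}\in\Aut(A)$ inducing exactly $\id\times g$ on the fusion group $R(A)$. Extending $\Phi$ to the ambient \aia{} built from all simple currents (as in Section~\ref{sec:ext}) and restricting to the summands indexed by $I_{\chi_2}$, the map $\Phi$ sends $M^{(\chi_2)}$ isomorphically onto $M^{(\chi_1)}$, since it takes $V_{\Lambda_\nu}^{\hat\nu}(\chi_2(\gamma+L))\otimes V_{\gamma+L}$ to $V_{\Lambda_\nu}^{\hat\nu}(\chi_1(\delta+L))\otimes V_{\delta+L}$ with $\delta+L=g(\gamma+L)$. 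As $\chi=(\psi,\varphi)$ with $\psi$ fixed and natural, independence of the choice of $\varphi$ follows as a special case.

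The main obstacle is the lifting step, which is precisely the content of Lemma~\ref{lem:surjective}: one needs every \fqs{}-automorphism of $L'/L$ to be induced by an honest lattice isometry of $L$, so that it is realised by a \voa{} automorphism of $V_L$. The remaining ingredients—that lattice isometries permute the simple-current modules $V_{\gamma+L}$ according to their action on $L'/L$, and that a \voa{} automorphism of $A$ extends to the surrounding \aia{} and hence identifies simple-current extensions whose defining isotropic subgroups are related by the induced \fqs{}-automorphism—are standard in the simple-current framework of Section~\ref{sec:ext}.
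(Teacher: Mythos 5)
Your proposal is correct and takes essentially the same approach as the paper: the paper's own proof simply invokes the argument of Lemma~3.1 in \cite{HS14} together with Proposition~\ref{prop:Mdecomp} and Lemma~\ref{lem:surjective}, and what you have written out—transporting the isotropic subgroup $I_{\chi_2}$ to $I_{\chi_1}$ via the \fqs{} automorphism $g=\chi_1^{-1}\circ\chi_2$, lifting $g$ to $\O(L)$ by Lemma~\ref{lem:surjective}, realising it as an automorphism of $V_{\Lambda_\nu}^{\hat\nu}\otimes V_L$, and concluding via uniqueness of simple-current extensions—is precisely that argument made explicit.
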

\begin{proof}
As in the proof of Lemma~3.1 in \cite{HS14}, the decomposition in Proposition~\ref{prop:Mdecomp} and Lemma~\ref{lem:surjective} imply the assertion.
\end{proof}


\subsection{Characters}\label{sec:chars}
We describe the characters of the irreducible modules of the orbifold \voa{}s $V_{L_\nu}^{\hat\nu}$ from Section~\ref{sec:orb} and, more specifically, of $V_{\Lambda_\nu}^{\hat\nu}$ where $\nu$ is one of the ten automorphisms of square-free order in $M_{23}$. We then show that the latter form a vector-valued modular form obtained as lift of a certain eta product associated with $\nu$.

The \voa{} $V_{L_\nu}^{\hat\nu}$ is \strat{} of central charge $\rk(L_\nu)$ and has group-like fusion with fusion group $R(V_{L_\nu}^{\hat\nu})=(L_\nu)'/L_\nu\times(\Z_m\times\Z_m,Q_m)$. The corresponding characters
\begin{equation*}
\ch_{V_{L_\nu}^{\hat\nu}(\alpha+L_\nu,i,j)}(\tau)=\tr_{V_{L_\nu}^{\hat\nu}(\alpha+L_\nu,i,j)}q^{L_0-c/24},
\end{equation*}
$q=\e^{(2\pi\i)\tau}$, for $\alpha+L_\nu\in(L_\nu)'/L_\nu$ and $i,j\in\Z_m$ satisfy Zhu's modular invariance \cite{Zhu96}, i.e.\ they form a vector-valued modular form of weight $0$ for Zhu's representation
\begin{equation*}
\rho_{V_{L_\nu}^{\hat\nu}}\colon\SLZ\to\GL(\C[R(V_{L_\nu}^{\hat\nu})])
\end{equation*}
that is holomorphic on the upper half-plane $\H$ but may have poles at the cusp $\infty$. Since all irreducible $V_{L_\nu}^{\hat\nu}$-modules are simple currents, Zhu's representation takes a very simple form (see \cite{EMS20a}, Theorem~3.4, \cite{Moe16}, Proposition~2.2.6):
\begin{align*}
\rho_{V_{L_\nu}^{\hat{\rho}}}(S)_{(\alpha+L_\nu,i,j),(\beta+L_\nu,k,l)}&=\frac{1}{m\sqrt{|(L_\nu)'/L_\nu|}}\e^{-(2\pi\i)(\langle\alpha,\beta\rangle+(il+jk)/m)},\\
\rho_{V_{L_\nu}^{\hat{\rho}}}(T)_{(\alpha+L_\nu,i,j),(\beta+L_\nu,k,l)}&=\delta_{(\alpha+L_\nu,i,j),(\beta+L_\nu,k,l)}\e^{(2\pi\i)(\langle\alpha,\alpha\rangle/2+ij/m-c/24)}
\end{align*}
for the standard generators $S,T\in\SLZ$.

The characters of the irreducible $V_{L_\nu}^{\hat\nu}$-modules $V_{L_\nu}^{\hat\nu}(\alpha+L_\nu,0,j)$, i.e.\ those stemming from the irreducible untwisted $V_{L_\nu}$-modules, can be computed directly. In fact, we shall be able to express them explicitly in terms of theta series and the eta function. Since their modular properties are explicitly known, we can then determine the full vector-valued character of $V_{L_\nu}^{\hat\nu}$ by applying modular transformations.

More precisely, in order to compute the characters of the irreducible modules $V_{L_\nu}^{\hat\nu}(\alpha+L_\nu,0,j)$ we first consider the twisted trace functions \cite{DLM00}
\begin{equation*}
T_{\alpha+L_\nu,i,j}(\tau):=\tr_{V_{\alpha+L_\nu}(\hat\nu^i)}\phi_{\alpha+L_\nu,i}(\hat\nu^j)q^{L_0-c/24}
\end{equation*}
for all $\alpha+L_\nu\in(L_\nu)'/L_\nu$ and $i,j\in\Z_m$ where $\phi_{\alpha+L_\nu,i}$ is the choice of representation of $\langle\hat\nu\rangle$ on $V_{\alpha+L_\nu}(\hat\nu^i)$ described in Section~\ref{sec:orb}. It follows directly from the definition of the irreducible $V_{L_\nu}^{\hat\nu}$-modules that
\begin{equation*}
\ch_{V_{L_\nu}^{\hat\nu}(\alpha+L_\nu,i,j)}(\tau)=\frac{1}{m}\sum_{k\in\Z_m}\e^{-(2\pi\i)jk/m}T_{\alpha+L_\nu,i,k}(\tau)
\end{equation*}
for all $\alpha+L_\nu\in(L_\nu)'/L_\nu$ and $i,j\in\Z_m$.

Since the action of $\langle\hat\nu\rangle$ on the untwisted $V_{L_\nu}$-modules $V_{\alpha+L_\nu}$ for all $\alpha+L_\nu\in(L_\nu)'/L_\nu$ can be explicitly determined, it is possible to compute $T_{\alpha+L_\nu,0,j}(\tau)$ and hence $\ch_{V_{L_\nu}^{\hat\nu}(\alpha+L_\nu,0,j)}(\tau)$ for all $\alpha+L_\nu\in(L_\nu)'/L_\nu$ and $j\in\Z_m$.

Now consider the \voa{} $V_{\Lambda_\nu}^{\hat\nu}$ where $\Lambda$ is the Leech lattice and $\nu$ is one of the ten automorphisms of square-free order in $M_{23}$. Recall that for a lattice automorphism of cycle shape $\prod_{t\mid m}t^{b_t}$, $b_t\in\Z$, the associated eta product is $\eta_\nu(\tau)=\prod_{t\mid m}\eta(t\tau)^{b_t}$. Also, for any subset $S$ of a positive-definite lattice the corresponding theta series is defined as $\vartheta_S(\tau):=\sum_{\alpha\in S}q^{\langle\alpha,\alpha\rangle/2}$.
\begin{prop}
Let $\nu$ be of square-free order $m$ in $M_{23}$. Assume that the representations $\phi_{\alpha+L_\nu,0}$ of $\langle\hat\nu\rangle$ on the irreducible $V_{\Lambda_\nu}$-modules are chosen as in Section~\ref{sec:orb}. Then
\begin{equation*}
T_{\alpha+\Lambda_\nu,0,j}(\tau)=\frac{\vartheta_{(\alpha+\Lambda_\nu)^{\nu^j}}(\tau)}{\eta_{\nu^j}(\tau)}
\end{equation*}
for all $\alpha+\Lambda_\nu\in(\Lambda_\nu)'/\Lambda_\nu$ and $j\in\Z_m$ where $(\alpha+\Lambda_\nu)^{\nu^j}$ are the vectors in the lattice coset $\alpha+\Lambda_\nu$ invariant under $\nu^j$.
\end{prop}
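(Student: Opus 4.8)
The plan is to compute the graded trace $T_{\alpha+\Lambda_\nu,0,j}$ directly from the Fock-space decomposition of the untwisted module $V_{\alpha+\Lambda_\nu}$. Setting $\h:=\Lambda_\nu\otimes_\Z\C$ and $d:=\rk(\Lambda_\nu)$ (which is also the central charge $c$ of $V_{\Lambda_\nu}$), and letting $\pi_\beta^d$ denote the irreducible Heisenberg module with lowest-weight vector $v_\beta$ of $L_0$-weight $\langle\beta,\beta\rangle/2$, we have $V_{\alpha+\Lambda_\nu}=\bigoplus_{\beta\in\alpha+\Lambda_\nu}\pi_\beta^d$. Since $\nu$ acts trivially on $(\Lambda_\nu)'/\Lambda_\nu$, the standard lift $\hat\nu$ restricts to each $V_{\alpha+\Lambda_\nu}$, and for $i=0$ this restriction already realises the normalisation of Section~\ref{sec:orb}, as $Q_\rho((\alpha+\Lambda_\nu,0,j))=\langle\alpha,\alpha\rangle/2=Q_m((\alpha+\Lambda_\nu,0,j))$ (the term $ij/m$ vanishes). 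Hence we may take $\phi_{\alpha+\Lambda_\nu,0}(\hat\nu^j)=\hat\nu^j|_{V_{\alpha+\Lambda_\nu}}$, and on the above decomposition $\hat\nu^j$ sends $\pi_\beta^d$ to $\pi_{\nu^j\beta}^d$.

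First I would collapse the trace onto its diagonal blocks: a block $\pi_\beta^d$ contributes to $\tr_{V_{\alpha+\Lambda_\nu}}\hat\nu^j\,q^{L_0-c/24}$ only when $\nu^j\beta=\beta$, so the sum reduces to $\beta\in(\alpha+\Lambda_\nu)^{\nu^j}$. On such a diagonal block $\hat\nu^j$ acts as the linear map $\nu^j$ on the Heisenberg modes and by a scalar $\xi_j(\beta)$ on $v_\beta$, so the block contributes
\begin{equation*}
\tr_{\pi_\beta^d}\hat\nu^j\,q^{L_0-c/24}=\xi_j(\beta)\,q^{\langle\beta,\beta\rangle/2}\,\tr_{\pi_0^d}\nu^j\,q^{L_0-c/24},
\end{equation*}
with the Fock-space factor independent of $\beta$.

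The crux is to show $\xi_j(\beta)=1$ for every $\nu^j$-fixed $\beta$; this is where the standard-lift property is decisive. Since no order doubling occurs for these ten automorphisms, the same reasoning that makes $\phi_\nu^k$ a standard lift of $\nu^k$ on $\Lambda$ shows that $\hat\nu^j$ is the standard lift of $\nu^j$ on $\Lambda_\nu$. By definition the sign function of a standard lift is trivial on the corresponding fixed-point lattice, so $\xi_j(\beta)=1$ whenever $\beta\in(\Lambda_\nu)^{\nu^j}$, i.e.\ $\hat\nu^j v_\beta=v_\beta$. For prime $m$ the map $\nu^j$ is fixed-point free on $\Lambda_\nu$ and only $\beta=0$ survives, so this is immediate; but for composite $m$ (for instance $m=6,14,15$) the sublattice $(\Lambda_\nu)^{\nu^j}$ is genuinely nontrivial and this step carries real content.

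It then remains to evaluate the Heisenberg factor. Writing the cycle shape of $\nu^j$ acting on $\Lambda_\nu$ as $\prod_t t^{b_t}$ with $b_t\in\Z$ (so that $\eta_{\nu^j}(\tau)=\prod_t\eta(t\tau)^{b_t}$ by definition), and noting $\det_\h(1-\nu^j x)=\prod_t(1-x^t)^{b_t}$ and $c=\rk(\Lambda_\nu)=\sum_t tb_t$, the standard boson computation yields
\begin{equation*}
\tr_{\pi_0^d}\nu^j\,q^{L_0-c/24}=q^{-c/24}\prod_{n\geq1}\frac{1}{\det_\h(1-\nu^j q^n)}=q^{-c/24}\prod_t\prod_{n\geq1}(1-q^{tn})^{-b_t}=\frac{1}{\eta_{\nu^j}(\tau)},
\end{equation*}
using $\prod_{n\geq1}(1-q^{tn})=q^{-t/24}\eta(t\tau)$. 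Summing the diagonal contributions over $\beta\in(\alpha+\Lambda_\nu)^{\nu^j}$ and factoring out $1/\eta_{\nu^j}(\tau)$ gives $T_{\alpha+\Lambda_\nu,0,j}(\tau)=\vartheta_{(\alpha+\Lambda_\nu)^{\nu^j}}(\tau)/\eta_{\nu^j}(\tau)$, as claimed.
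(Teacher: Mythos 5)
Your computational skeleton---collapsing the trace onto the Heisenberg blocks fixed by $\nu^j$, and evaluating the Fock-space factor as $q^{-c/24}\prod_{n\geq1}\det_\h(1-\nu^jq^n)^{-1}=1/\eta_{\nu^j}(\tau)$---is correct, and it is indeed the route the underlying technical proof takes (the paper itself only defers to \cite{Moe16}, Proposition~7.5.9 and Lemma~7.6.8). The gap is at precisely the step you call the crux. The standard-lift property controls the sign function $\eta\colon\Lambda_\nu\to\{\pm1\}$ \emph{on the lattice $\Lambda_\nu$}: it says $\eta$ is trivial on the fixed-point sublattice, and (granting your claim that powers of $\hat\nu$ are again standard lifts) this gives $\xi_j(\beta)=1$ only for $\beta\in(\Lambda_\nu)^{\nu^j}\subseteq\Lambda_\nu$, i.e.\ for the trivial coset $\alpha+\Lambda_\nu=0+\Lambda_\nu$. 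For a nontrivial coset, the fixed vectors $\beta\in(\alpha+\Lambda_\nu)^{\nu^j}$ lie in $(\Lambda_\nu)'\setminus\Lambda_\nu$, where the lift's sign function is simply not defined. Moreover, $\hat\nu$ does not ``restrict'' to $V_{\alpha+\Lambda_\nu}$ --- the module is not a subspace of $V_{\Lambda_\nu}$ --- so what acts there is a representation $\phi_{\alpha+\Lambda_\nu,0}$ that must be constructed to intertwine the vertex operators $Y(\hat\nu v,x)=\phi(\hat\nu)Y(v,x)\phi(\hat\nu)^{-1}$; this intertwining condition forces relative signs among the $\xi_j(\beta)$ within one coset, governed by the $2$-cocycle $\eps$ and the lift data, and these are not trivial in general. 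This is exactly what the paper's proof warns about: in general the right-hand side holds only with $\vartheta_{(\alpha+\Lambda_\nu)^{\nu^j}}$ replaced by a theta series twisted by some sign function $(\Lambda_\nu)'\to\{\pm1\}$, and showing these signs vanish for the ten automorphisms at hand is the ``somewhat technical'' content of the cited thesis results; it uses specific features of these lattices and cycle shapes, not the standard-lift property alone.

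There is a secondary gap in how you tie your chosen action to the hypothesis of the proposition. You argue that the restriction realises the normalisation of Section~\ref{sec:orb} because $Q_\rho((\alpha+\Lambda_\nu,0,j))=\langle\alpha,\alpha\rangle/2$ contains no $ij/m$ term, but for $i=0$ this condition is vacuous: all $L_0$-eigenvalues of $V_{\alpha+\Lambda_\nu}$ are congruent modulo $1$, so \emph{every} choice of $\phi_{\alpha+\Lambda_\nu,0}$ (the choices differ by an overall $m$-th root of unity $\omega$) produces the same conformal weights modulo $1$. Meanwhile $T_{\alpha+\Lambda_\nu,0,j}$ rescales by $\omega^j$ under such a change, and the claimed formula has positive leading coefficient, so the statement is sensitive to this choice. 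Your proof therefore never actually connects the action you work with to the normalisation assumed in the statement; pinning down both the overall root of unity and the relative signs on the fixed blocks is what the technical proof really accomplishes.
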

\begin{proof}
The somewhat technical proof can be found in \cite{Moe16}, Proposition~7.5.9 and Lemma~7.6.8. For the assertion to hold, the actions of $\langle\hat\nu\rangle$ on the irreducible $V_{\Lambda_\nu}$-modules have to be sufficiently nice. In general, the theta series in the above expression would be modified by some function $(\Lambda_\nu)'\to\{\pm1\}$.
\end{proof}

The above proposition and the preceding discussion allow us to compute the vector-valued character of $V_{\Lambda_\nu}^{\hat\nu}$. By multiplying by a suitable power of the eta function we make the character transform under the more standard Weil representation rather than Zhu's representation:
\begin{prop}\label{prop:F1}
Let $\nu$ be of square-free order $m$ in $M_{23}$. Then
\begin{equation*}
\ch_{V_{\Lambda_\nu}^{\hat\nu}(\alpha+\Lambda_\nu,i,j)}(\tau)/\eta(\tau)^{\rk(\Lambda^\nu)}
\end{equation*}
for $\alpha+\Lambda_\nu\in(\Lambda_\nu)'/\Lambda_\nu$ and $i,j\in\Z_m$ are the components of a vector-valued modular form, holomorphic on $\H$ but with possible poles at the cusp $\infty$, of weight $w=-\rk(\Lambda^\nu)/2=1-k/2\in\Z_{<0}$ for the Weil representation of $\SLZ$ on $\C[R(V_{\Lambda_\nu}^{\hat\nu})]$.
\end{prop}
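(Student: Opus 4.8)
The plan is to start from Zhu's modular invariance, which (as recalled before the statement) guarantees that the characters $\ch_{V_{\Lambda_\nu}^{\hat\nu}(\alpha+\Lambda_\nu,i,j)}$ assemble into a vector-valued modular form of weight $0$ for Zhu's representation $\rho_{V_{\Lambda_\nu}^{\hat\nu}}$ on $\C[R(V_{\Lambda_\nu}^{\hat\nu})]$, whose generators $S$ and $T$ act by the explicit matrices displayed above. Multiplying each component by $1/\eta^{\rk(\Lambda^\nu)}$ then yields a vector-valued function, and the claim reduces to identifying its transformation behaviour with that of the Weil representation $\rho_{R(V_{\Lambda_\nu}^{\hat\nu})}$. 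Since $\eta$ is nowhere zero on $\H$ and the characters converge there, holomorphy on $\H$ is automatic; the substance lies in matching the $S$- and $T$-matrices.

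First I would fix the weight and the $T$-action. As $\eta$ is modular of weight $1/2$ and $\rk(\Lambda^\nu)$ is even in all ten cases, $\eta^{\rk(\Lambda^\nu)}$ has integral weight $\rk(\Lambda^\nu)/2$, so the division shifts the weight from $0$ to $-\rk(\Lambda^\nu)/2=w$. Under $T\colon\tau\mapsto\tau+1$ the factor $1/\eta^{\rk(\Lambda^\nu)}$ contributes $\e^{-(2\pi\i)\rk(\Lambda^\nu)/24}$, which combines with the $\e^{-(2\pi\i)c/24}$ already present in Zhu's diagonal $T$-matrix. Using that the central charge is $c=\rk(\Lambda_\nu)$ and that $\rk(\Lambda_\nu)+\rk(\Lambda^\nu)=\rk(\Lambda)=24$, these exponentials collapse to $\e^{-2\pi\i}=1$, leaving exactly the phase $\e^{(2\pi\i)(\langle\alpha,\alpha\rangle/2+ij/m)}=\e^{(2\pi\i)Q_\rho((\alpha+\Lambda_\nu,i,j))}$, i.e.\ the $T$-matrix of the Weil representation.

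Next I would treat $S\colon\tau\mapsto-1/\tau$. The exponential part of Zhu's $S$-matrix is $\e^{-(2\pi\i)(\langle\alpha,\beta\rangle+(il+jk)/m)}$, and a direct computation of the bilinear form on $R(V_{\Lambda_\nu}^{\hat\nu})=(\Lambda_\nu)'/\Lambda_\nu\times(\Z_m\times\Z_m,Q_m)$ gives $B((\alpha,i,j),(\beta,k,l))=\langle\alpha,\beta\rangle+(il+jk)/m$, so this factor already equals $\e^{-(2\pi\i)B(\gamma,\delta)}$; likewise the normalisation $1/(m\sqrt{|(\Lambda_\nu)'/\Lambda_\nu|})$ equals $1/\sqrt{|R(V_{\Lambda_\nu}^{\hat\nu})|}$. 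It remains to recover the Weil phase $\e^{-(2\pi\i)\operatorname{sign}(R(V_{\Lambda_\nu}^{\hat\nu}))/8}$. This is the crux. Since $\eta^{-\rk(\Lambda^\nu)}(-1/\tau)=(-\i)^{w}\tau^{w}\eta^{-\rk(\Lambda^\nu)}(\tau)$, the effective $S$-matrix is $(-\i)^{w}\rho_{V_{\Lambda_\nu}^{\hat\nu}}(S)$, so I must verify $(-\i)^{w}=\e^{-(2\pi\i)\operatorname{sign}(R(V_{\Lambda_\nu}^{\hat\nu}))/8}$. By Milgram's formula the signature of the positive-definite part $(\Lambda_\nu)'/\Lambda_\nu$ is $\rk(\Lambda_\nu)\equiv 26-k\pmod 8$, while the factor $(\Z_m\times\Z_m,Q_m)$, being the discriminant form of $\II_{1,1}(m)$, has signature $0$; hence $\operatorname{sign}(R(V_{\Lambda_\nu}^{\hat\nu}))\equiv 2-k\pmod 8$. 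Writing $(-\i)^{w}=\e^{-(2\pi\i)w/4}$ and substituting $w=1-k/2$ gives $w/4=(2-k)/8$, so the two phases agree.

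Finally, for the behaviour at the cusp I would inspect the leading $q$-exponents: each character behaves like $q^{\rho(\cdot)-c/24}$ as $q\to0$, and dividing by $\eta^{\rk(\Lambda^\nu)}\sim q^{\rk(\Lambda^\nu)/24}$ produces leading exponent $\rho(\cdot)-(c+\rk(\Lambda^\nu))/24=\rho(\cdot)-1$, which is negative for the low-weight components and hence gives a pole; thus the object is holomorphic on $\H$ with at most a pole at $\infty$, as claimed. The one genuinely non-formal step is the $S$-matrix phase, i.e.\ the signature bookkeeping modulo $8$; once the identity $(-\i)^{w}=\e^{-(2\pi\i)\operatorname{sign}(R(V_{\Lambda_\nu}^{\hat\nu}))/8}$ is established, matching the remaining data is routine.
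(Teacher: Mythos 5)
Your proof is correct and is in substance the same as the paper's: the paper simply cites Corollary~2.2.13 of \cite{Moe16} (which asserts that the characters multiplied by $\eta^{\rk(\Lambda_\nu)}$ form a vector-valued modular form of weight $\rk(\Lambda_\nu)/2$ for the Weil representation) and then divides by $\Delta=\eta^{24}$, and your direct matching of Zhu's $S$- and $T$-matrices against the Weil representation --- the bookkeeping $c+\rk(\Lambda^\nu)=24$ for $T$, and Milgram's formula giving $\sign(R(V_{\Lambda_\nu}^{\hat\nu}))\equiv\rk(\Lambda_\nu)\equiv 2-k\pmod{8}$ so that $(-\i)^{w}$ is exactly the Weil $S$-phase --- is precisely the computation hidden behind that citation. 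The only step worth stating explicitly is that agreement on the generators $S$ and $T$ suffices, since the Weil representation here is a genuine representation of $\SLZ$ (even signature, integral weight $w$), so the transformation law propagates from the generators to all of $\SLZ$.
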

\begin{proof}
By Corollary~2.2.13 in \cite{Moe16}, the $\ch_{V_{\Lambda_\nu}^{\hat\nu}(\alpha+\Lambda_\nu,i,j)}(\tau)\eta(\tau)^{\rk(\Lambda_\nu)}$ for $\alpha+\Lambda_\nu\in(\Lambda_\nu)'/\Lambda_\nu$ and $i,j\in\Z_m$ form a vector-valued modular form of weight $\rk(\Lambda_\nu)/2$ for the Weil representation of $\SLZ$ on $\C[R(V_{\Lambda_\nu}^{\hat\nu})]$. Dividing by $\Delta(\tau)=\eta(\tau)^{24}$, which is modular of weight $12$, yields the assertion.
\end{proof}

In the following we shall see that the vector-valued modular form from Proposition~\ref{prop:F1} is exactly the vector-valued modular form $F$ obtained in \cite{Sch04b,Sch06,Sch08} as lift of a certain scalar-valued modular form associated with $\nu$ (see also Section~\ref{sec:twisting}).

We consider the eta product
\begin{equation*}
f(\tau):=\frac{1}{\eta_\nu(\tau)}=\prod_{t\mid m}\eta(t\tau)^{-24/\sigma_1(m)}
\end{equation*}
associated with the cycle shape of $\nu\in\O(\Lambda)$. Products of rescaled eta functions are sometimes modular forms.

To describe this in more detail, we define the Dirichlet character $\chi_s$ for $s\in\Ns$ as the Kronecker symbol $\chi_s(j):=(j/s)$, $j\in\Z$. Note that if $s$ is an odd prime, then $\chi_s$ is a character modulo~$s$. For $s=1$ we get the trivial character. Given a quadratic Dirichlet character $\chi\colon\Z\to\{\pm1\}$ of some modulus $k\in\Ns$ we can view it as a character $\chi\colon\Gamma_0(k)\to\{\pm1\}$ on the congruence subgroup $\Gamma_0(k)$ by setting $\chi(M):=\chi(a)=\chi(d)$ for $M=\left(\begin{smallmatrix}a&b\\c&d\end{smallmatrix}\right)\in\Gamma_0(k)$. Then clearly, $\chi$ is also a character on $\Gamma_0(l)$ for any multiple $l$ of $k$.

Theorem~6.2 in \cite{Bor00} implies:
\begin{lem}\label{lem:fmodform}
Let $\nu$ be of square-free order $m$ in $M_{23}$. Then $f(\tau)$ is a modular form, holomorphic on $\H$ but with possible poles at the cusps, of weight $w=1-k/2=-12\sigma_0(m)/\sigma_1(m)\in\Z_{<0}$ for the congruence subgroup $\Gamma_0(m)$ and character $\chi_s$ where $s=s(m)\in\Ns$ is chosen such that $s\prod_{t\mid m}t^{24/\sigma_1(m)}$ is a rational square, i.e.\
\begin{equation*}
s(m)=
\begin{cases}
m&\text{if }m=7,23,\\
1&\text{otherwise.}
\end{cases}
\end{equation*}
\end{lem}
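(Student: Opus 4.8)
The plan is to realise $f$ as an eta quotient with constant exponents and to apply the standard modularity criterion for such products, which is exactly the content of Theorem~6.2 in \cite{Bor00} (a version of the Gordon--Hughes--Newman--Ligozat criterion). Writing $c:=24/\sigma_1(m)\in\Ns$, we have
\[
f(\tau)=\prod_{t\mid m}\eta(t\tau)^{r_t},\qquad r_t=-c\ \text{for all }t\mid m.
\]
The criterion asserts that such a product is a modular form of weight $\tfrac12\sum_{t\mid m}r_t$ for $\Gamma_0(m)$ with a Kronecker-symbol character, holomorphic on $\H$ and with possible poles only at the cusps, provided the two integrality conditions $\sum_{t\mid m}t\,r_t\equiv0\pmod{24}$ and $\sum_{t\mid m}(m/t)\,r_t\equiv0\pmod{24}$ hold. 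So the proof reduces to (i) checking these two congruences, (ii) computing the weight, and (iii) identifying the character; holomorphy and non-vanishing on $\H$ are automatic from the corresponding properties of $\eta$.

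For step (i), since the $r_t$ are constant and $\{m/t:t\mid m\}=\{t:t\mid m\}$, both sums coincide and equal $-c\sum_{t\mid m}t=-c\,\sigma_1(m)=-24\equiv0\pmod{24}$, using $c\,\sigma_1(m)=24$. For step (ii), the weight is $\tfrac12\sum_{t\mid m}r_t=-\tfrac{c}{2}\,\sigma_0(m)=-12\sigma_0(m)/\sigma_1(m)$, which equals $1-k/2$ after inserting $\rk(\Lambda^\nu)=k-2=24\sigma_0(m)/\sigma_1(m)$; in all ten cases this is a negative integer, so $w\in\Z_{<0}$ as claimed.

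The delicate point, and the part I expect to need the most care, is step (iii), the character. The criterion produces the character $d\mapsto\left(\frac{(-1)^{w}\,s'}{d}\right)$ (Kronecker symbol) with $s'=\prod_{t\mid m}t^{r_t}$. Since $r_t=-c$, we have $s'=(\prod_{t\mid m}t^{c})^{-1}$, and as $s(m)\prod_{t\mid m}t^{c}$ is a rational square by the defining property of $s(m)$, we get $s'\equiv s(m)\pmod{(\Q^\times)^2}$, so the character is $d\mapsto\left(\frac{(-1)^{w}s(m)}{d}\right)$. It remains to match this with $\chi_{s(m)}$. One checks directly that $w$ is even for all $m\notin\{7,23\}$; there $s(m)=1$ and the character is trivial, i.e.\ $\chi_1$. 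For $m\in\{7,23\}$ the weight $w$ is odd and $s(m)=m$ is an odd prime with $m\equiv3\pmod4$, so $-m\equiv1\pmod4$ is a fundamental discriminant; by quadratic reciprocity the Kronecker symbol $\left(\frac{-m}{d}\right)$ equals the Legendre symbol $\left(\frac{d}{m}\right)=\chi_m(d)$ for $d$ coprime to $m$. Thus the sign $(-1)^{w}$ is precisely what is needed to convert the ``numerator'' symbol produced by the eta quotient into the ``denominator'' symbol $\chi_{s(m)}(d)=(d/s(m))$ defining $\chi_s$, completing the identification of the character and hence the proof.
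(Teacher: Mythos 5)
Your proposal is correct and takes essentially the same route as the paper: the paper obtains the lemma as a direct consequence of Theorem~6.2 in \cite{Bor00} (the eta-quotient modularity criterion), which is exactly the criterion you invoke. Your verification of the two congruences modulo $24$, the weight computation, and the identification of the character via the square class of $\prod_{t\mid m}t^{24/\sigma_1(m)}$ together with quadratic reciprocity (using $-m\equiv 1\pmod 4$ for $m=7,23$) simply spells out the details that the paper leaves implicit.
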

Note that as described above, $\chi_s$ is indeed a character on $\Gamma_0(m)$ and it is the trivial character except for $m=7,23$.

Consider now the lattice $L=\Lambda^\nu\oplus K$ and its discriminant form $L'/L$. It has level $m$ and even signature. For any \fqs{} $D$ of even signature and level $N$ we define
\begin{equation*}
\chi_D(j):=\left(\frac{j}{\left|D\right|}\right)e\left(\left(j-1\right)\oddity(D)/8\right),
\end{equation*}
$j\in\Z$, which is a quadratic Dirichlet character modulo~$N$ (see, for example, Section~6 in \cite{Sch06}). If 4 does not divide the level $N$, for instance if $N$ is square-free, then the character simplifies and becomes
\begin{equation*}
\chi_D(j)=\left(\frac{j}{\left|D\right|}\right).
\end{equation*}
Using elementary properties of the Kronecker symbol we find:
\begin{lem}
Let $\nu$ be of square-free order $m$ in $M_{23}$ and $L=\Lambda^\nu\oplus\II_{1,1}(m)$. Then $\chi_{L'/L}=\chi_s$ for $s=s(m)$ as defined in Lemma~\ref{lem:fmodform}.
\end{lem}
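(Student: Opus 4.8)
The plan is to reduce both characters to plain Kronecker symbols and then compare them one prime at a time. First I would record that $L=\Lambda^\nu\oplus\II_{1,1}(m)$ has level $m$ (as in Theorem~\ref{thm:twistedbkma}) and even signature; since $m$ is square-free we have $4\nmid m$, so by the simplified formula from the surrounding discussion the oddity correction term in the definition of $\chi_{L'/L}$ drops out and $\chi_{L'/L}(j)=\left(\frac{j}{|L'/L|}\right)$. As $\chi_s(j)=\left(\frac{j}{s}\right)$ by definition, the whole statement becomes the identity of the two Kronecker symbols $\left(\frac{\cdot}{|L'/L|}\right)$ and $\left(\frac{\cdot}{s}\right)$, viewed as Dirichlet characters modulo $m$.

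Next I would exploit that the Kronecker symbol is completely multiplicative in its lower entry and that $\left(\frac{\cdot}{p}\right)$ has order dividing $2$ for every prime $p$. Writing $n=\prod_p p^{e_p}$ gives $\left(\frac{j}{n}\right)=\prod_{p\,:\,e_p\text{ odd}}\left(\frac{j}{p}\right)$ for $j$ coprime to $n$, so each Kronecker symbol depends only on the set of primes dividing its lower entry to an odd power. Hence the claim reduces to showing that a prime $p$ divides $|L'/L|$ to an odd power exactly when it divides $s=s(m)$ to an odd power, i.e.\ exactly when $p=m\in\{7,23\}$.

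To finish I would compute the discriminant. The discriminant form of $L$ is the orthogonal sum $(\Lambda^\nu)'/\Lambda^\nu\oplus K'/K$, so $|L'/L|=|(\Lambda^\nu)'/\Lambda^\nu|\cdot m^2$; since $m$ is square-free the factor $m^2$ contributes an even valuation at every prime dividing $m$, whence the parity of the $p$-valuation of $|L'/L|$ equals that of $|(\Lambda^\nu)'/\Lambda^\nu|$, i.e.\ the parity of the $p$-rank of the discriminant form of $\Lambda^\nu$. These $p$-ranks are read off from the genus symbols of $\Lambda^\nu$ in Table~\ref{table:ten}: all of them are even except the $7$-rank for $m=7$ (which is $3$) and the $23$-rank for $m=23$ (which is $1$), which matches $s(m)$ exactly.

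The reduction and the Kronecker-symbol manipulations are purely formal once the simplified character formula for square-free level is invoked. The only genuine input is the list of discriminant forms of the $\Lambda^\nu$ in Table~\ref{table:ten}, which pins down the parity of each $p$-rank (and in particular shows that the primes dividing $|L'/L|$ are exactly those dividing $m$); I expect that verifying these parities across the ten cases, rather than any conceptual difficulty, is the main thing to check.
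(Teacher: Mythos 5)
Your proof is correct and matches the paper's (implicit) argument: the paper gives no written proof beyond invoking ``elementary properties of the Kronecker symbol,'' and your reduction --- simplify $\chi_{L'/L}$ to $\left(\frac{\cdot}{|L'/L|}\right)$ using square-free level, split into Legendre symbols at primes of odd valuation, and check parities of the $p$-ranks of $(\Lambda^\nu)'/\Lambda^\nu$ against $s(m)$ --- is exactly that elementary verification, with the observation that the $m^2$ from $\II_{1,1}(m)$ cannot affect any parity. Note only that your table check is equivalent to the defining property of $s(m)$ (that $s\prod_{t\mid m}t^{24/\sigma_1(m)}$ is a rational square), since $|(\Lambda^\nu)'/\Lambda^\nu|=\prod_{t\mid m}t^{24/\sigma_1(m)}$.
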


This lemma allows us to lift $f(\tau)=1/\eta_\nu(\tau)$ to a vector-valued modular form for the (dual) Weil representation on $\C[L'/L]$.
\begin{prop}\label{prop:F2}
Let $\nu$ be of square-free order $m$ in $M_{23}$. Then
\begin{equation*}
F_{\alpha+L}(\tau):=\!\!\!\!\sum_{M\in\Gamma_0(m)\backslash\SLZ}\!\!\!\!(c\tau+d)^{-w}\frac{1}{\eta_\nu(M.\tau)}\overline\rho_{L'/L}(M^{-1})_{\alpha+L,0+L}
\end{equation*}
for $\alpha+L\in L'/L$ defines a vector-valued modular form $F$, holomorphic on $\H$ but with possible poles at the cusp $\infty$, of weight $w=1-k/2=-12\sigma_0(m)/\sigma_1(m)$ for the dual Weil representation $\overline\rho_{L'/L}$ of $\SLZ$ on $\C[L'/L]$. Moreover, $F$ is invariant under the automorphisms of the \fqs{} $L'/L$.
\end{prop}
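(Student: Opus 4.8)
The plan is to recognise $F$ as the standard lift (in the sense of Borcherds and Scheithauer) of the scalar-valued modular form $f(\tau)=1/\eta_\nu(\tau)$ to a vector-valued form for the dual Weil representation, and to verify in turn: (i) that each summand depends only on the coset $\Gamma_0(m)M$, so that $F$ is well defined; (ii) that $F$ transforms correctly under all of $\SLZ$; (iii) that $F$ is holomorphic on $\H$ with poles only at $\infty$; and (iv) that $F$ is fixed by $\O(L'/L)$. Throughout I will write $j(M,\tau)=c\tau+d$ for $M=\left(\begin{smallmatrix}a&b\\c&d\end{smallmatrix}\right)$ and use the cocycle relation $j(M_1M_2,\tau)=j(M_1,M_2.\tau)\,j(M_2,\tau)$.

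The decisive input is the behaviour of the basis vector $e_{0+L}\in\C[L'/L]$ under $\Gamma_0(m)$. Since $L$ has level $m$ and even signature, a standard property of the Weil representation gives that $e_{0+L}$ is an eigenvector, $\overline{\rho}_{L'/L}(\gamma)e_{0+L}=\chi_{L'/L}(\gamma)e_{0+L}$ for every $\gamma\in\Gamma_0(m)$. By the lemma immediately preceding this proposition we have $\chi_{L'/L}=\chi_s$, and by Lemma~\ref{lem:fmodform} the form $f$ satisfies $f(\gamma.\tau)=\chi_s(\gamma)\,j(\gamma,\tau)^{w}f(\tau)$ for $\gamma\in\Gamma_0(m)$. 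Writing a second representative as $\gamma M$ and combining these three facts — the transformation of $f$, the eigenvector relation for $e_{0+L}$ (applied to the column index $0+L$ of $\overline{\rho}_{L'/L}((\gamma M)^{-1})$), and the cocycle relation for $j$ — the two occurrences of $\chi_s(\gamma)$ multiply to $\chi_s(\gamma)^2=1$ and the automorphy factors cancel, so the summand is unchanged. This establishes (i).

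For (ii) I would substitute $M\mapsto MN$ in the defining sum for $N\in\SLZ$; right multiplication by $N$ permutes the cosets $\Gamma_0(m)\backslash\SLZ$, and applying the cocycle relation to $j(MN^{-1},N.\tau)$ together with $\overline{\rho}_{L'/L}((MN)^{-1})=\overline{\rho}_{L'/L}(N)\,\overline{\rho}_{L'/L}(M^{-1})$ reassembles the sum into
\begin{equation*}
F_{\alpha+L}(N.\tau)=j(N,\tau)^{w}\sum_{\beta+L\in L'/L}\overline{\rho}_{L'/L}(N)_{\alpha+L,\beta+L}\,F_{\beta+L}(\tau),
\end{equation*}
which is exactly modularity of weight $w$ for $\overline{\rho}_{L'/L}$ (here the choice of $M^{-1}$ rather than $M$ in the definition is what produces $\overline{\rho}_{L'/L}(N)$ on the right). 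Holomorphy on $\H$ in (iii) is immediate since $\eta_\nu$ is non-vanishing there, so every summand is holomorphic; the claim that poles occur only at the cusp $\infty$ would follow from the fact that $1/\eta_\nu$, an inverse product of rescaled eta functions, has at worst a pole of finite order at each cusp of $\Gamma_0(m)$, which translates into finitely many negative Fourier coefficients in each component $F_{\alpha+L}$.

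Finally, (iv) would follow because any $\sigma\in\O(L'/L)$ acts on $\C[L'/L]$ by permuting the $e_{\alpha+L}$, commutes with the (dual) Weil representation, and fixes $e_{0+L}$; consequently $\overline{\rho}_{L'/L}(M^{-1})_{\sigma(\alpha+L),0+L}=\overline{\rho}_{L'/L}(M^{-1})_{\alpha+L,0+L}$ for all $M$, giving $F_{\sigma(\alpha+L)}=F_{\alpha+L}$. The main obstacle is the compatibility underlying (i): everything rests on the eigenvector identity $\overline{\rho}_{L'/L}(\gamma)e_{0+L}=\chi_{L'/L}(\gamma)e_{0+L}$ for $\gamma\in\Gamma_0(m)$ and on its exact matching with the character $\chi_s$ of $f$. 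Once this is in place the transformation laws are a formal manipulation of automorphy factors; the more delicate point in practice is the pole analysis in (iii), i.e.\ bounding the growth of $1/\eta_\nu$ at all cusps of $\Gamma_0(m)$ so that $F$ is genuinely meromorphic at $\infty$.
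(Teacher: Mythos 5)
Your proposal is correct, and each of your four steps (well-definedness of the coset sum, modularity, pole structure, $\O(L'/L)$-invariance) is sound; however, it takes a different route from the paper, which disposes of the proposition by citation. The paper invokes Theorem~6.2 of \cite{Sch06}: for any \fqs{} $D$ of even signature and level dividing $N$, and any modular form $f$ of weight $w$ for $\Gamma_0(N)$ with character $\chi_D$, the coset sum $\sum_{M\in\Gamma_0(N)\backslash\SLZ}(c\tau+d)^{-w}f(M.\tau)\overline\rho_D(M^{-1})_{\gamma,0}$ defines a vector-valued modular form of weight $w$ for $\overline\rho_D$ invariant under the automorphisms of $D$ (the ``lift of $f$ with trivial support''); it then applies this with $D=L'/L$ and $f=1/\eta_\nu$, using Lemma~\ref{lem:fmodform} and the identity $\chi_{L'/L}=\chi_s$ exactly as you do. What you have written is, in effect, a proof of that general theorem in the case at hand. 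You also correctly isolate the one genuinely nontrivial input, the eigenvector identity $\overline\rho_{L'/L}(\gamma)e_{0+L}=\chi_{L'/L}(\gamma)e_{0+L}$ for $\gamma\in\Gamma_0(m)$, which you quote as ``a standard property'': this is a known but nontrivial computation with the Weil representation (essentially due to Borcherds \cite{Bor00}; see also \cite{Sch09}), and it is exactly what makes the paper's two preparatory lemmas — identifying the character of $1/\eta_\nu$ with $\chi_{L'/L}$ — the real content here, since the two characters must cancel in your step (i). Your route buys transparency: it makes visible where $\chi_{L'/L}=\chi_s$ and the evenness of the signature enter, and why invariance under $\O(L'/L)$ is automatic from $\sigma$ fixing $e_{0+L}$ and commuting with $\overline\rho_{L'/L}$. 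The paper's route buys brevity and defers the remaining bookkeeping (including the growth of $1/\eta_\nu$ at all cusps of $\Gamma_0(m)$, which you rightly flag as the delicate analytic point) to the cited reference, where it is handled once and for all.
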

\begin{proof}
Given a \fqs{} $D$ of even signature and level dividing $N$ and a modular form $f$ of weight $w\in\Z$ for $\Gamma_0(N)$ and character $\chi_D$ it was shown in \cite{Sch06}, Theorem~6.2, that
\begin{equation*}
F_\gamma(\tau):=\!\!\!\!\sum_{M\in\Gamma_0(N)\backslash\SLZ}\!\!\!\!(c\tau+d)^{-w}f(M.\tau)\overline\rho_D(M^{-1})_{\gamma,0},
\end{equation*}
$\gamma\in D$, are the components of a vector-valued modular form $F$ of weight $w$ for the dual Weil representation $\overline\rho_D$ of $\SLZ$ on $\C[D]$, which is invariant under the automorphisms of the \fqs{} $D$. $F$ is called the lift of $f$ with trivial support.

Applying this result to $D=L'/L$ of level $m$ and $f(\tau)=1/\eta_\nu(\tau)$, which is a modular form of weight $w$ for $\Gamma_0(m)$ with character $\chi_{L'/L}$, yields the assertion.
\end{proof}

The main result of this section is the following proposition, which shows that the two vector-valued modular forms from Propositions \ref{prop:F1} and \ref{prop:F2} are equal. Recall that there is an isomorphism $\chi\colon L'/L\to\overline{(\Lambda_\nu)'/\Lambda_\nu}\times(\Z_m\times\Z_m,-Q_m)$.
\begin{prop}\label{prop:F1eqF2}
Let $\nu$ be of square-free order in $M_{23}$. Then
\begin{equation*}
\ch_{V_{\Lambda_\nu}^{\hat{\nu}}(\chi(\alpha+L))}(\tau)/\eta(\tau)^{\rk(\Lambda^\nu)}=F_{\alpha+L}(\tau)
\end{equation*}
for all $\alpha+L\in L'/L$.
\end{prop}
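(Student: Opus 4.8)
The plan is to realise both sides as weakly holomorphic vector-valued modular forms of the \emph{same} weight for the \emph{same} representation, to compare their principal parts at the cusp $\infty$, and then to kill the difference by a vanishing argument in negative weight. First I would fix the identification of the two ambient spaces. The map $\chi$ is an isometry of \fqs{}s from $L'/L$ onto $\overline{(\Lambda_\nu)'/\Lambda_\nu}\times(\Z_m\times\Z_m,-Q_m)=\overline{R(V_{\Lambda_\nu}^{\hat\nu})}$. Since the Weil representation is functorial for isometries and the Weil representation attached to a conjugate \fqs{} is the dual Weil representation, pulling the character vector of Proposition~\ref{prop:F1} back along $\chi$ turns it into a vector-valued modular form for $\overline\rho_{L'/L}$ of weight $w=1-k/2$, holomorphic on $\H$ with a possible pole only at $\infty$. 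This is exactly the type of object produced in Proposition~\ref{prop:F2}, so that the difference $G$ of (the pullback of) the left-hand side and $F$ is a single weakly holomorphic vector-valued modular form for $\overline\rho_{L'/L}$ of weight $w$.

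The reduction step is the observation that $w=1-k/2<0$ in all ten cases, since $k\geq 4$. Consequently, if $G$ is holomorphic at $\infty$, then each of its components is a holomorphic modular form of negative weight for the congruence subgroup $\Gamma(m)$ (the level of $L'/L$ being $m$), through which $\overline\rho_{L'/L}$ factors. Such forms vanish identically, so $G\equiv 0$. As $G$ has poles at most at $\infty$, it therefore suffices to show that the principal parts of the two sides at $\infty$, i.e.\ the terms with strictly negative $q$-power, coincide; the vanishing theorem then does the rest.

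It remains to match these principal parts, and this is where the arithmetic of the orbifold characters enters. The decisive input is that the scalar seed of the lift is itself a twisted trace: because the coinvariant lattice satisfies $(\Lambda_\nu)^\nu=\{0\}$, the formula for $T_{\alpha+\Lambda_\nu,0,j}$ specialises to $T_{0+\Lambda_\nu,0,1}=\vartheta_{\{0\}}/\eta_{\nu}=1/\eta_\nu=f$. On the one hand, the principal part of $F$ is read off from the lift formula of Proposition~\ref{prop:F2}: it is governed by the poles of $1/\eta_\nu$ at the various cusps of $\Gamma_0(m)$, redistributed over the components through the coefficients $\overline\rho_{L'/L}(M^{-1})_{\cdot,0+L}$. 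On the other hand, the principal part of the character vector is governed by the conformal weights $\rho(V_{\Lambda_\nu}^{\hat\nu}(\alpha+\Lambda_\nu,i,j))$, of which only those below $1$ contribute after the overall shift by $q^{-1}$ coming from $\eta^{\rk(\Lambda_\nu)}\eta^{\rk(\Lambda^\nu)}=\eta^{24}\sim q$.

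The hard part will be translating between these two descriptions. Concretely, one must verify that the singular coefficients of the explicitly known untwisted characters $T_{\alpha+\Lambda_\nu,0,j}=\vartheta_{(\alpha+\Lambda_\nu)^{\nu^j}}/\eta_{\nu^j}$, together with their $S$-transforms into the twisted sectors, reproduce exactly the cusp expansions of $1/\eta_\nu$ entering the lift, and that the non-trivial lattice cosets $\alpha\neq 0$ contribute nothing singular. The $\Aut(L'/L)$-invariance of the character vector, which mirrors the invariance of $F$ recorded in Proposition~\ref{prop:F2}, is a useful consistency check that cuts down the number of components one actually has to compare. Once this principal-part identity is in place, the negative-weight vanishing argument of the second paragraph forces $G\equiv 0$ and hence the claimed equality for every $\alpha+L\in L'/L$.
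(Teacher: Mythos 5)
Your proposal is correct and follows essentially the same route as the paper: both sides are identified as vector-valued modular forms of the same negative weight $w=1-k/2$ for $\overline\rho_{L'/L}$ (Propositions~\ref{prop:F1} and~\ref{prop:F2}, transported along $\chi$), the problem is reduced to matching the finitely many singular coefficients at $\infty$, and the difference is then killed by negative-weight vanishing --- the paper invokes the valence formula where you invoke componentwise vanishing of holomorphic negative-weight forms on $\Gamma(m)$, which is the same argument, and the singular-coefficient comparison you defer as ``the hard part'' is exactly what the paper carries out explicitly in \texttt{Sage} and \texttt{Magma}, using Proposition~\ref{prop:simplelift} for $F$ and the orbifold character formulae for the left-hand side. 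One slip in your sketch, inherited from the ambiguous notation in the proposition on twisted traces: there $\eta_{\nu^j}$ is the eta product attached to the (partly negative) cycle shape of $\nu^j$ acting on the coinvariant lattice $\Lambda_\nu$, so $T_{0+\Lambda_\nu,0,1}=\eta(\tau)^{\rk(\Lambda^\nu)}/\eta_\nu(\tau)$ rather than $1/\eta_\nu(\tau)$ --- compare leading powers, $q^{-\rk(\Lambda_\nu)/24}$ versus $q^{-1}$ --- and the seed $f=1/\eta_\nu$ only emerges after the division by $\eta(\tau)^{\rk(\Lambda^\nu)}$ that is built into the statement being proved; this does not affect the structure of your argument, since the verification you propose compares the normalised characters with $F$ in any case.
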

\begin{proof}
We consider the vector-valued modular form $G$ with components $G_{\alpha+L}(\tau):=\ch_{V_{\Lambda_\nu}^{\hat\nu}(\chi(\alpha+L))}(\tau)/\eta(\tau)^{\rk(\Lambda^\nu)}$ for $\alpha+L\in L'/L$. We have to prove that $F=G$.

By Proposition~\ref{prop:F2}, $F$ is a vector-valued modular form of weight $w$ for $\overline{\rho}_{L'/L}$. Proposition~\ref{prop:F1} states that the functions $G_{\alpha+L}(\tau)=\ch_{V_{\Lambda_\nu}^{\hat{\nu}}(\chi(\alpha+L))}/\eta(\tau)^{\rk(\Lambda^\nu)}$ form a vector-valued modular form of weight $w$ for the Weil representation on the fusion group $(\Lambda_\nu)'/\Lambda_\nu\times(\Z_m\times\Z_m,Q_m)\cong\overline{L'/L}$ (via $\chi$), which is the same as the dual Weil representation $\overline{\rho}_{L'/L}$ on $L'/L$.

Hence, $F$ and $G$ are both vector-valued modular forms of the same negative weight $w$ for $\overline{\rho}_{L'/L}$, and they are both holomorphic on $\H$ with possible poles at the cusp $\infty$.

We compute the $q$-expansions of $F$ and $G$ explicitly and verify that the singular coefficients are identical. The lift $F$ takes a very simple form (see Proposition~\ref{prop:simplelift} below) and hence its $q$-expansion can be easily determined using the well-known $q$-expansion of the eta function. The computation of the characters of the irreducible $V_{\Lambda_\nu}^{\hat{\nu}}$-modules, which enter $G$, is described at the beginning of this section. These calculations are performed in \texttt{Sage} and \texttt{Magma} \cite{Sage,Magma}.

Then $F-G$ is a modular form of negative weight, which has no singular terms, i.e.\ which is finite at the cusp $\infty$, and therefore has to vanish by the valence formula (see, for example, \cite{HBJ94}, Theorem~I.4.1). Hence, $F=G$.
\end{proof}

We comment on some special properties of the modular form $F$:
\begin{rem}
\begin{enumerate}[wide]
\item\label{item:rem1} Since $L=\Lambda^\nu\oplus\II_{1,1}(m)$ and $P=L\oplus\II_{1,1}$ have the same discriminant form $L'/L\cong P'/P$, we can view $F$ also as a vector-valued modular form for the dual Weil representation $\overline{\rho}_{P'/P}$ on $\C[P'/P]$. As such $F$ is \emph{completely reflective} (as defined in \cite{Sch06}, Section~9). Note that the lattice $P$ has signature $(k,2)$ and $F$ weight $w=1-k/2$ with $k\geq 4$ even.

Exactly such vector-valued modular forms are classified in \cite{Sch06}. Theorems \ref{thm:12.6} and \ref{thm:12.7} state the corresponding results for automorphic products and \BKMa{}s, respectively.

In the ten cases at hand complete reflectivity means that singular terms in the $q$-expansion of $F$ appear exactly in the components $F_{\alpha+P}(\tau)$, $\alpha+P\in P'/P$, with $\langle\alpha,\alpha\rangle/2=1/d\pmod{1}$ and $d\cdot(\alpha+L)=0+L$ for $d\mid m$ and in such a component the only singular term is $1\cdot q^{-1/d}$.

\item\label{item:rem2} As completely reflective modular form, $F$ is in particular \emph{symmetric}, i.e.\ invariant under the automorphisms of the \fqs{} $P'/P\cong L'/L$ (see Section~9 in \cite{Sch06} and note that $m$, the level of $P$ or $L$, is square-free). This also follows immediately from Proposition~\ref{prop:F2}.

Then, the characters of the irreducible $V_{\Lambda_\nu}^{\hat{\nu}}$-modules are invariant under the automorphisms of the fusion group $R(V_{\Lambda_\nu}^{\hat{\nu}})$ as \fqs{}. In particular, the characters $\ch_{V_{\Lambda_\nu}^{\hat{\nu}}(\chi(\alpha+L))}(\tau)$ do not depend on the choice of the isomorphism $\chi\colon L'/L\to (\Lambda_\nu)'/\Lambda_\nu\times(\Z_m\times\Z_m,-Q_m)$ (cf.\ Proposition~\ref{prop:choiceindep}).

\item\label{item:rem3} The automorphic product $\Psi_{\phi_\nu}$ on $P$, which is the denominator identity of $\g_{\phi_\nu}$, is constructed in \cite{Sch04} precisely as the Borcherds lift of the modular form~$F$.
\end{enumerate}
\label{rem:vvmfF}
\end{rem}

In the following we present a nice explicit formula for the components of the vector-valued modular form $F$ based on Theorem~6.5 in \cite{Sch06}. This was already stated in \cite{Sch04b}, Proposition~9.5. We give a proof for completeness. For $d\in\Ns$, we decompose $f(\tau/d)$, which has an expansion in $q^{1/d}$, as $f(\tau/d)=g_{d,0}(\tau)+\ldots+g_{d,d-1}(\tau)$ where $g_{d,j}(\tau)$ transforms under $T$ like $g_{d,j}(\tau+1)=\e^{(2\pi\i)j/d}g_{d,j}(\tau)$.
\begin{prop}\label{prop:simplelift}
Let $\nu$ be of square-free order $m$ in $M_{23}$. Then
\begin{equation*}
F_{\alpha+L}(\tau)=\sum_{d\mid m}\delta_{\alpha\in L'\cap \frac{1}{d}L}\,g_{d,j_{\alpha+L,d}}(\tau)
\end{equation*}
for all $\alpha+L\in L'/L$ with $j_{\alpha+L,d}\in\Z_d$ such that $-j_{\alpha+L,d}/d=\langle\alpha,\alpha\rangle/2\pmod{1}$.
\end{prop}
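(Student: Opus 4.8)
The plan is to establish the claimed formula for $F_{\alpha+L}$ by invoking the general lift formula from Theorem~6.5 in \cite{Sch06}, which expresses the components of the lift with trivial support in terms of the expansions $f(\tau/d)$ for the divisors $d\mid m$. First I would recall that Proposition~\ref{prop:F2} defines $F$ as the lift of $f=1/\eta_\nu$ from $\Gamma_0(m)$ (with character $\chi_{L'/L}=\chi_s$) to the dual Weil representation on $\C[L'/L]$. The key structural input is that for square-free level $m$, the cusps of $\Gamma_0(m)$ are indexed by the divisors $d\mid m$, and the local expansion of $f$ at each cusp governs the contribution to the various components of $F$. Theorem~6.5 in \cite{Sch06} then packages this: a component $F_{\alpha+L}$ receives a contribution from the divisor $d$ precisely when $\alpha$ lies in the appropriate subgroup, and the contribution is the piece of $f(\tau/d)$ with the matching $T$-eigenvalue.

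The core of the argument is identifying which divisors contribute to a given component and with which eigenvalue-piece $g_{d,j}$. The condition $\alpha\in L'\cap\frac{1}{d}L$ encodes that $\alpha+L$ is a $d$-torsion element lying in the relevant isotropic-type subgroup associated with the cusp of width related to $d$; I would verify that this is exactly the support condition coming from the coset structure of $L'/L$ for the square-free lattice $L=\Lambda^\nu\oplus\II_{1,1}(m)$. For the eigenvalue, the $T$-action on $F_{\alpha+L}$ is by $\e^{(2\pi\i)(\langle\alpha,\alpha\rangle/2)}$ (up to the $-c/24$ normalisation already absorbed), while $g_{d,j}$ transforms with eigenvalue $\e^{(2\pi\i)j/d}$; matching these forces $-j_{\alpha+L,d}/d \equiv \langle\alpha,\alpha\rangle/2\pmod 1$, which is precisely the stated indexing condition. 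I would check that this congruence has a well-defined solution $j_{\alpha+L,d}\in\Z_d$ exactly when the support condition $\alpha\in L'\cap\frac{1}{d}L$ holds, so the two halves of the formula are consistent.

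The main obstacle I anticipate is bookkeeping the coset geometry of $L'/L$ against the cusp structure of $\Gamma_0(m)$ and confirming that the general formula of Theorem~6.5 specialises to this clean form with no extra sign or multiplicity factors. Specifically, one must confirm that for the lift with trivial support the coefficient attached to each $(d,\alpha+L)$ pair is $1$ and that the sum over $d\mid m$ reproduces the Weil-representation transformation exactly; the square-freeness of $m$ is what makes each cusp contribute a single $d$ and eliminates the cross-terms that would otherwise appear. I would therefore lean on the hypothesis that $m$ is square-free throughout, using it both to enumerate the cusps of $\Gamma_0(m)$ by divisors and to guarantee that the character $\chi_{L'/L}$ and the discriminant form behave compatibly under the scaling $\tau\mapsto\tau/d$.

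Finally, I would close the argument by noting that since both sides are components of vector-valued modular forms of the same weight $w$ for $\overline\rho_{L'/L}$, holomorphic on $\H$ with possible poles only at $\infty$, it suffices to match $q$-expansions; the decomposition $f(\tau/d)=g_{d,0}(\tau)+\ldots+g_{d,d-1}(\tau)$ together with the support and eigenvalue conditions then yields the asserted identity termwise. This parallels the verification already used in the proof of Proposition~\ref{prop:F1eqF2}, so the computational part reduces to the known $q$-expansion of $\eta_\nu$ and the explicit divisor sum.
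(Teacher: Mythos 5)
Your overall route is the paper's: both invoke Theorem~6.5 of \cite{Sch06} for the lift with trivial support, and your identification of the support condition ($\alpha\in L'\cap\frac{1}{d}L$, i.e.\ $\alpha+L\in D_d$) and of the $T$-eigenvalue condition fixing $j_{\alpha+L,d}$ is correct. But there is a genuine gap: the entire content of the proposition is that the coefficient multiplying each $g_{d,j}$ is exactly $1$, and you defer this as ``bookkeeping'' justified by square-freeness --- which is not what makes it work. Theorem~6.5 produces the coefficient $\xi_{m/d}\,d/\sqrt{|D_d|}$ multiplying $h_{d,j}$, where $h_{d,j}$ is extracted not from $f(\tau/d)$ but from $f_{m/d}$, the transform of $f$ to the cusp associated with $d$. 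Closing the gap requires three computations that your proposal never performs: (i) an explicit eta-transformation calculation showing $f_{m/d}(\tau)=\psi_{m/d}\,f(\tau/d)\prod_{t\mid m}(t,d)^{12/\sigma_1(m)}$ for some phase $\psi_{m/d}$ of unit modulus, which exploits the highly symmetric cycle shape $\prod_{t\mid m}t^{24/\sigma_1(m)}$ of the ten automorphisms; (ii) the cardinality $|D_d|=d^2\prod_{t\mid m}(t,d)^{24/\sigma_1(m)}$ for this particular $L=\Lambda^\nu\oplus\II_{1,1}(m)$, which is exactly what makes all factors of non-unit modulus cancel; and (iii) a case-by-case verification, over the ten values of $m$ and all $d\mid m$, that the residual phase $\xi_{m/d}\psi_{m/d}$ equals $1$. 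None of this follows from square-freeness alone: square-freeness gives the indexing of cusps by divisors and the clean orbit structure of $L'/L$, but for a generic symmetric lift the coefficients $\xi_{N/c}\,c/\sqrt{|D_c|}$ are not $1$; here their triviality is a special coincidence between the cycle shape and the discriminant form.

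Your proposed closing argument is also circular. To run the valence-formula comparison (as in the proof of Proposition~\ref{prop:F1eqF2}) you would need both sides to be vector-valued modular forms of weight $w$ for $\overline\rho_{L'/L}$; the left-hand side $F$ is, by Proposition~\ref{prop:F2}, but the right-hand side $\sum_{d\mid m}\delta_{\alpha\in L'\cap\frac{1}{d}L}\,g_{d,j_{\alpha+L,d}}(\tau)$ is a vector of scalar functions whose transformation under $S$ for $\overline\rho_{L'/L}$ is not known independently --- it is precisely what is at stake (the $T$-behaviour is fine by construction, but that is the easy half). So ``matching $q$-expansions termwise'' cannot substitute for the coefficient computation; the identity has to come out of Theorem~6.5 itself, together with the three explicit computations above, which is how the paper proceeds.
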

\begin{proof}
Explicit formulae for the components of lifts of scalar-valued modular forms are given in \cite{Sch06}, Theorem~6.5: let $F$ be the lift of a scalar-valued modular form $f$ for the dual Weil representation $\overline{\rho}_D$ on some discriminant form $D$ of even signature and level dividing $N$. Assume that $N$ is square-free. Then for $\gamma\in D$,
\begin{equation*}
F_\gamma(\tau)=\sum_{c\mid N}\delta_{\gamma\in D_c}\xi_{\frac{N}{c}}\frac{1}{\sqrt{|D_c|}}c\,h_{c,j_{\gamma,c}}(\tau)
\end{equation*}
where for $c\mid N$ the $\xi_c$ are certain factors of unit modulus and the $h_{c,j}$, $j\in\Z_c$, are obtained from $f_{N/c}(\tau)$ in the same manner as the $g_{c,j}$ are obtained from $f(\tau/c)$. The $f_c(\tau)$ for $c\mid N$ are defined as $f_c(\tau):=(c\tau+d)^{-w}f(M_c.\tau)$ where the matrices $M_c=\left(\begin{smallmatrix}a&b\\c&d\end{smallmatrix}\right)\in\SLZ$ are chosen such that $d=1\pmod{c}$ and $d=0\pmod{N/c}$. Finally, $D_c=\{\gamma\in D\,|\, c\gamma=0\}$.

Returning to the specific cases at hand, with $D=L'/L$ of square-free level~$m$ and $f=1/\eta_\nu$, using that the modular-transformation properties of the eta function and rescaled eta functions are explicitly known (see, for example, \cite{Sch09}, Proposition~6.2), we compute the $f_c(\tau)$. Due to the highly symmetric nature of the eta product $f(\tau)=1/\eta_\nu(\tau)=\prod_{t\mid m}\eta(t\tau)^{-24/\sigma_1(m)}$ one obtains that
\begin{equation*}
f_{m/c}(\tau)=\psi_{m/c}f(\tau/c)\prod_{t\mid m}(t,c)^{12/\sigma_1(m)}
\end{equation*}
for some phase factor $\psi_{m/c}$ of unit modulus and hence
\begin{equation*}
h_{c,j}(\tau)=g_{c,j}(\tau)\psi_{\frac{m}{c}}\prod_{t\mid m}(t,c)^{12/\sigma_1(m)}.
\end{equation*}
The cardinality of $D_c=(L'\cap(1/c)L)/L$ is $|D_c|=c^2\prod_{t\mid m}(t,c)^{24/\sigma_1(m)}$. Consequently all factors of non-unit modulus cancel and
\begin{equation*}
F_{\alpha+L}(\tau)=\sum_{c\mid m}\delta_{\alpha+L\in D_c}\xi_{\frac{m}{c}}\psi_{\frac{m}{c}}g_{c,j_{\alpha+L,c}}(\tau).
\end{equation*}
A case-by-case study reveals that $\xi_c\psi_c=1$ for all $m=1,2,3,5,6,7,11,14,15,23$ and all $c\mid m$, completing the proof.
\end{proof}
The results we just proved about the vector-valued modular form $F$ will play an important role in Section~\ref{sec:tenbkmas} when we relate its Fourier coefficients to the dimensions of the graded components of the Lie algebra obtained as BRST quantisation of $M_{\phi_\nu}$.


\section{BRST Construction}\label{sec:brst}
In this section we describe the BRST quantisation of certain Virasoro representations $M$ of central charge $26$ and study the resulting physical space if $M$ is additionally a conformal vertex algebra, admits an invariant bilinear form or carries a certain representation of the Heisenberg \voa{} \cite{Fei84,FGZ86,Zuc89,LZ91,LZ93} (based on the semi-infinite cohomology of graded Lie algebras \cite{Fei84,FGZ86}). To some extent, we follow the presentation in \cite{Car12b}, Section~3.

Then we apply the BRST quantisation to the conformal vertex algebras $M_{\phi_\nu}$ from Section~\ref{sec:tenM} and show that the resulting \BKMa{}s are isomorphic to the ten twisted Fake Monster Lie algebras $g_{\phi_\nu}$ in Section~\ref{sec:bkma}.


\subsection{BRST Quantisation}\label{sec:brst0}
We describe the BRST quantisation of Virasoro representations of central charge $26$.

A representation of the Virasoro algebra is a complex vector space $V$ equipped with operators $L_n$, $n\in\Z$, and $K$ in $\End(V)$ satisfying the Virasoro relations
\begin{equation*}
[L_m,L_n]=(m-n)L_{m+n}+\frac{m^3-m}{12}\delta_{m+n,0}K\quad\text{and}\quad[L_n,K]=0
\end{equation*}
for all $m,n\in\Z$.

We define some important notions:
\begin{defi}\label{defi:virasoro}
Let $V$ be a representation of the Virasoro algebra.
\begin{enumerate}
\item $V$ has \emph{central charge} $c\in\C$ if $K=c\cdot\id_V$.
\item We call $V$ \emph{positive-energy} if $L_0$ acts diagonalisably on $V$, i.e.\ $V$ is a direct sum of $L_0$-eigenspaces $V=\bigoplus_{\lambda\in\C}V_\lambda$, and if the subalgebra generated by $\{L_n\,|\,n\in\Ns\}$ acts locally nilpotently, i.e.\ for all $v\in V$ there is an $N\in\Ns$ such that $L_{n_1}\ldots L_{n_k}v=0$ for all sequences $n_1,\ldots,n_k\in\Ns$ satisfying $n_1+\ldots+n_k>N$. (The second property is trivially satisfied if the $L_0$-grading on $V$ is bounded from below.)
\item We say that a bilinear or sesquilinear form $(\cdot,\cdot)$ on $V$ is \emph{Virasoro-invariant} if $(L_nv,w)=(v,L_{-n}w)$ for all $v,w\in V$ and all $n\in\Z$.
\end{enumerate}
\end{defi}
For the BRST quantisation, which associates a physical space with a Virasoro representation, we first introduce the \emph{bosonic ghost vertex operator superalgebra} $V_\mathrm{gh.}$ of central charge $-26$ (in the ``ghost sector''). It can be constructed as the vertex operator superalgebra associated with the integral lattice $\Z\sigma$ with $\langle\sigma,\sigma\rangle=1$ and the usual Virasoro vector shifted by $\frac{3}{2}\sigma(-2)1\otimes\ee_0$.

$V_\mathrm{gh.}$ is $\Z_{\geq-1}$-graded by $L_0$-weights, $\Z_2$-graded by parity (super grading) and $\Z$-graded by ghost number, the eigenvalue of the ghost number operator $U=\sigma(0)$, and all these gradings are compatible. (In fact, the parity is just given by the parity of the ghost number.) $V_\mathrm{gh.}$ is generated by $b=1\otimes\ee_{-\sigma}$ and $c=1\otimes\ee_{\sigma}$, which have $L_0$-weights $2$ and $-1$, odd parity and ghost numbers $-1$ and $1$, respectively.

Given a Virasoro representation $M$ (in the ``matter sector'') of central charge $c$ we consider the tensor-product Virasoro module $W=M\otimes V_\mathrm{gh.}$, which is of central charge $c-26$. It is equipped with a tensor-product weight grading. The ghost (and parity) grading are extended trivially to the tensor product. Then one defines a BRST current $j^\mathrm{BRST}\in W$ and the BRST operator $Q:=j_0^\mathrm{BRST}$ as its zero mode. Explicitly,
\begin{align*}
Q&=\sum_{n\in\Z}L^M_n\otimes c_{-n-2}-\frac{1}{2}\id_M\otimes\sum_{m,n\in\Z}(m-n):c_{-m-2}c_{-n-2}b_{m+n+1}:
\end{align*}
where the normal ordering means that the annihilation operators $b_n$, $c_n$ for $n\geq0$ are moved to the right of the creation operators $b_n$, $c_n$ for $n\leq-1$, keeping track of minus signs since these are fermionic operators.
\begin{prop}
Let $M$ be a positive-energy representation of the Virasoro algebra of central charge $c$. The BRST operator $Q$ on $W=M\otimes V_\mathrm{gh.}$ fulfils:
\begin{enumerate}
\item $[U,Q]=Q$, i.e.\ $Q$ raises the ghost number by 1.
\item $[U,L_0]=0$, i.e.\ the ghost-number and $L_0$-grading are compatible.
\item $\{Q,b_{n+1}\}=L_n$ for all $n\in\Z$.
\item $Q^2=0$ if and only if $c=26$.
\end{enumerate}
Moreover, if $c=26$, then:
\begin{enumerate}[resume]
\item $[Q,L_n]=0$ for all $n\in\Z$.
\end{enumerate}
\end{prop}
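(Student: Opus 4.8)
The plan is to verify all five identities by direct computation with the modes, drawing only on three inputs: the anticommutation relations of the ghost modes of $V_\mathrm{gh.}$, the Virasoro relations in the matter sector $M$, and the fact that the ghost-number operator $U=\sigma(0)$ acts as a derivation. From the lattice realisation of $V_\mathrm{gh.}$ one reads off that the only non-vanishing ghost anticommutators are governed by $\{c_{-k-2},b_{n+1}\}=\delta_{k,n}$ (equivalently $\{b_m,c_n\}=\delta_{m+n+1,0}$), while $\{b_m,b_n\}=\{c_m,c_n\}=0$, and that $[U,c_n]=c_n$, $[U,b_n]=-b_n$ since $U$ measures the $\Z\sigma$-charge with $c=\ee_\sigma$ and $b=\ee_{-\sigma}$. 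The positive-energy hypothesis on $M$ guarantees that all the mode sums appearing below act locally finitely, so that the manipulations are legitimate.

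Parts (1) and (2) are immediate grading statements: since $U$ is a derivation and every monomial in $Q$ carries one more $c$-mode than $b$-modes — one $c$ in the first summand, two $c$'s and one $b$ in the second — each monomial has ghost number $1$, whence $[U,Q]=Q$; and writing $L_0=L_0^M\otimes\id+\id\otimes L_0^{\mathrm{gh}}$, the matter part commutes with the ghost sector while $L_0^{\mathrm{gh}}$ has ghost number $0$, so $[U,L_0]=0$. For (3) I would evaluate $\{Q,b_{n+1}\}$ summand by summand. In the first summand the matter operators $L_k^M$ commute past $b_{n+1}$ and the contraction $\{c_{-k-2},b_{n+1}\}=\delta_{k,n}$ collapses the sum to $L_n^M$; in the second, anticommuting $b_{n+1}$ against each of the two $c$-factors (tracking the fermionic signs forced by the normal ordering) reassembles exactly the ghost Virasoro mode $L_n^{\mathrm{gh}}$ of central charge $-26$, the background-charge shift in the conformal vector of $V_\mathrm{gh.}$ supplying the correct normal-ordering constant. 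Summing gives $\{Q,b_{n+1}\}=L_n^M+L_n^{\mathrm{gh}}=L_n$, the total Virasoro operator on $W$.

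The substantial step is (4), the evaluation of $Q^2=\tfrac12\{Q,Q\}$ (using that $Q$ is odd). I would expand $\{Q,Q\}$ into its matter and ghost contributions: the matter Virasoro bracket feeds in its anomaly $\tfrac{c}{12}(m^3-m)\delta_{m+n,0}$, while the reordering of the quartic ghost terms produces an analogous anomaly whose coefficient is pinned down by the central charge $-26$ of $V_\mathrm{gh.}$. After the (lengthy but mechanical) fermionic bookkeeping these two anomalies merge into a single operator equal to a fixed, not identically vanishing expression — quadratic in the $c$-modes — times the scalar $c-26$; hence $Q^2=0$ if and only if $c=26$. I expect this bookkeeping to be the main obstacle: the difficulty lies entirely in getting every fermionic sign and every reordering constant right, so that the matter and ghost anomalies assemble with the correct relative normalisation and combine into the factor $c-26$. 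This is a standard computation, and I would follow the presentation in \cite{Car12b}, cf.\ also \cite{FGZ86,LZ91,LZ93}.

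Finally, (5) is immediate from (3) and (4). For any operator $x$ one has the elementary identity $[Q,\{Q,x\}]=[Q^2,x]$; taking $x=b_{n+1}$ and using $\{Q,b_{n+1}\}=L_n$ from (3) gives $[Q,L_n]=[Q^2,b_{n+1}]$, which vanishes for all $n\in\Z$ once $Q^2=0$, i.e.\ once $c=26$ by (4).
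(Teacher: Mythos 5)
Your proposal is correct and matches the paper's approach: the paper simply records that these identities are "readily checked" and cites Section~4 of \cite{Zuc89}, i.e.\ it relies on exactly the standard mode computations (ghost contractions for (3), the matter/ghost anomaly cancellation yielding the factor $c-26$ for (4)) that you outline, with your appeal to \cite{Car12b,FGZ86,LZ91,LZ93} playing the same role as the paper's citation. Your derivation of (5) from (3) and (4) via $[Q,\{Q,b_{n+1}\}]=[Q^2,b_{n+1}]$ is a clean way to organise the last point and is consistent with the standard treatment.
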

\begin{proof}
These claims are readily checked. They are listed in \cite{Zuc89}, Section~4.
\end{proof}
We use the modern definition of $Q$ corresponding to the integral ghost grading described above rather than the version of $Q$ corresponding to the ghost grading shifted by $-3/2$, which is used in older texts.

If $c=26$, then the BRST operator $Q$ with ghost number $1$ satisfies $Q^2=0$, i.e.\ $\im(Q)\subseteq\ker(Q)$, and therefore defines a cochain complex of vector spaces, the \emph{BRST complex}
\begin{equation*}
\ldots\stackrel{Q}{\longrightarrow}W^{p-1}\stackrel{Q}{\longrightarrow}W^p\stackrel{Q}{\longrightarrow}W^{p+1}\stackrel{Q}{\longrightarrow}\ldots,
\end{equation*}
where $p$ denotes the ghost number. The complex is graded by $L_0$-weights because $[Q,L_0]=0$. Since $\{Q,b_1\}=L_0$, the corresponding cohomological spaces $H^p_\mathrm{BRST}(M)=(W^p\cap\ker(Q))/(W^p\cap\im(Q))$ are supported only in $L_0$-weight 0, which means we that can redefine the BRST complex to be
\begin{equation*}
\ldots\stackrel{Q}{\longrightarrow}W_0^{p-1}\stackrel{Q}{\longrightarrow}W_0^p\stackrel{Q}{\longrightarrow}W_0^{p+1}\stackrel{Q}{\longrightarrow}\ldots
\end{equation*}
without changing the cohomological spaces.

We can now define the BRST quantisation:
\begin{defi}\label{defi:brst}
Let $M$ be a positive-energy representation of the Virasoro algebra of central charge $26$. Then we define the \emph{physical space} to be $H^1_\mathrm{BRST}(M)$.
\end{defi}
Note that, in contrast to some of the cited literature, we use the term physical space irrespective of whether $H^1_\mathrm{BRST}(M)$ naturally admits a positive-definite Hermitian sesquilinear form or not (see also Remark~\ref{rem:noghost} below).
\begin{rem}
There is also a different quantisation procedure sometimes called \emph{old covariant quantisation}, which was used in \cite{Bor92}, for example. One can show, however, that given a positive-energy representation of the Virasoro algebra of central charge $26$ with a Virasoro-invariant bilinear form, the corresponding physical spaces are naturally isomorphic (see, for example, Lemma~3.3.6 in \cite{Car12b} and the references cited therein).
\end{rem}

The equation $\{Q,b_1\}=L_0$ also permits us to restrict $Q$ to $C:=W_0\cap\ker(b_1)$, the weight-zero vectors in the kernel of $b_1$, which defines the \emph{relative BRST subcomplex}
\begin{equation*}
\ldots\stackrel{Q}{\longrightarrow}C^{p-1}\stackrel{Q}{\longrightarrow}C^p\stackrel{Q}{\longrightarrow}C^{p+1}\stackrel{Q}{\longrightarrow}\ldots
\end{equation*}
with corresponding cohomological spaces $H^p_\mathrm{rel.}(M)=(C\cap\ker(Q))/(C\cap\im(Q))$. We note that the inclusion map $C^p\hookrightarrow W_0^p$ induces an injective map $H^p_\mathrm{rel.}(M)\to H^p_\mathrm{BRST}(M)$.

There is a short exact sequence of cochain complexes
\begin{equation*}
0\to C^\bullet\hookrightarrow W^\bullet_0\stackrel{\psi}{\longrightarrow}C^{\bullet-1}\to 0
\end{equation*}
with $\psi\colon W^p_0\to C^{p-1}$, $w\mapsto(-1)^{|w|}b_1w$. Then the zig-zag lemma entails a long exact sequence
\begin{equation*}
\ldots\to H_\mathrm{rel.}^p\to H_\mathrm{BRST}^p\to H_\mathrm{rel.}^{p-1}\to H_\mathrm{rel.}^{p+1}\to H_\mathrm{BRST}^{p+1}\to H_\mathrm{rel.}^{p}\to H_\mathrm{rel.}^{p+2}\to\ldots
\end{equation*}
In Section~\ref{sec:noghost} we shall study situations in which this sequence collapses.


\subsection{Lie Algebra and Invariant Bilinear Form}
We describe the case when the Virasoro representation $M$ in the matter sector is a conformal vertex algebra. Then $H_\mathrm{BRST}^1(M)$ and $H_\mathrm{rel.}^1(M)$ inherit Lie algebra structures. More precisely:
\begin{prop}\label{prop:valie}
Let $M$ be a conformal vertex algebra of central charge 26, which is positive-energy as Virasoro representation. Then the bracket $[u,v]=(b_0u)_0v$ for all $u,v\in W^1$ is well-defined on $H_\mathrm{BRST}^1(M)$ and endows it with the structure of a Lie algebra.

Moreover, the bracket restricts to $\ker(b_1)$ and also defines a Lie algebra structure on $H_\mathrm{rel.}^1(M)$.
\end{prop}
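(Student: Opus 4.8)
The plan is to work inside the vertex superalgebra $W = M \otimes V_\mathrm{gh.}$, the tensor product of the conformal vertex algebra $M$ and the ghost vertex operator superalgebra, on which $Q = j_0^\mathrm{BRST}$ is an odd derivation of every product, squares to zero because $c = 26$, and satisfies $\{Q, b_{n+1}\} = L_n$. A first check is that $[u,v] = (b_0 u)_0 v$ takes values in $W^1$ again: the mode $b_0$ lowers the ghost number by $1$ and raises the $L_0$-weight by $1$, while the zero mode $(b_0 u)_0$ of the resulting weight-$1$, ghost-number-$0$ state preserves both gradings, so for weight-zero representatives $u, v$ the bracket lands in the weight-zero, ghost-number-one space.

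First I would prove that the bracket descends to $H^1_\mathrm{BRST}(M)$. For $Q$-closed $u, v$, the derivation property gives $Q((b_0 u)_0 v) = (Q(b_0 u))_0 v + (b_0 u)_0(Qv)$; the second summand vanishes since $Qv = 0$, and from $\{Q, b_0\} = L_{-1}$ and $Qu = 0$ we get $Q(b_0 u) = L_{-1}u$, whose zero mode is zero by the vertex-algebra identity $(L_{-1}a)_0 = 0$. Hence $[u,v]$ is $Q$-closed. Replacing $u$ by $u + Qu'$ or $v$ by $v + Qv'$ and using the same three facts---$\{Q,b_0\} = L_{-1}$, $(L_{-1}\,\cdot\,)_0 = 0$ and the derivation property---shows that the resulting change in $[u,v]$ is $Q$-exact, so the class of $[u,v]$ depends only on the classes of $u$ and $v$. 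Bilinearity is immediate from the linearity of the modes.

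The step I expect to be the main obstacle is the verification of antisymmetry and the Jacobi identity. Here I would invoke the Lian--Zuckerman structure: since the combined matter--ghost system has total central charge $0$, the cohomology $H^\bullet_\mathrm{BRST}(M)$ carries a Batalin--Vilkovisky algebra structure. Concretely, the normal-ordered product $u \cdot v = u_{-1}v$ descends to a graded-commutative, associative product (by vertex-algebra skew-symmetry and associativity modulo $Q$-exact terms); the weight-preserving mode $b_1$, whose anticommutator $\{Q, b_1\} = L_0$ vanishes on the weight-zero cohomology, descends to a square-zero odd operator $\Delta$; and the bracket $(b_0 u)_0 v$ agrees on cohomology, up to sign, with the odd Poisson (Gerstenhaber) bracket measuring the failure of $\Delta$ to be a derivation of the product. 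Graded antisymmetry and the graded Jacobi identity are then part of the Gerstenhaber axioms, and specialising to ghost number $1$ yields exactly the Lie-algebra axioms. This is where the genuine vertex-algebra input enters, and I would cite \cite{LZ93,Zuc89,Car12b} for the underlying identities rather than reproduce them.

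Finally, for the relative statement I would check that the bracket preserves $\ker(b_1)$. Using the commutator formula for modes together with the fact that $b$ has trivial self-OPE, so that $b_{(j)}b = 0$ for $j \geq 0$ and in particular $b_0^2 = 0$ and $\{b_1, b_0\} = 0$, one finds $[b_1, (b_0 u)_0] = (b_0^2 u)_1 + (b_1 b_0 u)_0 = 0$ whenever $u \in \ker(b_1)$, whence $b_1((b_0 u)_0 v) = (b_0 u)_0(b_1 v) = 0$ for $u, v \in \ker(b_1)$. Thus the bracket restricts to $C = \ker(b_1) \cap W_0$. Since the arguments above use only $\{Q, b_0\} = L_{-1}$, $(L_{-1}\,\cdot\,)_0 = 0$ and the derivation property of $Q$---all compatible with the relative subcomplex---the same reasoning endows $H^1_\mathrm{rel.}(M)$ with a Lie-algebra structure, for which the inclusion $C^\bullet \hookrightarrow W^\bullet_0$ induces a Lie-algebra homomorphism $H^1_\mathrm{rel.}(M) \to H^1_\mathrm{BRST}(M)$.
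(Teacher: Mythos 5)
Your proof is correct and takes essentially the same route as the paper, whose entire proof is a citation of \cite{LZ93} (Theorem~2.2 for the bracket on $H_\mathrm{BRST}^1(M)$ and Lemma~2.1 for the restriction to $\ker(b_1)$ and $H_\mathrm{rel.}^1(M)$). Your explicit mode computations for descent to cohomology and for preservation of $\ker(b_1)$ are accurate, but the load-bearing step---antisymmetry and the Jacobi identity via the Gerstenhaber/BV structure---rests on exactly the same Lian--Zuckerman results the paper invokes.
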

\begin{proof}
The first claim is stated in \cite{LZ93}, Theorem~2.2, and the second assertion follows from Lemma~2.1 in \cite{LZ93}.
\end{proof}
We note that if a group $G$ acts on $M$ by automorphisms of conformal vertex algebras, then $G$ induces an action on $H_\mathrm{BRST}^1(M)$ by Lie algebra automorphisms.

Let us additionally assume that the conformal vertex algebra $M$ carries a non-degenerate, invariant bilinear form $(\cdot,\cdot)$, which is necessarily symmetric \cite{Li94} and Virasoro-invariant. We show that this induces a non-degenerate, invariant bilinear form on the Lie algebra $H_\mathrm{rel.}^1(M)$ as well.

\begin{prop}\label{prop:Mformhrel}
Let $M$ be a positive-energy Virasoro representation of central charge $26$. Assume that $M$ is a conformal vertex algebra that carries a non-degenerate, invariant bilinear form $(\cdot,\cdot)_M$. Then $(\cdot,\cdot)_M$ induces a non-degenerate, symmetric, invariant bilinear form on the Lie algebra $H_\mathrm{rel.}^1(M)$.
\end{prop}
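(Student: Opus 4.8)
The plan is to produce the form by descent from a non-degenerate bilinear form on the whole complex $W=M\otimes V_\mathrm{gh.}$. First I would equip the ghost vertex operator superalgebra $V_\mathrm{gh.}$ with its standard non-degenerate invariant bilinear form, available because $V_\mathrm{gh.}$ is realised as the lattice vertex superalgebra of $\Z\sigma$ with the shifted conformal vector; it is symmetric and Virasoro-invariant. Tensoring this with $(\cdot,\cdot)_M$ gives a non-degenerate, symmetric, Virasoro-invariant bilinear form $\langle\cdot,\cdot\rangle$ on $W$. Because the shifted grading assigns $\ee_{n\sigma}$ the weight $n(n-3)/2$, the ghost form pairs ghost number $p$ with ghost number $3-p$; consequently, after passing to the relative complex (which removes the ghost zero mode $b_1$ together with its partner), the form pairs $C^p$ with $C^{2-p}$, and at $p=1$ this is a pairing of $H_\mathrm{rel.}^1(M)$ with itself. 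This is the structural reason ghost number $1$ carries a form at all.

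The technical heart is the behaviour of the differential under $\langle\cdot,\cdot\rangle$. Virasoro-invariance gives $L_n^\dagger=L_{-n}$, and the modes of $b$ and $c$ are mutually adjoint; from this I would show that the BRST operator $Q=j_0^\mathrm{BRST}$, being the zero mode of the weight-one BRST current, is graded self-adjoint, $\langle Qu,v\rangle=\pm\langle u,Qv\rangle$. I would verify this either directly from the explicit mode expansion of $Q$ using $L_n^\dagger=L_{-n}$ and the $b$--$c$ adjunction, or from the general principle that the zero mode of a quasi-primary weight-one state is anti-self-adjoint for an invariant form. Self-adjointness at once yields $\langle\im(Q),\ker(Q)\rangle=0$, so $\langle\cdot,\cdot\rangle$ descends to a symmetric bilinear form on $Q$-cohomology.

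I would then restrict everything to the relative subcomplex $C=W_0\cap\ker(b_1)$. Here one must check that $\langle\cdot,\cdot\rangle$ stays non-degenerate on $C$ — using that the adjoint of $b_1$ is the corresponding mode of $c$, so that $\ker(b_1)$ is paired correctly against a complementary subspace — and that $Q$ preserves $C$ and remains self-adjoint there. Granting this, the form descends to $H_\mathrm{rel.}^\bullet(M)$, pairing $H_\mathrm{rel.}^p$ with $H_\mathrm{rel.}^{2-p}$, and in particular defines a symmetric bilinear form on $H_\mathrm{rel.}^1(M)$.

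The main obstacle is non-degeneracy of the induced form. Self-adjointness of $Q$ gives $\im(Q)\subseteq\ker(Q)^\perp$, so a class in $H_\mathrm{rel.}^1(M)$ lies in the radical of the induced form exactly when its representative lies in $\ker(Q)\cap\ker(Q)^\perp$; what must be shown is that this intersection is no larger than $\im(Q)$. Over a field this amounts to representing, through $\langle\cdot,\cdot\rangle$, a certain functional on $C$ by an actual element of $C$, which can fail in infinite dimensions — this is precisely where the difficulty enters. To resolve it I would use that $Q$ commutes with $L_0$ and preserves the internal (e.g.\ momentum) grading of $M$ coming from its vertex-algebra structure, so that $C$ is an orthogonal direct sum of finite-dimensional subcomplexes; on each the form is non-degenerate and $Q$ self-adjoint, and the finite-dimensional Hodge decomposition $C=\ker(Q)\oplus\im(Q^\dagger)$ forces $\ker(Q)\cap\ker(Q)^\perp=\im(Q)$, these assembling to non-degeneracy on $H_\mathrm{rel.}^1(M)$. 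Ad-invariance of the form for the bracket $[u,v]=(b_0u)_0v$ of Proposition~\ref{prop:valie} then follows by transporting the vertex-operator invariance of $(\cdot,\cdot)_M$ and of the ghost form to the bracket, just as symmetry is inherited from $\langle\cdot,\cdot\rangle$; this last step is routine once the descent is established.
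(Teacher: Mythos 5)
Your overall architecture (tensor form on $W$, sign-twisted self-adjointness of $Q$, descent to cohomology, symmetry and invariance at the end) matches the paper's proof, but there is a genuine gap at the step where you pass to the relative subcomplex, and it breaks the plan as written. The tensor-product form $(\cdot,\cdot)_W$ does \emph{not} restrict to anything useful on $C=W_0\cap\ker(b_1)$: it vanishes identically there. The invariance adjoint relations for the ghosts are $b_n^*=b_{2-n}$ and $c_n^*=-c_{-4-n}$, so $b_1$ is \emph{self}-adjoint; your claim that ``the adjoint of $b_1$ is the corresponding mode of $c$, so that $\ker(b_1)$ is paired correctly against a complementary subspace'' is false. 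Since moreover $\{b_1,c_{-2}\}=1$ gives $\ker(b_1)=\im(b_1)$, any $u,v\in\ker(b_1)$ satisfy $(u,v)_W=\pm(w,b_1v)_W=0$ upon writing $u=b_1w$; that is, $\ker(b_1)$ is totally isotropic for $(\cdot,\cdot)_W$. This is also forced by the grading you yourself computed: the form on $W$ pairs ghost number $p$ with $3-p$, and no mere restriction can convert that into a pairing of $C^p$ with $C^{2-p}$ --- the ghost-number grading is intrinsic and a subspace inherits it. So the ``structural reason ghost number $1$ carries a form'' that you invoke does not exist at the level of the restricted form; it has to be manufactured.

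The missing idea, which is exactly what the paper does, is to define a \emph{new} form on $B:=W\cap\ker(b_1)$ by inserting the weight-preserving, ghost-number-one, anti-self-adjoint mode $c_{-2}$, namely $(u,v)_B:=(c_{-2}u,v)_W$. This shifts the ghost pairing from $p+q=3$ to $p+q=2$, is non-degenerate on $B$ (using $\{b_1,c_{-2}\}=1$ together with non-degeneracy of $(\cdot,\cdot)_W$), and satisfies $(Qu,v)_C=-(-1)^{|u|}(u,Qv)_C$ on $C=B_0$, so it descends to $H^\bullet_\mathrm{rel.}$, pairs $H^p_\mathrm{rel.}$ with $H^{2-p}_\mathrm{rel.}$, and restricts to a non-degenerate, symmetric, invariant form on the Lie algebra $H^1_\mathrm{rel.}(M)$. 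Once this correction is made, your remaining steps (descent via $Q$-self-adjointness up to sign, and symmetry/invariance of the induced form) go through and parallel the arguments the paper imports from Scheithauer's work on invariant forms for conformal vertex (super)algebras with locally nilpotent $L_1$. One further caveat: your non-degeneracy argument on cohomology relies on decomposing $C$ into finite-dimensional homogeneous subcomplexes, which uses finite-dimensionality hypotheses (e.g.\ on $L_0$- or momentum-eigenspaces) that the proposition as stated does not grant; the paper avoids this by following the cited general treatment rather than a Hodge-type decomposition.
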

\begin{proof}
Since $M$ is positive-energy, in particular $L_1$ acts locally nilpotently on $M$. In this case there is still a nice theory of invariant bilinear forms on $M$ \cite{Sch97,Sch98}, similar to the theory for \voa{}s developed in \cite{Li94}. We shall also need to consider $\Z$-graded conformal vertex superalgebras, for which the theory is described in \cite{Sch00,Sch04}.

The proof closely follows the arguments made in \cite{Sch04}, Section~4, and \cite{Sch00}, Section~5. Note that $(V_\mathrm{gh.})_0/L_1^\mathrm{gh.}(V_\mathrm{gh.})_1$ is one-dimensional. Let $(\cdot,\cdot)_\mathrm{gh.}$ be the unique invariant bilinear form on the ghost vertex superalgebra $V_\mathrm{gh.}$. For definiteness we normalise it such that $(\vac,1\otimes\ee_{3\sigma})_\mathrm{gh.}=1$. Then $(\cdot,\cdot)_\mathrm{gh.}$ is super-symmetric (with respect to the $\Z_2$-grading), non-degenerate, vanishes on $\ker(b_1)$ and pairs spaces $(V_\mathrm{gh.})_n^p$ and $(V_\mathrm{gh.})_m^q$ non-trivially only if $m=n$ and $p+q=3$. Also, note that the following adjoint relations hold: $b_n^*=b_{2-n}$ and $c_n^*=-c_{-4-n}$ for all $n\in\Z$.

On $W=M\otimes V_\mathrm{gh.}$ we consider the tensor-product bilinear form $(\cdot,\cdot)_W$, which is non-degenerate, super-symmetric, invariant and vanishes on $\ker(b_1)$. Moreover, $Q^*=-Q$.

We then define a bilinear form on $B:=W\cap\ker(b_1)$ by setting $(u,v)_B:=(c_{-2}u,v)_W$ for $u,v\in B$. It is non-degenerate, super-antisymmetric and pairs $B_n^p$ and $B_m^q$ non-trivially only if $m=n$ and $p+q=2$.

Then $(u,v)_B$ can be restricted to $C=B_0=W_0\cap\ker(b_1)$ and the corresponding bilinear form is again non-degenerate. Moreover, $(Qu,v)_C=-(-1)^{|u|}(u,Qv)_C$ for $u,v\in C$ with $u$ being $\Z_2$-homogeneous.

The last relation, together with $Q^2=0$, entails that $(\cdot,\cdot)_C$ induces a well-defined bilinear form $(\cdot,\cdot)_{H_\mathrm{rel.}}$ on $H_\mathrm{rel.}=(\ker(Q)\cap C)/(\im(Q)\cap C)$. This form is non-degenerate, super-antisymmetric and pairs $H_\mathrm{rel.}^p$ and $H_\mathrm{rel.}^q$ non-trivially only if $p+q=2$.

Finally, $(\cdot,\cdot)_{H_\mathrm{rel.}}$ can be restricted to the Lie algebra $H_\mathrm{rel.}^1$ and the resulting bilinear form is non-degenerate, symmetric and invariant.
\end{proof}


\subsection{Vanishing Theorem}\label{sec:noghost}
In the following we shall specialise to the case where the Virasoro representation $M$ in the matter sector carries a representation of the Heisenberg (or free-boson) \voa{} $\pi_0^{(k-1,1)}$ of some rank $2\leq k\leq 26$ and Lorentzian signature.

The following vanishing theorem, which uses the full power Feigin's semi-infinite cohomology theory \cite{Fei84}, asserts the vanishing of almost all cohomological spaces associated with the relative BRST complex.
\begin{prop}[Vanishing Theorem, \cite{Zuc89}, Theorem~4.9, \cite{Fei84}]\label{prop:vanish}
Let $2\leq k\leq 26$ and $V$ be a positive-energy Virasoro representation of central charge $26-k$ carrying a non-degenerate, Virasoro-invariant Hermitian sesquilinear form. Let $\alpha\in\R^{(k-1,1)}\otimes_\R\C$ with $\alpha\neq 0$. Then
\begin{equation*}
H_\mathrm{rel.}^p(V\otimes\pi_\alpha^{(k-1,1)})=\{0\}
\end{equation*}
for all $p\neq 1$.
\end{prop}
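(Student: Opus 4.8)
The plan is to reduce the vanishing of the relative BRST cohomology $H^p_{\mathrm{rel.}}(V\otimes\pi_\alpha^{(k-1,1)})$ for $p\neq 1$ to a statement about the semi-infinite cohomology of the Virasoro algebra, where Feigin's original computation applies. The key observation is that for $\alpha\neq 0$ the Heisenberg module $\pi_\alpha^{(k-1,1)}$ has lowest $L_0$-weight $\langle\alpha,\alpha\rangle/2$, which is nonzero precisely when $\alpha$ is not isotropic, but more importantly the relevant structure is that $V\otimes\pi_\alpha^{(k-1,1)}$ carries an action of the Heisenberg algebra built from the $k$ free bosons, and one of these bosons can be split off. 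The standard approach (following \cite{Zuc89,FGZ86}) is to use the decomposition of the bosonic string Hilbert space via the \emph{no-ghost theorem}: one shows that the relative cohomology is isomorphic to the ``transverse'' or ``light-cone'' subspace, which is concentrated entirely in ghost number (and hence cohomological degree) $1$.

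First I would set up the Fock-space structure explicitly. Writing $\h_\R=\R^{(k-1,1)}$ with a chosen isotropic vector, one isolates a two-dimensional Lorentzian sublattice (spanned by a null pair) and decomposes the Heisenberg algebra accordingly. The module $\pi_\alpha^{(k-1,1)}$ becomes a tensor product of a rank-$(k-2)$ positive-definite (transverse) Fock space with a rank-$2$ Lorentzian Fock space, and the total matter representation is $V\otimes\pi_\alpha^{(k-1,1)}$. Because $\alpha\neq 0$, the zero mode of at least one Heisenberg generator acts as a nonzero scalar, which is exactly the non-degeneracy condition that makes Feigin's spectral sequence / reduction argument work. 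The hypothesis that $V$ carries a non-degenerate, Virasoro-invariant Hermitian sesquilinear form of the right central charge $26-k$ ensures that the combined matter representation of central charge $26$ has a non-degenerate contravariant form, so that the cohomology can be computed by a positive-definiteness argument and the BRST differential is compatible with taking adjoints.

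The core computation is Feigin's semi-infinite cohomology of the Virasoro algebra acting on a Fock module with nonzero momentum: the result is that the relative semi-infinite cohomology $H^\bullet_\infty(\mathrm{Vir}; -)$ of such a module is one-dimensional, concentrated in a single degree. Translating this through the identification of the relative BRST complex with the Virasoro semi-infinite complex (the ghost system $V_{\mathrm{gh.}}$ is exactly the $bc$-system realising the Virasoro semi-infinite wedge representation), one obtains that $H^p_{\mathrm{rel.}}(V\otimes\pi_\alpha^{(k-1,1)})$ vanishes for $p\neq 1$. Concretely I would invoke the version stated as Theorem~4.9 in \cite{Zuc89}, whose hypotheses are precisely that the matter side is a positive-energy Virasoro module of central charge $26$ equipped with a non-degenerate Hermitian form and with the momentum $\alpha$ nonzero; our $V\otimes\pi_\alpha^{(k-1,1)}$ satisfies all of these by construction, since $V$ supplies the form in the transverse directions and the Heisenberg Fock space supplies the standard contravariant form in the remaining directions.

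The main obstacle I expect is verifying the non-degeneracy of the Hermitian form on the full matter representation $V\otimes\pi_\alpha^{(k-1,1)}$ and, relatedly, that the momentum being nonzero is genuinely what drives the vanishing rather than merely the norm $\langle\alpha,\alpha\rangle/2$ being nonzero. In the Lorentzian setting the Hermitian form is not positive-definite, so one cannot directly run the naive Hodge-theoretic argument; instead the vanishing relies on the more delicate spectral-sequence filtration by the eigenvalue of a Heisenberg zero mode, which requires $\alpha\neq 0$ so that this zero mode acts invertibly on the relevant subquotients. I would handle this by citing the precise form of the vanishing theorem in \cite{Zuc89,Fei84}, checking only that the hypotheses transfer: positivity of energy is inherited from $V$ and the Fock space, the central charge adds up to $26$, the sesquilinear form is the tensor product of the given form on $V$ with the canonical contravariant form on $\pi_\alpha^{(k-1,1)}$, and $\alpha\neq 0$ is assumed. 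Thus the proof is essentially a verification that the abstract vanishing theorem applies, with the genuine analytic content deferred to Feigin's cohomology computation.
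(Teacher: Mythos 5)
Your proposal is correct and takes essentially the same route as the paper: the paper gives no proof of this proposition at all, importing it directly from \cite{Zuc89}, Theorem~4.9, and \cite{Fei84}, and your argument likewise reduces to checking that the hypotheses of that cited vanishing theorem (positive energy, central charge adding up to $26$, non-degenerate Virasoro-invariant Hermitian form on the tensor product, and nonzero momentum $\alpha$) are satisfied, deferring the genuine cohomological content to Feigin's and Zuckerman's work. Your additional sketch of the internal mechanism (light-cone splitting, filtration by a Heisenberg zero mode, no-ghost-type identification) is a fair description of how the cited proof goes, but it is not reproduced in the paper either.
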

Of course, for this result the \voa{} module structure of $\pi_\alpha^{(k-1,1)}$ is irrelevant. Only the structure of $M=V\otimes\pi_\alpha^{(k-1,1)}$ as a Virasoro module with a Virasoro-invariant Hermitian sesquilinear form matters.

We remark that a vanishing theorem for $M=\pi_\alpha^{(r,s)}$ for $r,s\in\Ns$ with $r+s=26$ was stated in Theorem~2.7 of \cite{FGZ86}.

The vanishing of the relative cohomological spaces for $\alpha\neq 0$ lets collapse the above long exact sequence so that for $\alpha\neq 0$
\begin{equation*}
H_\mathrm{BRST}^1(V\otimes\pi_\alpha^{(k-1,1)})\cong H_\mathrm{rel.}^1(V\otimes\pi_\alpha^{(k-1,1)})\cong H_\mathrm{BRST}^2(V\otimes\pi_\alpha^{(k-1,1)})
\end{equation*}
and
\begin{equation*}
H_\mathrm{BRST}^p(V\otimes\pi_\alpha^{(k-1,1)})=\{0\}
\end{equation*}
for all $p\neq 1,2$.
\begin{rem}\label{rem:noghost}
Like in the proof of Proposition~\ref{prop:Mformhrel}, the non-degenerate, Hermitian sesquilinear forms on $V$ and $\pi_\alpha^{(k-1,1)}$ induce a non-degenerate, Hermitian sesquilinear form on the physical space $H_\mathrm{BRST}^1(V\otimes\pi_\alpha^{(k-1,1)})\cong H_\mathrm{rel.}^1(V\otimes\pi_\alpha^{(k-1,1)})$ for $\alpha\neq 0$. If the form on $V$ is positive-definite, then so is the one on the physical space \cite{Zuc89}. This is referred to as \emph{no-ghost theorem}.
\end{rem}

The vanishing theorem together with the fact that the Euler-Poincaré characteristic of the relative BRST complex, if well-defined, is the same as the one of the corresponding cohomological spaces implies:
\begin{prop}[\cite{Zuc89}, Theorem~4.9]\label{prop:vanish2}
Let $2\leq k\leq 26$ and $V$ be a positive-energy Virasoro representation of central charge $26-k$ carrying a non-degenerate, Virasoro-invariant, Hermitian sesquilinear form. Assume that the $L_0$-grading of $V$ is bounded from below and that the $L_0$-eigenspaces of $V$ are finite-dimensional. Let $\alpha\in\R^{(k-1,1)}\otimes_\R\C$ with $\alpha\neq 0$. Then
\begin{equation*}
H_\mathrm{BRST}^1(V\otimes\pi_\alpha^{(k-1,1)})\cong H_\mathrm{rel.}^1(V\otimes\pi_\alpha^{(k-1,1)})\cong(V\otimes\pi_0^{(k-2,0)})_{1-\langle\alpha,\alpha\rangle/2}.
\end{equation*}
\end{prop}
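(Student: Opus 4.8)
The first isomorphism $H_\mathrm{BRST}^1(V\otimes\pi_\alpha^{(k-1,1)})\cong H_\mathrm{rel.}^1(V\otimes\pi_\alpha^{(k-1,1)})$ for $\alpha\neq0$ is already supplied by the collapse of the long exact sequence following the vanishing theorem (Proposition~\ref{prop:vanish}), so the plan is to establish the second isomorphism by an Euler--Poincaré argument. First I would use the additional hypotheses to secure finiteness: since the $L_0$-grading of $V$ is bounded from below with finite-dimensional graded pieces, the same holds for $M=V\otimes\pi_\alpha^{(k-1,1)}$ and hence for $W=M\otimes V_\mathrm{gh.}$, so the weight-zero space $W_0$ is finite-dimensional and only finitely many ghost numbers $p$ contribute to it. Consequently each $C^p=W_0^p\cap\ker(b_1)$ is finite-dimensional and the relative complex is a finite complex, which legitimises the Euler--Poincaré identity
\begin{equation*}
\sum_p(-1)^p\dim C^p=\sum_p(-1)^p\dim H^p_\mathrm{rel.}(M).
\end{equation*}

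By the vanishing theorem all $H^p_\mathrm{rel.}(M)$ with $p\neq1$ vanish, so the right-hand side equals $-\dim H^1_\mathrm{rel.}(M)$ and it remains to evaluate the alternating sum $\sum_p(-1)^p\dim C^p$. I would compute this by a super-character calculation, writing the relevant generating function as the product of $\ch_V(q)$, the character $q^{\langle\alpha,\alpha\rangle/2}\prod_{n\geq1}(1-q^n)^{-k}$ of $\pi_\alpha^{(k-1,1)}$, and the super-character of the relative ghost sector. The essential point is the cancellation in the ghost sector: once the relative complex has excised the two ghost zero modes dual to $L_0$ (those paired with $b_1$), the surviving fermionic $bc$-oscillators contribute a factor $\prod_{n\geq1}(1-q^n)^2$ in the numerator, which cancels exactly two of the $k$ bosonic oscillator families of $\pi_\alpha^{(k-1,1)}$, namely the two null (longitudinal) directions of $\R^{(k-1,1)}\otimes_\R\C$ singled out because $\alpha\neq0$. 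What survives is the oscillator character $\prod_{n\geq1}(1-q^n)^{-(k-2)}$ of the transverse Heisenberg algebra $\pi_0^{(k-2,0)}$, while the ghost ground state of weight $-1$ and the momentum ground-state weight $\langle\alpha,\alpha\rangle/2$ combine so that the total-weight-zero constraint extracts precisely the coefficient of $q^{1-\langle\alpha,\alpha\rangle/2}$.

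Putting the two steps together I would obtain
\begin{equation*}
\dim H^1_\mathrm{rel.}(M)=\bigl[q^{1-\langle\alpha,\alpha\rangle/2}\bigr]\,\ch_V(q)\prod_{n\geq1}(1-q^n)^{-(k-2)}=\dim\bigl(V\otimes\pi_0^{(k-2,0)}\bigr)_{1-\langle\alpha,\alpha\rangle/2},
\end{equation*}
and since both sides are finite-dimensional vector spaces this yields the desired isomorphism; as the ghost sector carries no $V$-module structure, the same alternating-sum computation can be run with $G$-characters for any group $G$ acting on $V$, upgrading the result to a module isomorphism (as needed in Section~\ref{sec:twisting}). The main obstacle I anticipate is the careful bookkeeping of the ghost sector in the relative complex: correctly identifying which zero modes are excised in the shifted integral ghost grading used here, verifying that exactly two bosonic oscillator families cancel against the ghosts, and confirming that the residual weight shift is $1-\langle\alpha,\alpha\rangle/2$ and not some other constant. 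This is precisely the content already carried out in \cite{Fei84,FGZ86,Zuc89}, and I would either invoke their computation or reproduce it in the present normalisation.
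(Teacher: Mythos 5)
Your proposal is correct and takes essentially the same route as the paper: the paper obtains this proposition precisely by combining the vanishing theorem for the relative subcomplex with the Euler--Poincar\'e characteristic argument (``the Euler-Poincar\'e characteristic of the relative BRST complex, if well-defined, is the same as the one of the corresponding cohomological spaces''), delegating the ghost-sector bookkeeping to the cited Theorem~4.9 of \cite{Zuc89}. Your supercharacter computation, in which the relative ghost contribution $-q^{-1}\prod_{n\geq1}(1-q^n)^2$ cancels two Heisenberg oscillator families and shifts the weight extraction to $1-\langle\alpha,\alpha\rangle/2$, is exactly the content of that citation.
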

(Note that $1/\Delta(q)$ should be replaced by $1/q$ in item (b) of Theorem~4.9 in \cite{Zuc89}.)

The case of $\alpha=0$ is not covered by the vanishing theorem but a direct calculation yields:
\begin{prop}[\cite{Zuc89}, Theorem~4.9]\label{prop:vanish2_0}
Let $2\leq k\leq 26$ and $V$ be a positive-energy Virasoro representation of central charge $26-k$ carrying a non-degenerate, Virasoro-invariant, Hermitian sesquilinear form. Assume that
\begin{enumerate*}
\item\label{item:cond1} the $L_0$-spectrum of $V$ is non-negative,
\item\label{item:cond2} $L_{-1}V_0=\{0\}$.
\end{enumerate*}
Then
\begin{equation*}
H_\mathrm{BRST}^1(V\otimes\pi_0^{(k-1,1)})\cong H_\mathrm{rel.}^1(V\otimes\pi_0^{(k-1,1)})\cong(V\otimes\pi_0^{(k-1,1)})_1.
\end{equation*}
\end{prop}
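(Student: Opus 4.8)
The plan is to compute the relative cohomology $H^\bullet_\mathrm{rel.}(V\otimes\pi_0^{(k-1,1)})$ directly, since the Vanishing Theorem (Proposition~\ref{prop:vanish}) explicitly excludes the case $\alpha=0$, and then to deduce $H^1_\mathrm{BRST}$ from the long exact sequence. Writing $M=V\otimes\pi_0^{(k-1,1)}$, the first observation is that the two hypotheses constrain the matter sector drastically: non-negativity of the $L_0$-spectrum \eqref{item:cond1} gives $M_0=V_0\otimes\vac$ and confines the weight-zero part $W_0=(M\otimes V_\mathrm{gh.})_0$ to only the matter-ghost weight combinations with matter weight $0$ or $1$ (the ghost weights being bounded below by $-1$). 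Since $\{Q,b_1\}=L_0$ and the whole complex is supported in $L_0$-weight $0$, I would work in the relative complex $C^\bullet=(W_0\cap\ker b_1)^\bullet$ graded by ghost number.

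First I would enumerate the weight-zero states of $C^\bullet$ explicitly. A direct bookkeeping of the ghost Fock space intersected with $\ker b_1$ shows that the relative complex at weight zero is concentrated in ghost numbers $0,1,2$, with $C^0\cong M_0$, $C^1\cong M_1$ and $C^2\cong M_0$ (each tensored with a fixed one-dimensional ghost state, and $C^p=\{0\}$ for $p\neq0,1,2$). I would then read off the two differentials from the explicit formula for $Q$: on $C^0$ only the term raising the matter weight survives, so $d^0$ is identified with $L_{-1}^M|_{M_0}\colon M_0\to M_1$, and on $C^1$ only the term lowering the matter weight survives, so $d^1$ is identified with $L_1^M|_{M_1}\colon M_1\to M_0$, the ghost-cubic contributions either vanishing or leaving $\ker b_1$.

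The crux is to show that both differentials vanish. That $L_{-1}^M M_0=\{0\}$ is immediate from \eqref{item:cond2} together with $L_{-1}\vac=0$ on the Heisenberg factor. The vanishing $L_1^M M_1=\{0\}$ is the subtler point: I would use that $L_1$ is the adjoint of $L_{-1}$ for the non-degenerate, Virasoro-invariant form, so that for $v\in V_1$ and every $w\in V_0$ one has $(L_1 v,w)=(v,L_{-1}w)=0$ by \eqref{item:cond2}; non-degeneracy of the form on $V_0$ then forces $L_1 V_1=\{0\}$, while on the Heisenberg part $L_1$ annihilates $(\pi_0^{(k-1,1)})_1$ by the one-line mode computation $L_1(\alpha(-1)\vac)=\alpha(0)\vac=0$. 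Hence $d^0=d^1=0$, giving $H^0_\mathrm{rel.}\cong M_0$, $H^1_\mathrm{rel.}\cong M_1$ and $H^2_\mathrm{rel.}\cong M_0$.

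Finally I would feed this into the long exact sequence. As there are no relative classes of negative ghost number at weight zero, it reduces near $p=1$ to $0\to H^1_\mathrm{rel.}\to H^1_\mathrm{BRST}\to H^0_\mathrm{rel.}\xrightarrow{\delta}H^2_\mathrm{rel.}$, so it remains only to check that the connecting homomorphism $\delta\colon H^0_\mathrm{rel.}\to H^2_\mathrm{rel.}$ is injective. I would verify this by lifting a class $[\,|m\rangle\otimes\vac\,]\in H^0_\mathrm{rel.}\cong M_0$ along $\psi$ using the ghost zero mode conjugate to $b_1$ and applying $Q$: the matter term drops out because $L_{-1}^M M_0=\{0\}$, and the ghost-cubic term produces a nonzero multiple of the generator of $C^2$, so $\delta$ is identified with an isomorphism $M_0\to M_0$. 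Injectivity of $\delta$ then yields $H^1_\mathrm{BRST}\cong H^1_\mathrm{rel.}\cong M_1=(V\otimes\pi_0^{(k-1,1)})_1$. I expect the main obstacle to be precisely this last comparison of $H^1_\mathrm{BRST}$ with $H^1_\mathrm{rel.}$, namely tracking the ghost zero modes through the connecting map and confirming that the ghost-cubic part of $Q$ contributes nothing inside $\ker b_1$; by contrast, the vanishing of the matter differentials reduces cleanly to the interplay of \eqref{item:cond2} with the invariant form.
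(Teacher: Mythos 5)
Your proposal is correct, and its decisive input coincides with the paper's: hypothesis \eqref{item:cond2} together with the non-degenerate, Virasoro-invariant Hermitian form forces $L_1V_1=\{0\}$ (adjointness of $L_{\pm 1}$ plus non-degeneracy of the form restricted to $V_0$), while the Heisenberg factor contributes nothing to either differential. Where you diverge is in how the absolute cohomology is reached. The paper computes, for any positive-energy $M$ with non-negative $L_0$-spectrum and \emph{without} using \eqref{item:cond2}, the general formula $H^1_\mathrm{BRST}(M)=\bigl((\ker(L_1)\cap M_1)/L_{-1}M_0\bigr)\otimes\C c=H^1_\mathrm{rel.}(M)$ directly in the weight-zero complex, and only then specialises and invokes the hypotheses. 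You instead work entirely in the relative complex, whose enumeration $C^0\cong M_0$, $C^1\cong M_1$, $C^2\cong M_0$, $C^p=\{0\}$ otherwise is correct, identify the differentials with $L_{-1}^M$ and $L_1^M$, and then transfer to $H^1_\mathrm{BRST}$ via the long exact sequence, which requires injectivity of the connecting map $\delta\colon H^0_\mathrm{rel.}\to H^2_\mathrm{rel.}$; your lift $v\otimes\vac\mapsto v\otimes c_{-2}\vac$ (using the mode conjugate to $b_1$) and the ghost-cubic computation do produce a non-zero multiple of the generator of $C^2$, so $\delta$ is injective and the argument closes. Your route avoids computing cocycles and coboundaries in the larger absolute complex $W_0^\bullet$ (which is spread over ghost numbers $0$ through $3$), at the price of the connecting-map computation; the paper's route yields the slightly more general identity quoted above. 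One small imprecision: on $C^1$ the ghost-cubic part of $Q$ neither vanishes nor stays in $\ker(b_1)$ on its own -- it produces a multiple of $v\otimes c_{-2}c_{-1}\vac$ that cancels exactly against the $L_0^M$-term of the matter part of $Q$ (as it must, since $\{Q,b_1\}=L_0$ guarantees $Q$ preserves $C$), and only the sum lands in $C^2$; your conclusion $d^1=L_1^M$ is nevertheless correct.
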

\begin{proof}
Assuming that $M$ is a positive-energy Virasoro representation with non-negative $L_0$-spectrum, one computes $H_\mathrm{BRST}^1(M)=((\ker(L_1)\cap M_1)/L_{-1}M_0)\otimes\C c=H_\mathrm{rel.}^1(M)$. Inserting $M=V\otimes\pi_0^{(k-1,1)}$ yields
\begin{equation*}
\frac{\ker(L_1)\cap V_1}{L_{-1}V_0}\otimes\C\vac\otimes\C c\oplus V_0\otimes(\pi_0^{(k-1,1)})_1\otimes \C c
\end{equation*}
Using \eqref{item:cond2}, which also implies $L_1V_1=\{0\}$ because of the non-degenerate, Virasoro-invariant, Hermitian sesquilinear form, this proves the assertion.
\end{proof}

If the character of $V$ is well-defined, the following is immediate with knowledge of the character of the Heisenberg \voa{} (see Section~\ref{sec:lattice}):
\begin{cor}
Let $2\leq k\leq 26$ and $V$ as in the above proposition. Additionally assume that all the $L_0$-eigenspaces are finite-dimensional. Then the dimension of the physical space is
\begin{equation*}
\dim(H_\mathrm{BRST}^1(V\otimes\pi_\alpha^{(k-1,1)}))=\left[\ch_{V}(q)/\eta(q)^{k-2}\right](-\langle\alpha,\alpha\rangle/2)+2\delta_{\alpha,0}\dim(V_0).
\end{equation*}
for all $\alpha\in\R^{(k-1,1)}\otimes_\R\C$.
\end{cor}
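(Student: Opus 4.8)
The plan is to treat the cases $\alpha\neq 0$ and $\alpha=0$ separately, in each case reducing $\dim(H_\mathrm{BRST}^1(V\otimes\pi_\alpha^{(k-1,1)}))$ to a single graded dimension by means of Propositions~\ref{prop:vanish2} and~\ref{prop:vanish2_0}, and then reading off that dimension as a coefficient in the appropriate character. The hypotheses imposed on $V$ (a non-negative, hence bounded-below, $L_0$-spectrum, finite-dimensional $L_0$-eigenspaces, and the non-degenerate, Virasoro-invariant, Hermitian sesquilinear form inherited from the ``above proposition'') are exactly what both vanishing results require.

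For $\alpha\neq 0$ I would invoke Proposition~\ref{prop:vanish2}, which yields
\[
H_\mathrm{BRST}^1(V\otimes\pi_\alpha^{(k-1,1)})\cong (V\otimes\pi_0^{(k-2,0)})_{1-\langle\alpha,\alpha\rangle/2}.
\]
The key observation is a central-charge count: $V$ has central charge $26-k$ and $\pi_0^{(k-2,0)}$ has central charge $k-2$, so their sum is $24$ and $\ch_V(\tau)/\eta(\tau)^{k-2}=\ch_{V\otimes\pi_0^{(k-2,0)}}(\tau)=\tr q^{L_0-24/24}$ carries precisely the shift $q^{L_0-1}$. Hence the coefficient of $q^{-\langle\alpha,\alpha\rangle/2}$ in $\ch_V/\eta^{k-2}$ is the dimension of the $L_0$-eigenspace of $V\otimes\pi_0^{(k-2,0)}$ of weight $1-\langle\alpha,\alpha\rangle/2$, i.e.\ the desired $[\ch_V/\eta^{k-2}](-\langle\alpha,\alpha\rangle/2)$. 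Since $\delta_{\alpha,0}=0$ here, this settles the first case.

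For $\alpha=0$ I would instead use Proposition~\ref{prop:vanish2_0}, giving $H_\mathrm{BRST}^1(V\otimes\pi_0^{(k-1,1)})\cong (V\otimes\pi_0^{(k-1,1)})_1$, and compute this weight space directly. The Fock space $\pi_0^{(k-1,1)}$ has $\dim(\pi_0^{(k-1,1)})_0=1$ and $\dim(\pi_0^{(k-1,1)})_1=k$, and since $V$ has non-negative $L_0$-spectrum only the summands $V_1\otimes(\pi_0^{(k-1,1)})_0$ and $V_0\otimes(\pi_0^{(k-1,1)})_1$ contribute, so $\dim(V\otimes\pi_0^{(k-1,1)})_1=\dim V_1+k\dim V_0$. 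On the other hand, extracting the coefficient of $q^0$ from $\ch_V/\eta^{k-2}=q^{-1}\bigl(\sum_n\dim(V_n)\,q^n\bigr)\prod_{m\geq 1}(1-q^m)^{-(k-2)}$ gives $[\ch_V/\eta^{k-2}](0)=\dim V_1+(k-2)\dim V_0$. The two expressions differ by exactly $2\dim V_0$, which is the $\alpha=0$ correction term $2\delta_{\alpha,0}\dim(V_0)$ in the statement.

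The coefficient bookkeeping is routine; the one point requiring care, and the main obstacle, is the $\alpha=0$ case. There the cohomology is built from all $k$ Heisenberg oscillators, whereas the uniform formula is normalised by $\eta^{k-2}$, i.e.\ only $k-2$ of them. The term $2\dim(V_0)$ is precisely what accounts for the two ``longitudinal'' oscillator directions that are present at $\alpha=0$ but absent from the $\eta^{k-2}$-normalisation used for all $\alpha$ simultaneously; verifying that this discrepancy is exactly $2\dim(V_0)$, and not off by $V_1$-contributions, is the substantive check.
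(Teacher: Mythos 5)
Your proposal is correct and is essentially the paper's own argument: the paper derives the corollary directly from Propositions~\ref{prop:vanish2} and~\ref{prop:vanish2_0} together with the character of the Heisenberg \voa{}, exactly as you do, merely leaving the coefficient bookkeeping implicit. Your explicit verification that $\dim V_1 + k\dim V_0 = [\ch_V/\eta^{k-2}](0) + 2\dim V_0$ in the $\alpha=0$ case correctly fills in the detail the paper calls ``immediate''.
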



\subsection{Natural Construction of Ten \BKMA{}s}\label{sec:tenbkmas}
Finally, we apply the BRST quantisation to the ten conformal vertex algebras $M_{\phi_\nu}$ from Section~\ref{sec:tenM}.

First, we must check that the assumptions are satisfied.
\begin{lem}\label{lem:lieass}
Let $\nu$ be of square-free order in $M_{23}$. Then $M_{\phi_\nu}$ is a positive-energy Virasoro representation of central charge $26$.
\end{lem}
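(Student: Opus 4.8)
The plan is to verify the three requirements of Definition~\ref{defi:virasoro}. That $M_{\phi_\nu}$ has central charge $26$ is exactly the content of Proposition~\ref{prop:mdef}, so it remains to establish that $L_0$ acts diagonalisably and that the subalgebra generated by $\{L_n\mid n\in\Ns\}$ acts locally nilpotently. For diagonalisability I would use the explicit decomposition
\[
M_{\phi_\nu}=\bigoplus_{\alpha\in L'}V_{\Lambda_\nu}^{\hat\nu}(\chi(\alpha+L))\otimes\pi^{(k-1,1)}_\alpha
\]
recorded after Proposition~\ref{prop:Mdecomp}. On each irreducible $V_{\Lambda_\nu}^{\hat\nu}$-module factor and on each Heisenberg Fock space $\pi^{(k-1,1)}_\alpha$ the operator $L_0$ acts semisimply, and on the tensor product $L_0$ is the sum of the two commuting diagonalisable operators $L_0\otimes\id$ and $\id\otimes L_0$; hence $L_0$ is diagonalisable on every summand, and therefore on $M_{\phi_\nu}$.

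The substantive point is the local nilpotency. Because $L$ has Lorentzian signature $(k-1,1)$, the norms $\langle\alpha,\alpha\rangle/2$ are unbounded below as $\alpha$ ranges over $L'$, so the total $L_0$-grading of $M_{\phi_\nu}$ is \emph{not} bounded from below, and the sufficient condition noted in Definition~\ref{defi:virasoro} cannot be invoked globally. The remedy is to exploit the $L'$-grading: the conformal vector $\omega$ lies in the degree-zero component $M_{\phi_\nu}(0)$, so the Virasoro modes, being modes of $\omega$, preserve the $L'$-grading, and each summand $M_{\phi_\nu}(\alpha)=V_{\Lambda_\nu}^{\hat\nu}(\chi(\alpha+L))\otimes\pi^{(k-1,1)}_\alpha$ is a Virasoro submodule. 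Within a single summand the $L_0$-grading \emph{is} bounded from below, by $\rho(V_{\Lambda_\nu}^{\hat\nu}(\chi(\alpha+L)))+\langle\alpha,\alpha\rangle/2$, since the module factor is a lower-bounded ordinary module of the \strat{} \voa{} $V_{\Lambda_\nu}^{\hat\nu}$ and $\pi^{(k-1,1)}_\alpha$ has $L_0$-spectrum in $\langle\alpha,\alpha\rangle/2+\N$. As the operators $L_n$ with $n>0$ strictly lower the $L_0$-weight, any $v\in M_{\phi_\nu}$, being a finite sum of homogeneous vectors lying in finitely many summands and hence of $L_0$-weight bounded above, is annihilated by $L_{n_1}\cdots L_{n_k}$ once $n_1+\dots+n_k$ exceeds the (finite) maximum over the relevant summands of the difference between its top weight and the lower bound of the spectrum; this is precisely the required local nilpotency.

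I expect the only real obstacle to be the conceptual one just described, namely recognising that the full $L_0$-grading of $M_{\phi_\nu}$ is unbounded below and circumventing this by passing to the $L'$-homogeneous pieces, on each of which the grading is bounded below and the verification reduces to the standard lower-bounded case. The remaining ingredients, semisimplicity of $L_0$ on the tensor factors and invariance of the grading under the modes of $\omega$, are routine consequences of the structures already set up in Sections~\ref{sec:lattice} and~\ref{sec:tenM}.
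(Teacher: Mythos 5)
Your proof is correct and follows essentially the same route as the paper: decompose $M_{\phi_\nu}$ into summands of the form (module of a \strat{} \voa{}) $\otimes$ (Heisenberg Fock space), observe that each summand has lower-bounded $L_0$-grading and is therefore positive-energy, and note that positive-energy (unlike lower-boundedness, which indeed fails globally by the Lorentzian signature) passes to the infinite direct sum. The only cosmetic difference is that the paper works with the $K'$-indexed decomposition $\bigoplus_{\alpha\in K'}V_\Lambda^{\phi_\nu}(\varphi(\alpha+K))\otimes\pi^{(1,1)}_\alpha$ from the definition of $M_{\phi_\nu}$ rather than the $L'$-indexed one from Proposition~\ref{prop:Mdecomp}, and it leaves implicit the local-nilpotency bookkeeping that you spell out.
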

\begin{proof}
By definition, $M_{\phi_\nu}$ decomposes as
\begin{equation*}
M_{\phi_\nu}=\bigoplus_{\alpha\in K'}V_\Lambda^{\phi_\nu}(\varphi(\alpha+K))\otimes\pi^{(1,1)}_\alpha.
\end{equation*}
Clearly, $L_0$ acts diagonalisably on $M_{\phi_\nu}$ with central charge $26$. Moreover, the $L_0$-grading on the irreducible $V_\Lambda^{\phi_\nu}$-modules and on the Heisenberg modules $\pi^{(1,1)}_\alpha$, $\alpha\in K'$, is bounded from below. Hence, they are positive energy, which (in contrast to the boundedness from below) carries over to $M_{\phi_\nu}$.
\end{proof}
This allows us to apply the BRST quantisation in Definition~\ref{defi:brst} to $M_{\phi_\nu}$. We define
\begin{equation*}
\g^{\phi_\nu}:=H^1_\mathrm{BRST}(M_{\phi_\nu}),
\end{equation*}
which is a Lie algebra:
\begin{prop}
Let $\nu$ be of square-free order $m$ in $M_{23}$. Then the physical space $\g^{\phi_\nu}$ is an $L'$-graded Lie algebra, i.e.\
\begin{equation*}
\g^{\phi_\nu}=\bigoplus_{\alpha\in L'}\g^{\phi_\nu}(\alpha)\quad\text{and}\quad
[\g^{\phi_\nu}(\alpha),\g^{\phi_\nu}(\beta)]\subseteq \g^{\phi_\nu}(\alpha+\beta)
\end{equation*}
for all $\alpha,\beta\in L'$ where $\g^{\phi_\nu}(\alpha)=H^1_\mathrm{BRST}(M_{\phi_\nu}(\alpha))$ and $L=\Lambda^\nu\oplus \II_{1,1}(m)$.
\end{prop}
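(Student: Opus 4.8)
The plan is to observe that the $L'$-grading on the conformal vertex algebra $M_{\phi_\nu}$ is a grading by a commutative group compatible with all vertex operators, to extend it to the ghost-tensored complex so that the BRST differential becomes homogeneous of degree $0$, and finally to check that the Lie bracket of Proposition~\ref{prop:valie} adds degrees.

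First I would record that the decomposition
\begin{equation*}
M_{\phi_\nu}=\bigoplus_{\alpha\in L'}M_{\phi_\nu}(\alpha),\qquad M_{\phi_\nu}(\alpha)=V_{\Lambda_\nu}^{\hat\nu}(\chi(\alpha+L))\otimes\pi^{(k-1,1)}_\alpha,
\end{equation*}
from Proposition~\ref{prop:Mdecomp} is a grading of $M_{\phi_\nu}$ as a conformal vertex algebra by the additive group $L'$: the grading originates from the Heisenberg factors $\pi^{(k-1,1)}_\alpha$, whose fusion is governed by addition in $L'$, so that $u_nv\in M_{\phi_\nu}(\alpha+\beta)$ whenever $u\in M_{\phi_\nu}(\alpha)$, $v\in M_{\phi_\nu}(\beta)$ and $n\in\Z$. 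In particular the vacuum and the conformal vector lie in $M_{\phi_\nu}(0)$, so each matter Virasoro mode $L^{M_{\phi_\nu}}_n$ preserves the $L'$-grading.

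Next I would extend this grading to the BRST complex $W=M_{\phi_\nu}\otimes V_\mathrm{gh.}$ by placing all of $V_\mathrm{gh.}$ in degree $0$. Since $Q$ is built solely from the matter Virasoro modes $L^{M_{\phi_\nu}}_n$ (homogeneous of $L'$-degree $0$) and the ghost operators (which act only on the degree-$0$ factor $V_\mathrm{gh.}$), the operator $Q$ is homogeneous of $L'$-degree $0$; it also commutes with the ghost-number and $L_0$ gradings. Hence the BRST complex splits as a direct sum over $\alpha\in L'$ of the subcomplexes built on $M_{\phi_\nu}(\alpha)\otimes V_\mathrm{gh.}$, and, because cohomology commutes with direct sums of complexes, one obtains
\begin{equation*}
\g^{\phi_\nu}=H^1_\mathrm{BRST}(M_{\phi_\nu})=\bigoplus_{\alpha\in L'}H^1_\mathrm{BRST}(M_{\phi_\nu}(\alpha))=\bigoplus_{\alpha\in L'}\g^{\phi_\nu}(\alpha).
\end{equation*}
Here Lemma~\ref{lem:lieass} supplies the positive-energy hypothesis needed to form these cohomology spaces in the first place.

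Finally, for the compatibility of the bracket I would trace through $[u,v]=(b_0u)_0v$ on representatives. The ghost field $b$ lies in $V_\mathrm{gh.}$, hence in $L'$-degree $0$, so $b_0u$ has the same $L'$-degree $\alpha$ as $u$; applying the mode $(b_0u)_0$ to $v$ then adds the two degrees by compatibility of the $L'$-grading with the vertex operators of $W$. Thus $[\g^{\phi_\nu}(\alpha),\g^{\phi_\nu}(\beta)]\subseteq\g^{\phi_\nu}(\alpha+\beta)$, and the bracket is well-defined on graded pieces since it already descends to cohomology by Proposition~\ref{prop:valie}. The only step requiring genuine care rather than routine bookkeeping is the first one: verifying that the simple-current/Heisenberg origin of the decomposition really yields an honest $L'$-grading respected by every mode $u_n$, refining the coarser $L'/L$-grading of the underlying simple-current extension; once that is in hand, the homogeneity of $Q$ and of the bracket is immediate.
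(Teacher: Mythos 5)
Your proposal is correct and follows essentially the same route as the paper: the $L'$-grading of $M_{\phi_\nu}$ as a conformal vertex algebra, homogeneity of $Q$ (since it is built from the matter Virasoro modes and ghost operators, all of $L'$-degree $0$), the resulting splitting of the BRST complex and its cohomology, and compatibility of the bracket with the grading. You merely spell out in more detail what the paper compresses into a few lines, in particular the explicit check that $[u,v]=(b_0u)_0v$ adds $L'$-degrees.
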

\begin{proof}
The Lie algebra claim follows from Lemma~\ref{lem:lieass} and Proposition~\ref{prop:valie}.

For the grading we recall that the conformal vertex algebra $M_{\phi_\nu}$ is graded by the dual lattice $L'$, i.e.\
\begin{equation*}
M_{\phi_\nu}=\bigoplus_{\alpha\in L'}M_{\phi_\nu}(\alpha)=\bigoplus_{\alpha\in L'}V_{\Lambda_\nu}^{\hat\nu}(\chi(\alpha+L))\otimes\pi^{(k-1,1)}_\alpha.
\end{equation*}
We note that the $L'$-grading is compatible with the $L_0$-grading on $M_{\phi_\nu}$. In fact, all the Virasoro modes $L_n$, $n\in\Z$, on $M_{\phi_\nu}$ preserve the $L'$-grading, and hence so does~$Q$. We conclude that the BRST quantisation preserves the $L'$-grading, which shows the direct-sum decomposition of $\g^{\phi_\nu}$. Since $M_{\phi_\nu}$ is $L'$-graded as vertex algebra, $\g^{\phi_\nu}$ is $L'$-graded as Lie algebra.
\end{proof}

The $L'$-decomposition of the Lie algebra $\g^{\phi_\nu}$ allows us to apply the vanishing theorem or its corollary, Proposition~\ref{prop:vanish2}. Again, we first have to check that the assumptions are satisfied:
\begin{lem}\label{lem:vanishass}
Let $\nu$ be of square-free order $m$ in $M_{23}$. Then $V_{\Lambda_\nu}^{\hat\nu}(\alpha+\Lambda_\nu,i,j)$ admits a non-degenerate, Virasoro-invariant Hermitian sesquilinear form and satisfies items \eqref{item:cond1} and \eqref{item:cond2} in Proposition~\ref{prop:vanish2_0} for all $\alpha+\Lambda_\nu\in(\Lambda_\nu)'/\Lambda_\nu$ and $i,j\in\Z_m$.
\end{lem}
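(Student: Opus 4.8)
The plan is to realise each irreducible module $V_{\Lambda_\nu}^{\hat\nu}(\alpha+\Lambda_\nu,i,j)$ as the $\e^{(2\pi\i)j/m}$-eigenspace of $\phi_{\alpha+\Lambda_\nu,i}(\hat\nu)$ inside the irreducible $\hat\nu^i$-twisted $V_{\Lambda_\nu}$-module $V_{\alpha+\Lambda_\nu}(\hat\nu^i)$, and to transport the needed structure from the lattice \voa{} side. For the Hermitian form I would use that $V_{\Lambda_\nu}$ is the lattice \voa{} of the positive-definite lattice $\Lambda_\nu$: the positive-definite form on $\Lambda_\nu\otimes_\R\R$ induces a positive-definite, Virasoro-invariant Hermitian sesquilinear form on the underlying Heisenberg modules (as recalled in Section~\ref{sec:lattice}), and the same Fock-space construction equips each irreducible twisted module $V_{\alpha+\Lambda_\nu}(\hat\nu^i)$ with such a form, the adjoint relations for the (twisted) Heisenberg modes yielding $L_n^*=L_{-n}$. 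Since $\hat\nu$ fixes the conformal vector, $\phi_{\alpha+\Lambda_\nu,i}(\hat\nu)$ commutes with every $L_n$ and acts unitarily for this form, so its eigenspaces are mutually orthogonal; the form therefore restricts to a positive-definite, in particular non-degenerate, form on $V_{\Lambda_\nu}^{\hat\nu}(\alpha+\Lambda_\nu,i,j)$, and as $L_{\pm n}$ preserve this eigenspace the restriction remains Virasoro-invariant.

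For item~\eqref{item:cond1}, the conformal weight of $V_{\Lambda_\nu}^{\hat\nu}(\alpha+\Lambda_\nu,i,j)$ is bounded below by $\rho(V_{\alpha+\Lambda_\nu}(\hat\nu^i))$. For $i=0$ the latter equals $\min_{\beta\in\alpha+\Lambda_\nu}\langle\beta,\beta\rangle/2\geq0$, as $\Lambda_\nu$ is positive-definite, and it is strictly positive unless $\alpha\in\Lambda_\nu$. For $i\neq0$ the restriction $\nu|_{\Lambda_\nu}$ has order $m$, so $\nu^i$ moves at least one direction of $\Lambda_\nu\otimes_\R\R$ and the twisted vacuum energy is strictly positive; this is consistent with $\rho(V_\Lambda(\phi_\nu))=(m-1)/m>0$, where the entire twist contribution originates from the coinvariant directions. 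Hence the $L_0$-spectrum is non-negative in all cases.

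For item~\eqref{item:cond2} I observe, by the weight estimate just made, that the weight-zero space of $V_{\Lambda_\nu}^{\hat\nu}(\alpha+\Lambda_\nu,i,j)$ is non-zero precisely when its conformal weight vanishes; since the twisted sectors ($i\neq0$) and the non-trivial untwisted cosets have strictly positive weight, and the fixed part $(V_{\Lambda_\nu})_0=\C\vac$ lies in the $j=0$ eigenspace, this occurs only for $(\alpha+\Lambda_\nu,i,j)=(0+\Lambda_\nu,0,0)$, i.e.\ for $V_{\Lambda_\nu}^{\hat\nu}$ itself, where the weight-zero space is $\C\vac$ and $L_{-1}\vac=0$. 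For every other module the weight-zero space is trivial, so $L_{-1}V_0=\{0\}$ holds vacuously. The only genuinely non-routine input is the positive-definite, Virasoro-invariant Hermitian form on the twisted sectors $V_{\alpha+\Lambda_\nu}(\hat\nu^i)$ together with the unitarity of $\phi_{\alpha+\Lambda_\nu,i}(\hat\nu)$; I would establish this from the explicit twisted Fock-space realisation of these modules, which is where the main work lies.
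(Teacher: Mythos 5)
Your proposal is correct and matches the paper's proof in structure: the paper likewise reduces both weight conditions to the non-negativity of the conformal weights of the $\hat\nu^i$-twisted $V_{\Lambda_\nu}$-modules (citing \cite{DL96}, which gives exactly the vacuum-anomaly and minimal-norm description you use) and deduces item~\eqref{item:cond2} from the resulting positivity condition together with CFT-type and $L_{-1}\vac=0$. The only real difference is presentational: where you sketch the positive-definite form on the twisted Fock spaces and the unitarity/eigenspace-orthogonality argument (and correctly flag this as the main remaining work), the paper simply cites Lemma~3.1.2 of \cite{Car12b} (see also \cite{KRR13}) for the existence of the non-degenerate, Virasoro-invariant Hermitian sesquilinear form.
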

\begin{proof}
That the $L_0$-spectrum of all the irreducible $V_{\Lambda_\nu}^{\hat\nu}$-modules is non-negative follows from the corresponding fact for the irreducible $\hat\nu^i$-twisted $V_{\Lambda_\nu}$-modules for $i\in\Z_m$. Their conformal weights are described in \cite{DL96} and always non-negative. This shows \eqref{item:cond1}. In fact, $V_{\Lambda_\nu}^{\hat\nu}$ satisfies the positivity condition, i.e.\ the conformal weight of any irreducible $V_{\Lambda_\nu}^{\hat\nu}$-module is positive except for that of $V_{\Lambda_\nu}^{\hat\nu}$ itself. Since $V_{\Lambda_\nu}^{\hat\nu}$ is of CFT-type and $L_{-1}\vac=0$, this shows \eqref{item:cond2}.

If $m=1$, $V_{\Lambda_\nu}^{\hat\nu}$ is the trivial \voa{}. For the remaining cases the central charge is $26-k=8,12,\ldots,22$. Similar to Lemma~3.1.2 in \cite{Car12b} (see also \cite{KRR13}) one can show that for all $\alpha+\Lambda_\nu\in(\Lambda_\nu)'/\Lambda_\nu$ and $i,j\in\Z_m$, $V_{\Lambda_\nu}^{\hat\nu}(\alpha+\Lambda_\nu,i,j)$ admits a non-degenerate, Virasoro-invariant, Hermitian sesquilinear form.
\end{proof}
The lemma permits us to compute $\g^{\phi_\nu}(\alpha)=H^1_\mathrm{BRST}(M_{\phi_\nu}(\alpha))\cong H^1_\mathrm{rel.}(M_{\phi_\nu}(\alpha))$ using Propositions \ref{prop:vanish2} and \ref{prop:vanish2_0}:
\begin{prop}\label{prop:tendims}
Let $\nu$ be of square-free order in $M_{23}$. Then the $L'$-graded Lie algebra $\g^{\phi_\nu}$ satisfies
\begin{equation*}
\g^{\phi_\nu}(\alpha)\cong\begin{cases}
(V_{\Lambda_\nu}^{\hat\nu}(\chi(\alpha+L))\otimes\pi_0^{(k-2,0)})_{1-\langle\alpha,\alpha\rangle/2}&\text{if }\alpha\neq0,\\
(V_{\Lambda_\nu}^{\hat\nu}\otimes\pi_0^{(k-1,1)})_1&\text{if }\alpha=0
\end{cases}
\end{equation*}
for all $\alpha\in L'$. Moreover,
\begin{equation*}
\dim(\g^{\phi_\nu}(\alpha))=\left[\ch_{V_{\Lambda_\nu}^{\hat\nu}(\chi(\alpha+L))}(q)/\eta(q)^{k-2}\right](-\langle\alpha,\alpha\rangle/2)
\end{equation*}
for all $\alpha\in L'\setminus\{0\}$ and
\begin{equation*}
\dim(\g^{\phi_\nu}(0))=k=\rk(L).
\end{equation*}
\end{prop}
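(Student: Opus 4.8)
The plan is to read off each graded piece $\g^{\phi_\nu}(\alpha)=H^1_\mathrm{BRST}(M_{\phi_\nu}(\alpha))$ directly from the vanishing theorem. By Proposition~\ref{prop:Mdecomp} we have $M_{\phi_\nu}(\alpha)=V_{\Lambda_\nu}^{\hat\nu}(\chi(\alpha+L))\otimes\pi_\alpha^{(k-1,1)}$, so I set $V:=V_{\Lambda_\nu}^{\hat\nu}(\chi(\alpha+L))$ and treat it as the matter representation tensored with the Heisenberg factor. Its central charge equals $\rk(\Lambda_\nu)=24-\rk(\Lambda^\nu)=24-(k-2)=26-k$, exactly the value demanded by Propositions~\ref{prop:vanish2} and~\ref{prop:vanish2_0}. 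First I would check the hypotheses of those propositions: by Lemma~\ref{lem:vanishass} each such irreducible module carries a non-degenerate, Virasoro-invariant Hermitian sesquilinear form and (for the identity module) satisfies conditions \eqref{item:cond1} and \eqref{item:cond2}; the $L_0$-grading is bounded from below with finite-dimensional eigenspaces because $V$ is an irreducible module of the \strat{} \voa{} $V_{\Lambda_\nu}^{\hat\nu}$, so in particular $V$ is positive-energy.

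With the hypotheses in place the two isomorphisms are immediate. For $\alpha\neq0$ I apply Proposition~\ref{prop:vanish2} to $V\otimes\pi_\alpha^{(k-1,1)}$, obtaining $\g^{\phi_\nu}(\alpha)\cong(V_{\Lambda_\nu}^{\hat\nu}(\chi(\alpha+L))\otimes\pi_0^{(k-2,0)})_{1-\langle\alpha,\alpha\rangle/2}$. For $\alpha=0$ the coset $0+L$ is the identity of $L'/L$, so $\chi(0+L)$ is the identity of the fusion group and $V=V_{\Lambda_\nu}^{\hat\nu}$ is the \voa{} itself; here conditions \eqref{item:cond1} and \eqref{item:cond2} are supplied by Lemma~\ref{lem:vanishass}, and Proposition~\ref{prop:vanish2_0} gives $\g^{\phi_\nu}(0)\cong(V_{\Lambda_\nu}^{\hat\nu}\otimes\pi_0^{(k-1,1)})_1$.

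It remains to convert these isomorphisms into the stated dimensions. For $\alpha\neq0$ I match the weight-space dimension against the character: writing $\ch_V(q)=\tr_V q^{L_0-(26-k)/24}$ and $\eta(q)^{k-2}=q^{(k-2)/24}\prod_{n\geq1}(1-q^n)^{k-2}$, the exponent shifts combine to $-(26-k)/24-(k-2)/24=-1$, whence
\begin{equation*}
\frac{\ch_V(q)}{\eta(q)^{k-2}}=q^{-1}\,\tr_V q^{L_0}\prod_{n\geq1}(1-q^n)^{-(k-2)}=q^{-1}\,\tr_{V\otimes\pi_0^{(k-2,0)}}q^{L_0}.
\end{equation*}
Reading off the coefficient of $q^{-\langle\alpha,\alpha\rangle/2}$ identifies $\dim\g^{\phi_\nu}(\alpha)$ with $\dim(V\otimes\pi_0^{(k-2,0)})_{1-\langle\alpha,\alpha\rangle/2}=[\ch_V(q)/\eta(q)^{k-2}](-\langle\alpha,\alpha\rangle/2)$. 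For $\alpha=0$ I compute the weight-one space directly from its decomposition $(V_{\Lambda_\nu}^{\hat\nu})_1\otimes(\pi_0^{(k-1,1)})_0\oplus(V_{\Lambda_\nu}^{\hat\nu})_0\otimes(\pi_0^{(k-1,1)})_1$: using the observation recorded before this proposition that $(V_{\Lambda_\nu}^{\hat\nu})_1=\{0\}$, together with $(V_{\Lambda_\nu}^{\hat\nu})_0=\C\vac$ and $\dim(\pi_0^{(k-1,1)})_1=k$, this yields $\dim\g^{\phi_\nu}(0)=0+k=\rk(L)$.

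There is no deep obstacle here, since the analytic content is carried entirely by the vanishing theorem and by Lemma~\ref{lem:vanishass}. The only points requiring care are the central-charge bookkeeping $\rk(\Lambda_\nu)=26-k$, the single $q$-grading shift by $-1$ reconciling the weight-space dimension with the character coefficient, and the vanishing $(V_{\Lambda_\nu}^{\hat\nu})_1=\{0\}$ that makes the degree-zero piece genuinely $k$-dimensional, so that it can subsequently serve as a Cartan subalgebra.
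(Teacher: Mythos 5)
Your proposal is correct and follows essentially the same route as the paper: the paper's proof simply declares the first two claims ``immediate'' from Lemma~\ref{lem:vanishass} and Proposition~\ref{prop:vanish2} (with Proposition~\ref{prop:vanish2_0} handling $\alpha=0$), and derives $\dim(\g^{\phi_\nu}(0))=k$ from CFT-type together with $(V_{\Lambda_\nu}^{\hat\nu})_1=\{0\}$, exactly as you do. Your write-up just makes explicit the details the paper leaves implicit — the central-charge count $\rk(\Lambda_\nu)=26-k$, the $q^{-1}$ shift matching character coefficients to weight-space dimensions, and the decomposition of the weight-one space — all of which are accurate.
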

\begin{proof}
The first two claims are immediate from Lemma~\ref{lem:vanishass} and Proposition~\ref{prop:vanish2}. The last statement follows since $V_{\Lambda_\nu}^{\hat\nu}$ is of CFT-type and satisfies $(V_{\Lambda_\nu}^{\hat\nu})_1=\{0\}$.
\end{proof}

By the above proposition, the dimensions of the graded components of $\g^{\phi_\nu}$ are Fourier coefficients exactly of the vector-valued modular form $F$ introduced in Section~\ref{sec:chars} (see Proposition~\ref{prop:F1eqF2}) and lifting to the automorphic product $\Psi_{\phi_\nu}$ (see Section~\ref{sec:twisting}). Hence:
\begin{cor}\label{cor:dimform}
Let $\nu$ be of square-free order $m$ in $M_{23}$. Then
\begin{equation*}
\dim(\g^{\phi_\nu}(\alpha))=\left[F_{\alpha+L}\right](-\langle\alpha,\alpha\rangle/2)=\sum_{d\mid m}\delta_{\alpha\in L'\cap \frac{1}{d}L}[1/\eta_\nu](-d\langle\alpha,\alpha\rangle/2)
\end{equation*}
for all $\alpha\in L'\setminus\{0\}$.
\end{cor}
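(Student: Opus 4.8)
The plan is to chain together the three structural results already established and then perform a single Fourier-coefficient extraction. The first equality is essentially immediate: Proposition~\ref{prop:tendims} expresses $\dim(\g^{\phi_\nu}(\alpha))$ as the Fourier coefficient $[\ch_{V_{\Lambda_\nu}^{\hat\nu}(\chi(\alpha+L))}(q)/\eta(q)^{k-2}](-\langle\alpha,\alpha\rangle/2)$ for $\alpha\in L'\setminus\{0\}$, while Proposition~\ref{prop:F1eqF2} identifies $\ch_{V_{\Lambda_\nu}^{\hat\nu}(\chi(\alpha+L))}(\tau)/\eta(\tau)^{\rk(\Lambda^\nu)}$ with $F_{\alpha+L}(\tau)$. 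Since $\rk(\Lambda^\nu)=k-2$, the two powers of $\eta$ coincide, and substituting directly gives $\dim(\g^{\phi_\nu}(\alpha))=[F_{\alpha+L}](-\langle\alpha,\alpha\rangle/2)$.

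For the second equality I would read off the relevant coefficient from the explicit shape of $F$ provided by Proposition~\ref{prop:simplelift}, namely $F_{\alpha+L}(\tau)=\sum_{d\mid m}\delta_{\alpha\in L'\cap\frac1d L}\,g_{d,j_{\alpha+L,d}}(\tau)$. Since the coefficient functional $[\,\cdot\,](-\langle\alpha,\alpha\rangle/2)$ is linear, it suffices to evaluate each summand $[g_{d,j_{\alpha+L,d}}](-\langle\alpha,\alpha\rangle/2)$ separately. Recalling that $g_{d,j}(\tau)$ collects the $q^{n/d}$-terms of $f(\tau/d)=1/\eta_\nu(\tau/d)$ with $n\equiv j\pmod d$, and that $f=1/\eta_\nu$ has an integral $q$-expansion (its leading exponent is $-1$, as $\sum_{t\mid m}tb_t/24=1$ for these cycle shapes), the coefficient of $q^{-\langle\alpha,\alpha\rangle/2}$ in $g_{d,j}$ equals the coefficient of $q^{-d\langle\alpha,\alpha\rangle/2}$ in $f$, i.e.\ $[1/\eta_\nu](-d\langle\alpha,\alpha\rangle/2)$, exactly when the residue condition $-d\langle\alpha,\alpha\rangle/2\equiv j\pmod d$ is met, and is $0$ otherwise.

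The one step demanding care — and the only genuine, if modest, obstacle — is matching this residue condition with the index $j_{\alpha+L,d}$. By its definition $j_{\alpha+L,d}\in\Z_d$ satisfies $-j_{\alpha+L,d}/d=\langle\alpha,\alpha\rangle/2\pmod 1$, which, whenever $d\langle\alpha,\alpha\rangle/2\in\Z$, is precisely the congruence $-d\langle\alpha,\alpha\rangle/2\equiv j_{\alpha+L,d}\pmod d$; so taking $j=j_{\alpha+L,d}$ makes the residue condition automatic and the coefficient equal to $[1/\eta_\nu](-d\langle\alpha,\alpha\rangle/2)$. In the complementary case $d\langle\alpha,\alpha\rangle/2\notin\Z$ there is no integer $n$ with $n/d=-\langle\alpha,\alpha\rangle/2$, so $[g_{d,j_{\alpha+L,d}}](-\langle\alpha,\alpha\rangle/2)=0$, and correspondingly $[1/\eta_\nu](-d\langle\alpha,\alpha\rangle/2)=0$ under the convention that coefficients at non-integral exponents vanish; thus the two sides agree in all cases. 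Summing these contributions over $d\mid m$ against the indicator $\delta_{\alpha\in L'\cap\frac1d L}$ reproduces the stated formula and completes the argument.
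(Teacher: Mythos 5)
Your proposal is correct and follows essentially the same route as the paper: the first equality is obtained by combining Proposition~\ref{prop:tendims} with Proposition~\ref{prop:F1eqF2} (using $\rk(\Lambda^\nu)=k-2$), and the second by extracting the relevant Fourier coefficient from the explicit formula in Proposition~\ref{prop:simplelift}. Your careful matching of the residue condition with the index $j_{\alpha+L,d}$ (including the degenerate case $d\langle\alpha,\alpha\rangle/2\notin\Z$) is precisely the detail the paper compresses into the phrase ``by definition of the $g_{d,j}(\tau)$ in terms of $1/\eta_\nu(\tau/d)$''.
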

\begin{proof}
Proposition~\ref{prop:simplelift} implies that
\begin{align*}
\left[F_{\alpha+L}\right](-\langle\alpha,\alpha\rangle/2)&=\sum_{d\mid m}\delta_{\alpha\in L'\cap \frac{1}{d}L}\,\left[g_{d,j_{\alpha+L,d}}\right](-\langle\alpha,\alpha\rangle/2)\\
&=\sum_{d\mid m}\delta_{\alpha\in L'\cap \frac{1}{d}L}[1/\eta_\nu](-d\langle\alpha,\alpha\rangle/2)
\end{align*}
by definition of the $g_{d,j}(\tau)$ in terms of $1/\eta_\nu(\tau/d)$.
\end{proof}

Because $\g^{\phi_\nu}=H^1_\mathrm{BRST}(M_{\phi_\nu})=H^1_\mathrm{rel.}(M_{\phi_\nu})$, we can use Proposition~\ref{prop:Mformhrel} to define a non-degenerate, symmetric, invariant bilinear form on $\g^{\phi_\nu}$.
\begin{lem}
Let $\nu$ be of square-free order in $M_{23}$. Then the conformal vertex algebra $M_{\phi_\nu}$ admits a non-degenerate, symmetric, invariant bilinear form $(\cdot,\cdot)_{M_{\phi_\nu}}$, which is unique up to a non-zero scalar.
\end{lem}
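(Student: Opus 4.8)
The plan is to verify the two hypotheses needed to invoke the theory of invariant bilinear forms on conformal vertex algebras with locally nilpotently acting $L_1$ developed in \cite{Li94,Sch97,Sch98}: namely that $M_{\phi_\nu}$ is self-contragredient and that the space of such forms is one-dimensional. Note first that $L_1$ acts locally nilpotently on $M_{\phi_\nu}$, since the latter is positive-energy by Lemma~\ref{lem:lieass}, so that this theory applies at all.

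For existence I would build the form from the constituents of the simple-current extension in Proposition~\ref{prop:Mdecomp}, $M_{\phi_\nu}\cong\bigoplus_{\gamma+L\in L'/L}V_{\Lambda_\nu}^{\hat\nu}(\chi(\gamma+L))\otimes V_{\gamma+L}$. The \fpvosa{} $V_{\Lambda_\nu}^{\hat\nu}$ is \strat{}, hence self-contragredient and of CFT-type, so it carries a non-degenerate, invariant bilinear form by \cite{Li94}; the Lorentzian lattice vertex algebra $V_L$ carries the standard non-degenerate, invariant bilinear form pairing $\pi^{(k-1,1)}_\alpha$ with $\pi^{(k-1,1)}_{-\alpha}$. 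Their tensor product equips the component $M_{\phi_\nu}(0)=V_{\Lambda_\nu}^{\hat\nu}\otimes\pi_0^{(k-1,1)}$ with a non-degenerate, invariant form. To extend it, I would use that the extension is self-dual: since all modules involved are simple currents, the contragredient of the $(\gamma+L)$-component is $V_{\Lambda_\nu}^{\hat\nu}(\chi(\gamma+L))'\otimes V_{\gamma+L}'\cong V_{\Lambda_\nu}^{\hat\nu}(\chi(-\gamma+L))\otimes V_{-\gamma+L}$, i.e.\ the $(-\gamma+L)$-component, using that $\chi$ is an isomorphism of finite quadratic spaces and hence a group homomorphism and that $(W^\delta)'\cong W^{-\delta}$ for simple currents (Section~\ref{sec:ext}). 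Summing over $\gamma+L\in L'/L$ yields $M_{\phi_\nu}'\cong M_{\phi_\nu}$, and the tensor-product form extends to a non-degenerate, invariant bilinear form on all of $M_{\phi_\nu}$.

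For uniqueness I would exploit the action of the Heisenberg subalgebra inside $M_{\phi_\nu}(0)$. Invariance forces $h_0$ to be skew-adjoint for every $h\in\h$, so any invariant form pairs the graded pieces $M_{\phi_\nu}(\alpha)$ and $M_{\phi_\nu}(\beta)$ non-trivially only if $\alpha+\beta=0$. Restricting to the vertex subalgebra $M_{\phi_\nu}(0)$, which is of CFT-type, the form is determined up to a single scalar by \cite{Li94}. Each $M_{\phi_\nu}(\alpha)=V_{\Lambda_\nu}^{\hat\nu}(\chi(\alpha+L))\otimes\pi_\alpha^{(k-1,1)}$ is irreducible as a module over $M_{\phi_\nu}(0)$, so the pairing with $M_{\phi_\nu}(-\alpha)$ is a scalar multiple of the canonical one; and the compatibility of the form with the intertwining operators $Y(u,z)$, $u\in M_{\phi_\nu}(\beta)$, ties all these scalars to the one on $M_{\phi_\nu}(0)$. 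Hence the form is unique up to a non-zero scalar.

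The main obstacle I anticipate is this last, gluing step: making precise that the invariance constraints coming from the genuinely off-diagonal intertwining operators determine all the relative scalars from the one on $M_{\phi_\nu}(0)$, equivalently establishing that $M_{\phi_\nu}$ is simple so that Schur's lemma pins down the endomorphism relating two invariant forms. This is where the non-lower-bounded, Lorentzian nature of $M_{\phi_\nu}$ obstructs a direct appeal to the \strat{} extension results of Section~\ref{sec:ext}, and one must instead argue as in the lattice case \cite{Sch00,Sch04} that the relevant structure constants do not vanish.
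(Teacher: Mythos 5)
Your proposal is correct and follows essentially the same route as the paper: decompose $M_{\phi_\nu}$ into irreducible $M_{\phi_\nu}(0)$-modules, use the up-to-scalar-unique non-degenerate invariant form on the simple, self-contragredient \voa{} $M_{\phi_\nu}(0)$ together with contragredient pairings $M_{\phi_\nu}(-\alpha)\cong(M_{\phi_\nu}(\alpha))'$, and note that any invariant form can pair $M_{\phi_\nu}(\alpha)$ non-trivially only with $M_{\phi_\nu}(-\alpha)$. The gluing/normalisation step you single out as the main obstacle is exactly the point the paper resolves by citing Proposition~3.1.8 in \cite{Car12b}, which shows that a proper choice of the relative scalars makes the component-wise form $M_{\phi_\nu}$-invariant and thereby also pins down uniqueness up to one overall scalar.
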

\begin{proof}
The space of symmetric, invariant bilinear forms on $M_{\phi_\nu}$ is isomorphic to the dual space of $(M_{\phi_\nu})_0/L_1(M_{\phi_\nu})_1$ since $L_1$ acts locally nilpotently on $M_{\phi_\nu}$ \cite{Sch97,Sch98}. However, instead of studying such forms on $M_{\phi_\nu}$ directly, we shall first consider the \voa{} $M_{\phi_\nu}(0)$ and then extend the result to $M_{\phi_\nu}$.

The space $M_{\phi_\nu}(0)_0/L_1M_{\phi_\nu}(0)_1$ is one-dimensional. Let $(\cdot,\cdot)_{M_{\phi_\nu}(0)}$ be the up to non-zero scalar unique non-degenerate, symmetric, invariant bilinear form on the simple, self-contragredient \voa{} $M_{\phi_\nu}(0)$. It is related to the contragredient pairing by $(u,v)_{M_{\phi_\nu}(0)}=\langle\phi_0(u),v\rangle$ for $u,v\in M_{\phi_\nu}(0)$ where $\phi_0\colon M_{\phi_\nu}(0)\to M_{\phi_\nu}(0)'$ is an isomorphism of $M_{\phi_\nu}(0)$-modules, again unique up to a non-zero scalar.

Any $M_{\phi_\nu}$-invariant bilinear form on $M_{\phi_\nu}$ can only pair the irreducible $M_{\phi_\nu}(0)$-module $M_{\phi_\nu}(\alpha)=V_{\Lambda_\nu}^{\hat\nu}(\chi(\alpha+L))\otimes\pi^{(k-1,1)}_\alpha$ non-trivially with its contragredient module $M_{\phi_\nu}(\alpha)'\cong M_{\phi_\nu}(-\alpha)=V_{\Lambda_\nu}^{\hat\nu}(\chi(-\alpha+L))\otimes\pi^{(k-1,1)}_{-\alpha}$, and such a form is in particular $M_{\phi_\nu}(0)$-invariant.

On the other hand, $(\cdot,\cdot)_{M_{\phi_\nu}(0)}$ and the contragredient pairings with choices of $M_{\phi_\nu}(0)$-module isomorphisms $\phi_\alpha\colon M_{\phi_\nu}(-\alpha)\to(M_{\phi_\nu}(\alpha))'$ for $\alpha\neq 0$ define a non-degenerate, symmetric, $M_{\phi_\nu}(0)$-invariant bilinear form $(\cdot,\cdot)_{M_{\phi_\nu}}$ on $M_{\phi_\nu}$.

Proper normalisation with respect to the normalisation of $(\cdot,\cdot)_{M_{\phi_\nu}(0)}$ (cf.\ Proposition~3.1.8 in \cite{Car12b}) makes the form $(\cdot,\cdot)_{M_{\phi_\nu}}$ $M_{\phi_\nu}$-invariant.
\end{proof}

For definiteness we normalise $(\cdot,\cdot)_{M_{\phi_\nu}}$ such that $(\vac,\vac)_{M_{\phi_\nu}}=1$. Proposition~\ref{prop:Mformhrel} implies:
\begin{prop}\label{prop:lainvbil}
Let $\nu$ be of square-free order in $M_{23}$. Then there is a non-degenerate, symmetric, invariant bilinear form $(\cdot,\cdot)_{\g^{\phi_\nu}}$ on $\g^{\phi_\nu}$.
\end{prop}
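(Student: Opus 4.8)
The plan is to deduce the statement directly from Proposition~\ref{prop:Mformhrel}, since all the necessary hypotheses have already been established in the immediately preceding results. First I would record that $M_{\phi_\nu}$ meets the three requirements of that proposition: it is a positive-energy Virasoro representation of central charge $26$ by Lemma~\ref{lem:lieass}, it is a conformal vertex algebra by Proposition~\ref{prop:mdef}, and it carries a non-degenerate, symmetric, invariant bilinear form $(\cdot,\cdot)_{M_{\phi_\nu}}$ by the preceding lemma (normalised so that $(\vac,\vac)_{M_{\phi_\nu}}=1$). Proposition~\ref{prop:Mformhrel} then immediately furnishes a non-degenerate, symmetric, invariant bilinear form on the Lie algebra $H^1_\mathrm{rel.}(M_{\phi_\nu})$.

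The only remaining point is to transport this form from $H^1_\mathrm{rel.}(M_{\phi_\nu})$ to $\g^{\phi_\nu}=H^1_\mathrm{BRST}(M_{\phi_\nu})$. I would argue graded component by graded component along the $L'$-decomposition. For each nonzero $\alpha\in L'$, Lemma~\ref{lem:vanishass} verifies the hypotheses of the vanishing theorem, so the long exact sequence of Section~\ref{sec:noghost} collapses and the natural inclusion induces an isomorphism $H^1_\mathrm{rel.}(M_{\phi_\nu}(\alpha))\cong H^1_\mathrm{BRST}(M_{\phi_\nu}(\alpha))$; for $\alpha=0$ the same isomorphism is provided directly by Proposition~\ref{prop:vanish2_0}. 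Summing over $\alpha\in L'$ yields $H^1_\mathrm{rel.}(M_{\phi_\nu})\cong\g^{\phi_\nu}$.

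Since this isomorphism is the map induced by the inclusion $C^\bullet\hookrightarrow W^\bullet_0$ of the relative subcomplex into the full BRST complex, it is an isomorphism of Lie algebras: both brackets are defined by the same vertex-algebra operation $[u,v]=(b_0u)_0v$ (Proposition~\ref{prop:valie}), and this operation restricts to $\ker(b_1)$, so the inclusion intertwines the two brackets. Pulling back the form $(\cdot,\cdot)_{H_\mathrm{rel.}}$ along this isomorphism therefore produces a non-degenerate, symmetric, invariant bilinear form $(\cdot,\cdot)_{\g^{\phi_\nu}}$ on $\g^{\phi_\nu}$, as claimed.

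I expect no genuine obstacle here, as the substantive work is carried out in the earlier propositions; the proof is a matter of assembling them. The only step warranting explicit mention is the compatibility of the relative-to-absolute isomorphism with the Lie bracket, which I would confirm by noting that both brackets arise from the identical vertex-algebra product restricted, in the relative case, to $\ker(b_1)$.
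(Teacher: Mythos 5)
Your proposal is correct and follows essentially the same route as the paper: the paper likewise obtains the form by combining the lemma on $(\cdot,\cdot)_{M_{\phi_\nu}}$ with Proposition~\ref{prop:Mformhrel}, using the identification $\g^{\phi_\nu}=H^1_\mathrm{BRST}(M_{\phi_\nu})=H^1_\mathrm{rel.}(M_{\phi_\nu})$ coming from Lemma~\ref{lem:vanishass} and Propositions~\ref{prop:vanish2} and~\ref{prop:vanish2_0}. Your explicit verification that the relative-to-absolute isomorphism respects the Lie bracket (via Proposition~\ref{prop:valie}) is a point the paper leaves implicit, but it is the same argument.
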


In the following we describe the zero-component $\mathcal{H}:=\g^{\phi_\nu}(0)$ of $\g^{\phi_\nu}$, which we shall later identify as a Cartan subalgebra of $\g^{\phi_\nu}$. It simplifies to
\begin{equation*}
\mathcal{H}\cong(V_{\Lambda_\nu}^{\hat\nu})_0\otimes(\pi_0^{(k-1,1)})_1=\C\vac\otimes\{h(-1)1\,|\,h\in\h\}\cong\h
\end{equation*}
with $\h:=L\otimes_\Z\C$ since $V_{\Lambda_\nu}^{\hat\nu}$ is of CFT-type and satisfies $(V_{\Lambda_\nu}^{\hat\nu})_1=\{0\}$.

Now recall that $(\cdot,\cdot)_{\g^{\phi_\nu}}$ is induced from the tensor product of the up to non-zero scalar unique invariant, bilinear forms $(\cdot,\cdot)_{M_{\phi_\nu}}$ on $M_{\phi_\nu}$ and $(\cdot,\cdot)_\mathrm{gh.}$ on $V_\mathrm{gh.}$ and that we chose normalisations for both. Moreover, recall that $\h$ comes equipped with a bilinear form $\langle\cdot,\cdot\rangle$ obtained as extension of the bilinear form $\langle\cdot,\cdot\rangle$ on the lattice $L$. Then the above isomorphism is even an isometry:
\begin{prop}\label{prop:Hiso}
Let $\nu$ be of square-free order in $M_{23}$. Then there is an isometry
\begin{equation*}
\left(\h,\langle\cdot,\cdot\rangle\right)\cong\left(\mathcal{H},(\cdot,\cdot)_{\g^{\phi_\nu}}\right)
\end{equation*}
induced by $h\mapsto\vac\otimes h(-1)1\otimes c\in W$ for all $h\in\h=L\otimes_\Z\C$. This isometry maps $L\otimes_\Z\R$, on which the bilinear form $\langle\cdot,\cdot\rangle$ is real-valued and of signature $(k-1,1)$, to a real subspace $\mathcal{H}_\R$ of $\mathcal{H}$ on which $(\cdot,\cdot)_{\g^{\phi_\nu}}$ is real-valued and of signature $(k-1,1)$.
\end{prop}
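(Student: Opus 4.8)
The plan is to compute the induced bilinear form directly on the explicit cocycle representatives and to check that it reproduces $\langle\cdot,\cdot\rangle$ on $\h$. First I would observe that the stated map is precisely the isomorphism $\h\cong\mathcal{H}$ already used in the excerpt: applying Proposition~\ref{prop:vanish2_0} to $M_{\phi_\nu}(0)=V_{\Lambda_\nu}^{\hat\nu}\otimes\pi_0^{(k-1,1)}$ and using $(V_{\Lambda_\nu}^{\hat\nu})_0=\C\vac$ together with $(V_{\Lambda_\nu}^{\hat\nu})_1=\{0\}$, the space $\mathcal{H}=H^1_\mathrm{rel.}(M_{\phi_\nu}(0))$ is identified with $(M_{\phi_\nu}(0))_1=\C\vac\otimes(\pi_0^{(k-1,1)})_1\cong\h$. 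By the proof of Proposition~\ref{prop:vanish2_0} the corresponding cohomology classes are represented exactly by the vectors $u_h:=\vac\otimes h(-1)1\otimes c$ for $h\in\h=L\otimes_\Z\C$, so it remains only to evaluate $(u_h,u_{h'})_{\g^{\phi_\nu}}$ and show it equals $\langle h,h'\rangle$.

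Next I would use the description of the form from Proposition~\ref{prop:Mformhrel}, namely $(u_h,u_{h'})_{\g^{\phi_\nu}}=(c_{-2}u_h,u_{h'})_W$, where $(\cdot,\cdot)_W=(\cdot,\cdot)_{M_{\phi_\nu}}\otimes(\cdot,\cdot)_\mathrm{gh.}$ and $c_{-2}$ acts only on the ghost factor. This factorizes into a matter part times a ghost part. For the matter part, restricted to the vacuum component $M_{\phi_\nu}(0)$ the invariant form is the tensor product of the invariant forms on $V_{\Lambda_\nu}^{\hat\nu}$ and on the Heisenberg algebra $\pi_0^{(k-1,1)}$, so $(\vac\otimes h(-1)1,\vac\otimes h'(-1)1)_{M_{\phi_\nu}}=(\vac,\vac)\cdot(h(-1)1,h'(-1)1)=\langle h,h'\rangle$ by the standard Heisenberg pairing and the normalization $(\vac,\vac)_{M_{\phi_\nu}}=1$. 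For the ghost part I would compute the constant $(c_{-2}c,c)_\mathrm{gh.}$ in the lattice realization: from the operator product $c_{-2}c=\eps(\sigma,\sigma)\,(1\otimes\ee_{2\sigma})$ and the adjoint relation $c_n^*=-c_{-4-n}$, the pairing $(1\otimes\ee_{2\sigma},1\otimes\ee_\sigma)_\mathrm{gh.}$ reduces to the normalization $(\vac,1\otimes\ee_{3\sigma})_\mathrm{gh.}=1$ (note that $1\otimes\ee_{3\sigma}$ is the top ghost vector, proportional to $c_{-3}c_{-2}c$), giving $(c_{-2}c,c)_\mathrm{gh.}=1$. Hence $(u_h,u_{h'})_{\g^{\phi_\nu}}=\langle h,h'\rangle$ and the map is an isometry.

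For the real structure I would restrict $h$ to $L\otimes_\Z\R$. The computation above is $\R$-bilinear, so the image $\mathcal{H}_\R$ of $L\otimes_\Z\R$ carries a real-valued form and the isometry transports the signature of $\langle\cdot,\cdot\rangle$ on $L\otimes_\Z\R$ verbatim. Since $L=\Lambda^\nu\oplus\II_{1,1}(m)$ with $\Lambda^\nu$ positive-definite of rank $k-2$ and $\II_{1,1}(m)$ of signature $(1,1)$, this signature is $(k-1,1)$, which yields the last assertion (and supplies condition~\eqref{enum:bkma3} needed to apply Proposition~\ref{prop:bor95thm2} later).

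I expect the main obstacle to be the ghost-sector constant $(c_{-2}c,c)_\mathrm{gh.}$: one must carefully track the $2$-cocycle signs $\eps(\sigma,\cdot)$ and the background-charge (shifted-Virasoro) adjoint relations so that the chosen normalization $(\vac,1\otimes\ee_{3\sigma})_\mathrm{gh.}=1$ forces this constant to be exactly $1$, rather than merely some fixed nonzero scalar. Everything else — the factorization of the tensor-product forms, the Heisenberg pairing, and the signature bookkeeping — is routine.
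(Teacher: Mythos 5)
Your overall strategy is the intended one: the paper's own proof of Proposition~\ref{prop:Hiso} is nothing but a pointer to \cite{Sch04}, Section~4.2, where precisely this computation (representatives $\vac\otimes h(-1)1\otimes c$, the pairing $(c_{-2}\,\cdot\,,\cdot)_W$ from Proposition~\ref{prop:Mformhrel}, factorisation into matter and ghost contributions) is carried out. But your execution has a genuine gap, and it sits exactly where the entire content of the proposition lies: in the overall sign. A sign cannot be absorbed here, because $\langle\cdot,\cdot\rangle$ restricted to $L\otimes_\Z\R$ has signature $(k-1,1)$ while $-\langle\cdot,\cdot\rangle$ has signature $(1,k-1)$, and these are not isometric. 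So with the normalisations already fixed in the paper, $(\vac,\vac)_{M_{\phi_\nu}}=1$ and $(\vac,1\otimes\ee_{3\sigma})_\mathrm{gh.}=1$, it is not enough that the induced form be a nonzero multiple of $\langle\cdot,\cdot\rangle$; the multiple must be shown to be $+1$.

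The concrete problem is your matter-sector evaluation, which you dismiss as routine. The form entering Proposition~\ref{prop:Mformhrel} is the invariant form in the vertex-algebra sense of \cite{Li94,Sch97,Sch98} — this is what is needed for $Q^*=-Q$ and the descent to cohomology — and for that form the adjoint relation on a primary weight-one vector is $h(n)^*=-h(-n)$, coming from the factor $(-z^{-2})^{L_0}$. Hence
\begin{equation*}
\left(\vac\otimes h(-1)1,\vac\otimes h'(-1)1\right)_{M_{\phi_\nu}}=-\langle h,h'\rangle\,(\vac,\vac)_{M_{\phi_\nu}}=-\langle h,h'\rangle,
\end{equation*}
not $+\langle h,h'\rangle$. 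The value you quote is that of the contravariant (Shapovalov-type) pairing with $h(n)^*=h(-n)$, which is Virasoro-invariant but not invariant in the vertex-algebra sense and is not the form in play. Consequently, for the proposition to be true the ghost constant must satisfy $(c_{-2}c,c)_\mathrm{gh.}=-1$ under the stated normalisation — the opposite of what you predict — and with your expected value $+1$ together with the corrected matter sign the map would come out an anti-isometry, contradicting the signature claim. So neither of your two intermediate evaluations can stand as written: both signs (the matter one, and the ghost one involving the $2$-cocycle values, the Koszul signs in the super adjoint relations, and $(-z^{-2})^{L_0}$ at negative ghost weights) must be computed and shown to cancel. That cancellation is exactly the bookkeeping performed in \cite{Sch04}, Section~4.2, and it is the substance of the proposition rather than a peripheral technicality.
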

\begin{proof}
Cf. \cite{Sch04}, Section~4.2.
\end{proof}
We shall see that $\mathcal{H}=\g^{\phi_\nu}(0)$ is a Cartan subalgebra of $\g^{\phi_\nu}$. For this property it is essential that $(V_{\Lambda_\nu}^{\hat\nu})_1=\{0\}$.

In the following we prove that $\g^{\phi_\nu}$ is a \BKMa{} using Propositions \ref{prop:bkma} and \ref{prop:bor95thm2}.
\begin{lem}
Let $\nu$ be of square-free order in $M_{23}$. Then $\g^{\phi_\nu}$ satisfies items \eqref{enum:bkma1} to \eqref{enum:bkma4} in Proposition~\ref{prop:bkma}.
\end{lem}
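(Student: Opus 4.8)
The plan is to verify the four conditions \eqref{enum:bkma1}--\eqref{enum:bkma4} of Proposition~\ref{prop:bkma} for $\g^{\phi_\nu}$ one at a time, leaning on the structural results already assembled. Condition \eqref{enum:bkma1} is immediate: it is precisely the content of Proposition~\ref{prop:lainvbil}, which supplies the non-degenerate, symmetric, invariant bilinear form $(\cdot,\cdot)_{\g^{\phi_\nu}}$. So I would dispatch \eqref{enum:bkma1} in one line and concentrate on \eqref{enum:bkma2}, then deduce \eqref{enum:bkma3} and \eqref{enum:bkma4} from it.

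For condition \eqref{enum:bkma2} I would take $\mathcal{H}=\g^{\phi_\nu}(0)$, which by the discussion preceding Proposition~\ref{prop:Hiso} is isomorphic to $\h=L\otimes_\Z\C$ via $h\mapsto\vac\otimes h(-1)1\otimes c$. The key computation is to show that the adjoint action of such an $h\in\mathcal{H}$ on a graded piece $\g^{\phi_\nu}(\alpha)$, $\alpha\in L'$, is multiplication by the scalar $\langle\alpha,h\rangle$. This I would carry out directly from the bracket $[u,v]=(b_0u)_0v$ of Proposition~\ref{prop:valie}: applying $b_0$ to the ghost factor $c$ strips it to the matter state $\vac\otimes h(-1)1$, whose zero mode is the Heisenberg mode $h(0)$, and $h(0)$ acts on $\pi^{(k-1,1)}_\alpha$ as the scalar $\langle\alpha,h\rangle$ on the $\alpha$-graded component. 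Granting this, the $L'$-grading of $\g^{\phi_\nu}$ is exactly the eigenspace decomposition under $\ad(\mathcal{H})$, so the roots are the $\alpha\in L'\setminus\{0\}$ with $\g^{\phi_\nu}(\alpha)\neq\{0\}$; in particular the $\alpha=0$ case gives $[\mathcal{H},\mathcal{H}]=0$, so $\mathcal{H}$ is abelian. The subalgebra $\mathcal{H}$ is self-centralising because any element of $\g^{\phi_\nu}$ commuting with all of $\mathcal{H}$ must have vanishing $\alpha$-component for every $\alpha\neq0$ --- the form on $\h$ being non-degenerate, some $h\in\mathcal{H}$ pairs non-trivially with $\alpha$ --- whence $C_{\g^{\phi_\nu}}(\mathcal{H})=\mathcal{H}$. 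Finiteness of the root multiplicities is then immediate from Proposition~\ref{prop:tendims}, which expresses $\dim(\g^{\phi_\nu}(\alpha))$ as a single (finite) Fourier coefficient.

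Condition \eqref{enum:bkma3} now follows from Proposition~\ref{prop:Hiso} together with the root picture just established: the isometry $(\h,\langle\cdot,\cdot\rangle)\cong(\mathcal{H},(\cdot,\cdot)_{\g^{\phi_\nu}})$ identifies $\mathcal{H}_\R$ with $L\otimes_\Z\R$, on which the form is real-valued of signature $(k-1,1)$ and satisfies $\mathcal{H}=\mathcal{H}_\R\otimes_\R\C$, while each root $\alpha\in L'\subseteq L\otimes_\Z\R$ is identified with the real-valued functional $h\mapsto\langle\alpha,h\rangle\in(\mathcal{H}_\R)^*$. For condition \eqref{enum:bkma4} I would invoke Corollary~\ref{cor:dimform}, which identifies $\dim(\g^{\phi_\nu}(\alpha))$ with the Fourier coefficient $[F_{\alpha+L}](-\langle\alpha,\alpha\rangle/2)$ of the vector-valued modular form $F$. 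Since $F$ is holomorphic on $\H$ with possible poles only at the cusp $\infty$ (Propositions~\ref{prop:F1} and \ref{prop:F2}), each component $F_{\alpha+L}$ has a finite principal part, so its Fourier coefficients vanish below some fixed exponent $n_0$. Hence $\g^{\phi_\nu}(\alpha)\neq\{0\}$ forces $-\langle\alpha,\alpha\rangle/2\geq n_0$, i.e.\ $\langle\alpha,\alpha\rangle\leq-2n_0$, so the norms of the roots are bounded from above. (In fact Remark~\ref{rem:vvmfF}\eqref{item:rem1} pins down $n_0=-1$, but only boundedness is needed here.)

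The main obstacle is the bracket computation in \eqref{enum:bkma2}: verifying that $\ad(h)$ acts on $\g^{\phi_\nu}(\alpha)$ as the scalar $\langle\alpha,h\rangle$, which requires unwinding the BRST bracket $(b_0u)_0v$ through the ghost anticommutation relations and the Heisenberg mode action on $\pi^{(k-1,1)}_\alpha$. Once this is in hand, the root-space decomposition, the abelianness and self-centralising property of $\mathcal{H}$, and the identification of the roots with real functionals attached to $L'\setminus\{0\}$ all follow formally, and conditions \eqref{enum:bkma1}, \eqref{enum:bkma3} and \eqref{enum:bkma4} reduce to the cited propositions.
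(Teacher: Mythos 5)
Your proposal is correct and follows the paper's proof almost verbatim for items \eqref{enum:bkma1}--\eqref{enum:bkma3}: item \eqref{enum:bkma1} from Proposition~\ref{prop:lainvbil}, item \eqref{enum:bkma2} via the identification $\mathcal{H}=\g^{\phi_\nu}(0)$, the adjoint action formula $[h,y]=\langle h,\alpha\rangle y$ (which the paper asserts and you correctly sketch from the bracket $(b_0u)_0v$), self-centralisation from non-degeneracy of the form, and finiteness of multiplicities from Proposition~\ref{prop:tendims}; item \eqref{enum:bkma3} from Proposition~\ref{prop:Hiso}. The only divergence is item \eqref{enum:bkma4}: the paper argues directly from Proposition~\ref{prop:tendims} that $\g^{\phi_\nu}(\alpha)\cong(V_{\Lambda_\nu}^{\hat\nu}(\chi(\alpha+L))\otimes\pi_0^{(k-2,0)})_{1-\langle\alpha,\alpha\rangle/2}$ vanishes whenever $\langle\alpha,\alpha\rangle/2>1$, because $V_{\Lambda_\nu}^{\hat\nu}$ satisfies the positivity condition --- an elementary argument yielding the sharp bound $\langle\alpha,\alpha\rangle\leq 2$; you instead route through Corollary~\ref{cor:dimform} and the finiteness of the principal part of the vector-valued modular form $F$, which is also valid (and, via Remark~\ref{rem:vvmfF}, gives the same bound), but invokes the modular-form machinery where the paper needs only the non-negativity of the $L_0$-spectrum of the irreducible $V_{\Lambda_\nu}^{\hat\nu}$-modules.
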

\begin{proof}
Item~\eqref{enum:bkma1} is the statement of Proposition~\ref{prop:lainvbil}.

Recall that $\g^{\phi_\nu}$ is graded by $L'$. Then $\mathcal{H}=\g^{\phi_\nu}(0)$ is a Lie subalgebra of $\g^{\phi_\nu}$ and acts on $\g^{\phi_\nu}$ in the adjoint representation as
$[x,y]=\langle h,\alpha\rangle y$ for $x=\vac\otimes h(-1)1\otimes c\in \mathcal{H}$ and $y\in\g^{\phi_\nu}(\alpha)$, $\alpha\in L'$. This implies that $\mathcal{H}$ is self-centralising.

We abuse notation and write $h\in\h$ for the element $\vac\otimes h(-1)1\otimes c\in\mathcal{H}=\g^{\phi_\nu}(0)$, identifying $\mathcal{H}$ with $\h$. Since the bilinear form on $\mathcal{H}$ is non-degenerate, we can further identify $\mathcal{H}\cong\h$ with $\h^*$ via $\alpha(\cdot):=\langle\alpha,\cdot\rangle$ for $\alpha\in\h$. Then
\begin{equation*}
[h,x]=\alpha(h)x
\end{equation*}
for $h\in \mathcal{H}$ and $x\in\g^{\phi_\nu}(\alpha)$, i.e.\ $\g^{\phi_\nu}(\alpha)$ is the root space associated with $\alpha\in L'\setminus\{0\}$. The set of roots $\Phi\subseteq L'\setminus\{0\}$ are those $\alpha$ for which $\g^{\phi_\nu}(\alpha)\neq\{0\}$. Then $\g^{\phi_\nu}$ decomposes into the direct sum
\begin{equation*}
\g^{\phi_\nu}=\mathcal{H}\oplus\bigoplus_{\alpha\in\Phi}\g^{\phi_\nu}(\alpha)
\end{equation*}
with Cartan subalgebra $\mathcal{H}$ and root spaces $\g^{\phi_\nu}(\alpha)$, $\alpha\in\Phi$. Proposition~\ref{prop:tendims} states in particular that $\dim(\g^{\phi_\nu}(\alpha))<\infty$ for all $\alpha\in L'\setminus\{0\}$, i.e.\ the root spaces are finite-dimensional. This completes the proof of item~\eqref{enum:bkma2}.

Proposition~\ref{prop:Hiso} isometrically identifies $\mathcal{H}$ with $\h=L\otimes_\Z\C$, which has a natural real subspace $\mathcal{H}_\R:=L\otimes_\Z\R$, on which the bilinear form takes real values, and the roots, identified with elements of the lattice $L'$, lie in $\mathcal{H}_\R^*$. This shows item~\eqref{enum:bkma3}.

Under the identifications presented above the norm of a root $\alpha\in\Phi$ is exactly $\langle\alpha,\alpha\rangle/2$. From the explicit expression for $\g^{\phi_\nu}(\alpha)$ in Proposition~\ref{prop:tendims} we conclude that $\g^{\phi_\nu}(\alpha)=\{0\}$ if $\langle\alpha,\alpha\rangle/2>1$ since $V_{\Lambda_\nu}^{\hat\nu}$ satisfies the positivity condition. This proves \eqref{enum:bkma4}.
\end{proof}
The more difficult part of the proof that $\g^{\phi_\nu}$ is a \BKMa{} is to show that the conditions in Proposition~\ref{prop:bor95thm2} are satisfied. First, we need the following lemma:
\begin{lem}\label{lem:largeorbits}
Let $\nu$ be of square-free order $m$ in $M_{23}$ and $L=\Lambda^\nu\oplus\II_{1,1}(m)$. Then the orbits of the \fqs{} $L'/L$ under $\O(L'/L)$ are uniquely determined by the order and the value of the quadratic form of their elements.
\end{lem}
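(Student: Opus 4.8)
The plan is to reduce the assertion to Witt's theorem by decomposing $L'/L$ into its $p$-primary parts. Since $\nu$ has square-free level $m$, the lattice $L=\Lambda^\nu\oplus\II_{1,1}(m)$ has square-free level $m$ as well, and its discriminant form splits orthogonally as
\begin{equation*}
L'/L=\bigoplus_{p\mid m}D_p,
\end{equation*}
where $D_p$ denotes the $p$-primary part. As $m$ is square-free, each $D_p$ is an elementary abelian $p$-group, hence an $\mathbb{F}_p$-vector space, and the quadratic form $Q$ restricts to a non-degenerate $\mathbb{F}_p$-valued quadratic form $Q_p$ on $D_p$, with values in $\tfrac1p\Z/\Z$. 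From Table~\ref{table:ten} one reads off that every $2$-primary part occurring for the ten lattices is of even type $2_{\II}^{+2k}$, hence non-defective as a quadratic form over $\mathbb{F}_2$. Since the $D_p$ are characteristic subgroups preserved by every isometry, the automorphism group factors as $\O(L'/L)=\prod_{p\mid m}\O(D_p)$, acting componentwise.

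One direction is immediate: every $g\in\O(L'/L)$ preserves the order of an element and its $Q$-value, so elements of a common orbit share the same pair $(\ord,Q)$. For the converse I would first translate these invariants into $p$-local data. For $x=(x_p)_{p\mid m}$ the order is $\ord(x)=\prod_{p\,:\,x_p\neq0}p$, which recovers the support $\{p\mid m\,:\,x_p\neq0\}$, while the value $Q(x)=\sum_{p\mid m}Q_p(x_p)$ determines each $Q_p(x_p)$, since the summands $Q_p(x_p)$ lie in distinct ($p$-)primary components of $\Q/\Z$. Thus the pair $(\ord(x),Q(x))$ is equivalent to the tuple of local data consisting, for each $p\mid m$, of whether $x_p=0$ together with the value $Q_p(x_p)$. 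Because $\O(L'/L)$ acts componentwise, the orbit of $x$ is the product of the orbits of the $x_p$, so it suffices to prove the claim prime by prime.

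The main step is then Witt's theorem on each $D_p$. If $x_p=0$ the orbit is $\{0\}$. If $x_p\neq0$ and $y_p\neq0$ satisfy $Q_p(x_p)=Q_p(y_p)$, then $x_p\mapsto y_p$ is an isometry of the one-dimensional subspaces $\langle x_p\rangle$ and $\langle y_p\rangle$ of the non-degenerate $\mathbb{F}_p$-quadratic space $D_p$, which by Witt's extension theorem extends to an element of $\O(D_p)$ carrying $x_p$ to $y_p$; hence nonzero vectors of equal norm are $\O(D_p)$-conjugate. This single statement handles both the anisotropic vectors and the isotropic case $Q_p=0$, where the order is precisely what separates $0$ from the nonzero isotropic vectors. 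Combining the three steps shows that the $\O(L'/L)$-orbit of $x$ is uniquely determined by $(\ord(x),Q(x))$. The only delicate point I anticipate is $p=2$, where Witt's extension theorem requires non-defectiveness of the quadratic form; this holds here because, as noted, all $2$-primary parts arising from the ten lattices are of even type $2_{\II}^{+2k}$.
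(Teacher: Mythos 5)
Your argument is correct, but it takes a genuinely different route from the paper. The paper's proof is essentially a citation: Proposition~5.1 of \cite{Sch15} states that in a non-degenerate \fqs{} of square-free level two elements lie in the same orbit under the full automorphism group if and only if they have the same order and the same value of the quadratic form, and $L'/L$ has square-free level $m$, so the lemma follows at once. What you have done is reprove this general statement in the case at hand: square-freeness of the level forces every $p$-primary component $D_p$ to be an elementary abelian $p$-group with $Q_p$ valued in $\frac{1}{p}\Z/\Z$, so that $\O(L'/L)=\prod_{p\mid m}\O(D_p)$ acts componentwise, the invariant pair $(\ord,Q)$ is equivalent to the tuple of local invariants, and Witt's extension theorem over $\mathbb{F}_p$ gives transitivity of $\O(D_p)$ on non-zero vectors of fixed norm. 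Two refinements are worth recording. First, you do not need Table~\ref{table:ten} to control the prime $2$: odd-type $2$-adic components $2_t^{\pm n}$ have level divisible by $4$, so square-freeness of the level alone forces the $2$-part to be of even type $2_{\II}^{\pm 2k}$, hence non-defective (equivalently, non-degeneracy of the \fqs{} together with $Q_2$ taking values in $\frac{1}{2}\Z/\Z$ already gives non-defectiveness). Second, the characteristic-$2$ step you flag is indeed the only delicate one: a line spanned by a vector of norm $1$ is a defective subspace, so one must invoke the extension theorem in the form valid for non-defective ambient spaces in characteristic $2$ (Dieudonn\'e/Chevalley, or Elman--Karpenko--Merkurjev, Theorem~8.3), or equivalently the classical transitivity of the finite orthogonal groups $\O_{2n}^{\pm}(2)$ on non-zero singular vectors and on non-singular vectors. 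The trade-off between the two approaches is clear: the paper's citation is shorter and rests on a result covering all square-free-level \fqs{}s uniformly, while your proof is self-contained and elementary, in effect a proof of Scheithauer's proposition in this case, and it makes visible exactly where the square-free hypothesis enters.
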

\begin{proof}
Proposition~5.1 in \cite{Sch15} implies that for a non-degenerate \fqs{} $D$ of square-free level, two elements of $D$ are in the same orbit under $\O(D)$ if and only if they have the same order and value of the quadratic form (see comment before Proposition~5.3 in \cite{Sch15}). Since $L'/L$ has level $m$, the assertion follows.
\end{proof}
\begin{lem}
Let $\nu$ be of square-free order in $M_{23}$. Then $\g^{\phi_\nu}$ satisfies the conditions in Proposition~\ref{prop:bor95thm2}, which implies that \eqref{enum:bkma5} and \eqref{enum:bkma6} in Proposition~\ref{prop:bkma} are satisfied.
\end{lem}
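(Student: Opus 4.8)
The plan is to check the two hypotheses of Proposition~\ref{prop:bor95thm2}: that the bilinear form restricted to $\mathcal{H}_\R$ is Lorentzian, and that if two roots are positive multiples of the same norm-zero vector then their root spaces commute. Once both are verified, that proposition yields \eqref{enum:bkma5} and \eqref{enum:bkma6} at once (the general formulation of \eqref{enum:bkma6} having already been reduced there, by Lorentzian geometry, to this parallel null-ray case). The first hypothesis is immediate from Proposition~\ref{prop:Hiso}, which identifies $\mathcal{H}_\R$ isometrically with $L\otimes_\Z\R$ of signature $(k-1,1)$. So the whole content lies in the commuting statement: I must show $[\g^{\phi_\nu}(\alpha),\g^{\phi_\nu}(\beta)]=0$ whenever $\alpha=a\gamma$ and $\beta=b\gamma$ with $\gamma$ norm-zero and $a,b>0$.

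For such $\alpha,\beta$ one has $\langle\alpha,\alpha\rangle=\langle\beta,\beta\rangle=\langle\alpha,\beta\rangle=0$, so Proposition~\ref{prop:tendims} gives $\g^{\phi_\nu}(\alpha)\cong(V_{\Lambda_\nu}^{\hat\nu}(\chi(\alpha+L))\otimes\pi_0^{(k-2,0)})_1$, and since $\beta^\perp=\alpha^\perp$ and $\C\beta=\C\alpha$ both root spaces may be built over the \emph{same} positive-definite transverse Heisenberg $\pi_0^{(k-2,0)}$; in particular every transverse polarisation that occurs is orthogonal to both $\alpha$ and $\beta$. I would then compute the bracket $[u,v]=(b_0u)_0v$, which (after the ghost factor is stripped by $b_0$) reduces to a residue $\operatorname{Res}_z Y(\,\cdot\,,z)v$ in the matter sector $M_{\phi_\nu}$. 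The lattice exponentials contribute a factor $z^{\langle\alpha,\beta\rangle}=z^0$; the contraction of a transverse oscillator in $u$ against the momentum of $v$, and vice versa, contributes terms proportional to $\langle\zeta,\beta\rangle$ respectively $\langle\zeta,\alpha\rangle$; and the double oscillator contraction contributes a term of order $z^{-2}$. After imposing $\langle\alpha,\beta\rangle=0$ and transversality, every potential simple pole either carries a vanishing pairing or sits in a power of $z$ that does not contribute to the residue, exactly as in the flat massless computation. Hence the residue vanishes and the two root spaces commute.

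The one point requiring care is the contribution of the internal intertwining operator between the simple-current modules $V_{\Lambda_\nu}^{\hat\nu}(\chi(\alpha+L))$ and $V_{\Lambda_\nu}^{\hat\nu}(\chi(\beta+L))$, which is present whenever the relevant module has positive conformal weight. Its leading power of $z$ equals $\rho(V_{\Lambda_\nu}^{\hat\nu}(\chi((\alpha+\beta)+L)))-\rho(V_{\Lambda_\nu}^{\hat\nu}(\chi(\alpha+L)))-\rho(V_{\Lambda_\nu}^{\hat\nu}(\chi(\beta+L)))$, and since $M_{\phi_\nu}$ is a genuine, integrally $L_0$-graded conformal vertex algebra graded by $L'$, the combined lattice-plus-internal expansion proceeds in integer powers of $z$ with no poles beyond those already accounted for; thus the internal factor cannot create a new residue. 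To keep this uniform over all null rays I would invoke Lemma~\ref{lem:largeorbits}, the symmetry of the root multiplicities (Remark~\ref{rem:vvmfF}\eqref{item:rem2}), and the surjectivity $\O(L)\to\O(L'/L)$ of Lemma~\ref{lem:surjective}: together these move an arbitrary primitive null root into a fixed orbit representative by a Lie algebra automorphism of $\g^{\phi_\nu}$, so that the bracket need only be checked on one null ray.

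The main obstacle is precisely this internal contribution: establishing that the nontrivial intertwiner between the coinvariant-lattice orbifold modules produces no unexpected singular term, so that the clean cancellation driven by $\langle\alpha,\beta\rangle=0$ and transversality survives in the presence of the $V_{\Lambda_\nu}^{\hat\nu}$-factor. Once this is secured, both hypotheses of Proposition~\ref{prop:bor95thm2} hold and \eqref{enum:bkma5} and \eqref{enum:bkma6} follow.
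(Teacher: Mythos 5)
Your use of Proposition~\ref{prop:Hiso} for the Lorentzian hypothesis is fine, and you correctly isolate the real content: showing $[\g^{\phi_\nu}(a\gamma),\g^{\phi_\nu}(b\gamma)]=0$ for norm-zero $\gamma$ and $a,b>0$. But the argument you give for that has genuine gaps. First, even in the free-field case ($\nu=\id$) your bookkeeping fails: for representatives $\zeta(-1)\ee_{\alpha}$, $\xi(-1)\ee_{\beta}$ with $\alpha=a\gamma$, $\beta=b\gamma$ and $\zeta,\xi\perp\gamma$, the double contraction $\langle\zeta,\xi\rangle z^{-2}$ \emph{does} contribute to the residue, because it is multiplied by the creation-mode exponential $1+\alpha(-1)z+O(z^2)$ of $Y(\ee_{\alpha},z)$; the zero-mode product of representatives is $\pm\langle\zeta,\xi\rangle\,\alpha(-1)\ee_{\alpha+\beta}$, which is nonzero in $M$ and dies only in cohomology, since $\alpha(-1)\ee_{\alpha+\beta}$ is proportional to $L_{-1}\ee_{\alpha+\beta}$, a BRST-exact vector. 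So ``the residue vanishes'' is false as stated; any correct version of your computation must track exactness, not vanishing. Second, and fatally, your disposal of the internal factor is a non sequitur: the fact that $M_{\phi_\nu}$ is an integrally $L_0$-graded vertex algebra only says that the total powers of $z$ are integers; it says nothing about the absence of poles --- every vertex algebra has integer powers and finite-order poles, and poles are exactly what produce nonzero brackets. For $\nu\neq\id$ the modules $V_{\Lambda_\nu}^{\hat\nu}(\chi(\alpha+L))$ have positive conformal weight, the weight-one vectors of $M_{\phi_\nu}(\alpha)$ are not of the form $\zeta(-1)\otimes\ee_{\alpha}$, and there is no contraction calculus for the intertwining operators of the orbifold \voa{}; proving that their singular parts combine with the lattice/Heisenberg factors into something BRST-exact is precisely the content of the lemma, which you flag as ``the main obstacle'' but do not supply. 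Your reduction to a single null ray has the same status: Lemmas~\ref{lem:largeorbits} and~\ref{lem:surjective} produce automorphisms of the \fqs{} $L'/L$, not automorphisms of the conformal vertex algebra $M_{\phi_\nu}$ or of the Lie algebra $\g^{\phi_\nu}$, and no such lifting statement is proved (or used) in the paper.

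The paper's proof avoids operator-product computations entirely. Given a primitive norm-zero $\gamma$, it uses Lemma~\ref{lem:largeorbits} only to move $\chi(\gamma+L)$ into a standard position inside the fusion group of $V_{\Lambda_\nu}^{\hat\nu}\otimes V_{\Lambda^\nu}$, then builds an isotropic subgroup $I=I^\bot$ of that fusion group whose simple-current extension $V_I$ is a \strathol{} \voa{} of central charge $24$, and computes $\dim((V_I)_1)=24$ from the characters, using the complete reflectivity of $F$ and the absence of norm-one vectors in $\Lambda^\nu$ and $\Lambda_\nu$. By \cite{DM04b} this forces $V_I\cong V_\Lambda$, whose weight-one Lie algebra is abelian; the invariance of the characters under automorphisms of $R(V_{\Lambda_\nu}^{\hat\nu})$ (Remark~\ref{rem:vvmfF}) transports this to $V_{\hat{\kappa}(I)}$, whose abelian weight-one Lie algebra contains all the root spaces $\g^{\phi_\nu}(k\gamma)$, $k\in\Ns$, with compatible brackets, so they commute. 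If you want a computational proof along your lines, you must replace the integrality argument by an actual analysis of the relevant intertwining operators; it is the structural route through holomorphic central-charge-$24$ extensions that makes the problem tractable.
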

\begin{proof}
We want to show that the root spaces of $\g^{\phi_\nu}$ corresponding to positive multiples of the same norm-zero root commute. To this end we consider the \voa{} $V_{\Lambda_\nu}^{\hat\nu}\otimes V_{\Lambda^\nu}$ of central charge $24$. Its fusion group is the \fqs{}
\begin{equation*}
F:=(\Lambda_\nu)'/\Lambda_\nu\times(\Z_m\times\Z_m,Q_m)\times(\Lambda^\nu)'/\Lambda^\nu
\end{equation*}
by the results in Section~\ref{sec:orb}. Let $I\leq F$ be an isotropic subgroup of $F$ with $I^\bot=I$. Then, as explained in Section~\ref{sec:ext}, the direct sum of irreducible $V_{\Lambda_\nu}^{\hat\nu}\otimes V_{\Lambda^\nu}$-modules
\begin{equation*}
V_I=\!\!\!\!\bigoplus_{(\alpha+\Lambda_\nu,i,j,\beta+\Lambda^\nu)\in I}\!\!\!\!V_{\Lambda_\nu}^{\hat\nu}(\alpha+\Lambda_\nu,i,j)\otimes V_{\beta+\Lambda^\nu}
\end{equation*}
is a \strat{}, holomorphic \voa{} of central charge $24$. These \voa{}s have been studied extensively (see, for example, \cite{Sch93,DM04,EMS20a}). In particular, $\dim((V_I)_1)=24$ if and only if $V_I$ is isomorphic to the lattice \voa{} $V_\Lambda$ associated with the Leech lattice $\Lambda$ \cite{DM04b}.

It is well-known that the weight-one space $V_1$ of a \voa{} $V$ of CFT-type carries the structure of a Lie algebra via $[u,v]=u_0v$ for all $u,v\in V_1$. Now, if $V_I\cong V_\Lambda$, then the Lie algebra $(V_I)_1$ is abelian of dimension $24$.

After these preliminary considerations, let $\gamma\in L'\setminus\{0\}$ with $\langle\gamma,\gamma\rangle/2=0$. Without loss of generality we may assume that $\gamma+L$ has maximal order $m$ in $L'/L$, a group of exponent $m$. Then $\chi(\gamma+L)$ is an isotropic element in $(\Lambda_\nu)'/\Lambda_\nu\times(\Z_m\times\Z_m,Q_m)$ of order $m$ and $(\chi(\gamma+L),0+\Lambda^\nu)$ is an isotropic element in $F$ of order $m$.

By Lemma~\ref{lem:largeorbits} there exists an automorphism $\kappa$ of the \fqs{} $(\Lambda_\nu)'/\Lambda_\nu\times(\Z_m\times\Z_m,Q_m)$ such that $\chi(\gamma+L)=\kappa((0+\Lambda_\nu,0,1))$. Define
\begin{equation*}
I:=\{(\psi(\lambda+\Lambda^\nu),0,i,\lambda+\Lambda^\nu)\,|\,\lambda+\Lambda^\nu\in(\Lambda^\nu)'/\Lambda^\nu,i\in\Z_m\}\leq F,
\end{equation*}
which is isotropic, satisfies $I^\bot=I$ and contains $(\kappa^{-1}(\chi(\gamma+L)),0+\Lambda^\nu)$.

Now consider the holomorphic \voa{} $V_I$ of central charge $24$ associated to this particular choice of $I$. We shall show that $\dim((V_I)_1)=24$ and hence $V_I\cong V_\Lambda$ so that $(V_I)_1$ is abelian. In fact, because the characters of the irreducible $V_{\Lambda_\nu}^{\hat\nu}$-modules have the special property that they are invariant under the automorphisms of $(\Lambda_\nu)'/\Lambda_\nu\times(\Z_m\times\Z_m,Q_m)$ (see item~\eqref{item:rem2} of Remark~\ref{rem:vvmfF}), it follows that
\begin{equation*}
V_{\hat{\kappa}(I)}=\bigoplus_{(\alpha+\Lambda_\nu,i,j,\beta+\Lambda^\nu)\in I}V_{\Lambda_\nu}^{\hat\nu}(\kappa(\alpha+\Lambda_\nu,i,j))\otimes V_{\beta+\Lambda^\nu}
\end{equation*}
has the same character as $V_I$ where $\hat{\kappa}=(\kappa,\id)\in\O(F)$ and hence $\dim((V_{\hat{\kappa}(I)})_1)=\dim((V_I)_1)=24$. In particular, $(V_{\hat{\kappa}(I)})_1$ is abelian.

But $\hat{\kappa}(I)$ contains the subgroup $\langle(\chi(\gamma+L),0+\Lambda^\nu)\rangle$, and therefore the abelian Lie algebra $(V_{\hat{\kappa}(I)})_1$ contains
\begin{align*}
\left(V_{\Lambda_\nu}^{\hat\nu}(k\chi(\gamma+L))\otimes V_{\Lambda^\nu}\right)_1&\cong V_{\Lambda_\nu}^{\hat\nu}(k\chi(\gamma+L))_1=V_{\Lambda_\nu}^{\hat\nu}(\chi(k\gamma+L))_1\\
&\cong\g^{\phi_\nu}(k\gamma)
\end{align*}
for all $k\in\Ns$. One checks that the definitions of the Lie brackets on the left-hand and right-hand side of the equation coincide, which implies that
\begin{equation*}
[\g^{\phi_\nu}(k\gamma),\g^{\phi_\nu}(l\gamma)]=0
\end{equation*}
for all $k,l\in\Ns$. This proves the assertion.

It remains to show that the holomorphic \voa{} $V_I$ has a weight-one space of dimension $24$. By the definition of $V_I$ and Proposition~\ref{prop:F1eqF2}, the character of $V_I$ is
\begin{align*}
\ch_{V_I}(\tau)&=\!\!\!\!\sum_{\substack{\lambda+\Lambda^\nu\in(\Lambda^\nu)'/\Lambda^\nu\\i\in\Z_m}}\!\!\!\!\ch_{V_{\Lambda_\nu}^{\hat\nu}(\psi(\lambda+\Lambda^\nu),0,i)}(\tau)\ch_{V_{\lambda+\Lambda^\nu}}(\tau)\\
&=\!\!\!\!\sum_{\substack{\lambda+\Lambda^\nu\in(\Lambda^\nu)'/\Lambda^\nu\\i\in\Z_m}}\!\!\!\!F_{\chi^{-1}(\psi(\lambda+\Lambda^\nu),0,i)}(\tau)\vartheta_{\lambda+\Lambda^\nu}(\tau).
\end{align*}
To determine the constant term in the $q$-expansion of the above character we note that $\vartheta_{\lambda+\Lambda^\nu}(\tau)$ has no singular terms and a constant term only if $\lambda+\Lambda^\nu=0+\Lambda^\nu$. As described in item~\eqref{item:rem1} of Remark~\ref{rem:vvmfF}, the complete reflectivity of the vector-valued modular form $F$ means that singular terms in the $q$-expansion of $F$ appear exactly in the components $F_{\alpha+L}(\tau)$, $\alpha+L\in L'/L$, with $\langle\alpha,\alpha\rangle/2=1/d\pmod{1}$ and $d\cdot(\alpha+L)=0+L$ for $d\mid m$ and that in such a component the only singular term is $1\cdot q^{-1/d}$. Hence
\begin{align*}
\dim((V_I)_1)&=[\ch_{V_I}](0)=\sum_{i\in\Z_m}[F_{\chi^{-1}(0+\Lambda_\nu,0,i)}](0)[\vartheta_{0+\Lambda^\nu}](0)\\
&\quad+\sum_{d\mid m}\sum_{\substack{\lambda+\Lambda^\nu\in(\Lambda^\nu)'/\Lambda^\nu\\i\in\Z_m,d\cdot i=0\\\langle\lambda,\lambda\rangle/2=1/d\pmod{1}\\d\cdot(\lambda+\Lambda^\nu)=0+\Lambda^\nu}}\!\!\!\![F_{\chi^{-1}(\psi(\lambda+\Lambda^\nu),0,i)}](-1/d)[\vartheta_{\lambda+\Lambda^\nu}](1/d)\\
&=\sum_{i\in\Z_m}[F_{\chi^{-1}(0+\Lambda_\nu,0,i)}](0)+\sum_{d\mid m}d\!\!\!\!\sum_{\substack{\lambda+\Lambda^\nu\in(\Lambda^\nu)'/\Lambda^\nu\\\langle\lambda,\lambda\rangle/2=1/d\pmod{1}\\d\cdot(\lambda+\Lambda^\nu)=0+\Lambda^\nu}}\!\!\!\![\vartheta_{\lambda+\Lambda^\nu}](1/d).
\end{align*}
Studying the theta series of the cosets of $\Lambda^\nu$ we find the second term to vanish. For example, since $\Lambda^\nu$ has no vectors $\alpha$ of norm $\langle\alpha,\alpha\rangle/2=1$, the coefficient of the $q^1$-term in $\vartheta_{\Lambda^\nu}(\tau)$ vanishes. Then
\begin{align*}
\dim((V_I)_1)&=\sum_{i\in\Z_m}[F_{\chi^{-1}(0+\Lambda_\nu,0,i)}](0)=\sum_{i\in\Z_m}[\ch_{V_{\Lambda_\nu}^{\hat\nu}(0+\Lambda_\nu,0,i)}/\eta^{k-2}](0)\\
&=[\ch_{V_{\Lambda_\nu}}/\eta^{k-2}](0)=[\vartheta_{\Lambda_\nu}/\eta^{24}](0)=24
\end{align*}
since also $\Lambda_\nu$ has no vectors $\alpha$ of norm $\langle\alpha,\alpha\rangle/2=1$. This completes the proof.
\end{proof}
The two lemmata imply:
\begin{prop}
Let $\nu$ be of square-free order in $M_{23}$. Then $\g^{\phi_\nu}$ is a \BKMa{} with Cartan subalgebra $\mathcal{H}=\g^{\phi_\nu}(0)\cong L\otimes_\Z\C$.
\end{prop}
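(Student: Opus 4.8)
The plan is to deduce the statement directly from the structural criterion of Proposition~\ref{prop:bkma}, feeding in the two preceding lemmata and using the Lorentzian simplification of Proposition~\ref{prop:bor95thm2} to dispatch the two most delicate hypotheses. The first of the two lemmata already records that $\g^{\phi_\nu}$ satisfies items \eqref{enum:bkma1} to \eqref{enum:bkma4}: the non-degenerate invariant form is Proposition~\ref{prop:lainvbil}, the eigenspace decomposition under $\mathcal{H}=\g^{\phi_\nu}(0)$ with finite-dimensional root spaces comes from the $L'$-grading together with Proposition~\ref{prop:tendims}, the real form $\mathcal{H}_\R=L\otimes_\Z\R$ and the real-valuedness of the bilinear form on it are provided by Proposition~\ref{prop:Hiso}, and the upper bound on root norms follows from the positivity condition on $V_{\Lambda_\nu}^{\hat\nu}$ (a root $\alpha$ with $\langle\alpha,\alpha\rangle/2>1$ has vanishing root space by Proposition~\ref{prop:tendims}).

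It then remains to establish conditions \eqref{enum:bkma5} and \eqref{enum:bkma6}. Here I would invoke that, by Proposition~\ref{prop:Hiso}, the bilinear form on $\mathcal{H}_\R$ has signature $(k-1,1)=(\dim(\mathcal{H})-1,1)$, so that $\g^{\phi_\nu}$ is Lorentzian and Proposition~\ref{prop:bor95thm2} applies. That proposition makes \eqref{enum:bkma5} automatic and reduces \eqref{enum:bkma6} to the single requirement that the root spaces of two positive multiples of the same norm-zero vector commute. This requirement is precisely the assertion of the second preceding lemma. Hence Proposition~\ref{prop:bkma} yields that $\g^{\phi_\nu}$ is a \BKMa{}.

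Finally, for the Cartan subalgebra I would note that the computation in the first lemma shows that $\mathcal{H}=\g^{\phi_\nu}(0)$ acts on $\g^{\phi_\nu}(\alpha)$ by the scalar $\alpha(h)=\langle\alpha,h\rangle$, so that $\mathcal{H}$ is self-centralising, and that $(V_{\Lambda_\nu}^{\hat\nu})_1=\{0\}$ forces $\g^{\phi_\nu}(0)\cong\h=L\otimes_\Z\C$ rather than a larger space; together with the isometry of Proposition~\ref{prop:Hiso} this gives $\mathcal{H}\cong L\otimes_\Z\C$. As for difficulty, for this statement itself the work is purely one of assembly, and the genuine obstacle sits upstream in the second lemma, namely in showing that $\dim((V_I)_1)=24$, so that the holomorphic \voa{} $V_I$ is isomorphic to $V_\Lambda$ and its weight-one Lie algebra is abelian; the Lorentzian signature is exactly what spares us from verifying \eqref{enum:bkma5} by hand.
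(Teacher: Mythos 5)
Your proposal is correct and is essentially identical to the paper's proof: the paper derives this proposition as an immediate consequence of the same two lemmata, the first verifying conditions \eqref{enum:bkma1}--\eqref{enum:bkma4} of Proposition~\ref{prop:bkma} and the second using the Lorentzian signature together with Proposition~\ref{prop:bor95thm2} to dispatch \eqref{enum:bkma5} and \eqref{enum:bkma6}. Your closing remarks on the Cartan subalgebra just make explicit what the paper establishes in the discussion surrounding Proposition~\ref{prop:Hiso}, so nothing is missing.
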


Finally, we shall prove that $\g^{\phi_\nu}=H^1_\mathrm{BRST}(M_{\phi_\nu})$ is isomorphic to the complexification of the real \BKMa{} $\g_{\phi_\nu}$ constructed by Borcherds \cite{Bor92} by twisting the denominator identity of the Fake Monster Lie algebra $\g$ (see Section~\ref{sec:twisting}).

To facilitate the discussion we rescale the rational lattice $L'=(\Lambda^\nu)'\oplus (\II_{1,1}(m))'$, by which $\g^{\phi_\nu}$ is graded, to an even and in particular integral lattice $\Delta$. Note that $(\II_{1,1}(m))'\cong\II_{1,1}(1/m)$ and, due to the special form of the ten automorphisms, $(\Lambda^\nu)'\cong \Lambda^\nu(1/m)$. Hence, rescaling the quadratic form on $L$ by $m$ we obtain the even lattice
\begin{equation*}
\Delta:=L'(m)\cong\Lambda^\nu\oplus \II_{1,1},
\end{equation*}

Then Corollary~\ref{cor:dimform} implies:
\begin{cor}\label{cor:rescaleddim}
Let $\nu$ be of square-free order $m$ in $M_{23}$. Then the \BKMa{} $\g^{\phi_\nu}$ is graded by the even lattice $\Delta=L'(m)=\Lambda^\nu\oplus \II_{1,1}$ of rank $k$ and level $m$ with the dimensions of the graded components given by
\begin{align*}
\dim(\g^{\phi_\nu}(\alpha))&=\sum_{d\mid m}\delta_{\alpha\in\Delta\cap \frac{m}{d}\Delta'}\left[\frac{1}{\eta_\nu}\right]\left(-\frac{d}{m}\frac{\langle\alpha,\alpha\rangle}{2}\right)\\
&=\sum_{d\mid m}\delta_{\alpha\in\Delta\cap d\Delta'}\left[\frac{1}{\eta_\nu}\right]\left(-\frac{1}{d}\frac{\langle\alpha,\alpha\rangle}{2}\right)
\end{align*}
for all $\alpha\in\Delta\setminus\{0\}$.
\end{cor}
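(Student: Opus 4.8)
The plan is to derive the formula directly from Corollary~\ref{cor:dimform} by tracking how the rescaling $\Delta=L'(m)$ affects the two data entering that corollary: the value of the quadratic form and the divisibility condition recorded by the indicator $\delta_{\alpha\in L'\cap\frac1d L}$. First I would fix the identifications inside the rational space $L\otimes_\Z\Q=L'\otimes_\Z\Q$. As an abelian group $\Delta$ is just $L'$, but with the bilinear form scaled by $m$, so each $\alpha\in\Delta$ is the same element $\alpha\in L'$ with $\langle\alpha,\alpha\rangle_\Delta=m\langle\alpha,\alpha\rangle_{L'}$. Dually, $\Delta'=(L'(m))'=\{x\in L'\otimes_\Z\Q\,|\,m\langle x,y\rangle_{L'}\in\Z\text{ for all }y\in L'\}$, and since $m\langle x,y\rangle_{L'}=\langle mx,y\rangle_{L'}\in\Z$ for all $y\in L'$ is equivalent to $mx\in(L')'=L$, this gives $\Delta'=\tfrac1m L$, consistent with the rescaling identities $(\II_{1,1}(m))'\cong\II_{1,1}(1/m)$ and $(\Lambda^\nu)'\cong\Lambda^\nu(1/m)$ noted just before the statement.

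With these identifications the first equality becomes a substitution. For the exponent of $1/\eta_\nu$ I would replace $\langle\alpha,\alpha\rangle_{L'}=\tfrac1m\langle\alpha,\alpha\rangle_\Delta$ in Corollary~\ref{cor:dimform}, so that $-d\langle\alpha,\alpha\rangle_{L'}/2$ turns into $-\tfrac{d}{m}\langle\alpha,\alpha\rangle_\Delta/2$. For the indicator I would use $\tfrac{m}{d}\Delta'=\tfrac{m}{d}\cdot\tfrac1m L=\tfrac1d L$ as subsets of $L\otimes_\Z\Q$, together with $\Delta=L'$, to rewrite the condition $\alpha\in L'\cap\tfrac1d L$ as $\alpha\in\Delta\cap\tfrac{m}{d}\Delta'$. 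Summing over $d\mid m$ then reproduces the first displayed expression verbatim.

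For the second equality I would reindex the sum by the involution $d\mapsto m/d$, a bijection of the divisors of $m$, under which $\tfrac{m}{d}\Delta'\mapsto d\Delta'$ and $-\tfrac{d}{m}\langle\alpha,\alpha\rangle_\Delta/2\mapsto-\tfrac1d\langle\alpha,\alpha\rangle_\Delta/2$; this yields the second form. The whole argument is bookkeeping rather than computation, so the only real obstacle is consistency of conventions: one must carefully distinguish the $\Q$-valued form on $L'$ from the integral form on $\Delta$, and realise $\Delta'$ as $\tfrac1m L$ rather than as $L$. Once the scaling $\langle\alpha,\alpha\rangle_\Delta=m\langle\alpha,\alpha\rangle_{L'}$ and the identity $\tfrac{m}{d}\Delta'=\tfrac1d L$ are in place, nothing further is needed.
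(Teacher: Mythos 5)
Your proposal is correct and is exactly the argument the paper intends: the paper states this corollary without proof as an immediate consequence of Corollary~\ref{cor:dimform}, and the intended justification is precisely your rescaling bookkeeping, namely the identifications $\langle\alpha,\alpha\rangle_\Delta=m\langle\alpha,\alpha\rangle_{L'}$ and $\Delta'=\tfrac{1}{m}L$ (so that $\tfrac{m}{d}\Delta'=\tfrac{1}{d}L$), followed by the reindexing $d\mapsto m/d$ over the divisors of $m$ for the second equality.
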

Comparing with the equation for $\dim(\g_{\phi_\nu}(\alpha))$ in Section~\ref{sec:twisting} this immediately shows that the $\Delta$-graded components, i.e.\ the root spaces, of $\g^{\phi_\nu}$ and $\g_{\phi_\nu}$ have identical dimensions.

We now study the roots of $\g^{\phi_\nu}$. The real roots of $\g^{\phi_\nu}$, i.e.\ the vectors $\alpha\in\Delta$ with $\langle\alpha,\alpha\rangle>0$ and $\dim(\g^{\phi_\nu}(\alpha))>0$, can be easily read off from the dimension formula in Corollary~\ref{cor:dimform} or the rescaled version in Corollary~\ref{cor:rescaleddim}:
\begin{prop}\label{prop:realroots}
Let $\nu$ be of square-free order $m$ in $M_{23}$. Then the real roots of $\g^{\phi_\nu}$ are exactly the $\alpha\in\Delta\cap d\Delta'$ with $\langle\alpha,\alpha\rangle/2=d$ for $d\mid m$, and they all have root multiplicity $\dim(\g^{\phi_\nu}(\alpha))=1$. Moreover, the real roots of $\g^{\phi_\nu}$ are exactly the roots of the lattice $\Delta$.
\end{prop}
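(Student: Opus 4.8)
The plan is to read the real roots off the explicit dimension formula in Corollary~\ref{cor:rescaleddim} and then match the resulting set of vectors with the reflective vectors of $\Delta$. First I would record the leading behaviour of $1/\eta_\nu$: since all exponents in the cycle shape equal $b:=24/\sigma_1(m)$, the $q$-order of $\eta_\nu(\tau)=\prod_{t\mid m}\eta(t\tau)^{b}$ is $\tfrac{1}{24}\sum_{t\mid m}t\,b=\tfrac{b}{24}\sigma_1(m)=1$. Hence $\eta_\nu(\tau)=q\,(1+O(q))$ and $1/\eta_\nu(\tau)=q^{-1}(1+O(q))$ has integral $q$-powers with vanishing coefficients below $q^{-1}$; in particular $\left[\frac{1}{\eta_\nu}\right](x)=\delta_{x,-1}$ for every real $x\le -1$.

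Next, for a vector $\alpha\in\Delta$ of positive norm every argument $-\tfrac{1}{d}\langle\alpha,\alpha\rangle/2$ appearing in Corollary~\ref{cor:rescaleddim} is strictly negative, so by the previous step the corresponding summand is nonzero only when $-\tfrac{1}{d}\langle\alpha,\alpha\rangle/2=-1$, i.e.\ $\langle\alpha,\alpha\rangle/2=d$, and then it equals $1$. As $\langle\alpha,\alpha\rangle/2$ determines $d$ uniquely, at most one summand survives, so $\dim(\g^{\phi_\nu}(\alpha))=1$ exactly when $\langle\alpha,\alpha\rangle/2=d$ for some $d\mid m$ with $\alpha\in\Delta\cap d\Delta'$, and $\dim(\g^{\phi_\nu}(\alpha))=0$ otherwise. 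This is the first assertion.

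It remains to identify this set with the roots of $\Delta$, i.e.\ the positive-norm vectors $\alpha$ for which the reflection $\sigma_\alpha$ lies in $\O(\Delta)$, equivalently $\tfrac{2}{\langle\alpha,\alpha\rangle}\alpha\in\Delta'$ (this is the convention under which $W$ is the full reflection group of $\Delta$ in Theorem~\ref{thm:twistedbkma}). For the forward inclusion, a real root has $\langle\alpha,\alpha\rangle/2=d$ square-free, so writing $\alpha=k\beta$ with $\beta$ primitive gives $k^2\mid d$ and hence $k=1$; thus $\alpha$ is primitive, and $\alpha\in d\Delta'$ is precisely the reflectivity condition $\tfrac{2}{\langle\alpha,\alpha\rangle}\alpha=\tfrac{\alpha}{d}\in\Delta'$. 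For the converse, let $\alpha$ be a primitive root of $\Delta$ and put $d=\langle\alpha,\alpha\rangle/2\in\Ns$; reflectivity gives $\alpha/d\in\Delta'$, the order $e$ of $\alpha/d+\Delta$ divides $d$ since $\alpha\in\Delta$, while primitivity of $\alpha$ forces $d\mid e$, so $e=d$. Hence $d$ divides the exponent of $\Delta'/\Delta$, which in turn divides the level $m$; so $d\mid m$ and $\alpha\in\Delta\cap d\Delta'$ lies in the set from the first part.

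The main obstacle is this converse direction: turning the geometric reflectivity of a lattice root into the arithmetic restriction $d\mid m$. The crux is the order computation for $\alpha/d+\Delta$, which rests on primitivity of $\alpha$ together with the fact that for square-free level $m$ the exponent of the discriminant group $\Delta'/\Delta$ divides $m$. Everything else reduces to bookkeeping with the dimension formula and the leading term of $1/\eta_\nu$, the only genuinely quantitative input being that this leading term is $q^{-1}$ with coefficient exactly $1$.
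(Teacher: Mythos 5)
Your proof of the first claim is essentially identical to the paper's: both read the real roots off the dimension formula of Corollary~\ref{cor:rescaleddim}, using that $1/\eta_\nu(\tau)=q^{-1}+\tfrac{24}{\sigma_1(m)}+\ldots$ has integral $q$-powers and no terms below $q^{-1}$, so that for $\langle\alpha,\alpha\rangle>0$ the summand indexed by $d$ collapses to $\delta_{\alpha\in\Delta\cap d\Delta'}\,\delta_{d,\langle\alpha,\alpha\rangle/2}$. (A cosmetic slip: you state $[1/\eta_\nu](x)=\delta_{x,-1}$ only for $x\leq-1$, while arguments in $(-1,0)$ could a priori occur in the sum; these are killed by the integrality of the $q$-powers, which you did record, so nothing is lost.)

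For the second claim the routes genuinely diverge: the paper disposes of it in one line by citing Propositions~2.1 and~2.2 of \cite{Sch06}, which characterise the roots of an even lattice of square-free level, whereas you reprove that lattice-theoretic fact from scratch (primitivity from square-freeness of $d$; the order of $\alpha/d+\Delta$ equals $d$; the exponent of $\Delta'/\Delta$ divides the level $m$). Your argument is correct and buys self-containedness, which is worthwhile. One point needs care, though: your parenthetical assertion that, for a positive-norm $\alpha$, the condition $\sigma_\alpha\in\O(\Delta)$ is \emph{equivalent} to $\tfrac{2}{\langle\alpha,\alpha\rangle}\alpha\in\Delta'$ is false without a primitivity hypothesis. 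For instance, if $\alpha=(0,1,1)\in\Lambda^\nu\oplus\II_{1,1}$ (norm $2$), then $\sigma_{2\alpha}=\sigma_\alpha\in\O(\Delta)$, yet $\tfrac{2}{\langle 2\alpha,2\alpha\rangle}2\alpha=\alpha/2\notin\Delta'$. Consequently ``root of $\Delta$'' must be read as \emph{primitive} positive-norm vector whose reflection preserves $\Delta$; this is the convention of \cite{Sch06}, and under the non-primitive reading the proposition itself would be false, since $2\alpha$ above is reflective but never a real root of $\g^{\phi_\nu}$ (as $4\nmid m$). Your converse direction does restrict to primitive roots, so with that convention the proof goes through; just replace the ``equivalently'' by the one implication valid for all vectors (namely $\tfrac{2}{\langle\alpha,\alpha\rangle}\alpha\in\Delta'$ implies $\sigma_\alpha\in\O(\Delta)$), and note that the reverse implication is exactly where primitivity enters.
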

\begin{proof}
Let $\alpha\in\Delta$ such that $\langle\alpha,\alpha\rangle>0$. Then using that
\begin{equation*}
\frac{1}{\eta_\nu(\tau)}=\prod_{t\mid m}\eta(t\tau)^{-24/\sigma_1(m)}=\frac{1}{q}+\frac{24}{\sigma_1(m)}+\ldots
\end{equation*}
we obtain
\begin{equation*}
\dim(\g^{\phi_\nu}(\alpha))=\sum_{d\mid m}\delta_{\alpha\in\Delta\cap d\Delta'}\left[\frac{1}{\eta_\nu}\right]\left(-\frac{1}{d}\frac{\langle\alpha,\alpha\rangle}{2}\right)=\sum_{d\mid m}\delta_{\alpha\in\Delta\cap d\Delta'}\delta_{d,\langle\alpha,\alpha\rangle/2},
\end{equation*}
which proves the first claim. The second claim follows directly from Propositions 2.1 and 2.2 in \cite{Sch06}.
\end{proof}

The Weyl group $W\leq\Aut(\Delta)$ of $\g^{\phi_\nu}$ is defined as the group generated by the reflections through the hyperplanes orthogonal to the real roots of $\g^{\phi_\nu}$ and hence in this case it is the full reflection group of the lattice $\Delta$, i.e.\ the group generated by the reflections through the hyperplanes orthogonal to the roots of $\Delta$.

Therefore, a choice of simple roots of the reflection group of $\Delta$ gives a choice of real simple roots of $\g^{\phi_\nu}$.

A Weyl vector for $W$ is a vector $\rho\in\Delta\otimes_\Z\R$ such that a set of simple roots of $W$ is given by the roots $\alpha\in\Delta$ satisfying $\langle\alpha,\rho\rangle=-\langle\alpha,\alpha\rangle/2$ (see Corollary~2.4 in \cite{Bor88}).
\begin{prop}\label{prop:weylvector}
Let $\nu$ be of square-free order in $M_{23}$. Then there exists a primitive norm-zero vector $\rho\in\Delta$ that is a Weyl vector for the reflection group $W$ of $\Delta$.
\end{prop}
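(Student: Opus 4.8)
The plan is to exhibit the primitive isotropic vector $\rho=(0,0,1)$ of the summand $\II_{1,1}$ of $\Delta=\Lambda^\nu\oplus\II_{1,1}$ as the desired Weyl vector, as is already implicit in Theorem~\ref{thm:twistedbkma}. That $\rho$ is primitive and satisfies $\langle\rho,\rho\rangle/2=0$ is immediate from the norm formula $\langle(\lambda,a,b),(\lambda,a,b)\rangle/2=\langle\lambda,\lambda\rangle/2-ab$. The content to be verified is then the defining property of a Weyl vector (Corollary~2.4 in \cite{Bor88}): the set $S$ of roots $\alpha\in\Delta$ with $\langle\alpha,\rho\rangle=-\langle\alpha,\alpha\rangle/2$ is a set of simple roots for the full reflection group $W$ of $\Delta$. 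I would stress at the outset that both $W$ and the notion of a Weyl vector depend only on the lattice $\Delta$, so the statement is purely lattice-theoretic and is identical for $\g^{\phi_\nu}$ and for Scheithauer's $\g_{\phi_\nu}$.

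By Proposition~\ref{prop:realroots} the roots of $\Delta$ are exactly the $\alpha\in\Delta\cap d\Delta'$ with $\langle\alpha,\alpha\rangle/2=d$ for $d\mid m$. Computing $\langle\alpha,\rho\rangle=-a$ for $\alpha=(\lambda,a,b)$, the condition defining $S$ becomes $a=d$, so that $S$ is parametrised, for each $d\mid m$, by those $\lambda\in\Lambda^\nu$ for which $\alpha=(\lambda,d,\langle\lambda,\lambda\rangle/(2d)-1)$ lies in $\Delta\cap d\Delta'$. The geometric heart of the argument is the rootlessness of $\Lambda^\nu$ (recorded in Section~\ref{sec:twisting}), which is the exact analogue of the rootlessness of the Leech lattice in Conway's description of the reflection group of $\II_{25,1}=\Lambda\oplus\II_{1,1}$. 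Using it one checks, by a direct computation of the type $\langle r_\lambda,r_\mu\rangle=2-\langle\lambda-\mu,\lambda-\mu\rangle/2\leq 0$ for the $d=1$ roots $r_\lambda=(\lambda,1,\langle\lambda,\lambda\rangle/2-1)$ (and the analogous estimates for $d>1$), that distinct elements of $S$ have pairwise non-positive inner product. Hence $S$ is the set of simple roots of a single Weyl chamber of $W$ having the cusp $\rho$ as ideal vertex; this is precisely the situation handled by Vinberg's algorithm seeded at the isotropic vector $\rho$ (Conway's variant).

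It then remains to prove that $S$ is complete, i.e.\ that the reflections in the elements of $S$ already generate all of $W$ and that no further walls bound the chamber at $\rho$. This is the step I expect to be the main obstacle: for a general Lorentzian lattice the Vinberg process need not terminate, and checking termination by hand for each of the ten lattices would be laborious. I would instead reduce completeness to the complete reflectivity of $\Delta$ (equivalently, of the automorphic product $\Psi_{\phi_\nu}$; see item~\eqref{item:rem1} of Remark~\ref{rem:vvmfF}), which guarantees that the reflective roots tessellate hyperbolic space with the chamber at $\rho$ bounded exactly by $S$. This completeness, together with the explicit description of the reflection group of these ten lattices and the identification of $\rho=(0,0,1)$ as Weyl vector, is exactly what is established in \cite{Sch06,Sch04b} (see Theorem~\ref{thm:twistedbkma}), on which I would rely for the remaining bookkeeping.
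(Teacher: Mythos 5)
Your proposal is correct in substance but takes a genuinely different route from the paper. The paper's proof is essentially a one-line appeal to a purely lattice-theoretic result: since $\Lambda^\nu$ has no roots, Theorem~3.3 of \cite{Bor90b} (on Lorentzian lattices of the form $K\oplus\II_{1,1}$ with $K$ rootless) applies to $\Delta=\Lambda^\nu\oplus\II_{1,1}$ and produces a norm-zero vector controlling the simple roots of the full reflection group $W$; one then checks that this vector is primitive and is a Weyl vector in the sense of Corollary~2.4 of \cite{Bor88}. You instead take the explicit candidate $\rho=(0,0,1)$, verify the Conway-style pairwise non-positivity $\langle r_\lambda,r_\mu\rangle=2-\langle\lambda-\mu,\lambda-\mu\rangle/2\leq 0$ (which uses exactly the same key input, the rootlessness of $\Lambda^\nu$), and then outsource the decisive completeness step --- that the reflections in $S$ generate all of $W$ and that no further walls bound the chamber at $\rho$ --- to the complete reflectivity of $\Psi_{\phi_\nu}$ and to \cite{Sch06,Sch04b} (Theorem~\ref{thm:twistedbkma}). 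This is logically admissible: as you observe, the proposition concerns only the lattice $\Delta$, and Scheithauer's papers do establish that $\rho=(0,0,1)$ is a Weyl vector for these ten lattices, so citing them is not circular within this paper. But it has two costs. First, once you invoke Scheithauer for completeness you are in effect citing (nearly) the full statement, so your Conway/Vinberg computation does no logical work --- and strictly speaking that middle step only shows that $S$ is a simple system for the subgroup generated by the reflections in $S$, not for $W$, which is precisely the point at issue. Second, the passage from complete reflectivity of the automorphic product to ``the reflective roots tessellate hyperbolic space with the chamber at $\rho$ bounded exactly by $S$'' is itself a nontrivial piece of the automorphic machinery rather than a formality. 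The paper's route buys uniformity and independence: Proposition~\ref{prop:weylvector} is settled at the level of lattices, before $\Psi_{\phi_\nu}$ enters the argument (it is only needed later, in Proposition~\ref{prop:main}, to pin down the imaginary simple roots), whereas your route threads the case-specific automorphic results through a statement that Borcherds' theorem handles in one stroke from rootlessness alone.
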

\begin{proof}
As remarked earlier, the even lattice $\Lambda^\nu$ has no roots. This allows us to apply Theorem~3.3 in \cite{Bor90b} to the Lorentzian lattice $\Delta=\Lambda^\nu\oplus \II_{1,1}$. It states that there is a norm-zero vector $\rho\in\Delta$ such that the simple roots of the reflection group $W$ of $\Delta$ are exactly the roots $\alpha$ of $\Delta$ such that $\langle\alpha,\rho\rangle$ is negative and divides $\langle\alpha,v\rangle$ for all vectors $v\in\Delta$. It is not difficult to show that for one of the ten automorphisms the vector $\rho$ is a Weyl vector and primitive.
\end{proof}
A possible choice of Weyl vector is given by $\rho=(0,\eta)\in\Lambda^\nu\oplus\II_{1,1}$ for any primitive norm-zero vector $\eta\in\II_{1,1}$ (cf.\ \cite{CKS07}, directly before Theorem~6.2). We fix such a choice of $\rho$, which also fixes a set of simple roots of $W$ and the fundamental Weyl chamber, i.e.\ the set of vectors in $\Delta\otimes_\Z\R$ with non-positive inner product with the simple roots. (For example, we may take $\rho=(0,0,1)$, like for $\g_{\phi_\nu}$ in Theorem~\ref{thm:twistedbkma}.) The Weyl vector $\rho$ lies in the fundamental Weyl chamber. We obtain:
\begin{prop}
Let $\nu$ be of square-free order $m$ in $M_{23}$. Then the real simple roots of $\g^{\phi_\nu}$ are the $\alpha\in\Delta\cap d\Delta'$ with $\langle\alpha,\alpha\rangle/2=d$ for $d\mid m$ and $\langle\rho,\alpha\rangle=-\langle\alpha,\alpha\rangle/2$. These are precisely the simple roots of the reflection group $W$ of $\Delta$.
\end{prop}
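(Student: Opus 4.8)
The plan is to read off the real simple roots directly from the description of the Weyl group $W$ and the Weyl vector $\rho$ fixed immediately before the statement, so that the proposition becomes an assembly of results already in hand.

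First I would recall the two ingredients. By Proposition~\ref{prop:realroots} the real roots of $\g^{\phi_\nu}$ are exactly the $\alpha\in\Delta\cap d\Delta'$ with $\langle\alpha,\alpha\rangle/2=d$ for $d\mid m$, and these coincide with the roots of the Lorentzian lattice $\Delta$. Since $W$ is the \emph{full} reflection group of $\Delta$, the real simple roots of $\g^{\phi_\nu}$ form a set of simple roots of $W$, as observed in the paragraph preceding Proposition~\ref{prop:weylvector}.

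Next I would invoke the Weyl vector. By Proposition~\ref{prop:weylvector} there is a primitive norm-zero $\rho\in\Delta$ that is a Weyl vector for $W$, and such a $\rho$ has been fixed. By the defining property of a Weyl vector (Corollary~2.4 in \cite{Bor88}), the corresponding set of simple roots of $W$ is precisely the set of roots $\alpha$ of $\Delta$ with $\langle\alpha,\rho\rangle=-\langle\alpha,\alpha\rangle/2$. Substituting the explicit description of the roots of $\Delta$ from Proposition~\ref{prop:realroots} then yields that the real simple roots of $\g^{\phi_\nu}$ are exactly the $\alpha\in\Delta\cap d\Delta'$ with $\langle\alpha,\alpha\rangle/2=d$ for $d\mid m$ and $\langle\rho,\alpha\rangle=-\langle\alpha,\alpha\rangle/2$, which is the asserted characterisation and simultaneously exhibits them as the simple roots of $W$.

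Essentially no new computation is required; the content lies entirely in the cited results. The only point that deserves care is the structural fact that, for a \BKMa{} whose Weyl group is the full reflection group of a Lorentzian lattice possessing a Weyl vector, the real simple roots of the algebra agree with the unique (up to $W$) system of simple roots singled out by that Weyl vector. I expect this to be the main---indeed the only---step that is not an immediate citation, but it is standard once one knows, as here, that the real roots of $\g^{\phi_\nu}$ are exactly the lattice roots of $\Delta$ and that $\rho$ lies in and determines the fundamental Weyl chamber.
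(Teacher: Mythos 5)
Your proposal is correct and follows the paper's own route: the paper's proof is exactly the one-line assembly of Proposition~\ref{prop:realroots} (real roots of $\g^{\phi_\nu}$ equal the lattice roots of $\Delta$, so $W$ is the full reflection group), Proposition~\ref{prop:weylvector} (existence of the fixed Weyl vector $\rho$), and the defining property of a Weyl vector from Corollary~2.4 in \cite{Bor88}. Your explicit flagging of the structural fact that the real simple roots of the \BKMa{} coincide with the simple system of $W$ singled out by $\rho$ is the same point the paper disposes of with its sentence ``a choice of simple roots of the reflection group of $\Delta$ gives a choice of real simple roots of $\g^{\phi_\nu}$'', so nothing is missing.
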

\begin{proof}
This follows immediately from Proposition~\ref{prop:weylvector} and the properties of a Weyl vector.
\end{proof}

We then determine the imaginary simple roots of $\g^{\phi_\nu}$.
\begin{prop}
Let $\nu$ be of square-free order $m$ in $M_{23}$. Then the positive multiples $n\rho$, $n\in\Ns$, of the Weyl vector $\rho$ are imaginary simple roots of $\g^{\phi_\nu}$ with multiplicity $24\sigma_0((m,n))/\sigma_1(m)$.
\end{prop}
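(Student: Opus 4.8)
The plan is to use that $\g^{\phi_\nu}$, being a \BKMa{}, satisfies its own denominator identity, and to compare this identity with the one in Theorem~\ref{thm:twistedbkma}. At this point we already know that $\g^{\phi_\nu}$ is graded by $\Delta=\Lambda^\nu\oplus\II_{1,1}$, that its Weyl group $W$ is the full reflection group of $\Delta$, that $\rho$ is a primitive, norm-zero Weyl vector, and, by Corollary~\ref{cor:rescaleddim}, that the root multiplicities $\dim(\g^{\phi_\nu}(\alpha))$ as a function on $\Delta$ coincide with those of $\g_{\phi_\nu}$ recorded after Theorem~\ref{thm:twistedbkma}. Since both Lie algebras share the same root lattice, the same Weyl group, the same Weyl vector $\rho$ (and hence the same set $\Phi^+$ of positive roots), and the same multiplicity function, the \emph{product side} of their denominator identities is literally the same formal product; in particular, the product over $d\mid m$ in Theorem~\ref{thm:twistedbkma} is just the regrouping of the exponents of $\prod_{\alpha\in\Phi^+}(1-\ee^\alpha)^{\dim(\g^{\phi_\nu}(\alpha))}$.

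First I would write the denominator identity of $\g^{\phi_\nu}$ as
\begin{equation*}
\ee^\rho\prod_{\alpha\in\Phi^+}(1-\ee^\alpha)^{\operatorname{mult}(\alpha)}=\sum_{w\in W}\det(w)\,w(\ee^\rho\,\Xi),
\end{equation*}
where $\Xi=\sum_\beta\eps(\beta)\ee^\beta$ runs over all sums $\beta$ of pairwise orthogonal imaginary simple roots of $\g^{\phi_\nu}$ (the empty sum $\beta=0$ included), with $\eps(\beta)=(-1)^{\ell}$ for a sum of $\ell$ such roots. The aim is to compute $\Xi$. By Theorem~\ref{thm:twistedbkma} the same product equals $\sum_{w\in W}\det(w)\,w(\eta_\nu(\ee^\rho))$. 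Using $\eta(t\tau)=q^{t/24}\prod_{j\geq1}(1-q^{tj})$, $b_t=24/\sigma_1(m)$ and $\sum_{t\mid m}t\,b_t=24$, and collecting for each $n\geq1$ the divisors $t\mid m$ with $t\mid n$, I would rewrite
\begin{equation*}
\eta_\nu(\ee^\rho)=\ee^\rho\prod_{n\geq1}(1-\ee^{n\rho})^{24\sigma_0((m,n))/\sigma_1(m)}=:\ee^\rho\,D.
\end{equation*}
Equating the two right-hand sides gives $\sum_{w\in W}\det(w)\,w(\ee^\rho\,\Xi)=\sum_{w\in W}\det(w)\,w(\ee^\rho\,D)$.

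It then remains to deduce $\Xi=D$. Being a Weyl vector, $\rho$ satisfies $\langle\alpha,\rho\rangle=-\langle\alpha,\alpha\rangle/2<0$ for every real simple root $\alpha$, so $\rho$ is regular and lies in the open fundamental Weyl chamber. Both $\Xi$ and $D$ are supported on dominant vectors (sums of imaginary simple roots, respectively non-negative multiples of $\rho$, all have non-positive inner product with the simple roots), whence $\rho+\beta$ is strictly dominant, in particular regular, for every occurring exponent $\beta$. Therefore each $W$-orbit meeting the support of $\ee^\rho\Xi$ or $\ee^\rho D$ has a unique dominant representative $\rho+\beta$, distinct dominant $\beta$ give disjoint orbits, and the coefficient of $\ee^{\rho+\beta}$ in either alternating sum equals the coefficient of $\ee^\beta$ in $\Xi$ or $D$. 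Comparing these coefficients yields $\Xi=D$.

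Finally I would read off the imaginary simple roots from $\Xi=D=\prod_{n\geq1}(1-\ee^{n\rho})^{24\sigma_0((m,n))/\sigma_1(m)}$. Since $D$ is supported on $\N\rho$ and an imaginary simple root $r$, being simple, is not a sum of two positive roots, the singleton contribution $-\ee^r$ to $\Xi$ cannot cancel against any multi-term $\beta$; hence $\ee^r$ occurs in $\Xi=D$, forcing $r\in\N\rho$. Thus every imaginary simple root is a positive multiple of $\rho$, and, as all multiples of the norm-zero vector $\rho$ are pairwise orthogonal, $\Xi=\prod_{n\geq1}(1-\ee^{n\rho})^{\operatorname{mult}(n\rho)}$; matching this with $D$ order by order gives $\operatorname{mult}(n\rho)=24\sigma_0((m,n))/\sigma_1(m)$ for all $n\in\Ns$. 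I expect the main obstacle to be the careful bookkeeping that makes the two product sides \emph{identical} formal expressions, together with the verification that $\rho$ is regular—these underpin the coefficient comparison—while the eta-product computation and the non-cancellation of simple-root terms are then routine.
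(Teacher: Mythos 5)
Your strategy---comparing the denominator identity that $\g^{\phi_\nu}$ satisfies as a \BKMa{} with the twisted identity of Theorem~\ref{thm:twistedbkma}---is genuinely different from the paper's proof of this proposition, which is a direct Lie-theoretic argument: there, one writes the root $n\rho$ as a sum of simple roots, uses the Lorentzian fact that non-positive-norm vectors in the same cone have non-positive inner product (zero only for proportional norm-zero vectors) to see that only multiples of $\rho$ can occur, and then uses connectedness of the support of a root to conclude that $n\rho$ is itself simple; the multiplicity is then read off from Corollary~\ref{cor:rescaleddim}. (The paper does deploy your kind of denominator-identity comparison, but only later, in the proof of Proposition~\ref{prop:main}.) Up to and including $\Xi=D$ your argument is correct: the product sides agree by Corollary~\ref{cor:rescaleddim}, the eta-product rewriting is right, and the coefficient comparison via strict dominance and regularity of $\rho+\beta$ is valid.

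The gap is in your last step. The principle you invoke---``an imaginary simple root, being simple, is not a sum of two positive roots''---is false for \BKMa{}s whose simple roots are not linearly independent, and that is exactly the situation here: the roots of $\g^{\phi_\nu}$ live in the lattice $\Delta$, and (by the very statement being proven, or already in the Fake Monster Lie algebra, the case $m=1$) all the $n\rho$ are simple, so $2\rho=\rho+\rho$ and $3\rho=\rho+2\rho$ are imaginary simple roots that \emph{are} sums of two positive roots. The usual height argument breaks down because distinct collections of abstract simple roots can map to the same element of $\Delta$, and precisely such coincidences create cancellations in $\Xi$ (for example, the coefficient of $\ee^{2\rho}$ in $\Xi$ receives $-\operatorname{mult}(2\rho)$ from singletons and $+\binom{\operatorname{mult}(\rho)}{2}$ from pairs). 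So for a hypothetical imaginary simple root $r\notin\N\rho$ you cannot conclude that the singleton term $-\ee^{r}$ survives in $\Xi$. The conclusion can be salvaged, but it needs a different argument: since any imaginary simple root $s$ and $\rho$ are non-positive-norm positive roots in the same closed cone, $\langle s,\rho\rangle\le 0$ with equality if and only if $s\in\R\rho$; hence a pairwise orthogonal collection containing some $s\notin\R\rho$ contains no multiple of $\rho$ at all, and if imaginary simple roots outside $\R\rho$ existed, then choosing the maximal value $c=\langle s,\rho\rangle<0$ among them (a maximum exists since these are negative integers), the level-$c$ part of $\Xi$ would consist only of singleton terms $-\operatorname{mult}(s)\,\ee^{s}$, which cannot cancel; this contradicts $\Xi=D$ being supported on $\N\rho$. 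With this repair (or by substituting the paper's geometric argument for this step) your proof goes through; as written, the non-cancellation step fails.
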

\begin{proof}
The Weyl vector $\rho$ lies in the fundamental Weyl chamber. In fact, it has negative inner product with all real simple roots. By Proposition~2.1 in \cite{Bor88} we can choose imaginary simple roots lying in the fundamental Weyl chamber so that $\rho$ has non-negative inner product with all simple roots. In Lorentzian signature the inner product of two vectors of non-positive norm in the same cone is non-positive and zero only if both vectors are multiples of the same norm-zero vector. Therefore, if we write $n\rho$, $n\in\Ns$, as sum of simple roots with positive coefficients, the only simple roots appearing in this sum are positive multiples of $\rho$. Since the support of an imaginary root is connected, all the $n\rho$, $n\in\Ns$, are simple roots. By Corollary~\ref{cor:rescaleddim}, the multiplicities are
\begin{equation*}
\dim(\g^{\phi_\nu}(n\rho))=\sum_{d\mid m}\delta_{n\rho\in\Delta\cap d\Delta'}[1/\eta_\nu](0)=\frac{24}{\sigma_1(m)}\sum_{d\mid m}\delta_{n\rho\in\Delta\cap d\Delta'}
\end{equation*}
for $n\in\Ns$. Since the Weyl vector $\rho=(0,\eta)$ is primitive in $\Delta=\Lambda^\nu\oplus \II_{1,1}$, we obtain that $n\rho\in\Delta\cap d\Delta'$ if and only if $d\mid n$ and hence
\begin{equation*}
\dim(\g^{\phi_\nu}(n\rho))=\frac{24}{\sigma_1(m)}\sum_{d\mid m}\delta_{d\mid n}=\frac{24\sigma_0((m,n))}{\sigma_1(m)}
\end{equation*}
for $n\in\Ns$, which completes the proof.
\end{proof}

The following result shows that these are in fact all the imaginary simple roots. The argument uses that the denominator identity of $\g^{\phi_\nu}$ (see also Corollary~\ref{cor:denid}) is the automorphic product $\Psi_{\phi_\nu}$ from \cite{Sch04,Sch06}.
\begin{prop}\label{prop:main}
Let $\nu$ be of square-free order $m$ in $M_{23}$. Then a set of simple roots of $\g^{\phi_\nu}$ is as follows: the real simple roots of $\g^{\phi_\nu}$ are the $\alpha\in\Delta\cap d\Delta'$ with $\langle\alpha,\alpha\rangle/2=d$ for $d\mid m$ and $\langle\rho,\alpha\rangle=-\langle\alpha,\alpha\rangle/2$ with multiplicity $1$ and the imaginary simple roots are the positive multiples $n\rho$, $n\in\Ns$, of the Weyl vector $\rho$ with multiplicity $24\sigma_0((m,n))/\sigma_1(m)$.
\end{prop}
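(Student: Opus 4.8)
The plan is to establish completeness of the proposed list of simple roots; that the real simple roots are exactly the simple roots of the reflection group $W$ and that the positive multiples $n\rho$ are imaginary simple roots of multiplicity $24\sigma_0((m,n))/\sigma_1(m)$ has already been shown, so it only remains to rule out any further imaginary simple roots. The tool for this is the denominator identity of $\g^{\phi_\nu}$, which holds because $\g^{\phi_\nu}$ is a \BKMa{}:
\[
\ee^\rho\prod_{\alpha\in\Phi^+}(1-\ee^\alpha)^{\dim(\g^{\phi_\nu}(\alpha))}=\sum_{w\in W}\det(w)\,w\Bigl(\ee^\rho\sum_{\alpha}\eps(\alpha)\ee^\alpha\Bigr),
\]
where the function $\eps$ encodes precisely the imaginary simple roots.

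By Corollary~\ref{cor:rescaleddim} the exponents on the left coincide with those appearing in Theorem~\ref{thm:twistedbkma}, and $\Phi^+$ is determined by the same lattice $\Delta$ and the same Weyl vector $\rho$; hence the left-hand side is literally the product side of the twisted denominator identity, i.e.\ the automorphic product $\Psi_{\phi_\nu}$ (cf.\ Remark~\ref{rem:vvmfF}). By Theorem~\ref{thm:twistedbkma} its expansion equals $\sum_{w\in W}\det(w)\,w(\eta_\nu(\ee^\rho))$. Comparing the two expressions for the right-hand side yields
\[
\sum_{w\in W}\det(w)\,w\Bigl(\ee^\rho\sum_{\alpha}\eps(\alpha)\ee^\alpha\Bigr)=\sum_{w\in W}\det(w)\,w(\eta_\nu(\ee^\rho)).
\]

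Next I would extract the correction factor from this Weyl-antisymmetrised identity. Since $\rho$ has strictly negative inner product with every real simple root, and the imaginary simple roots lie in the closure of the fundamental Weyl chamber (\cite{Bor88}, Proposition~2.1), every vector $\rho+\alpha$ occurring on the left (with $\alpha$ a sum of imaginary simple roots) lies strictly inside the fundamental chamber, as do the monomials $\ee^{n\rho}$ of $\eta_\nu(\ee^\rho)$. Therefore the summands for distinct $w$ have disjoint support, and matching the $w=1$ terms gives
\[
\ee^\rho\sum_{\alpha}\eps(\alpha)\ee^\alpha=\eta_\nu(\ee^\rho)=\ee^\rho\prod_{n\in\Ns}(1-\ee^{n\rho})^{24\sigma_0((m,n))/\sigma_1(m)},
\]
whose right-hand side is supported on multiples of $\rho$. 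Grading by $d(\beta):=-\langle\rho,\beta\rangle\geq0$, an imaginary simple root $\beta$ satisfies $d(\beta)=0$ exactly when it is a multiple of the norm-zero vector $\rho$ (two non-positive-norm vectors in the forward cone of $\Delta=\Lambda^\nu\oplus\II_{1,1}$ being orthogonal only if parallel and of norm zero). If some imaginary simple root were not a multiple of $\rho$, pick one, $\beta_0$, with $d(\beta_0)>0$ minimal; the same Lorentzian orthogonality forces $\{\beta_0\}$ to be the only set of pairwise orthogonal imaginary simple roots summing to $\beta_0$, so the coefficient of $\ee^{\rho+\beta_0}$ on the left is $-\operatorname{mult}(\beta_0)\neq0$, contradicting support on multiples of $\rho$. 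Hence every imaginary simple root is a multiple of $\rho$, and comparing the remaining factors fixes their multiplicities as $24\sigma_0((m,n))/\sigma_1(m)$, in agreement with the preceding proposition.

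The main obstacle is precisely this last extraction step: justifying that the antisymmetrised identity lets one read off the single correction factor $\ee^\rho\sum_{\alpha}\eps(\alpha)\ee^\alpha=\eta_\nu(\ee^\rho)$, and then that its support on multiples of $\rho$ excludes any stray imaginary simple root. Both hinge on the Lorentzian geometry of $\Delta$ — the disjointness of Weyl-chamber translates and the fact that orthogonal forward-cone vectors of non-positive norm are parallel norm-zero vectors — combined with the input that the denominator identity of $\g^{\phi_\nu}$ is the automorphic product $\Psi_{\phi_\nu}$.
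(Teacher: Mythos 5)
Your proposal is correct and follows essentially the same route as the paper: both identify the product side of the denominator identity of $\g^{\phi_\nu}$ (using the root multiplicities from Corollary~\ref{cor:rescaleddim}) with the product side of the twisted denominator identity of Theorem~\ref{thm:twistedbkma}, i.e.\ with the expansion of the automorphic product $\Psi_{\phi_\nu}$, and conclude that the simple roots must be as stated. The only difference is one of packaging: where the paper invokes the general principle that the root multiplicities (together with the Cartan subalgebra and the choice of fundamental Weyl chamber) determine the simple roots of a \BKMa{}, citing the proof of Theorem~7.2 in \cite{Bor92}, you prove that determination explicitly in this case — via the extraction of the $w=1$ term using disjointness of the Weyl-chamber translates and the Lorentzian orthogonality argument — which is precisely the content of the cited principle.
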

\begin{proof}
We consider the automorphic product $\Psi_{\phi_\nu}$ of singular weight obtained in \cite{Sch04} as Borcherds lift of the vector-valued modular form $F$ introduced in Section~\ref{sec:chars}. Its expansion at any cusp is given by
\begin{equation*}
\ee^\rho\prod_{d\mid m}\prod_{\alpha\in\Phi^+\cap d\Delta'}(1-\ee^\alpha)^{[1/\eta_\nu](-\langle\alpha,\alpha\rangle/2d)}=\sum_{w\in W}\det(w)w(\eta_\nu(\ee^\rho)).
\end{equation*}

Now, let $\mathfrak{k}$ be the \BKMa{} with root lattice $\Delta$, Cartan subalgebra $\Delta\otimes_\Z\C$ and simple roots as stated in the theorem. Then the above is the denominator identity of $\mathfrak{k}$, implying that $\mathfrak{k}$ and $\g^{\phi_\nu}$ have the same root multiplicities (cf.\ proof of Theorem~7.2 in \cite{Bor92}). The simple roots of a \BKMa{} (with given Cartan subalgebra and choice of fundamental Weyl chamber) are determined by its root multiplicities because of the denominator identity. Hence, $\mathfrak{k}$ and $\g^{\phi_\nu}$ have the same simple roots (and are therefore isomorphic).
\end{proof}

The following two results are immediate corollaries of (the proof of) Proposition~\ref{prop:main}.
\begin{cor}\label{cor:denid}
Let $\nu$ be of square-free order $m$ in $M_{23}$. Then the denominator identity of the \BKMa{} $\g^{\phi_\nu}$ is
\begin{equation*}
\ee^\rho\prod_{d\mid m}\prod_{\alpha\in\Phi^+\cap d\Delta'}(1-\ee^\alpha)^{[1/\eta_\nu](-\langle\alpha,\alpha\rangle/2d)}=\sum_{w\in W}\det(w)w(\eta_\nu(\ee^\rho))
\end{equation*}
with $\Delta=\Lambda^\nu\oplus\II_{1,1}$, Weyl vector $\rho=(0,0,1)$ and Weyl group $W$, which is the full reflection group of $\Delta$.
\end{cor}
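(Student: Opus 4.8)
The plan is to read off this identity almost directly from the identification carried out in the proof of Proposition~\ref{prop:main}. There the \BKMa{} $\g^{\phi_\nu}$ was shown to coincide with the \BKMa{} $\mathfrak{k}$ having root lattice $\Delta$, Cartan subalgebra $\Delta\otimes_\Z\C$, and the real and imaginary simple roots listed in Proposition~\ref{prop:main}, while the displayed expansion of the automorphic product $\Psi_{\phi_\nu}$ was recognised as the denominator identity of $\mathfrak{k}$. Since the denominator identity of a \BKMa{} is determined by its Cartan subalgebra, Weyl group and set of simple roots, and since $\mathfrak{k}\cong\g^{\phi_\nu}$, the stated identity is the denominator identity of $\g^{\phi_\nu}$. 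This is precisely why the corollary is immediate from (the proof of) Proposition~\ref{prop:main}.

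To make the statement self-contained I would alternatively derive it from the general \BKMa{} denominator identity recalled in Section~\ref{sec:bkma}, inserting the simple-root data of Proposition~\ref{prop:main}. For the left-hand side the product runs over all positive roots weighted by their multiplicities; organising $\Phi^+$ according to which sublattices $d\Delta'$ the roots lie in and invoking the multiplicity formula of Corollary~\ref{cor:rescaleddim}, namely $\dim(\g^{\phi_\nu}(\alpha))=\sum_{d\mid m}\delta_{\alpha\in\Delta\cap d\Delta'}[1/\eta_\nu](-\langle\alpha,\alpha\rangle/2d)$, regroups $\prod_{\alpha\in\Phi^+}(1-\ee^\alpha)^{\operatorname{mult}(\alpha)}$ exactly into the double product $\prod_{d\mid m}\prod_{\alpha\in\Phi^+\cap d\Delta'}(1-\ee^\alpha)^{[1/\eta_\nu](-\langle\alpha,\alpha\rangle/2d)}$ appearing on the left-hand side.

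The only genuine computation sits on the right-hand side, in the term $\ee^\rho\sum_{\mu}\eps(\mu)\ee^{\mu}$, where $\mu$ ranges over the roots that are sums of pairwise orthogonal imaginary simple roots (these are the $\alpha\in\Phi$ with $\eps(\alpha)\neq 0$ in the general formula). The key simplification is that by Proposition~\ref{prop:main} every imaginary simple root is a positive multiple $n\rho$ of the \emph{norm-zero} Weyl vector $\rho$, so that $\langle n\rho,n'\rho\rangle=nn'\langle\rho,\rho\rangle=0$ and any two imaginary simple roots are automatically orthogonal (as already in the Fake Monster case). Hence the sum factorises over the multiset of imaginary simple roots: including $k$ of the $\operatorname{mult}(n\rho)$ copies of $n\rho$ contributes $\binom{\operatorname{mult}(n\rho)}{k}(-1)^k\ee^{kn\rho}$, so summing over $k$ gives the factor $(1-\ee^{n\rho})^{\operatorname{mult}(n\rho)}$ and therefore $\ee^\rho\sum_\mu\eps(\mu)\ee^\mu=\ee^\rho\prod_{n\geq 1}(1-\ee^{n\rho})^{\operatorname{mult}(n\rho)}$.

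The final matching with $\eta_\nu(\ee^\rho)$ is a short bookkeeping check. Writing $\eta_\nu(\tau)=\prod_{t\mid m}\eta(t\tau)^{b_t}$ with $b_t=24/\sigma_1(m)$ and using $\eta(t\tau)=q^{t/24}\prod_{k\geq 1}(1-q^{tk})$, the exponent of $q$ is $\sum_{t\mid m}tb_t/24=1$ because $\sum_{t\mid m}t=\sigma_1(m)$, which produces the prefactor $\ee^\rho$ under the formal substitution $q\mapsto\ee^\rho$; and the exponent of a fixed factor $(1-\ee^{n\rho})$ is $\sum_{t\mid(m,n)}b_t=24\sigma_0((m,n))/\sigma_1(m)$, which is exactly the multiplicity $\operatorname{mult}(n\rho)$ from Proposition~\ref{prop:main}. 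Thus $\eta_\nu(\ee^\rho)=\ee^\rho\prod_{n\geq 1}(1-\ee^{n\rho})^{\operatorname{mult}(n\rho)}$, and applying the alternating Weyl sum $\sum_{w\in W}\det(w)w(\cdot)$ produces the right-hand side. I expect no real obstacle here: the substantive content—the identification of the simple roots and of the automorphic product $\Psi_{\phi_\nu}$—is already discharged in Proposition~\ref{prop:main}, and what remains is merely the combinatorial factorisation of the imaginary-root term together with the divisor-sum identity above.
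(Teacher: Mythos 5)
Your proposal is correct, and its first paragraph is exactly the paper's proof: the corollary is read off from the proof of Proposition~\ref{prop:main}, where $\g^{\phi_\nu}$ is identified with the \BKMa{} $\mathfrak{k}$ whose denominator identity is the displayed expansion of $\Psi_{\phi_\nu}$. Your supplementary self-contained derivation is also sound, and it is worth noting what it adds: the paper simply asserts (citing Scheithauer's Theorem~10.1, i.e.\ Theorem~\ref{thm:twistedbkma}) that a \BKMa{} with the simple-root data of Proposition~\ref{prop:main} has this denominator identity, whereas you actually verify the specialisation of the general denominator formula -- regrouping the left-hand side via the multiplicity formula of Corollary~\ref{cor:rescaleddim}, factorising the imaginary-root correction term $\sum_\mu\eps(\mu)\ee^\mu$ into $\prod_{n\geq1}(1-\ee^{n\rho})^{\operatorname{mult}(n\rho)}$ using that all imaginary simple roots are multiples of the norm-zero vector $\rho$ (the binomial-coefficient count over the $\operatorname{mult}(n\rho)$ copies is the right reading of $\eps$), and matching exponents with $\eta_\nu(\ee^\rho)$ via $\sum_{t\mid m}t\,b_t/24=1$ and $\sum_{t\mid(m,n)}b_t=24\sigma_0((m,n))/\sigma_1(m)$. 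This buys independence from the cited automorphic-product computation, at the cost of redoing bookkeeping that the paper treats as known.
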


Comparing with Theorem~\ref{thm:twistedbkma} we obtain the main result of this work:
\begin{thm}[Main Result]\label{thm:main}
Let $\nu$ be of square-free order in $M_{23}$. Then $\g^{\phi_\nu}=H^1_\mathrm{BRST}(M_{\phi_\nu})$ is isomorphic to the complexification of $\g_{\phi_\nu}$.
\end{thm}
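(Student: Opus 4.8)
The plan is to identify the two \BKMa{}s $\g^{\phi_\nu}$ and (the complexification of) $\g_{\phi_\nu}$ by showing that they share the same defining data, namely the same root lattice, Cartan subalgebra, and set of simple roots with multiplicities. Since a \BKMa{} is determined up to isomorphism by its (generalised) Cartan matrix, equivalently by its Cartan subalgebra together with the simple roots and their multiplicities, it suffices to match these invariants.

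First I would recall that by the preceding propositions $\g^{\phi_\nu}$ is an honest \BKMa{} with Cartan subalgebra $\mathcal{H}=\g^{\phi_\nu}(0)\cong L\otimes_\Z\C$, and that by Proposition~\ref{prop:Hiso} the bilinear form restricts to a Lorentzian form of signature $(k-1,1)$ on the real subspace $\mathcal{H}_\R$. After rescaling the grading lattice $L'$ by $m$ to the even lattice $\Delta=L'(m)\cong\Lambda^\nu\oplus\II_{1,1}$, Corollary~\ref{cor:rescaleddim} shows that $\g^{\phi_\nu}$ is $\Delta$-graded with root multiplicities given by exactly the same formula as the multiplicities of $\g_{\phi_\nu}$ recorded in Section~\ref{sec:twisting}. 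Next I would invoke Corollary~\ref{cor:denid} and Proposition~\ref{prop:main}: the real simple roots of $\g^{\phi_\nu}$ are the simple roots of the full reflection group $W$ of $\Delta$ (with multiplicity $1$), and the imaginary simple roots are the positive multiples $n\rho$ of the primitive norm-zero Weyl vector $\rho=(0,0,1)$, each with multiplicity $24\sigma_0((m,n))/\sigma_1(m)$. Comparing with the description of $\g_{\phi_\nu}$ in Theorem~\ref{thm:twistedbkma}, these are verbatim the same root lattice $\Delta$, the same Weyl vector $\rho$, the same Weyl group $W$, and the same real and imaginary simple roots with identical multiplicities.

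Since both Lie algebras have the same Cartan subalgebra $\Delta\otimes_\Z\C$ and the identical set of simple roots (with multiplicities), they are generated by the same generalised Cartan matrix, and hence $\g^{\phi_\nu}$ is isomorphic to the complexification of $\g_{\phi_\nu}$. The crucial conceptual point, already carried out in the proof of Proposition~\ref{prop:main}, is that a \BKMa{} is uniquely determined by its root multiplicities via the denominator identity: once the denominator identities agree (both equalling the expansion of the automorphic product $\Psi_{\phi_\nu}$), the simple-root data must coincide. This is exactly the argument Borcherds uses in the proof of Theorem~7.2 in \cite{Bor92}.

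The main obstacle is establishing that the root multiplicities genuinely coincide across \emph{all} of $\Delta$, not merely for the real and imaginary simple roots; this is precisely where the identification $F=G$ of vector-valued modular forms from Proposition~\ref{prop:F1eqF2} does the heavy lifting, since it guarantees that the character-theoretic multiplicity formula from Corollary~\ref{cor:dimform} matches Scheithauer's lattice-theoretic formula. The remaining verification, that the real simple roots of $\g^{\phi_\nu}$ are exactly the roots of $\Delta$ and that $\rho$ is a primitive norm-zero Weyl vector, rests on Proposition~\ref{prop:realroots} and Proposition~\ref{prop:weylvector}, which use that $\Lambda^\nu$ has no roots. With these in hand the final comparison with Theorem~\ref{thm:twistedbkma} is immediate and completes the proof.
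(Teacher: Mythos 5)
Your proposal is correct and follows essentially the same route as the paper: the paper's proof of Theorem~\ref{thm:main} is precisely the comparison of the simple-root data established in Proposition~\ref{prop:main} (real simple roots from the reflection group of $\Delta$, imaginary simple roots $n\rho$ with multiplicities $24\sigma_0((m,n))/\sigma_1(m)$) with the description of $\g_{\phi_\nu}$ in Theorem~\ref{thm:twistedbkma}, using the fact that a \BKMa{} is determined by its Cartan subalgebra and simple roots with multiplicities. Your identification of where the real work lies --- the matching of root multiplicities via Proposition~\ref{prop:F1eqF2} and Corollary~\ref{cor:rescaleddim}, and the denominator-identity argument borrowed from the proof of Theorem~7.2 in \cite{Bor92} --- accurately reflects the structure of the paper's argument.
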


With the above theorem we have found a uniform, natural construction of the \BKMa{}s obtained in \cite{Bor92} by twisting the denominator identity of the Fake Monster Lie algebra $\g$ by elements of square-free order in $M_{23}$. These are also the ten \BKMa{}s classified in \cite{Sch06} whose denominator identities are completely reflective automorphic products of singular weight.

Moreover, we showed that these denominator identities are Borcherds lifts of the vector-valued characters of the \voa{}s in the input of this natural construction.

The main results are summarised in the following diagram (cf.\ the diagram in the introduction):
\begin{equation*}
\begin{tikzcd}
\text{Vertex alg.}&&\text{BKMA}&&\text{Aut.\ prod.}\\
V_\Lambda\otimes V_{\II_{1,1}}\cong V_{\II_{25,1}}\arrow{dd}{\phi_\nu}\arrow{rr}{\text{quantise}}\arrow[bend right]{rrrr}{\text{lift of char.}}&&\text{FMA }\g\arrow[<->]{rr}{\text{den. id.}}&&\Psi\arrow{dd}{\phi_\nu}\\
\\
\bigoplus_{\gamma\in K'/K}V_\Lambda^{\phi_\nu}(\gamma+K)\otimes V_{\gamma+K}\arrow{rr}{\text{quantise}}\arrow[bend right]{rrrr}{\text{lift of char.}}&&\g^{\phi_\nu}\cong\g_{\phi_\nu}\arrow[<->]{rr}{\text{den.\ id.}}&&\Psi_{\phi_\nu}
\end{tikzcd}
\end{equation*}

While we gave the first systematic string-theoretic construction of a subfamily of Borcherds' twisted versions of the Fake Monster Lie algebra, the majority of these Borcherds-Kac-Moody (super)algebras have not yet been realised in natural constructions (see Problem~3 in \cite{Bor92}). However, with recent advancements in orbifold theory, it should be possible to make further strides in this direction.


\bibliographystyle{alpha_noseriescomma}
\bibliography{quellen}{}

\newcommand{\etalchar}[1]{$^{#1}$}
\newcommand{\SortNoop}[1]{}
\begin{thebibliography}{CCN{\etalchar{+}}85}

\bibitem[BCP97]{Magma}
Wieb Bosma, John Cannon and Catherine Playoust.
\newblock The {M}agma algebra system. {I}. {T}he user language.
\newblock {\em J. Symbolic Comput.}, 24(3--4):235--265, 1997.
\newblock \url{http://magma.maths.usyd.edu.au}.

\bibitem[BK04]{BK04}
Bojko~N. Bakalov and Victor~G. Kac.
\newblock Twisted modules over lattice vertex algebras.
\newblock In Heinz-Dietrich Doebner and Vladimir~K. Dobrev, editors, {\em Lie
  theory and its applications in physics {V}}, pages 3--26. World Scientific,
  2004.
\newblock (\href{http://arxiv.org/abs/math/0402315v3}{arXiv:math/0402315v3
  [math.QA]}).

\bibitem[Bor86]{Bor86}
Richard~E. Borcherds.
\newblock Vertex algebras, {K}ac-{M}oody algebras, and the {M}onster.
\newblock {\em Proc. Nat. Acad. Sci. U.S.A.}, 83(10):3068--3071, 1986.

\bibitem[Bor88]{Bor88}
Richard~E. Borcherds.
\newblock Generalized {K}ac-{M}oody algebras.
\newblock {\em J. Algebra}, 115(2):501--512, 1988.

\bibitem[Bor90a]{Bor90b}
Richard~E. Borcherds.
\newblock Lattices like the {L}eech lattice.
\newblock {\em J. Algebra}, 130(1):219--234, 1990.

\bibitem[Bor90b]{Bor90}
Richard~E. Borcherds.
\newblock The monster {L}ie algebra.
\newblock {\em Adv. Math.}, 83(1):30--47, 1990.

\bibitem[Bor92]{Bor92}
Richard~E. Borcherds.
\newblock Monstrous moonshine and monstrous {L}ie superalgebras.
\newblock {\em Invent. Math.}, 109(2):405--444, 1992.

\bibitem[Bor95]{Bor95a}
Richard~E. Borcherds.
\newblock A characterization of generalized {K}ac-{M}oody algebras.
\newblock {\em J. Algebra}, 174(3):1073--1079, 1995.

\bibitem[Bor98]{Bor98}
Richard~E. Borcherds.
\newblock Automorphic forms with singularities on {G}rassmannians.
\newblock {\em Invent. Math.}, 132(3):491--562, 1998.
\newblock
  (\href{http://arxiv.org/abs/alg-geom/9609022v2}{arXiv:alg-geom/9609022v2}).

\bibitem[Bor00]{Bor00}
Richard~E. Borcherds.
\newblock Reflection groups of {L}orentzian lattices.
\newblock {\em Duke Math. J.}, 104(2):319--366, 2000.
\newblock (\href{http://arxiv.org/abs/math/9909123v1}{arXiv:math/9909123v1
  [math.GR]}).

\bibitem[Bor01]{Bor01}
Richard~E. Borcherds.
\newblock Problems in {M}oonshine.
\newblock In {\em First international congress of {C}hinese mathematicians},
  volume~20 of {\em AMS/IP Stud. Adv. Math.}, pages 3--10. Amer. Math. Soc.,
  2001.

\bibitem[Car16]{Car12b}
Scott Carnahan.
\newblock Generalized moonshine {I}{V}: Monstrous {L}ie algebras.
\newblock (\href{http://arxiv.org/abs/1208.6254v3}{arXiv:1208.6254v3
  [math.RT]}), 2016.

\bibitem[CCN{\etalchar{+}}85]{CCNPW85}
John~H. Conway, Robert~T. Curtis, Simon~P. Norton, Richard~A. Parker and
  Robert~A. Wilson.
\newblock {\em Atlas of finite groups: Maximal subgroups and ordinary
  characters for simple groups}.
\newblock Oxford University Press, 1985.
\newblock With computational assistance from J.~G. Thackray.

\bibitem[CKS07]{CKS07}
Thomas Creutzig, Alexander Klauer and Nils~R. Scheithauer.
\newblock Natural constructions of some generalized {K}ac-{M}oody algebras as
  bosonic strings.
\newblock {\em Commun. Number Theory Phys.}, 1(3):453--477, 2007.
\newblock (\href{http://arxiv.org/abs/0801.1829v1}{arXiv:0801.1829v1
  [math.NT]}).

\bibitem[CM16]{CM16}
Scott Carnahan and Masahiko Miyamoto.
\newblock Regularity of fixed-point vertex operator subalgebras.
\newblock (\href{http://arxiv.org/abs/1603.05645v4}{arXiv:1603.05645v4
  [math.RT]}), 2016.

\bibitem[DL93]{DL93}
Chongying Dong and James~I. Lepowsky.
\newblock {\em Generalized Vertex Algebras and Relative Vertex Operators},
  volume 112 of {\em Progr. Math.}
\newblock Birkh{\"a}user, 1993.

\bibitem[DL96]{DL96}
Chongying Dong and James~I. Lepowsky.
\newblock The algebraic structure of relative twisted vertex operators.
\newblock {\em J. Pure Appl. Algebra}, 110(3):259--295, 1996.
\newblock (\href{http://arxiv.org/abs/q-alg/9604022v1}{arXiv:q-alg/9604022v1}).

\bibitem[DLM97]{DLM97}
Chongying Dong, Haisheng Li and Geoffrey Mason.
\newblock Regularity of rational vertex operator algebras.
\newblock {\em Adv. Math.}, 132(1):148--166, 1997.
\newblock (\href{http://arxiv.org/abs/q-alg/9508018v1}{arXiv:q-alg/9508018v1}).

\bibitem[DLM00]{DLM00}
Chongying Dong, Haisheng Li and Geoffrey Mason.
\newblock Modular-invariance of trace functions in orbifold theory and
  generalized moonshine.
\newblock {\em Commun. Math. Phys.}, 214:1--56, 2000.
\newblock (\href{http://arxiv.org/abs/q-alg/9703016v2}{arXiv:q-alg/9703016v2}).

\bibitem[DM97]{DM97}
Chongying Dong and Geoffrey Mason.
\newblock On quantum {G}alois theory.
\newblock {\em Duke Math. J.}, 86(2):305--321, 1997.
\newblock
  (\href{http://arxiv.org/abs/hep-th/9412037v1}{arXiv:hep-th/9412037v1}).

\bibitem[DM04a]{DM04}
Chongying Dong and Geoffrey Mason.
\newblock Holomorphic vertex operator algebras of small central charge.
\newblock {\em Pacific J. Math.}, 213(2):253--266, 2004.
\newblock (\href{http://arxiv.org/abs/math/0203005v1}{arXiv:math/0203005v1
  [math.QA]}).

\bibitem[DM04b]{DM04b}
Chongying Dong and Geoffrey Mason.
\newblock Rational vertex operator algebras and the effective central charge.
\newblock {\em Int. Math. Res. Not.}, 2004(56):2989--3008, 2004.
\newblock (\href{http://arxiv.org/abs/math/0201318v1}{arXiv:math/0201318v1
  [math.QA]}).

\bibitem[Don93]{Don93}
Chongying Dong.
\newblock Vertex algebras associated with even lattices.
\newblock {\em J. Alg.}, 161(1):245--265, 1993.

\bibitem[DRX17]{DRX17}
Chongying Dong, Li~Ren and Feng Xu.
\newblock On orbifold theory.
\newblock {\em Adv. Math.}, 321:1--30, 2017.
\newblock (\href{http://arxiv.org/abs/1507.03306v2}{arXiv:1507.03306v2
  [math.QA]}).

\bibitem[Ebe13]{Ebe13}
Wolfgang Ebeling.
\newblock {\em Lattices and Codes}.
\newblock Adv. Lectures Math. Springer, 3rd edition, 2013.
\newblock A course partially based on lectures by Friedrich Hirzebruch.

\bibitem[EMS20]{EMS20a}
Jethro~van Ekeren, Sven M\"{o}ller and Nils~R. Scheithauer.
\newblock Construction and classification of holomorphic vertex operator
  algebras.
\newblock {\em J. Reine Angew. Math.}, 759:61--99, 2020.
\newblock (\href{http://arxiv.org/abs/1507.08142v3}{arXiv:1507.08142v3
  [math.RT]}).

\bibitem[Fei84]{Fei84}
Boris~L. Feigin.
\newblock The semi-infinite homology of {K}ac-{M}oody and {V}irasoro {L}ie
  algebras.
\newblock {\em Russian Math. Surveys}, 39(2):155--156, 1984.

\bibitem[FGZ86]{FGZ86}
Igor~B. Frenkel, Howard Garland and Gregg~J. Zuckerman.
\newblock Semi-infinite cohomology and string theory.
\newblock {\em Proc. Nat. Acad. Sci. U.S.A.}, 83(22):8442--8446, 1986.

\bibitem[FHL93]{FHL93}
Igor~B. Frenkel, Yi-Zhi Huang and James~I. Lepowsky.
\newblock {\em On Axiomatic Approaches to Vertex Operator Algebras and
  Modules}, volume 104 of {\em Mem. Amer. Math. Soc.}
\newblock Amer. Math. Soc., 1993.

\bibitem[FLM88]{FLM88}
Igor~B. Frenkel, James~I. Lepowsky and Arne Meurman.
\newblock {\em Vertex operator algebras and the {M}onster}, volume 134 of {\em
  Pure Appl. Math.}
\newblock Academic Press, 1988.

\bibitem[GN02]{GN02}
Valery~A. Gritsenko and Viacheslav~V. Nikulin.
\newblock On the classification of {L}orentzian {K}ac-{M}oody algebras.
\newblock {\em Russian Math. Surveys}, 57(5):921--979, 2002.
\newblock (\href{http://arxiv.org/abs/math/0201162v2}{arXiv:math/0201162v2}).

\bibitem[GN18]{GN18}
Valery~A. Gritsenko and Viacheslav~V. Nikulin.
\newblock Lorentzian {K}ac-{M}oody algebras with {W}eyl groups of
  2-reflections.
\newblock {\em Proc. Lond. Math. Soc. (3)}, 116(3):485--533, 2018.
\newblock (\href{http://arxiv.org/abs/1602.08359v2}{arXiv:1602.08359v2
  [math.AG]}).

\bibitem[HBJ94]{HBJ94}
Friedrich Hirzebruch, Thomas Berger and Rainer Jung.
\newblock {\em Manifolds and modular forms}, volume E20 of {\em Aspects Math.}
\newblock Vieweg, 2nd edition, 1994.
\newblock With appendices by Nils-Peter Skoruppa and Paul Baum.

\bibitem[HS03]{HS03}
Gerald H{\"o}hn and Nils~R. Scheithauer.
\newblock A natural construction of {B}orcherds' {F}ake {B}aby {M}onster {L}ie
  algebra.
\newblock {\em Amer. J. Math.}, 125(3):655--667, 2003.
\newblock (\href{http://arxiv.org/abs/math/0312106v1}{arXiv:math/0312106v1
  [math.QA]}).

\bibitem[HS14]{HS14}
Gerald H{\"o}hn and Nils~R. Scheithauer.
\newblock A generalized {K}ac-{M}oody algebra of rank 14.
\newblock {\em J. Alg.}, 404:222--239, 2014.
\newblock (\href{http://arxiv.org/abs/1009.5153v2}{arXiv:1009.5153v2
  [math.QA]}).

\bibitem[Jur96]{Jur96}
Elizabeth Jurisich.
\newblock An exposition of generalized {K}ac-{M}oody algebras.
\newblock In {\em Lie algebras and their representations}, volume 194 of {\em
  Contemp. Math.}, pages 121--159. Amer. Math. Soc., 1996.

\bibitem[KRR13]{KRR13}
Victor~G. Kac, Ashok~K. Raina and Natasha Rozhkovskaya.
\newblock {\em Bombay lectures on highest weight representations of infinite
  dimensional {L}ie algebras}, volume~29 of {\em Advanced Series In
  Mathematical Physics}.
\newblock World Scientific, 2nd edition, 2013.

\bibitem[Lam19]{Lam19}
Ching~Hung Lam.
\newblock Cyclic orbifolds of lattice vertex operator algebras having
  group-like fusions.
\newblock {\em Lett. Math. Phys.}, 110(5):1081--1112, 2019.
\newblock (\href{http://arxiv.org/abs/1805.10778v2}{arXiv:1805.10778v2
  [math.QA]}).

\bibitem[Li94]{Li94}
Haisheng Li.
\newblock Symmetric invariant bilinear forms on vertex operator algebras.
\newblock {\em J. Pure Appl. Algebra}, 96(3):279--297, 1994.

\bibitem[LL04]{LL04}
James~I. Lepowsky and Haisheng Li.
\newblock {\em Introduction to Vertex Operator Algebras and Their
  Representations}, volume 227 of {\em Progr. Math.}
\newblock Birkh{\"a}user, 2004.

\bibitem[LZ91]{LZ91}
Bong~H. Lian and Gregg~J. Zuckerman.
\newblock {BRST} cohomology and highest weight vectors. {I}.
\newblock {\em Comm. Math. Phys.}, 135(3):547--580, 1991.

\bibitem[LZ93]{LZ93}
Bong~H. Lian and Gregg~J. Zuckerman.
\newblock New perspectives on the {BRST}-algebraic structure of string theory.
\newblock {\em Comm. Math. Phys.}, 154(3):613--646, 1993.
\newblock
  (\href{http://arxiv.org/abs/hep-th/9211072v1}{arXiv:hep-th/9211072v1}).

\bibitem[Miy15]{Miy15}
Masahiko Miyamoto.
\newblock {$C_2$}-cofiniteness of cyclic-orbifold models.
\newblock {\em Comm. Math. Phys.}, 335(3):1279--1286, 2015.
\newblock (\href{http://arxiv.org/abs/1306.5031v1}{arXiv:1306.5031v1
  [math.QA]}).

\bibitem[MM09]{MM09}
Rick Miranda and David~R. Morrison.
\newblock Embeddings of integral quadratic forms.
\newblock \url{https://www.math.colostate.edu/~miranda/preprints/eiqf.pdf},
  2009.

\bibitem[M{\"o}l12]{Moe12}
Sven M{\"o}ller.
\newblock {\em Zur {K}lassifikation automorpher Produkte singul{\"a}ren
  {G}ewichts}.
\newblock Master's thesis, Technische Universit{\"a}t Darmstadt, 2012.

\bibitem[M{\"o}l16]{Moe16}
Sven M{\"o}ller.
\newblock {\em A Cyclic Orbifold Theory for Holomorphic Vertex Operator
  Algebras and Applications}.
\newblock {Ph.D. thesis}, Technische Universit{\"a}t Darmstadt, 2016.
\newblock (\href{http://arxiv.org/abs/1611.09843v1}{arXiv:1611.09843v1
  [math.QA]}).

\bibitem[MT04]{MT04}
Masahiko Miyamoto and Kenichiro Tanabe.
\newblock Uniform product of {$A_{g,n}(V)$} for an orbifold model {$V$} and
  {$G$}-twisted {Z}hu algebra.
\newblock {\em J. Algebra}, 274(1):80--96, 2004.
\newblock (\href{http://arxiv.org/abs/math/0112054v3}{arXiv:math/0112054v3
  [math.QA]}).

\bibitem[Nik80]{Nik80}
Viacheslav~V. Nikulin.
\newblock Integral symmetric bilinear forms and some of their applications.
\newblock {\em Math. USSR Izv.}, 14(1):103--167, 1980.

\bibitem[Sag]{Sage}
The Sage Developers.
\newblock {\em {S}ageMath, the {S}age {M}athematics {S}oftware {S}ystem}.
\newblock \url{http://www.sagemath.org}.

\bibitem[Sch93]{Sch93}
A.~N. Schellekens.
\newblock Meromorphic {$c=24$} conformal field theories.
\newblock {\em Comm. Math. Phys.}, 153(1):159--185, 1993.
\newblock
  (\href{http://arxiv.org/abs/hep-th/9205072v1}{arXiv:hep-th/9205072v1}).

\bibitem[Sch97]{Sch97}
Nils~R. Scheithauer.
\newblock {\em Vertex algebras, Lie algebras and superstrings}.
\newblock {Ph.D. thesis}, Universit{\"a}t Hamburg, 1997.
\newblock (\href{http://arxiv.org/abs/hep-th/9802058v1}{arXiv:hep-th/9802058}).

\bibitem[Sch98]{Sch98}
Nils~R. Scheithauer.
\newblock Vertex algebras, {L}ie algebras, and superstrings.
\newblock {\em J. Algebra}, 200(2):363--403, 1998.

\bibitem[Sch00]{Sch00}
Nils~R. Scheithauer.
\newblock The fake monster superalgebra.
\newblock {\em Adv. Math.}, 151(2):226--269, 2000.
\newblock (\href{http://arxiv.org/abs/math/9905113v1}{arXiv:math/9905113v1
  [math.QA]}).

\bibitem[Sch04a]{Sch04b}
Nils~R. Scheithauer.
\newblock Generalized {K}ac-{M}oody algebras, automorphic forms and {C}onway's
  group. {I}.
\newblock {\em Adv. Math.}, 183(2):240--270, 2004.

\bibitem[Sch04b]{Sch04}
Nils~R. Scheithauer.
\newblock {\em Moonshine for {C}onway's Group}.
\newblock Habilitationsschrift, Ruprecht-Karls-Universi{\"a}t Heidelberg, 2004.

\bibitem[Sch06]{Sch06}
Nils~R. Scheithauer.
\newblock On the classification of automorphic products and generalized
  {K}ac-{M}oody algebras.
\newblock {\em Invent. Math.}, 164(3):641--678, 2006.

\bibitem[Sch08]{Sch08}
Nils~R. Scheithauer.
\newblock Generalized {K}ac-{M}oody algebras, automorphic forms and {C}onway's
  group. {II}.
\newblock {\em J. Reine Angew. Math.}, 625:125--154, 2008.

\bibitem[Sch09]{Sch09}
Nils~R. Scheithauer.
\newblock The {W}eil representation of {$\mathrm{SL}_2(\mathbb{Z})$} and some
  applications.
\newblock {\em Int. Math. Res. Not.}, 2009(8):1488--1545, 2009.

\bibitem[Sch15]{Sch15}
Nils~R. Scheithauer.
\newblock Some constructions of modular forms for the {W}eil representation of
  {$\mathrm{SL}_2(\mathbb{Z})$}.
\newblock {\em Nagoya Math. J.}, 220:1--43, 2015.

\bibitem[Zhu96]{Zhu96}
Yongchang Zhu.
\newblock Modular invariance of characters of vertex operator algebras.
\newblock {\em J. Amer. Math. Soc.}, 9(1):237--302, 1996.

\bibitem[Zuc89]{Zuc89}
Gregg~J. Zuckerman.
\newblock Modular forms, strings, and ghosts.
\newblock In Leon Ehrenpreis and Robert~C. Gunning, editors, {\em Theta
  Functions {B}owdoin 1987}, volume~49 of {\em Proc. Sympos. Pure Math.}, pages
  273--284. Amer. Math. Soc., 1989.

\end{thebibliography}

\end{document}